\documentclass[11pt, a4paper,reqno]{amsart}
\pdfoutput=1
\usepackage{tikz,genyoungtabtikz,enumitem,rotating,latexsym,bm,stmaryrd}
\usetikzlibrary{positioning,intersections}
 \usepackage{amsmath,amsthm,amsfonts,amssymb,mathrsfs,pb-diagram}
\usepackage[
bookmarks=true,colorlinks=true,linktoc=page,citecolor=darkgreen,linkcolor=blue,urlcolor=mediumblue]{hyperref}
\usepackage{caption}
\usepackage{lipsum,wasysym}
\usepackage{mathtools}
\usepackage[a4paper,margin=0.87in]{geometry}
\usepackage{cleveref}
 \usepackage{amsmath}
\mathchardef\mhyphen="2D
\usepackage{color}
\definecolor{mediumblue}{rgb}{0.0, 0.0, 0.8}
 \colorlet{darkgreen}{green!50!black}
\synctex=1

\newcommand\mptn[2]{\mathscr{P}^{#1}_{#2}}
\renewcommand{\geq}{\geqslant}
\renewcommand{\leq}{\leqslant}
\renewcommand{\trianglerighteq}{\trianglerighteqslant}
\renewcommand{\trianglelefteq}{\trianglelefteqslant}
\newcommand{\Q}{{\mathcal Q}}

\tikzset{wei/.style= 
{red,double=red,double
distance=0.5pt}}

\tikzset{wei2/.style={red,double=red,double
distance=0.5pt}}

\allowdisplaybreaks
\numberwithin{equation}{section}
\parskip=2pt
\usepackage{scalefnt}

\newtheorem{thm}{Theorem}[section]
\newtheorem{cor}[thm]{Corollary}

\newtheorem{conj}[thm]{Conjecture}
\newtheorem{lem}[thm]{Lemma}
\newtheorem{prop}[thm]{Proposition}
\newtheorem*{prop*}{Proposition}
\newtheorem*{thmA*}{Theorem A}
\newtheorem*{thmB*}{Theorem B}
\newtheorem*{thmC*}{Theorem C}\newtheorem*{thm*}{Theorem D}
\newtheorem*{cor*}{Corollary}

\newtheorem*{conj*}{Conjecture A}
\newtheorem*{conj1*}{Conjecture B}
\newtheorem*{Acknowledgements*}{Acknowledgements}

\theoremstyle{remark}

\theoremstyle{definition}
\newtheorem{rmk}[thm]{Remark}
\newtheorem{defn}[thm]{Definition}
\newtheorem{eg}[thm]{Example}

\newcommand{\great}{>}
\newcommand{\less}{<}
\newcommand{\greatoreq}{\geq}
\newcommand{\lessoreq}{\leq}
\newcommand{\degr}{\mathrm{deg}}

\newcommand{\rad}{\mathrm{rad}}
\newcommand{\res}{\mathrm{res}}
\newcommand{\ik}{{k}}
\newcommand{\Std}{\operatorname{Std}}
\newcommand{\SStd}{{\rm SStd}}
\renewcommand{\det}{{\rm det}_h}

\newcommand{\TSStd}{\operatorname{\mathcal{T}}}
\newcommand{\Shape}{\operatorname{Shape}} 
\newcommand{\Path}{{\rm Path}}
\newcommand{\la}{\lambda}
\newcommand{\I}{i}
\newcommand{\J}{j}

\newcommand{\M}{m}

\newcommand{\SSTS}{\mathsf{S}}

\newcommand{\SSTT}{\mathsf{T}}  
\newcommand{\SSTU}{\mathsf{U}}  
\newcommand{\SSTV}{\mathsf{V}}  
\newcommand{\SSTQ}{\mathsf{Q}}  
\newcommand{\SSTR}{\mathsf{R}}  
\newcommand{\sts}{\mathsf{s}}  
\newcommand{\stt}{\mathsf{t}}  
\newcommand{\stu}{\mathsf{u}}  
\newcommand{\ZZ}{{\mathbb Z}}

\newcommand{\g}{\ell}
\newcommand{\Rem}{{\rm Rem}}

\newcommand{\RR}{{\mathbb R}}
\newcommand{\Hyp}{{\mathbb E}_{\alpha,me}}

\DeclareMathOperator{\Hom}{Hom}

\let\<=\langle
\let\>=\rangle

\newcommand\Dec[1][A]{\mathbf{D}_{#1}(t)}

\tikzset{
ultra thin/.style= {line width=0.05pt},
very thin/.style=  {line width=0.2pt},
thin/.style=       {line width=0.1pt},
semithick/.style=  {line width=0.6pt},
thick/.style=      {line width=0.8pt},
very thick/.style= {line width=1.2pt},
ultra thick/.style={line width=1.6pt}
}

\crefname{defn}{Definition}{Definitions}
\crefname{thm}{Theorem}{Theorems}
\crefname{prop}{Proposition}{Propositions}
\crefname{lem}{Lemma}{Lemmas}
\crefname{cor}{Corollary}{Corollaries}
\crefname{conj}{Conjecture}{Conjectures}
\crefname{section}{Section}{Sections}
\crefname{subsection}{Subsection}{Subsections}
\crefname{eg}{Example}{Examples}
\crefname{figure}{Figure}{Figures}
\crefname{rem}{Remark}{Remarks}
\crefname{rmk}{Remark}{Remarks}
\crefname{equation}{equation}{equation}

\Crefname{defn}{Definition}{Definitions}
\Crefname{thm}{Theorem}{Theorems}
\Crefname{prop}{Proposition}{Propositions}
\Crefname{lem}{Lemma}{Lemmas}
\Crefname{cor}{Corollary}{Corollaries}
\Crefname{conj}{Conjecture}{Conjectures}
\Crefname{section}{Section}{Sections}
\Crefname{subsection}{Subsection}{Subsections}
\Crefname{eg}{Example}{Examples}
\Crefname{figure}{Figure}{Figures}
\Crefname{rem}{Remark}{Remarks}
\Crefname{rmk}{Remark}{Remarks}

\hyphenation{tab-le-aux}

\def\Item{\item\abovedisplayskip=0pt\abovedisplayshortskip=5pt~\vspace*{-\baselineskip}} 

\parskip=2pt
\begin{document}

 \title[Decomposition numbers of cyclotomic Hecke   and diagrammatic Cherednik algebras]{Modular decomposition numbers of cyclotomic Hecke \\ and diagrammatic Cherednik algebras: \\ A path theoretic approach	 	}
 
\author{C. Bowman} 
\address{School of Mathematics, Statistics and Actuarial Science University of Kent Canterbury
CT2 7NF, United Kingdom}
\email{C.D.Bowman@kent.ac.uk}

\author {A.~G.~Cox}
\address{Department of Mathematics, City, University of London,   London, United Kingdom}
\email{A.G.Cox@city.ac.uk}

 \maketitle

\!\!\!\!\!\!\!\!\!\!\! 
\begin{abstract}
 We introduce a path-theoretic framework for understanding the representation theory of (quantum) symmetric and general linear groups and their higher level generalisations over fields of arbitrary characteristic.  
Our first main result is a ``super-strong linkage principle'' which provides degree-wise upper bounds for graded decomposition numbers
    (this is new even in the  case of symmetric    groups).  
Next,    we  generalise the notion of   homomorphisms between   Weyl/Specht modules which are ``generically'' placed (within the associated alcove geometries) to cyclotomic Hecke and diagrammatic Cherednik algebras. 
    Finally, we  provide  
   evidence for a higher-level analogue of the classical Lusztig conjecture  over fields of sufficiently large characteristic.  
 \end{abstract}

 \section*{Introduction}

Cyclotomic  quiver  Hecke algebras (and their quasi-herediary covers, the diagrammatic Cherednik algebras) are of central interest in Khovanov homology, knot theory,  group theory, 
 and higher representation theory. 
Rouquier's  conjecture \cite{MR2422270} (recently solved in a flurry of publications
\cite{RSVV,losev,MR3732238})
 allows us to understand  the complex representation theory of 
these algebras in terms of Kazhdan--Lusztig theory.  
This paper  seeks to generalise  their work
 to the  modular representation theory of these algebras, where 
 almost nothing is known or even conjectured.

Our 
  approach  provides new insight  even in the classical case; 
 in particular it allows us to provide strong  new 
 degree-wise upper bounds for the graded decomposition numbers
of   symmetric groups.    This combinatorial bound is given in terms of folding-up paths in Euclidean space under the action of an affine Weyl group. 
This seems to be the first result of its kind in the literature,
  and so we state it now in this simplified form (for the full statement in higher levels, see \cref{strongerman}).   

\begin{thmA*}[The Super-Strong Linkage Principle for symmetric groups]
Let $\la,\mu $  be  partitions with at most $h$ columns and $\Bbbk$ be a field of characteristic $p>h$.  
The graded decomposition numbers of the symmetric group are bounded as follows,
\begin{equation}\label{introductoy}\tag{1}
[S(\lambda):D(\mu)\langle k \rangle]
\leq 
|\{\sts \mid \sts \in \Path^+(\lambda,\stt^\mu) ,  \deg(\sts) = k\}|
\end{equation}
 for $k \in \mathbb{Z}$. In particular if $[S(\lambda):D(\mu)]\neq 0$, then 
 $\lambda$ and  $ \mu$ are strongly linked with $\lambda \uparrow \mu$.  
\end{thmA*}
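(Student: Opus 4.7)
The plan is to exploit the graded cellular structure on the cyclotomic KLR algebra of type $A$ (Hu--Mathas, Brundan--Kleshchev), under which the Specht module $S(\lambda)$ carries a homogeneous basis indexed by standard $\lambda$-tableaux, each with an explicit KLR degree. As a consequence of the cellular structure, the graded decomposition number $[S(\lambda):D(\mu)\langle k\rangle]$ is bounded above by the graded dimension in degree $k$ of the weight subspace $e(\res(\stt^\mu))\cdot S(\lambda)$, which is spanned by those standard $\lambda$-tableaux whose residue sequence coincides with that of $\stt^\mu$. What remains is to refine this crude bound to the path-theoretic count $\Path^+(\lambda,\stt^\mu)$ while preserving the degree.

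The first step is to translate each standard $\lambda$-tableau into a lattice path in the Euclidean space on which the affine symmetric group $\widetilde{\mathfrak{S}}_h$ acts by alcove reflections: the $i$-th step of the path is the coordinate vector given by the row of the box labelled $i$. The hypothesis $p>h$ guarantees that the reflection representation is non-degenerate, so the dictionary between residues mod $p$ and alcove-wall crossings is faithful. Under this translation, sharing a residue sequence with $\stt^\mu$ becomes sharing an endpoint with the canonical path associated to $\stt^\mu$, and the cellular KLR degree of a tableau matches the path degree measured by signed crossings of affine hyperplanes.

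The main obstacle is the refinement from the full residue-matched set down to $\Path^+(\lambda,\stt^\mu)$: one must show that standard tableaux whose paths fail the positive-folding condition contribute zero to $[S(\lambda):D(\mu)\langle k\rangle]$. I would handle this by combining the graded cellular bound with an analysis of the cell filtration of $S(\lambda)$, arguing that non-positively-folded basis elements lie in the kernel of the projection to $D(\mu)$ because their paths factor through alcove regions labelled by partitions strictly below $\mu$ in the dominance order; the sign-data of foldings (not merely the endpoint) is what detects radical membership, and this is precisely where the affine-Weyl-group geometry is essential. Once this bound is in hand, the strong linkage consequence is immediate: a non-zero decomposition number forces $\Path^+(\lambda,\stt^\mu)$ to be non-empty, which produces a chain of alcove reflections folding the straight-line path to $\lambda$ onto the path to $\stt^\mu$; such a chain encodes precisely the Jantzen-style data $\lambda \uparrow \mu$.
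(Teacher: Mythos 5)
Your proposal diverges from the paper's argument in a significant way, and the divergence introduces a real gap.

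The paper proves this theorem (as the $\ell=1$ case of \cref{strongerman}) entirely on the Schur algebra side, where it is a one-paragraph argument: $[\Delta(\lambda):L(\mu)\langle k\rangle]=\dim\Hom_{A_h}(P(\mu),\Delta(\lambda)\langle k\rangle)$, and since $P(\mu)$ is a cyclic direct summand of $A_h(n,\theta,\kappa){\sf 1}_\mu$, any such homomorphism is determined by its value on ${\sf 1}_\mu$, which lands in ${\sf 1}_\mu\Delta(\lambda)$. The cellular basis of \cref{quotienthghghg} identifies ${\sf 1}_\mu\Delta(\lambda)$ with the span of $\{C_\SSTS : \SSTS\in\SStd^+(\lambda,\mu)\}$, and \cref{step3} converts this, degree for degree, into the path count. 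The heavy lifting is therefore concentrated in building the path-indexed cellular basis (\cref{789324578935247832592378375893978524978235497835241}, \cref{step3}), not in the bound itself. You instead propose to work with the KLR weight space $e(\underline{i}_{\stt^\mu})S(\lambda)$ at the Hecke-algebra level; this is a defensible starting point, but it shifts the difficulty without resolving it.

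The concrete gap is in your proposed ``refinement''. You claim that standard $\lambda$-tableaux ``whose paths fail the positive-folding condition'' must be shown to contribute zero, and you sketch an argument via radical membership and alcove regions ``below $\mu$''. For $\ell=1$ there is nothing to discard: every standard tableau has all truncated shapes genuine partitions, so the associated lattice path is automatically dominant, and one checks (essentially via \cref{useful-lemma}) that two dominant paths with the same residue sequence are related by a chain of affine reflections in the sense required for $\sim$; hence the set of residue-matched standard tableaux is exactly $\Path^+(\lambda,\stt^\mu)$, with no excess. Your ``refinement'' is thus vacuous, and the vague mechanism you invoke (paths factoring through regions below $\mu$, radical detection by folding signs) has no analogue in the actual argument and would not close the proof. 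Worse, you wave away the step that is genuinely nontrivial: that the KLR degree of a standard tableau, which governs the grading on $e(\underline{i}_{\stt^\mu})S(\lambda)$, agrees with the wall-crossing degree of the associated path. That equality is the substance of \cref{step3} and the degree-comparison proposition preceding it, and it must be established, not asserted. Finally, note that the bound $[S(\lambda):D(\mu)\langle k\rangle]\leq\dim_\Bbbk\bigl(e(\underline{i}_{\stt^\mu})S(\lambda)\bigr)_k$ needs a word of justification in the graded setting: it follows because $e(\underline{i}_{\stt^\mu})D(\mu)$ is nonzero in degree $0$, whereas the paper's route through $\Hom(P(\mu),\Delta(\lambda)\langle k\rangle)$ gives the inequality directly without that extra observation.
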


  Theorem A provides a two-fold strengthening of the famous strong linkage principle for  symmetric (and general linear) groups \cite{MR564523}.
Firstly, if $\Path^+(\lambda,\stt^\mu)\neq \emptyset$ this implies that $\lambda \uparrow \mu$ and so we obtain infinitely many new zeroes of the decomposition matrix not covered by \cite[Theorem 1]{MR564523}.   
 Secondly, \cref{introductoy} clearly 
provides a   wealth of new and  more complicated bounds on    
these multiplicities -- in addition  it  incorporates the grading into the picture for the first time.   We expect  this result to be of independent interest and so we have included  illustrative   examples   in \cref{sec:6}.

  In the case of symmetric groups,  
 it is common practice to   restrict  ones attention to the representations with at most $h$  columns.  In so doing, 
we obtain  a category of 
    representations 
which remains   
rich in structure but, thanks to revolutionary work of Riche--Williamson  \cite{w16}, is now known to     stabilise and become 
understandable over fields of characteristic $p\gg h$.  
The principal aim of this paper is to identify  a higher level analogue of this   category with a  similarly  rich  structure and to generalise 
the vast array of powerful ideas and results  developed by   Andersen, Carter, Jantzen, Kleshchev, Koppinen,  Lusztig and others
 over the past forty  years (in particular \cite{MR564523,MR1670762,CP,MR860709,klesh1box,MR591724,MR0486093}) 
 and hence  cast questions concerning the representation theory of these higher level  algebras   
in terms of their associated alcove geometries.

The quotient algebra  of the cyclotomic quiver Hecke algebra $H_n(\kappa)$ (and hence subcategory of 
$H_n(\kappa)\mhyphen{\rm mod}$) of interest to us is  
$$
\Q _{\ell,h, n}(\kappa) = H_n(\kappa) / 
\langle e(\underline{i}) 
\mid 
\underline{i}\in  I^{\ell} \text{ and }
\underline{i}_{k+1} = \underline{i}_{k}+1 \text{ for }1\leq k \leq h \rangle 
$$
for $e>h\ell$ and an $h$-admissible $\kappa\in   I^\ell$ as in \cref{admissible}.  
We shall see that this algebra is Morita equivalent to   a certain quotient, $A_h(n,\theta,\kappa)$, of the diagrammatic Cherednik algebra   
associated to the weighting $\theta=(1,2,\dots ,\ell) \in \ZZ^\ell$ (over an arbitrary field $\Bbbk$).  
In particular, the   simple representations of both of these algebras are
indexed by  the set of  multipartitions whose components each have at most $h$ columns, denoted $\mptn \ell n(h)$, and the graded decomposition matrices, $\mathbf{D}=(d_{\lambda\mu}(t))_{\lambda,\mu \in \mptn \ell n}$, coincide. 
   We  cast   representation theoretic questions concerning  these algebras in the setting of an alcove geometry  of type 
\begin{equation*} 
{A}_{ h-1} \times {A}_{ h-1} \times \dots \times {A}_{ h-1} \subseteq 
\widehat{A}_{\ell h-1}. 
 \end{equation*}
We first show that 
  the    algebra    $A_h(n,\theta,\kappa)$ has a graded cellular   basis indexed by 
  orbits of  paths in this geometry.  
 For each  $ \lambda \in \mptn \ell n$,  we hence obtain a basis 
 of the Weyl module, $ \Delta(\lambda) $, 
  which encodes a great deal  of representation-theoretic information.  
 This allows us to provide incredibly simple proofs of  a number of new structural results over arbitrary fields.  
 The first of which is our higher level analogue of the super-strong linkage principle.

 We then  consider  the idea of {\em generic behaviour}.  This generalises the idea (originally due to Jantzen and later Lusztig \cite{MR591724,MR0486093}) that
when we are ``sufficiently far away from the walls of the dominant region''   representation theoretic questions
 simplify greatly.    
 We encounter   higher level analogues of 
the familiar generic sets of points which are ``close together'' in the geometry  (for example, points ``around a Steinberg vertex").  
 In higher levels, there is  
also  a    striking
  new 
  kind of generic behaviour  involving points ``as far apart as possible'' in the geometry. 
   For such generic sets, one of our main results is the following (see \cref{howtobuildshit3} for the full statement).   

\begin{thmB*}[Generic Homomorphisms] 
  For  $(\la,\mu)$   a generic pair, 
 we have that
\begin{align*}
\dim_{t}(\Hom_{A_h(n,\theta,\kappa)}(  \Delta(\mu) ,\Delta(\lambda))) 
= t ^{ \ell(\mu)-\ell(\lambda)} + \cdots 	
\end{align*}
where the other terms are of strictly smaller degree.  
 We  provide an explicit construction of these homomorphisms and 
 prove results concerning their composition.  

 \end{thmB*}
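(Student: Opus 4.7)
The plan is to leverage the path-theoretic graded cellular basis of $A_h(n,\theta,\kappa)$ indexed by paths in the alcove geometry, so that $\Delta(\lambda)$ acquires a basis in bijection with $\Path^+(\lambda,\stt^\lambda)$-type data and $\Hom_{A_h(n,\theta,\kappa)}(\Delta(\mu),\Delta(\lambda))$ is controlled by paths from $\stt^\mu$ to tableaux of shape $\lambda$. The first step is to rewrite the hom-space using the cellular structure: a homomorphism is determined by where it sends the highest weight vector of $\Delta(\mu)$, and this image must lie in the intersection of kernels of raising operators (annihilators coming from above $\mu$ in the dominance order). In the path model, such images correspond to $\Bbbk$-linear combinations of basis paths in $\Delta(\lambda)$ indexed by $\Path^+(\lambda,\stt^\mu)$, subject to compatibility conditions forced by the cellular relations. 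Our super-strong linkage bound then immediately gives an upper estimate on the graded dimension in terms of path counts weighted by degree.

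Next, I would construct an explicit homomorphism realising the leading term $t^{\ell(\mu)-\ell(\lambda)}$. Under the genericity hypothesis, $\stt^\mu$ and $\lambda$ are related by a distinguished sequence of alcove reflections whose total folding count equals $\ell(\mu)-\ell(\lambda)$; concretely, there is a canonical ``straightest'' folded path $\sts_0 \in \Path^+(\lambda,\stt^\mu)$ obtained by reflecting $\stt^\mu$ through the minimal chain of hyperplanes separating $\mu$ from $\lambda$ in the alcove geometry. Because $(\lambda,\mu)$ is generic, such a path is forced to fold at precisely the hyperplanes counted by $\ell(\mu)-\ell(\lambda)$, with no cheaper alternative available, and the assignment that sends the highest weight generator of $\Delta(\mu)$ to the cellular basis element indexed by $\sts_0$ extends to a homomorphism: the required annihilation relations follow from the fact that in the generic regime the only obstructions are indexed by paths through hyperplanes that the genericity assumption places ``out of reach''. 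This step is essentially where the genericity hypothesis does its work, and it is the step I expect to be the main obstacle, since one must simultaneously show existence of $\sts_0$, that it satisfies the cellular relations imposed by ideals of more dominant shapes, and that any other contributing path has strictly larger degree.

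Having produced a generic homomorphism of the claimed degree, the upper bound part of the statement follows from the super-strong linkage principle applied to the pair $(\lambda,\mu)$: any other element of $\Path^+(\lambda,\stt^\mu)$ must involve at least one additional fold compared to $\sts_0$, and so contributes only strictly smaller powers of $t$ to $\dim_t \Hom$. To pin down the leading coefficient as $1$ (rather than a larger integer) I would argue that, generically, there is a unique minimal-degree path $\sts_0$ up to the equivalences imposed by the cellular structure; alternative minimal paths would violate the genericity of $(\lambda,\mu)$ by producing a coincidence of reflecting hyperplanes.

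Finally, to address the composition statements, I would exploit the fact that concatenation of paths in the alcove geometry is compatible with composition of the corresponding cellular morphisms. For a generic triple $(\lambda,\mu,\nu)$ with $\lambda\uparrow\mu\uparrow\nu$, the canonical generic paths $\sts_0^{\nu\mu}$ and $\sts_0^{\mu\lambda}$ concatenate to a path whose folds recover those of $\sts_0^{\nu\lambda}$, giving nontriviality of the composite; non-vanishing is controlled by the length additivity $\ell(\nu)-\ell(\lambda)=(\ell(\nu)-\ell(\mu))+(\ell(\mu)-\ell(\lambda))$, which matches the multiplicative behaviour of the leading degrees and ensures the composition is not accidentally zero in $\Hom(\Delta(\nu),\Delta(\lambda))$.
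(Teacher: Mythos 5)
Your upper-bound half matches the paper's: genericity forces $\Path(\lambda,\stt^\mu)=\Path^+(\lambda,\stt^\mu)$, and combining the super-strong linkage bound on $d_{\lambda\mu}(t)$ (hence on $\dim_t\Hom(\Delta(\mu),\Delta(\lambda))\leq\dim_t\Hom(P(\mu),\Delta(\lambda))$) with \cref{maximal terms prop} gives the unique leading term $t^{\ell(\mu)-\ell(\lambda)}$. The existence half, however, has a genuine gap in two places.

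First, you propose to send the highest-weight generator of $\Delta(\mu)$ to the basis element $C_{\sts_0}$ indexed by the straightest folded path and then to verify the cellular annihilation relations directly. You correctly flag this as ``the main obstacle,'' but the sketch you give (``the only obstructions are paths that genericity places out of reach'') is not a verification: you would need to show that the two-sided ideal spanned by $C_{\SSTQ\SSTR}$ with $\Shape(\SSTQ)\rhd\mu$ actually kills $C_{\sts_0}$ inside $\Delta(\lambda)$, and nothing you have written supplies that check. The paper avoids verifying annihilation conditions directly for a general generic pair altogether: it only produces homomorphisms for \emph{adjacent} pairs $\ell(\nu)=\ell(\mu)-1$, where existence is immediate from $d_{\nu\mu}(t)=t$ (\cref{fanssdfghjlkdslfhjgdfhljkgdfshljkgsdffgdhsljkgfhldgf}) and highest-weight theory (the hom from $P(\mu)$ factors through $\Delta(\mu)$ because no $\alpha$ satisfies $\nu\uparrow\alpha\uparrow\mu$), and then builds the general map as a composite.

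Second, you assert that ``concatenation of paths in the alcove geometry is compatible with composition of the corresponding cellular morphisms'' and that ``length additivity \ldots ensures the composition is not accidentally zero.'' Neither statement is automatic, and the second is a non-sequitur: degree additivity is necessary but not sufficient for a composite to be nonzero. The nonvanishing, and more precisely the identity $\varphi_\lambda^\nu\circ\varphi_\nu^\mu=\varphi_\lambda^\mu$, is precisely what \cref{howtobuildshit2} establishes (using \cref{anotherbloominlemmer}); the proof is a delicate inductive argument on diagram strands showing that the product $C_{\stt^{1}_{n}}C_{\stt^{2}_{n}}\cdots C_{\stt^{t}_{n}}$ contains \emph{no} double-crossings of strands of equal or adjacent residue and hence equals $C_{\stt^\mu_\lambda}$ on the nose, with no relations of the diagrammatic Cherednik algebra producing extra or cancelling terms. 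That diagrammatic computation is the technical heart of Theorem B and is entirely absent from your proposal.
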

 
 Theorem B 
 generalises results due to Carter--Lusztig \cite[Section4]{MR0354887}  and Koppinen \cite[Theorem 6.1]{MR860709} for $\ell=1$ 
 and  is utilised in   \cite{norton} in order to construct  the first 
 family of BGG resolutions of simple modules of Hecke and Cherednik algebras (indeed the first examples of such resolutions   anywhere in
  modular representation theory) and  to generalise and lift all  the results  of  \cite{MR2266877,MR1383482} to a structural level.   
    For higher levels, we find that there are arbitrarily large generic sets (as $n\to \infty$) 
  and we hence obtain arbitrarily  long chains of homomorphisms whose composition is non-zero (\cref{nonparabolic}).  
Finally, in \cref{dettensor} we  obtain  a higher-level analogue of the   stability for representations of general linear groups obtained by tensoring with the determinant, as follows.  

\begin{thmC*}
Given a partition $\lambda=(\lambda_1 ,\lambda _2,\dots)$ and $h\in \mathbb{N}$, we set 
  ${\rm det}_h(\lambda) =  	(h, \lambda_1 ,\lambda _2,\dots) .$  We have 
 an injective map  
${\rm det}_h:\mptn \ell n (h) \hookrightarrow \mptn \ell {n+h\ell} (h)  $ given by 
 $${\rm det}_h(\lambda^{(1)},\lambda^{(2)},\dots ,\lambda^{(\ell)}) =  
 	({\rm det}_h(\lambda^{(1)}),{\rm det}_h(\lambda^{(2)}),\dots,{\rm det}_h(\lambda^{(\ell)})) .$$
and   a corresponding injective homomorphism   of graded $R$-algebras
 $
 A_{h}(n,\theta,\kappa) \hookrightarrow
 A_{h}(n+h\ell,\theta,\kappa).	 $ 
 In particular, 
 $d_{\lambda,\mu}(t)=d_{{\rm det}_h(\lambda),{\rm det}_h(\mu)}(t)$ 
for all $\lambda,\mu\in \mptn \ell {n } (h)$.  
\end{thmC*}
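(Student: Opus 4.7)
The plan is to lift the combinatorial operation ``prepend a column of height $h$ to each component of the multipartition'' to the path-theoretic cellular basis of $A_h(n,\theta,\kappa)$ developed in the earlier sections, and to read off all three assertions (injection on multipartitions, algebra embedding, equality of decomposition polynomials) from this lift. The combinatorial claim is immediate: if each component $\lambda^{(i)}$ has at most $h$ columns, then ${\rm det}_h(\lambda^{(i)}) = (h,\lambda^{(i)}_1,\dots)$ has a new first column of length exactly $h$ while the other columns are unchanged, so the resulting multipartition still lies in $\mptn \ell {n+h\ell}(h)$; injectivity is trivial since one recovers $\lambda$ by stripping the first column of each component.

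Next I would interpret ${\rm det}_h$ geometrically. In the alcove geometry $A_{h-1}\times\cdots\times A_{h-1}\subseteq\widehat{A}_{\ell h-1}$, passing from $\lambda$ to ${\rm det}_h(\lambda)$ corresponds to translation by the vector $\varpi$ given by $\ell$ concatenated copies of $(1,\dots,1)\in\ZZ^h$. Each block-coordinate is a sum of fundamental weights for the corresponding $A_{h-1}$ factor, so $\varpi$ lies in the extended affine Weyl lattice, commutes with the finite Weyl group action, and preserves the alcove decomposition. Using the $h$-admissibility of $\kappa$ one exhibits a distinguished initial path $\stt^\varpi$ of length $h\ell$ that realises this translation and has degree zero; it adds the $h\ell$ new boxes of ${\rm det}_h(\lambda)\setminus\lambda$ in a fixed residue-respecting order, and the choice of weighting $\theta=(1,2,\dots,\ell)$ ensures that this initial segment is uniquely determined.

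With $\stt^\varpi$ in hand I would define the algebra embedding on the cellular basis $\{c^\lambda_{\sts\stt}\}$ by concatenation of paths,
$$c^\lambda_{\sts\stt}\;\longmapsto\; c^{{\rm det}_h(\lambda)}_{\stt^\varpi\cdot\sts,\;\stt^\varpi\cdot\stt}.$$
Residues along the concatenation are a fixed translate of those along $\sts,\stt$, so idempotent labels correspond; degrees are preserved because $\stt^\varpi$ contributes zero. The key step is multiplicativity in the diagrammatic Cherednik algebra: one must check that composing two basis diagrams in the image of the map coincides with the image of the composition upstairs. This reduces to the observation that under the weighting $\theta$ the $h\ell$ prefix strands coming from $\stt^\varpi$ are spatially separated from the strands parametrising $\sts,\stt$, so no new crossings or dots can appear on the prefix during composition and the prefix acts as an identity idempotent insertion. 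The decomposition-number identity is then formal: the embedding identifies $\Delta(\lambda)$ with $e\cdot\Delta({\rm det}_h(\lambda))$ for the idempotent $e$ projecting onto paths starting with $\stt^\varpi$, and similarly for the simple heads, so graded composition multiplicities agree.

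I expect the main obstacle to be this multiplicativity check. It is not a purely formal consequence of the combinatorial bijection on paths; one must genuinely control the diagrammatic calculus at the interface between the prefix strands and the ``moving'' strands of $\sts,\stt$ when two basis diagrams are composed. The spatial separation afforded by $\theta$ should make the argument tractable, but it seems to use in an essential way that the prepended column has height \emph{exactly} $h$ (so that its residues fit precisely into the admissible range dictated by $\kappa$), and writing this out carefully will require explicit diagrammatic manipulation rather than a soft argument.
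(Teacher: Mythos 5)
Your proposal follows essentially the same route as the paper's proof of Theorem~\ref{dettensor}: the combinatorial injection, the observation that $\det_h$ corresponds to prepending a degree-zero initial path $(+\varepsilon_1,\dots,+\varepsilon_{h\ell})$ that avoids all $W^e$-hyperplanes, the induced bijection of path bases, and finally the multiplicativity check controlled by the weighting $\theta$ and the separating lemma (\cref{seperating}). The one small imprecision is your claim that the prefix strands are "spatially separated" so that "no new crossings" occur with the shifted strands: in fact, crossings between a shifted strand $\det(X)$ and a new vertical strand \emph{do} occur, and the point (as the paper argues) is that \cref{seperating} forces any such crossing to involve non-adjacent, non-equal residues, hence degree zero and removable by the non-interacting relations alone; this is what makes the prefix act as an inert idempotent under composition.
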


Inspired by  these results,    we provide the first conjectural framework   
for calculating the (graded) decomposition numbers of cyclotomic Hecke algebras over fields of sufficiently large characteristic in \cref{onject}.  
 We verify these conjectures in the cases  of
  $(i)$    maximal finite parabolic   orbits  with $\Bbbk$  arbitrary -- generalising all the results of \cite{klesh1box,cmt08,tanteo13} 
  $(ii)$    $\ell=2$ or $3$ with  $e>n$ and $\Bbbk$  arbitrary  -- generalising   \cite[Theorems B3 \& B5]{MR3356809}  and  \cite[Section 9]{MR2781018}
  $(iii)$ $\ell=2$ and $h=1$ and $\Bbbk$  arbitrary \cite[Section 8]{MR1995129} 
  $(iv)$    $\Bbbk=\mathbb{C}$.  
  For $\ell=1$   the conjecture was  
 formulated by Andersen \cite[Section 5]{MR1670762} and  proved by Riche and Williamson  in \cite[Theorem 1.9]{w16}.

\begin{conj*}
Let   
$e>h\ell$,   $\kappa\in I^\ell$ be an $h$-admissible  multicharge, and   $\Bbbk$ be  field of  characteristic $p \gg  h\ell$.  
 The decomposition numbers  of   $A_h(n,\theta,\kappa)$    are    given by 
$$d_{\lambda\mu}(t) = n_{\lambda\mu} (t)  $$ 
for   $\lambda,\mu\in \mptn \ell n(h)$  in 
 the first $ep$-alcove   and $n_{\lambda\mu} (t)$ 
  the associated  affine  parabolic Kazhdan--Lusztig polynomial    
of type ${A}_{ h-1} \times {A}_{ h-1} \times \dots \times {A}_{ h-1} \subseteq 
\widehat{A}_{\ell h-1}.$ 
 
\end{conj*}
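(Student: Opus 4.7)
The plan is to adapt the Riche--Williamson argument for level $\ell=1$ (case (iv) of the verified list, after the Andersen formulation) to the higher-level geometry $A_{h-1}\times\cdots\times A_{h-1}\subseteq\widehat{A}_{\ell h-1}$ furnished by the path-theoretic setup. Broadly, I would split the proof into three phases: (a) use the stabilisation and detection tools already developed in the paper to reduce an arbitrary pair in the first $ep$-alcove to a tractable regime; (b) produce enough graded homomorphisms between Weyl modules to categorify the relevant affine parabolic Hecke action; and (c) identify the resulting graded multiplicities with $n_{\lambda\mu}(t)$ by matching characters against the known characteristic zero answer.

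For phase (a), Theorem C provides the essential rigidity: the injection ${\rm det}_h$ preserves $d_{\lambda\mu}(t)$, so any specific pair $(\lambda,\mu)\in\mptn \ell n(h)\times\mptn \ell n(h)$ can be translated arbitrarily far from the walls of the dominant region without changing the graded decomposition numbers. Combined with the super-strong linkage principle of Theorem A, this localises all nonzero contributions to $d_{\lambda\mu}(t)$ to paths $\sts\in\Path^+(\lambda,\stt^\mu)$ lying inside a controllable slice of the geometry, which is precisely the combinatorial setting where the affine parabolic Kazhdan--Lusztig basis is defined. For phase (b), Theorem B already produces, for each generic pair, a Weyl-module homomorphism of the predicted graded degree $t^{\ell(\mu)-\ell(\lambda)}$, and the composition statements in its full version should assemble these into BGG-type complexes whose Euler characteristics compute $n_{\lambda\mu}(t)$; the extended generic sets noted after Theorem B (giving arbitrarily long non-zero chains) are exactly what is needed to reach arbitrary alcove distances.

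For phase (c), one lifts the characteristic zero identity through the $p$-modular reduction using a Soergel-theoretic categorification of the cellular structure on $A_h(n,\theta,\kappa)$, with the translation functors realised as induction and restriction along the tower $A_h(n,\theta,\kappa)\hookrightarrow A_h(n+1,\theta,\kappa)$ compatible with the $\det_h$-stability of phase (a). The verified cases play distinct roles as base steps: case (i) pins down the boundary behaviour on maximal finite parabolic orbits, cases (ii)--(iii) handle small $\ell$, and case (iv) supplies the characteristic zero character identity that the modular argument must specialise to after extending scalars.

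The principal obstacle is phase (c): constructing a higher-level analogue of the Elias--Williamson diagrammatic Soergel calculus adapted to the weighting $\theta=(1,2,\dots,\ell)$ and to the non-standard parabolic subsystem of $\widehat{A}_{\ell h-1}$ is likely to require genuinely new combinatorial input. A secondary difficulty is the treatment of those pairs $(\lambda,\mu)$ in the first $ep$-alcove which fail to be generic in the sense of Theorem B; one expects to need a stratification of $\mptn \ell n(h)$ refining the alcove decomposition, together with an inductive argument that interpolates between the maximal-parabolic regime of case (i) and the fully generic regime of Theorem B, so that the BGG-type resolutions constructed generically can be degenerated in a controlled way as $(\lambda,\mu)$ approaches a facet.
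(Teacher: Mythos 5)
This statement is a \emph{conjecture} in the paper (Conjecture A in the introduction, restated as \cref{conj1} in \cref{sec:8}), and the paper neither proves it nor claims to. What the paper does is formulate the conjecture, construct the geometric and path-theoretic framework that makes it precise (in particular \cref{KLpoly} and \cref{main:theorem:general} establish it over $\mathbb{C}$), and verify it in the four special cases listed before the conjecture: maximal finite parabolic orbits via \cref{eisarbandellisarb}; $\ell=2,3$ with $e>n$ via positivity or non-negativity of the grading; $\ell=2,h=1$ by appeal to \cite{MR1995129}; and $\Bbbk=\mathbb{C}$ via \cref{main:theorem:general}; plus the $\ell=1$ case which is the Riche--Williamson theorem. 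Your proposal is therefore not comparable to a proof in the paper, because no such proof exists there; it should be read as a research programme.

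As a programme, the framing of phases (a) and (b) is reasonable and aligned with the paper's existing machinery: \cref{dettensor} does give the $\det_h$-stabilisation you invoke, \cref{strongerman} does give the path-theoretic upper bound, and \cref{howtobuildshit3} / \cref{maximalparabolic} do give graded Weyl-module homomorphisms of the predicted top degree for generic pairs, with composition rules. But the gap you flag in phase (c) is not a secondary technical issue — it is the entire content of the problem. For $\ell=1$ this step is precisely the Riche--Williamson theorem, which required the $p$-canonical basis, the Elias--Williamson diagrammatic Hecke category, and a tilting-character categorification; none of that exists yet for the parabolic $A_{h-1}\times\cdots\times A_{h-1}\subseteq\widehat{A}_{\ell h-1}$ geometry attached to the non-well-separated weighting $\theta=(1,\dots,\ell)$. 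Your phase (b) also overstates what is available: \cref{howtobuildshit3} gives individual homomorphisms and some non-vanishing compositions, but BGG-type resolutions with the needed exactness in positive characteristic are not established in this paper (the paper only cites \cite{norton} as a follow-up using Theorem B to construct the first such resolutions), and Euler characteristics of a BGG resolution of $L(\mu)$ would in any case express $[L(\mu)]$ in terms of $[\Delta(\nu)]$'s, so inverting to recover $d_{\lambda\mu}(t)$ requires exactly the multiplicity information one is trying to prove. In short: the statement cannot currently be proved by the route you sketch (or by any route in this paper), and the honest assessment is that phases (a)--(b) are preparatory reductions the paper already supplies, while phase (c) is an open problem of roughly the same depth as the original conjecture.
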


Our treatment covers the quotient algebras $\Q_{\ell,h,n}(\kappa)$ for $e\in \mathbb{Z}_{>0}$ uniformly alongside the algebras 
 $H_n(\kappa)$ for $e=\infty$ (of more generally $e>n$).  
The alcove geometries controlling the latter family of algebras are seen as 
simple sub-cases of those controlling the former family of algebras.  
 The following conjecture (a refinement of \cite[Conjecture 7.3]{MR2777040} which was debunked in \cite[Section 4.2]{MR3163410}) is the corresponding simplification of Conjecture A.

\begin{conj1*}
Let   $e=\infty$ or more generally $e>n$ and suppose  $\kappa \in I^\ell$ has no repeated entries.  
Let     $\Bbbk$ be  field of  characteristic $p \gg   \ell$.  
The decomposition numbers  of    $A(n,\theta,\kappa)$   are    given by 
$$d_{\lambda\mu}(t) = n_{\lambda\mu} (t)  $$ 
for  $\lambda,\mu\in \mptn \ell n $ and   $ n_{\lambda\mu} (t)$ the associated  parabolic Kazhdan--Lusztig polynomials   
of type  
   ${A}_{ n-1} \times {A}_{ n-1} \times \dots \times {A}_{ n-1} \subseteq 
{A}_{\ell n-1}. $ 
\end{conj1*}

    Our approach provides  the first 
        general framework for studying the modular representation theory of 
these algebras (for  admissible $\kappa \in I^\ell$).  
Indeed, while cyclotomic Hecke  algebras have been extensively studied over the past twenty years, 
surprisingly little is known about their  representation theory over fields of positive characteristic.  
  The blocks of these algebras were determined a decade ago by
   Lyle and Mathas \cite{MR2351381}.   
More recently, homomorphisms between certain pairs of 
 Specht  modules  were  constructed     in
     \cite{MR3213527} and  
  reduction theorems   between certain pairs of 
  Specht modules were   given in \cite{MR3533560,bs16}.  
  Apart from blocks of small weight \cite{MR2253658,MR3448789}, nothing else is known or even conjectured concerning the decomposition numbers and homomorphism spaces of these algebras.  
 Other powerful results concerning their representation theory do exist, including  
  explicit branching rules \cite{MR1319521,MR1643413,MR2271584} and a generalised Jantzen sum formula \cite{MR1665333} but they provide little information about general decomposition numbers.

The paper is structured as follows.  The first three sections introduce the main protagonists of this paper.   In \cref{sec:2} we 
 explicitly review    the construction of 
$\Q_{\ell,h,n}(\kappa)$ in terms of the classical KLR generators and
its coset-like cellular basis from \cite[Section 8]{manycell}.  
In \cref{sec:3}, we recall Webster's definition of the diagrammatic Cherednik algebras.    
In \cref{sec:4} we prove  that   $A_h(n,\theta,\kappa)$ and  $\Q_{\ell,h,n}(\kappa)$ are (graded) Morita equivalent.  We also discuss in detail why our choice of weighting is optimal for the purposes of understanding as much of the modular representation theory of $H_{n}(\kappa)$ as possible.  
  \cref{sec:2,sec:4} have been written so as to make the paper intelligible 
 to those studying the classical representation theory of cyclotomic Hecke algebras
  (without any prior knowledge of how diagrammatic Cherednik algebras fit into the picture).   \cref{sec:5}  provides the crux of this paper: we prove that the algebra $A_h(n,\theta,\kappa)$ possesses a cellular basis indexed by pairs of paths in the alcove geometry of type
  ${A}_{ h-1} \times {A}_{ h-1} \times \dots \times {A}_{ h-1} \subseteq 
\widehat{A}_{\ell h-1}.$ 
  In \cref{determinant} we consider the higher-level analogue of tensoring with the determinant.
    In \cref{sec:6}, we state and prove   the super-strong linkage principle for our algebras and illustrate how it significantly improves on the classical strong linkage principle.
    In \cref{sec:7}, we construct homomorphisms between the Weyl modules for $A_h(n,\theta,\kappa)$ and consider when  the composition  of these homomorphisms is non-zero.  
  In \cref{sec:8}   we formulate and provide evidence for our conjectures for calculating the decomposition numbers of  $A_h(n,\theta,\kappa)$ over fields of sufficiently large characteristic.  
  We remark that over $\mathbb{C}$, the approach presented here is in the same spirit of our earlier work \cite{bcs15} (which covered the case $\Q_{1,\ell,n}(\kappa)$).  However our main focus in this paper is the modular case.  
  Finally, in \cref{sec:9} we  demonstrate the kind of geometries we encounter (in particular, 
  these are more exotic than those seen in classical Lie theory) and  illustrate    how 
  one can use the tools of this paper to understand 
   a detailed example  for the Hecke algebras of type $B$ over an arbitrary field.

\section{Graded  cellular algebras }\label{sec:1}
We now recall the definition and first properties of graded cellular algebras following \cite[Section 2]{hm10}.
  Let $R$ denote an arbitrary commutative  integral domain and $\Bbbk$ denote an arbitrary field.

\begin{defn}[{\cite[Definition 2.1]{hm10}}]\label{defn1}  
Suppose that $A$ is a $\ZZ$-graded $R$-algebra which is of finite rank over $R$.
We say that $A$ is a 
{\sf    graded cellular algebra} if the following conditions hold.
 The algebra is equipped with a {\sf    cell datum} $(\Lambda,\TSStd,C,\degr)$, where $(\Lambda,\trianglerighteq )$ is the {\sf    weight poset}.
For each $\lambda \in\Lambda$  we have a finite set, denoted $\TSStd(\lambda )$.  
There exist maps
\[
C:{\textstyle\coprod_{\lambda\in\Lambda}\TSStd(\lambda)\times \TSStd(\lambda)}\to A 
      \quad\text{and}\quad
     \degr: {\textstyle\coprod_{\lambda\in\Lambda}\TSStd(\lambda)} \to \ZZ
\]
such that $C$ is injective. We denote $C(\SSTS,\SSTT) = c^\lambda_{\SSTS\SSTT}$  for $\SSTS,\SSTT\in\TSStd(\lambda)$, and require
  \begin{enumerate}[leftmargin=*]
    \item Each   element $c^\lambda_{\SSTS\SSTT}$ is homogeneous
	of degree 
$
        (c^\lambda_{\SSTS\SSTT})=\degr(\SSTS)+\degr(\SSTT),$ for
        $\lambda\in\Lambda$, 
      $\SSTS,\SSTT\in \TSStd(\lambda)$.
    \item The set $\{c^\lambda_{\SSTS\SSTT}\mid\SSTS,\SSTT\in \TSStd(\lambda), \,
      \lambda\in\Lambda \}$ is a  
      $R$-basis of $A$.
    \item  If $\SSTS,\SSTT\in \TSStd(\lambda)$, for some
      $\lambda\in\Lambda$, and $a\in A$ then 
    there exist scalars $r_{\SSTS\SSTU}(a)$, which do not depend on
    $\SSTT$, such that 
      \[ac^\lambda_{\SSTS\SSTT}  =\sum_{\SSTU\in
      \TSStd(\lambda)}r_{\SSTS\SSTU}(a)c^\lambda_{\SSTU\SSTT}\pmod 
      {A^{\vartriangleright  \lambda}},\]
      where $A^{\vartriangleright  \lambda}$ is the $R$-submodule of $A$ spanned by
      $\{c^\mu_{\SSTQ\SSTR}\mid\mu \vartriangleright  \lambda\text{ and }\SSTQ,\SSTR\in \TSStd(\mu )\}.$
    \item  The $R$-linear map $*:A\to A$ determined by
      $(c^\lambda_{\SSTS\SSTT})^*=c^\lambda_{\SSTT\SSTS}$, for all $\lambda\in\Lambda$ and
      all $\SSTS,\SSTT\in\TSStd(\lambda)$, is an anti-isomorphism of $A$.
   \end{enumerate}
\end{defn}

 This graded cellular structure allows us to immediately define a natural family of so-called   {\sf    graded cell modules} as follows.  Given any $\lambda\in\Lambda$, the   {\sf    graded cell module} $\Delta^A(\lambda)$ is the graded left $A$-module
  with basis
    $\{c^\lambda_{\SSTS } \mid \SSTS\in \TSStd(\lambda )\}$.
    The action of $A$ on $\Delta^A(\lambda)$ is given by
   \begin{equation}
   \label{cellmodule}
   a c^\lambda_{ \SSTS  }  =\sum_{ \SSTU \in \TSStd(\lambda)}r_{\SSTS\SSTU}(a) c^\lambda_{\SSTU},
   \end{equation}
    where the scalars $r_{\SSTS\SSTU}(a)$ are the scalars appearing in condition (3) of Definition \ref{defn1}.
  Let $t$ be an indeterminate over $\ZZ_{\geq 0}$. If $M=\oplus_{z\in\ZZ}M_z$ is
a free graded $R$-module,  
 then its {\sf    graded dimension} is the Laurent  polynomial
\[\dim_t{(M)}=\sum_{k\in\ZZ}(\dim_{\Bbbk}M_k)t^k.\]

If $M$ is a graded $A$-module and $k\in\ZZ$, define $M\langle k \rangle$ to be the same module with $(M\langle k \rangle)_i = M_{i-k}$ for all $i\in\ZZ$. We call this a {\sf    degree shift} by $k$.
If $M$ is a graded
$A$-module and $L^A$ is a graded simple module let $[M:L^A\langle k\rangle]$ be the
multiplicity of 
$L^A\langle k\rangle$ as a graded composition factor
of $M$, for $k\in\ZZ$.

We now recall the method by which one can, at least in principle,  construct all simple  modules of a 
graded cellular algebra.  This construction uses only basic linear algebra. 
 Suppose that $\lambda \in \Lambda$. There is a bilinear form $\langle\ ,\ \rangle_\lambda$ on $\Delta^A(\lambda) $ which
is determined by
\[c^\lambda_{\SSTU\SSTS}c^\lambda_{\SSTT\SSTV}\equiv
  \langle c^\lambda_\SSTS,c^\lambda_\SSTT\rangle_\lambda c^\lambda_{\SSTU\SSTV}\pmod{A^{\vartriangleright \lambda}},\]
for any $\SSTS,\SSTT, \SSTU,\SSTV\in \TSStd(\lambda  )$.  
For every $\lambda \in \Lambda$,   we let   $\langle\ ,\ \rangle_\lambda$ denote the bilinear form on $\Delta^A(\la)$ and $\rad \langle\ ,\ \rangle_\lambda$ denote the  radical of this bilinear form.
Given any $\la \in \Lambda$ such that $\rad \langle\ ,\ \rangle_\lambda\neq \Delta^A(\la)$, we  
set   $L^A(\lambda)=\Delta^A (\lambda) / \rad \langle\ ,\ \rangle_\lambda$.  
 This module is graded (by \cite[Lemma 2.7]{hm10}) and simple, and in fact every  simple  module is of this form, up to grading shift.  

\begin{prop}[\cite{hm10}]\label{humathasprop}
If $\mu\in\Lambda$ then $\dim_t{(L^A(\mu))} \in \ZZ_{\geq 0}[t+t^{-1}]$.   \end{prop}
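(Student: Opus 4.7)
The plan is to exploit the bilinear form $\langle \ ,\ \rangle_\mu$ on $\Delta^A(\mu)$, together with the fact that it descends to a non-degenerate bilinear form on $L^A(\mu)$ by construction, to extract a $t \leftrightarrow t^{-1}$ symmetry of the graded dimension.

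First I would verify that the form $\langle \ ,\ \rangle_\mu$ is \emph{homogeneous of degree zero}, in the sense that $\langle c^\mu_{\SSTS}, c^\mu_{\SSTT}\rangle_\mu = 0$ whenever $\degr(\SSTS) + \degr(\SSTT) \neq 0$. This comes from a degree count in the defining identity
\[
c^\mu_{\SSTU\SSTS} \, c^\mu_{\SSTT\SSTV} \equiv \langle c^\mu_{\SSTS}, c^\mu_{\SSTT}\rangle_\mu \, c^\mu_{\SSTU\SSTV} \pmod{A^{\vartriangleright \mu}} :
\]
by condition (1) of \cref{defn1} the left-hand side is homogeneous of degree $\degr(\SSTU) + \degr(\SSTS) + \degr(\SSTT) + \degr(\SSTV)$ modulo $A^{\vartriangleright \mu}$, while $c^\mu_{\SSTU\SSTV}$ is homogeneous of degree $\degr(\SSTU) + \degr(\SSTV)$, so the scalar $\langle c^\mu_{\SSTS}, c^\mu_{\SSTT}\rangle_\mu \in \Bbbk$ must vanish unless these degrees agree.

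Combining this with the cited fact \cite[Lemma 2.7]{hm10} that $\rad \langle \ ,\ \rangle_\mu$ is a graded submodule of $\Delta^A(\mu)$, the form passes to a non-degenerate degree-zero bilinear form on the quotient $L^A(\mu)$. Any non-degenerate degree-zero pairing on a finite-dimensional graded $\Bbbk$-vector space $M$ restricts to a perfect pairing $M_k \times M_{-k} \to \Bbbk$ for each $k \in \ZZ$, so $\dim_\Bbbk L^A(\mu)_k = \dim_\Bbbk L^A(\mu)_{-k}$ for all $k$.

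Hence we may write
\[
\dim_t L^A(\mu) \;=\; \dim_\Bbbk L^A(\mu)_0 \;+\; \sum_{k > 0} \dim_\Bbbk L^A(\mu)_k \bigl( t^k + t^{-k} \bigr),
\]
a bar-invariant Laurent polynomial with non-negative integer coefficients, which is precisely the content of $\dim_t L^A(\mu) \in \ZZ_{\geq 0}[t + t^{-1}]$. The main point of substance in the whole argument is the degree-zero homogeneity of the cellular form; once that is in hand, the symmetry of the graded dimension and hence the conclusion are immediate from basic graded linear algebra.
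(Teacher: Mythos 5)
The paper does not actually prove \cref{humathasprop}; it is stated as a citation to Hu--Mathas~\cite{hm10}, with no argument given. Your proof correctly fills in what that citation refers to, and it is essentially the standard argument from that reference: the degree count in the defining identity (which implicitly uses that $A^{\vartriangleright\mu}$ is a graded subspace spanned by homogeneous elements, and that $c^\mu_{\SSTU\SSTV}\notin A^{\vartriangleright\mu}$) shows the cellular form has degree zero; the form descends to a non-degenerate form on $L^A(\mu)$ by construction of $L^A(\mu)$ as the quotient by the radical, with the graded-submodule fact correctly attributed to \cite[Lemma~2.7]{hm10}; and a non-degenerate degree-zero pairing then forces $\dim_\Bbbk L^A(\mu)_k = \dim_\Bbbk L^A(\mu)_{-k}$ for every $k$.

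One caveat worth flagging about what you have shown versus what the displayed notation literally says. Your argument establishes that $\dim_t L^A(\mu)$ is a Laurent polynomial with non-negative integer coefficients invariant under $t\mapsto t^{-1}$. Read literally, $\ZZ_{\geq 0}[t+t^{-1}]$ would be the subsemiring of $\ZZ_{\geq 0}[t,t^{-1}]$ generated by $t+t^{-1}$, which is strictly smaller: for example $t^2+t^{-2}$ is bar-invariant with non-negative coefficients, yet $t^2+t^{-2}=(t+t^{-1})^2-2$ is not a non-negative combination of powers of $t+t^{-1}$. The notation here (and in~\cite{hm10}) is being used loosely to mean ``bar-invariant with non-negative coefficients,'' which is exactly the content your argument proves and is all that is invoked later (e.g.\ in the inductive algorithm underlying \cref{main:theorem:general}). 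So your proof captures precisely the intended statement; you simply should not read the final sentence as asserting membership in that semiring.
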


    The passage between the (graded) cell and  simple  modules is recorded in the 
  {\sf    graded decomposition matrix}, $ \Dec=(d^A_{\lambda\mu}(t))$, of $A$ where    \[d_{\lambda\mu}^A(t)=\sum_{k\in\ZZ} [\Delta^A(\lambda):L^A(\mu)\langle k\rangle]\,t^k,\]
  for $\lambda,\mu\in\Lambda$.  This matrix is uni-triangular with respect to the partial ordering $\trianglerighteq$ on $\Lambda$.

\section{Cyclotomic  quiver  Hecke algebras}\label{sec:2}
We let $R$ denote an arbitrary commutative integral domain.  We let $\mathfrak{S}_n$ denote the symmetric group on $n$ letters, with   presentation
$$\mathfrak{S}_n=\langle s_1,\dots ,s_{n-1}\mid s_i s_{i+1}s_i- s_{i+1} s_{i}s_{i+1}, s_is_j - s_j s_i \text{ for }|i-j|>1\rangle.$$
We shall be interested in the representation theory (over $R$) of the 
reflection groups ${(\ZZ/\ell \ZZ)\wr \mathfrak{S}_n}$ and their 
deformations.  
The background material from this section is lifted from \cite[Section 2]{MR3732238}
 and \cite[Section 1]{manycell}.

\subsection{The quiver Hecke algebra}
 Throughout this paper $e$ is a fixed element of the set $\{3,4,5,\dots\}$ $\cup\{\infty\}$. 
This excludes the case $e=2$ as we are only interested in ``large" characteristics (where geometries have non-empty alcoves, see \cref{sec:5}).  
 If $e=\infty$ then we set $I=\ZZ$, while if $e<\infty$ then we set $I=\ZZ/e\ZZ$. 
 We  let $\Gamma_e$ be the
  quiver with vertex set $I $ and edges $i\longrightarrow i+1$, for
  $i\in I$.   Hence, we are considering either the linear quiver~$\ZZ$ ($e=\infty$)
  or a cyclic quiver ($e<\infty$):
   To the quiver $\Gamma_e$ we attach the symmetric Cartan matrix with  entries 
    $(a_{i,j})_{i,j\in I}$   defined by $a_{ij}=2\delta_{ij}-\delta_{i(j+1)}-\delta_{i(j-1)}$.
  
  Following \cite[Chapter~1]{Kac}, let $\widehat{\mathfrak{sl}}_e$ be the
  Kac-Moody algebra of~$\Gamma_e$  with simple roots
  $\{\alpha_i \mid i\in I\}$, fundamental weights $\{\Lambda_i\mid i\in I\}$,
  positive weight lattice $P^+=\bigoplus_{i\in I}\ZZ_{\geq 0}\Lambda_i$ and
  positive root lattice $Q^+=\bigoplus_{i\in I}\ZZ_{\geq 0}\alpha_i$. Let
  $(\cdot,\cdot)$ be the usual invariant form associated with this data,
  normalised so that $(\alpha_i,\alpha_j)=a_{ij}$ and
  $(\Lambda_i,\alpha_j)=\delta_{ij}$, for $i,j\in I$.   
  Fix a sequence $\kappa=(\kappa_1,\dots,\kappa_\ell)\in I^\ell$, the
  $e$-{\sf multicharge}, and define
  $\Lambda=\Lambda(\kappa) =
      \Lambda_{{\kappa}_1}+\dots+\Lambda_{{\kappa}_\ell}      $. Then $\Lambda\in P^+$ is
  dominant weight of {\sf level}~$\ell$.

\begin{defn}[\cite{MR2551762,MR2525917,ROUQ}]\label{defintino1}
Suppose $\alpha$ is a positive root of height $n$, and set
$I^\alpha=\{i \in  I^n  \mid  \alpha_{i_1}+\dots+\alpha_{i_n} =\alpha\}$.  Define   $H^\alpha(\kappa)$  to be the unital, associative $R$-algebra with generators
\begin{align}\label{gnrs}\tag{$\dagger$}
\{e(\underline{i}) \ | \ \underline{i}=(i_1,\dots,i_n)\in   I^\alpha\}\cup\{y_1,\dots,y_n\}\cup\{\psi_1,\dots,\psi_{n-1}\},
\end{align}
subject to the    relations 
\begin{align}
\label{rel1.1} e(\underline{i})e(\underline{j})&=\delta_{\underline{i},\underline{j}} e(\underline{i});
\\
\label{rel1.2}
\textstyle\sum_{\underline{i} \in   I^\alpha } e(\underline{i})&=1;\\ 
\label{rel1.3}
y_r		e(\underline{i})&=e(\underline{i})y_r;
\\
\label{rel1.4}
\psi_r e(\underline{i}) &= e(s_r\underline{i}) \psi_r;
\\ 
\label{rel1.5}
y_ry_s&=y_sy_r;\\ 
\label{rel1.6}
\psi_ry	_s&=\mathrlap{y_s\psi_r}\hphantom{\smash{\begin{cases}(\psi_{r+1}\psi_r\psi_{r+1}+y_r-2y_{r+1}+y_{r+2})e(\underline{i})\\\\\\\end{cases}}}\kern-\nulldelimiterspace\text{if } s\neq r,r+1;\\
\label{rel1.7}
\psi_r\psi_s&=\mathrlap{\psi_s\psi_r}\hphantom{\smash{\begin{cases}(\psi_{r+1}\psi_r\psi_{r+1}+y_r-2y_{r+1}+y_{r+2})e(\underline{i})\\\\\\\end{cases}}}
\kern-\nulldelimiterspace\text{if } |r-s|>1;
\\
\label{rel1.8}
y_r \psi_r e(\underline{i}) &=(\psi_r y_{r+1} + 
\delta_{i_r,i_{r+1}})e(\underline{i});\\
\label{rel1.9}
y_{r+1} \psi_r e(\underline{i}) &=(\psi_r y_r -  
\delta_{i_r,i_{r+1}})e(\underline{i});\\
\label{rel1.10}
\psi_r^2 e(\underline{i})&=\begin{cases}
\mathrlap0\phantom{(\psi_{r+1}\psi_r\psi_{r+1}+y_r-2y_{r+1}+y_{r+2})e(\underline{i})}& \text{if }i_r=i_{r+1},\\
e(\underline{i}) & \text{if }i_{r+1}\neq i_r, i_r\pm1,\\
(y_{r+1} - y_r) e(\underline{i}) & \text{if }i_{r+1}=i_r-1 ,\\
(y_r - y_{r+1}) e(\underline{i}) & \text{if }i_{r+1}=i_r+1; 
\end{cases}\\
\label{rel1.11}
\psi_r \psi_{r+1} \psi_r &=\begin{cases}
(\psi_{r+1}\psi_r\psi_{r+1} + 1)e(\underline{i})& \text{if }i_r=i_{r+2}=i_{r+1}+1 ,\qquad \\
(\psi_{r+1}\psi_r\psi_{r+1} - 1)e(\underline{i})& \text{if }i_r=i_{r+2}=i_{r+1}-1 ,\\
 (\psi_{r+1}\psi_r\psi_{r+1})e(\underline{i})& \text{otherwise;}
\end{cases} 
\end{align}
{for all admissible $r,s,i,j$. Finally, we have the cyclotomic relation}
\begin{align}
\label{rel1.12} y_1^{\langle \Lambda (\kappa) \mid \alpha_{i_1} \rangle} e(\underline{i}) &=0 \quad \text{for $\underline{i}\in I^\alpha$.}
\end{align}
   The {\sf quiver Hecke algebra} is the sum $H (\kappa):=\bigoplus_{\alpha}  H^\alpha(\kappa)$      over all positive roots of height $n$.  
 \end{defn}

\begin{thm}[\cite{MR2551762,MR2525917,ROUQ}]
We have a grading on $H_n(\kappa)$  given by
$$
{\rm deg}(e(\underline{i}))=0 \quad 
{\rm deg}( y_r)=2\quad 
{\rm deg}(\psi_r e(\underline{i}))=
\begin{cases}
-2		&\text{if }i_r=i_{r+1} \\
1		&\text{if }i_r=i_{r+1}\pm 1 \\
0 &\text{otherwise} 
\end{cases} $$
\end{thm}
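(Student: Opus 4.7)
The plan is to verify that the two-sided ideal generated by the defining relations \eqref{rel1.1}--\eqref{rel1.12} is homogeneous with respect to the proposed assignment of degrees on the free associative $R$-algebra on the generators in \eqref{gnrs}; once this is established, the quotient $H^\alpha(\kappa)$ inherits a grading, and so does $H_n(\kappa) = \bigoplus_\alpha H^\alpha(\kappa)$. A preliminary remark is that $\psi_r$ alone does not carry a well-defined degree: only the idempotent-indexed pieces $\psi_r e(\underline{i})$ do. Using \eqref{rel1.4}, every word in the generators can be uniquely expressed with a definite idempotent attached to each $\psi$, so the grading extends unambiguously, and we need only check that each relation equates two elements of the same total degree (or equates an element to zero).

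First I would dispatch the easy relations. The idempotent relations \eqref{rel1.1}--\eqref{rel1.2}, the commutation \eqref{rel1.3}, and the commutations \eqref{rel1.5}--\eqref{rel1.7} are immediate since idempotents have degree $0$ and the same non-idempotent generators appear on both sides. The relation \eqref{rel1.4} is homogeneous because $\psi_r e(\underline{i})$ and $e(s_r \underline{i}) \psi_r$ are two expressions for the same idempotented piece, which carries a single well-defined degree by construction.

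The relations \eqref{rel1.8} and \eqref{rel1.9} require a case split on whether $i_r = i_{r+1}$. If $i_r \neq i_{r+1}$ the Kronecker delta vanishes and both sides have degree $2 + \deg(\psi_r e(\underline{i}))$. If $i_r = i_{r+1}$ then $\psi_r e(\underline{i})$ has degree $-2$, so the left-hand side has degree $0$, which matches the degree of the correction term $e(\underline{i})$. For \eqref{rel1.10} one checks the four cases directly by comparing $2\deg(\psi_r e(\underline{i}))$ against the degree of the right-hand side: in the case $i_r = i_{r+1}$ the right-hand side is zero, in the case $i_r \neq i_{r+1}, i_r\pm 1$ both sides have degree $0$, and in the two remaining cases both sides have degree $2$. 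The crux of the argument is \eqref{rel1.11}. Here one uses \eqref{rel1.4} to decompose
\[ \psi_r \psi_{r+1} \psi_r e(\underline{i}) = \bigl(\psi_r e(s_{r+1} s_r \underline{i})\bigr) \bigl(\psi_{r+1} e(s_r \underline{i})\bigr) \bigl(\psi_r e(\underline{i})\bigr), \]
and similarly for $\psi_{r+1} \psi_r \psi_{r+1} e(\underline{i})$, summing degrees accordingly. A short case analysis on the triples $(i_r, i_{r+1}, i_{r+2})$ shows that the correction term $\pm e(\underline{i})$ of degree $0$ arises precisely in the configurations $i_r = i_{r+2} = i_{r+1} \pm 1$, where direct calculation gives both triple products degree $0$; outside these configurations there is no correction term and both triple products are easily seen to share the same degree.

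Finally, the cyclotomic relation \eqref{rel1.12} sets a product equal to zero, which is trivially homogeneous of every degree, so it imposes no condition. The only real obstacle is the careful bookkeeping of idempotents under conjugation by $\psi$'s in \eqref{rel1.11}, which is handled by systematic use of \eqref{rel1.4}; once this is disposed of, the defining relations generate a homogeneous ideal and the grading descends to $H_n(\kappa)$, as required.
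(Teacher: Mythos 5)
The paper does not prove this theorem: it is stated with citations to \cite{MR2551762,MR2525917,ROUQ} and no argument is given. Your proof — check that each defining relation in Definition~\ref{defintino1} is homogeneous for the proposed degree assignment, so that the grading descends from the free algebra to $H_n(\kappa)$ — is exactly the argument used in those references, and your case analysis is correct. In particular, for \eqref{rel1.11} the decomposition via \eqref{rel1.4} shows that both $\psi_r\psi_{r+1}\psi_r e(\underline{i})$ and $\psi_{r+1}\psi_r\psi_{r+1}e(\underline{i})$ have total degree equal to the sum, over the three unordered pairs drawn from $\{i_r,i_{r+1},i_{r+2}\}$, of the local degree ($-2$ for equal residues, $+1$ for adjacent residues, $0$ otherwise); this sum is symmetric, so the two triple products are equi-graded automatically, and it evaluates to $1+1-2=0$ precisely in the configurations $i_r=i_{r+2}=i_{r+1}\pm1$ where the degree-$0$ correction term $\pm e(\underline{i})$ appears, confirming your computation.

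The one place you could tighten the wording is the opening reduction. It is not quite right (and not needed) to assert that every word is \emph{uniquely} expressible with an idempotent on each $\psi_r$. The cleaner way to frame it: by \eqref{rel1.2} the homogeneous elements $e(\underline{i})$, $y_re(\underline{i})$, $\psi_r e(\underline{i})$ already generate; relation \eqref{rel1.4} is degree-consistent because $\deg(\psi_r e(\underline{i}))$ depends only on the unordered pair $\{i_r,i_{r+1}\}$ and is therefore unchanged by $s_r$; so relations \eqref{rel1.1}--\eqref{rel1.7} are manifestly homogeneous, and it remains to check \eqref{rel1.8}--\eqref{rel1.12}, which you do correctly.
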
 

  For
  each $i\in I$ define the {\sf $i$-string of length~$h$} to be
  $\alpha_{i,h}=\alpha_i+\alpha_{i+1}+\dots+\alpha_{i+h-1} \in Q^+$.

   \begin{defn}\label{admissible}  For $h\in \mathbb{Z}_{>0}$ and $\kappa\in I^\ell$, we say that $\kappa$ is  
 $h$-{\sf admissible}   if  
$(\Lambda,\alpha_{h,i})\le 1$  for all $ i\in I$.  
\end{defn}

\begin{defn}
Let $e>h\ell$ and $\kappa\in I^\ell$ be an $h$-admissible multicharge.  
 We set 
$$\Q _{\ell,h, n}(\kappa) = H_n(\kappa) /
\langle e(\underline{i}) 
\mid 
\underline{i}\in  I^n \text{ and }
\underline{i}_{k+1} = \underline{i}_{k}+1 \text{ for }1\leq k \leq h \rangle.
$$
\end{defn}

\begin{rmk}\label{ringel1}
For $\ell=1$ and $ p=e$ the algebra $\Q_{1,h,n}(\kappa)$ is isomorphic 
to the image of the symmetric group on $n$ letters   in ${\rm End}_{\Bbbk}((\Bbbk^h)^{\otimes n})$.  
Therefore $\Q _{1,h, n}(\kappa)$ is the generalised Temperley--Lieb algebra of \cite[Section 1]{MR1680384} 
and 
is the Ringel dual (see \cite[Section 4]{MR1611011}) of the classical Schur algebra. 
 \end{rmk}

\subsection{Weighted  standard tableaux} 
The background material from this section is lifted from \cite[Section 2]{MR3732238}
and \cite[Section 1]{manycell}.   
Fix integers $\ell ,n\in\ZZ_{\geq 0} $ 
and $e\in \mathbb{Z}_{>0}\cup\{\infty\}$.  
We define    
  a {\sf weighting} to be any $\theta  \in \ZZ^\ell$ such that $ \theta_i-\theta_{j} \not\in \ell\ZZ$ for $1\leq i < j \leq \ell$. 
Throughout this section, we assume that the weighting $\theta  \in \ZZ^\ell$ and $e$-multicharge $\kappa \in I^\ell$ are fixed.

We define a {\sf partition}, $\lambda$,  of $n$ to be a   finite weakly decreasing sequence  of non-negative integers $ (\lambda_1,\lambda_2, \ldots)$ whose sum, $|\lambda| = \lambda_1+\lambda_2 + \dots$, equals $n$.   
An    {\sf $\ell $-multipartition}  $\lambda=(\lambda^{(1)},\dots,\lambda^{(\ell)})$ of $n$ is an $\ell $-tuple of     partitions  such that $|\lambda^{(1)}|+\dots+ |\lambda^{(\ell)}|=n$. 
We will denote the set of $\ell $-multipartitions of $n$ by $\mptn {\ell}n$.
Given  $\lambda=(\lambda^{(1)},\lambda^{(2)},\ldots ,\lambda^{(\ell)}) \in \mptn {\ell}n$, the {\sf Young diagram} of $\lambda$    is defined to be the set of nodes, 
\[
\{(r,c,m) \mid  1\leq  c\leq \lambda^{(m)}_r\}.
\]
We do not distinguish between the multipartition and its Young diagram.  
  We refer to a node $(r,c,m)$ as being in the $r$th row and $c$th column of the $m$th component of $\lambda$.  
 Given a node, $(r,c,m)$,  
we define the {\sf residue} of this node  to be  ${\rm res}(r,c,m)  = \kappa_m+  c - r\pmod e$.  
We refer to a node of residue  $i\in I$ as an $i$-node.

Given $\lambda\in \mptn \ell n$,  the associated  {\sf  $\theta$-Russian array} 
is defined as follows.
For each $1\leq m\leq \ell$, we place a point on the real line at $\theta_m$ and consider the region bounded by  half-lines at angles $3\pi/4 $ and $\pi/4$.
		We tile the resulting quadrant with a lattice of squares, each with diagonal of length $2 \ell $.  
We place a box  $(1,1,m)\in \lambda$   at the point  $\theta_m$ on the real line, with rows going northwest from this node, and columns going northeast. 
We do not distinguish between $\la$ 
and its $\theta$-Russian array.

\begin{defn}
Let    $\theta\in \ZZ^\ell $ be a weighting and $\kappa \in I^\ell$.  
Given $\lambda\in \mptn \ell n$, we define a {\sf tableau} of shape $\lambda$ to be a filling of the boxes of 
the $\theta$-Russian array of  $\la $ with the numbers 
$\{1,\dots , n\}$.  We define a {\sf  standard tableau} to be a tableau  in which    the entries increase along the rows and columns of each component.  
We let $\Std (\lambda)$ denote the set of all standard tableaux of shape $\lambda\in\mptn\ell n$. 
Given 
$\stt\in \Std (\lambda)$, we set $\Shape(\stt)=\la$.  
Given $1\leq k \leq n$, we let $\stt{\downarrow}_{\{1,\dots ,k\}}$ be the subtableau of $\stt$ whose  entries belong to the set
$\{1,\dots,k\}$.  \end{defn}

\begin{defn} \label{domdef}
Let  $(r,c,m), (r',c',m')    $ be two $i$-boxes  and    $\theta\in \ZZ^\ell$ be our fixed weighting.  
We write $(r,c,m) \rhd_\theta  (r',c',m')$ if 
$\theta_m +\ell(r-c) <\theta_{m'} +\ell(r'-c')$
or $\theta_m +\ell(r-c) = \theta_{m'} +\ell(r'-c')$
and $r+c < r'+c'$.  
\end{defn}

\begin{defn}  
Given  $\la,\mu\in \mptn \ell n$, we say that  
$\la$ $\theta$-{\sf dominates} $\mu$ (and write $\mu \trianglelefteq_\theta \la$) 
if for every $i$-box  $(r,c,m) \in \mu$, there exist at least as many $i$-boxes $(r',c',m') \in \la$ which $\theta$-dominate $(r,c,m)$ than  there do 
$i$-boxes $(r'',c'',m'') \in \mu$ which $\theta$-dominate $(r,c,m)$.  
\end{defn}

Given $\la \in \mptn \ell n$, we let   ${\rm Rem} (\la)$ (respectively ${\rm Add}  (\la)$) 
denote the set of all    removable  respectively addable) boxes  of the Young diagram of $\la$ so that the resulting diagram is the Young diagram of a multipartition.  Given $i\in \ZZ/e\ZZ$, we let  ${\rm Rem}_i  (\la)\subseteq {\rm Rem}  (\la)$ (respectively  ${\rm Add}_i  (\la)\subseteq {\rm Add}  (\la)$) denote the subset of boxes of residue $i\in I$.

\begin{defn} 
Let $\la\in\mptn\ell n$ and $\stt \in \Std(\la)$. 
We let $\stt^{-1}(k)$ denote the box in $\stt$ containing the integer $1\leq k\leq n$.   
Given $1\leq k\leq n$, we let ${\mathcal A}_\stt(k)$, 
(respectively ${\mathcal R}_\stt(k)$)  denote the set of   all addable $\res (\stt^{-1}(k))$-boxes (respectively all
removable   $\res (\stt^{-1}(k))$-boxes)  of the multipartition $\Shape(\stt{\downarrow}_{\{1,\dots ,k\}})$ which
 are less than  $\stt^{-1}(k)$ in the $\theta$-dominance order  (i.e  those which appear to the {\em right} of $\stt^{-1}(k)$).    

\end{defn}

\begin{defn} Let $\la\in\mptn\ell n$ and $\stt \in \Std(\la)$.  We define the degree of $\stt$ as follows,
$$
\deg(\stt) = \sum_{k=1}^n \left(	|{\mathcal A}_\stt(k)|-|{\mathcal R}_\stt(k)|	\right).
$$
\end{defn}


\begin{eg}
Let $e=7$ and  $\kappa=(0,3)$ and $\theta=(0,1)$.  
The rightmost  tableau $  \sts  \in \Std((3,2,1),(3,2,2))$ depicted in \cref{cyclic tableau} is of degree   $1$.  
 The boxes of non-zero degree are   $\sts^{-1}(k)$ for 
$k=5$, $12$, and $13$.  
We have that $\deg(\stt^{-1}(5))=1$,
 $\deg(\sts^{-1}(12))=-1$, 
 and $\deg(\sts^{-1}(13))=1$.  
For example,   the box $ \sts^{-1}(10) $ appears to the right of 
$ \sts^{-1}(12) $ and both are of residue $5 \in \ZZ/7\ZZ$.  
\end{eg}

\subsection{The graded cellular basis }
In this section we recall the construction of the  graded cellular basis of the algebra  $\Q _{\ell,h, n}(\kappa)$ from \cite[Section 8]{manycell}. 
For this section, we  fix   $\theta=(1,2,\dots ,\ell) \in \ZZ^\ell$.   

\begin{defn}
Given $\la \in \mptn \ell n$ we let $\stt^\lambda \in \Std(\la)$ be the tableau   obtained by placing the entry $n$ in the least dominant removable box $(r,c,m) \in \la $ (in the $\theta$-dominance order) and then placing the entry $n-1$ in the least dominant removable box of $\la  \setminus \{(r,c,m) \}$ and continuing in this fashion.  
\end{defn}

\begin{eg}
We have that $\stt^\lambda\in \Std(\la)$ for $\la=((3,2,1),(3,2,2))$ is the leftmost tableau depicted in  
\cref{cyclic tableau}.  
\end{eg}

\!\!\!\!\!\!\!\!\!\!
\begin{figure}[ht]\captionsetup{width=0.9\textwidth}
\[\scalefont{0.9}
\begin{tikzpicture}[scale=0.9]
\begin{scope}
\draw[thick](2,-1.95)--(-1.5,-1.95); 
{     \path (0,0.8) coordinate (origin);     \draw[wei2] (origin)   circle (2pt);
 
\clip(-2.2,-1.95)--(-2.2,3)--(2,3)--(2,-1.95)--(-2.2,-1.95); 
\draw[wei2] (origin)   --(0,-2); 
\draw[thick] (origin) 
--++(130:3*0.7)
--++(40:1*0.7)
--++(-50:1*0.7)	
--++(40:1*0.7) --++(-50:1*0.7)
--++(40:1*0.7) --++(-50:1*0.7) 
--++(-50-90:3*0.7);
 \clip (origin)  
--++(130:3*0.7)
--++(40:1*0.7)
--++(-50:1*0.7)	
--++(40:1*0.7) --++(-50:1*0.7)
--++(40:1*0.7) --++(-50:1*0.7) 
--++(-50-90:3*0.7);
\path  (origin)--++(40:0.35)--++(130:0.35)  coordinate (111); 
   \node at  (111)  {  $1$};
\path  (origin)--++(40:0.35)--++(130:0.35+0.7)  coordinate (121); 
   \node at  (121)  {  $2$};
     \path  (origin)--++(40:0.35)--++(130:0.35+0.7*2)  coordinate (131); 
   \node at  (131)  {  $3$};
     \path  (origin)--++(40:0.35+0.7)--++(130:0.35 )  coordinate (211); 
   \node at  (211)  {  $7$};
            \path  (origin)--++(40:0.35+0.7)--++(130:0.35+0.7 )  coordinate (221); 
   \node at  (221)  {  $8$};
           \path  (origin)--++(40:0.35+0.7*2)--++(130:0.35 )  coordinate (211); 
   \node at  (211)  {  $13$};
\path (40:1cm) coordinate (A1);
\path (40:2cm) coordinate (A2);
\path (40:3cm) coordinate (A3);
\path (40:4cm) coordinate (A4);
\path (130:1cm) coordinate (B1);
\path (130:2cm) coordinate (B2);
\path (130:3cm) coordinate (B3);
\path (130:4cm) coordinate (B4);
\path (A1) ++(130:3cm) coordinate (C1);
\path (A2) ++(130:2cm) coordinate (C2);
\path (A3) ++(130:1cm) coordinate (C3);
\foreach \i in {1,...,19}
{
\path (origin)++(40:0.7*\i cm)  coordinate (a\i);
\path (origin)++(130:0.7*\i cm)  coordinate (b\i);
\path (a\i)++(130:4cm) coordinate (ca\i);
\path (b\i)++(40:4cm) coordinate (cb\i);
\draw[thin,gray] (a\i) -- (ca\i)  (b\i) -- (cb\i); } 
}
\end{scope}
\begin{scope}
{   
\path (0,-1.3)++(40:0.3*0.7)++(-50:0.3*0.7) coordinate (origin);  
\draw[wei2]  (origin)   circle (2pt);
\clip(-2.2,-1.95)--(-2.2,3)--(3,3)--(3,-1.95)--(-2.2,-1.95); 
\draw[wei2] (origin)   --++(-90:4cm); 
\draw[thick] (origin) 
--++(130:3*0.7)
--++(40:1*0.7)
--++(-50:1*0.7)	
--++(40:2*0.7)   --++(-50:2*0.7) 
--++(-50-90:3*0.7);
 \clip (origin) 
--++(130:3*0.7)
--++(40:1*0.7)
--++(-50:1*0.7)	
--++(40:2*0.7)   --++(-50:2*0.7) 
--++(-50-90:3*0.7);
  \path  (origin)--++(40:0.35)--++(130:0.35)  coordinate (111); 
   \node at  (111)  {  $4$};
\path  (origin)--++(40:0.35)--++(130:0.35+0.7)  coordinate (121); 
   \node at  (121)  {  $5$};
     \path  (origin)--++(40:0.35)--++(130:0.35+0.7*2)  coordinate (131); 
   \node at  (131)  {  $6$};
     \path  (origin)--++(40:0.35+0.7)--++(130:0.35 )  coordinate (211); 
   \node at  (211)  {  $9$};
            \path  (origin)--++(40:0.35+0.7)--++(130:0.35+0.7 )  coordinate (221); 
   \node at  (221)  {  $10$};
           \path  (origin)--++(40:0.35+2*0.7)--++(130:0.35  )  coordinate (224); 
   \node at  (224)  {  $11$};       
   \path  (origin)--++(40:0.35+2*0.7)--++(130:0.35+0.7 )  coordinate (223); 
   \node at  (223)  {  $12$};
\path (40:1cm) coordinate (A1);
\path (40:2cm) coordinate (A2);
\path (40:3cm) coordinate (A3);
\path (40:4cm) coordinate (A4);
\path (130:1cm) coordinate (B1);
\path (130:2cm) coordinate (B2);
\path (130:3cm) coordinate (B3);
\path (130:4cm) coordinate (B4);
\path (A1) ++(130:3cm) coordinate (C1);
\path (A2) ++(130:2cm) coordinate (C2);
\path (A3) ++(130:1cm) coordinate (C3);
\foreach \i in {1,...,19}
{
\path (origin)++(40:0.7*\i cm)  coordinate (a\i);
\path (origin)++(130:0.7*\i cm)  coordinate (b\i);
\path (a\i)++(130:4cm) coordinate (ca\i);
\path (b\i)++(40:4cm) coordinate (cb\i);
\draw[thin,gray] (a\i) -- (ca\i)  (b\i) -- (cb\i); } 
}  \end{scope}
\end{tikzpicture}
\qquad
\begin{tikzpicture}[scale=0.9]
\begin{scope}
\draw[thick](2,-1.95)--(-1.5,-1.95); 
{     \path (0,0.8) coordinate (origin);     \draw[wei2] (origin)   circle (2pt);
 
\clip(-2.2,-1.95)--(-2.2,3)--(2,3)--(2,-1.95)--(-2.2,-1.95); 
\draw[wei2] (origin)   --(0,-2); 
\draw[thick] (origin) 
--++(130:3*0.7)
--++(40:1*0.7)
--++(-50:1*0.7)	
--++(40:1*0.7) --++(-50:1*0.7)
--++(40:1*0.7) --++(-50:1*0.7) 
--++(-50-90:3*0.7);
 \clip (origin)  
--++(130:3*0.7)
--++(40:1*0.7)
--++(-50:1*0.7)	
--++(40:1*0.7) --++(-50:1*0.7)
--++(40:1*0.7) --++(-50:1*0.7) 
--++(-50-90:3*0.7);
\path  (origin)--++(40:0.35)--++(130:0.35)  coordinate (111); 
   \node at  (111)  {  $3$};
\path  (origin)--++(40:0.35)--++(130:0.35+0.7)  coordinate (121); 
   \node at  (121)  {  $5$};
     \path  (origin)--++(40:0.35)--++(130:0.35+0.7*2)  coordinate (131); 
   \node at  (131)  {  $8 
    $};
     \path  (origin)--++(40:0.35+0.7)--++(130:0.35 )  coordinate (211); 
   \node at  (211)  {  $7$};
            \path  (origin)--++(40:0.35+0.7)--++(130:0.35+0.7 )  coordinate (221); 
   \node at  (221)  {  $13$};
           \path  (origin)--++(40:0.35+0.7*2)--++(130:0.35 )  coordinate (211); 
   \node at  (211)  {  $10$};
\path (40:1cm) coordinate (A1);
\path (40:2cm) coordinate (A2);
\path (40:3cm) coordinate (A3);
\path (40:4cm) coordinate (A4);
\path (130:1cm) coordinate (B1);
\path (130:2cm) coordinate (B2);
\path (130:3cm) coordinate (B3);
\path (130:4cm) coordinate (B4);
\path (A1) ++(130:3cm) coordinate (C1);
\path (A2) ++(130:2cm) coordinate (C2);
\path (A3) ++(130:1cm) coordinate (C3);
\foreach \i in {1,...,19}
{
\path (origin)++(40:0.7*\i cm)  coordinate (a\i);
\path (origin)++(130:0.7*\i cm)  coordinate (b\i);
\path (a\i)++(130:4cm) coordinate (ca\i);
\path (b\i)++(40:4cm) coordinate (cb\i);
\draw[thin,gray] (a\i) -- (ca\i)  (b\i) -- (cb\i); } 
}
\end{scope}
\begin{scope}
{   
\path (0,-1.3)++(40:0.3*0.7)++(-50:0.3*0.7) coordinate (origin);  
\draw[wei2]  (origin)   circle (2pt);
\clip(-2.2,-1.95)--(-2.2,3)--(3,3)--(3,-1.95)--(-2.2,-1.95); 
\draw[wei2] (origin)   --++(-90:4cm); 
\draw[thick] (origin) 
--++(130:3*0.7)
--++(40:1*0.7)
--++(-50:1*0.7)	
--++(40:2*0.7)   --++(-50:2*0.7) 
--++(-50-90:3*0.7);
 \clip (origin) 
--++(130:3*0.7)
--++(40:1*0.7)
--++(-50:1*0.7)	
--++(40:2*0.7)   --++(-50:2*0.7) 
--++(-50-90:3*0.7);
  \path  (origin)--++(40:0.35)--++(130:0.35)  coordinate (111); 
   \node at  (111)  {  $1$};
\path  (origin)--++(40:0.35)--++(130:0.35+0.7)  coordinate (121); 
   \node at  (121)  {  $4$};
     \path  (origin)--++(40:0.35)--++(130:0.35+0.7*2)  coordinate (131); 
   \node at  (131)  {  $12$};
     \path  (origin)--++(40:0.35+0.7)--++(130:0.35 )  coordinate (211); 
   \node at  (211)  {  $2$};
            \path  (origin)--++(40:0.35+0.7)--++(130:0.35+0.7 )  coordinate (221); 
   \node at  (221)  {  $6 
   $};
           \path  (origin)--++(40:0.35+2*0.7)--++(130:0.35  )  coordinate (224); 
   \node at  (224)  {  $9$};       
   \path  (origin)--++(40:0.35+2*0.7)--++(130:0.35+0.7 )  coordinate (223); 
   \node at  (223)  {  $11$};
\path (40:1cm) coordinate (A1);
\path (40:2cm) coordinate (A2);
\path (40:3cm) coordinate (A3);
\path (40:4cm) coordinate (A4);
\path (130:1cm) coordinate (B1);
\path (130:2cm) coordinate (B2);
\path (130:3cm) coordinate (B3);
\path (130:4cm) coordinate (B4);
\path (A1) ++(130:3cm) coordinate (C1);
\path (A2) ++(130:2cm) coordinate (C2);
\path (A3) ++(130:1cm) coordinate (C3);
\foreach \i in {1,...,19}
{
\path (origin)++(40:0.7*\i cm)  coordinate (a\i);
\path (origin)++(130:0.7*\i cm)  coordinate (b\i);
\path (a\i)++(130:4cm) coordinate (ca\i);
\path (b\i)++(40:4cm) coordinate (cb\i);
\draw[thin,gray] (a\i) -- (ca\i)  (b\i) -- (cb\i); } 
}  \end{scope}\end{tikzpicture}
\]
\caption{The tableau $  \sts,\stt^\lambda \in \Std((3,2,1),(3,2,2))$}
\label{cyclic tableau}
\end{figure}

\begin{defn}Given $\stt \in \Std(\lambda)$, we define the associated {\sf residue sequence} as follows,  
$$ 
{\underline{i}}_\stt = ({\rm res} (\stt^{-1}(1)), {\rm res} (\stt^{-1}(2)),  \dots, {\rm res}(\stt^{-1}(n)) )    \in I^n.  
$$
\end{defn}

\begin{eg}    Given $  \sts,\stt^\lambda \in \Std(\lambda)$ for $\lambda=(3,2,1),(3,2,2)$, we  have that 
$$ 
{\underline{i}}_{\stt^\la} =
(0,1,2,3,4,5,6,0,2,3,1,2,5)
\qquad 
{\underline{i}}_{\sts} =
 (3,2,0,4,1,3,6,2,1,5,2,5,0)
  $$
\end{eg}

  \begin{defn}
Given $\la \in \mptn \ell n$ and  $\sts \in \Std (\la)$, we let $d _\sts \in \mathfrak{S}_n$ denote any 
element such that $d  _\sts  (\stt^\lambda) =\sts$ under the place permutation action of the symmetric group on standard tableaux.
\end{defn}

\begin{defn}\label{psielements}
Given $\la \in \mptn \ell n $
and $\sts,\stt \in \Std(\la)$ we
fix    
reduced expressions 
$d_\sts= s_{i_1}s_{i_2}\dots s_{i_k}$ and 
$d_\stt= s_{j_1}s_{j_2}\dots s_{j_m}$.  
We 
set 
$$ \psi _{\sts   \stt}^{\theta,\kappa} =
 \psi _{\sts}  e(\underline{i}_{\stt^\lambda} ) (\psi _{  \stt}) ^\ast $$
where
$\psi _\sts =\psi_{i_1}\psi_{{i_2}}\dots \psi_{i_k}$, 
$\psi _\stt =\psi_{j_1}\psi_{{j_2}}\dots \psi_{j_m}$.       \end{defn}

\begin{defn}\label{hrestrictedpartitions}
 Let $\mptn \ell n (h)\subseteq \mptn \ell n$ denote the   subset consisting of all multipartitions with at most $h$ columns in any given component, that is
 $$\mptn \ell n (h)= \{\la=(\la^{(1)}, \dots , \la^{(\ell)}) \mid \lambda^{(m)}_1\leq h \text{ for all }1\leq m \leq \ell\}.$$
  \end{defn}


\begin{thm}[{\cite[Theorem 8.2]{manycell}}]\label{cor11} 
The algebra  $\Q _{h,\ell,n}(\kappa)$ admits a  graded cellular     basis
\begin{equation*}    \{ \psi^{\theta,\kappa}_{\sts   \stt}  \mid \lambda \in \mptn \ell n(h), \sts,\stt \in \Std(\lambda)\}
\end{equation*}
with respect to the $\theta$-dominance order on $\mptn \ell n(h)$ and the involution $\ast$.   We refer to the resulting 
cell-modules (as in \cref{cellmodule}) as the  {\sf Specht modules} of $\Q_{h,\ell,n}(\kappa)$  and denote them by 
$$
S (\lambda) = \{ \psi_\sts^{\theta,\kappa}     \mid \sts \in \Std(\lambda)\} 
$$
for $\lambda\in \mptn \ell n (h)$. The modules $S (\lambda) $ lift to modules of $H_n(\kappa)$ and the 
  decomposition matrix ${\bf D}^{\Q_{h,\ell,n}(\kappa)}$ appears as a (square) submatrix of ${\bf D}^{H_n(\kappa)}$.  
  \end{thm}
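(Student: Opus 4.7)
The statement is essentially \cite[Theorem 8.2]{manycell}, so my plan is to recall the structure of that argument. The starting point is that the ambient algebra $H_n(\kappa)$ already carries a graded cellular basis $\{\psi^{\theta,\kappa}_{\sts\stt}\mid \sts,\stt\in\Std(\la),\ \la\in\mptn\ell n\}$ indexed by all standard tableaux of all $\ell$-multipartitions of $n$, with respect to the $\theta$-dominance order and the involution $\ast$ swapping $\sts$ and $\stt$. The task is then to check that passage to the quotient by the defining two-sided ideal $J$ of $\Q_{h,\ell,n}(\kappa)$ preserves the cellular structure on the sub-indexing set $\mptn\ell n(h)$.

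The argument splits into two steps. First, (a): for any $\la\in\mptn\ell n\setminus\mptn\ell n(h)$ and any $\sts,\stt\in\Std(\la)$, one shows that $\psi^{\theta,\kappa}_{\sts\stt}\in J$. Second, (b): the images of the remaining basis vectors are linearly independent in the quotient. Step (b) is automatic once (a) is known: the $R$-span of the basis elements with $\la\not\in\mptn\ell n(h)$ is a $\ast$-stable two-sided ideal of $H_n(\kappa)$ (being a union of the more-dominant ideals appearing in \cref{defn1}(3)), and the quotient of a graded cellular algebra by such a cellular ideal inherits a graded cellular basis by the standard Graham--Lehrer/Hu--Mathas formalism.

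The heart of the matter, and the main obstacle, is step (a). The key combinatorial observation is that whenever $\la^{(m)}_1\geq h+1$, the first row of the $m$-th component of $\la$ contains $h+1$ consecutive boxes of residues $\kappa_m,\kappa_m+1,\dots,\kappa_m+h$, i.e.\ an increasing string of length $h+1$. The $h$-admissibility hypothesis of \cref{admissible} (together with $e>h\ell$) ensures that the constraint $(\Lambda,\alpha_{h,i})\le 1$ is tight enough to force any such string of residues occurring in $\la$ to originate from one such row. Using the idempotent identity $\psi^{\theta,\kappa}_{\sts\stt}=e(\underline{i}_\sts)\,\psi^{\theta,\kappa}_{\sts\stt}\,e(\underline{i}_\stt)$ together with suitable applications of relations \eqref{rel1.8}--\eqref{rel1.11} to reshuffle the reduced expressions for $d_\sts$ and $d_\stt$, one rewrites $\psi^{\theta,\kappa}_{\sts\stt}$, modulo strictly more $\theta$-dominant terms, in a form factoring through an idempotent $e(\underline{j})$ with $\underline{j}_{k+1}=\underline{j}_k+1$ for $1\le k\le h$; by definition such terms lie in $J$. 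A downward induction on $\theta$-dominance then completes (a).

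Finally, the Specht modules $S(\la)$ lift to $H_n(\kappa)$-modules along the inflation $H_n(\kappa)\twoheadrightarrow\Q_{h,\ell,n}(\kappa)$, and under this inflation they coincide with the Hu--Mathas Specht modules of the same shape. Their simple heads, when non-zero, are therefore already simple as $H_n(\kappa)$-modules, and composition multiplicities in $S(\la)$ are preserved by inflation, which identifies $\mathbf{D}^{\Q_{h,\ell,n}(\kappa)}$ with the square submatrix of $\mathbf{D}^{H_n(\kappa)}$ indexed by $\mptn\ell n(h)$.
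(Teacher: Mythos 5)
The paper does not prove this statement; it simply imports it as \cite[Theorem 8.2]{manycell}, so there is no internal proof against which to compare. Your reconstruction can therefore only be assessed on its own terms, and on those terms there is a genuine gap in step (b).

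Writing $B$ for the $R$-span of the $\psi^{\theta,\kappa}_{\sts\stt}$ with $\Shape(\sts)\notin\mptn\ell n(h)$ and $J$ for the defining two-sided ideal of $\Q_{\ell,h,n}(\kappa)$, your step (a) establishes $B\subseteq J$. This makes $\Q_{\ell,h,n}(\kappa)=H_n(\kappa)/J$ a further quotient of the cellular quotient $H_n(\kappa)/B$, but the theorem asserts $\Q_{\ell,h,n}(\kappa)\cong H_n(\kappa)/B$, which requires the reverse inclusion $J\subseteq B$ as well. Your claim that ``step (b) is automatic once (a) is known'' is therefore false: $B\subseteq J$ alone does not prevent the images of the good basis elements from becoming linearly dependent in $\Q_{\ell,h,n}(\kappa)$. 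You still need to show $J\subseteq B$, i.e.\ that each generating idempotent $e(\underline{i})$ (with $\underline{i}_{k+1}=\underline{i}_k+1$ for $1\leq k\leq h$) lies in the cellular ideal $B$. The indispensable combinatorial lemma is that, under the hypotheses $e>h\ell$ and $h$-admissibility of $\kappa$, no $\sts\in\Std(\la)$ with $\la\in\mptn\ell n(h)$ has residue sequence whose first $h+1$ entries form an increasing string: entry $1$ must occupy some $(1,1,m)$ of residue $\kappa_m$, and admissibility (the $\kappa_j$ lie more than $h$ apart) rules out any box of residue $\kappa_m+1$ other than $(1,2,m)$, forcing the string inductively along the first row of component $m$, which then must have length $\geq h+1$, contradicting $\la^{(m)}_1\leq h$. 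Consequently the cellular expansion of each such $e(\underline{i})$ involves only terms indexed by shapes outside $\mptn\ell n(h)$, so $e(\underline{i})\in B$ and $J\subseteq B$. Notice that the observation you stated informally in step (a) (the first row of a ``bad'' component carries $h+1$ consecutive residues) is in fact the raw material for this \emph{reverse} inclusion, not for the direct manipulation you sketch in (a). You should also make explicit that the argument silently relies on $\mptn\ell n(h)$ being $\theta$-saturated for the chosen weighting $\theta=(1,\dots,\ell)$, which is what makes $B$ a two-sided ideal in the first place.
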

\begin{rmk}
As in \cite[Definition 5.1]{hm10} and \cite[Definition 7.11]{manycell}, 
   the elements $\psi^{\theta,\kappa}_\sts $ for $\sts \in \Std(\lambda)$ are defined up to a choice of reduced expression for $d_\sts$, however   any such reduced expression can be chosen arbitrarily.  
\end{rmk}
\begin{rmk}
 It is shown in \cite[Corollary 7.3]{manycell} that 
for $h=1$    the algebra $ \Q_{1,\ell,n}(\kappa)$ is isomorphic to the generalised blob algebra of \cite[Section 1.2]{MW03}
(in particular $ \Q_{1,2,n}(\kappa)$ is isomorphic to the blob algebra of \cite[Section 2]{MW00}).  
\end{rmk}

\section{Diagrammatic Cherednik algebras}\label{sec:3}

We  recall the definitions and important properties of  diagrammatic Cherednik algebras from \cite{MR3732238}.  
  We first tilt  the $\theta$-Russian array of $\lambda\in \mptn\ell n$   ever-so-slightly in the clockwise direction so that the top vertex of the box $(r,c,m)\in \lambda$  
   has $x$-coordinate $\mathbf{I}_{(r,c,m)}=\theta_m + \ell(r-c) + (r+c)\varepsilon$ for 
$\varepsilon\ll \tfrac{1}{n}$ (using standard small-angle identities to approximate the coordinate to order  $\varepsilon^2$).   
Given $\la\in\mptn \ell n$, we let ${\bf I}_\la$ denote   
 the disjoint union over the $\mathbf{I}_{(r,c,m)}$ for $(r,c,m)\in \la$.  
    
    \begin{defn} \label{semistandard:defn} Let $\lambda,\mu \in \mptn {\ell}n$.  A $\lambda$-tableau of weight $\mu$ is a
  bijective map $\SSTT :  \lambda  \to {\mathbf I}_\mu$. 
%
%
   We say that a tableau $\SSTT$ is {\sf semistandard} if it  satisfies the following additional properties   \begin{itemize}
\item[$(i)$]     $\SSTT(1,1,m)>\theta_m$,
\item[$(ii)$]    $\SSTT(r,c,m)> \SSTT(r-1,c,m)  +\g$,
\item[$(iii)$] 
$\SSTT(r,c,m)> \SSTT(r,c-1,m) -\g$.   
\end{itemize}
We  denote the set of all  semistandard tableaux of shape $\lambda$
and weight $\mu$ by $\SStd_{\theta,\kappa}(\lambda,\mu)$.   Given $\SSTT \in 
\SStd_{\theta,\kappa}(\lambda,\mu)$, we   write $\Shape(\SSTT)=\lambda$.  
When the context is clear we write $\SStd (\lambda,\mu)$ or $\SStd_n (\lambda,\mu)$ for $ \SStd_{\theta,\kappa}(\lambda,\mu)$.   \end{defn}

  \begin{defn}
 We let $\SStd^+_n(\lambda,\mu) \subseteq \SStd_n(\lambda,\mu) $ denote the subset of 
 tableaux which respect residues.  
  In other words,  if $
  \SSTT(r,c,m)= \mathbf{I}_{(r',c',m')} \in {\mathbf I}_\mu$ for   $(r,c,m) \in  \lambda  $ 
  and $(r',c',m')\in \mu$, then  $\kappa_m+c-r = \kappa_{m'}+c'-r' \pmod e$.    
 \end{defn}

\!\!\!\!\!\!\!  
\begin{figure}[ht!]
 \[
\begin{tikzpicture}[baseline, thick,yscale=1,xscale=4]\scalefont{0.8}

  \draw[wei]  (-1.1, -2)  to[out=90,in=-90] (-1.1, 2) ;
  \draw[red] (-1.1, -2.05) [below] node{$\ \kappa_2$};

  \draw[wei]  (-1.2, -2) to[out=90,in=-90] (-1.2, 2);
   \draw[red] (-1.2, -2.05) [below] node{$\ \kappa_1$};
 
 \draw(-1.5,-2) rectangle (1.4,2);

  \draw[gray, densely dotted] (-0.4+-.5,-2) to[out=90,in=-90]   (-0.4+-1,-1)  to[out=90,in=-90](-0.4+1,.3) to[out=90,in=-90]  
    (-0.4+0,2);
  \draw[gray, densely dotted] (-0.4+.5,-2) to[out=90,in=-90]  (-0.4+.5,0) to[out=90,in=-90] (-0.4+1,2);
  \draw[gray, densely dotted]  (-0.4+1,-2) to[out=90,in=-90]  (-0.4+-1,1) to[out=90,in=-90] (-0.4+.5,2);
  \draw[gray, densely dotted] (-0.4+0,-2) to[out=90,in=-90]  
  (-0.4+0.1,0) to[out=90,in=-90]
  (-0.4+-.5,2);
  \draw[gray, densely dotted]  (-0.4+-1, -2) to[out=95,in=-90]  
  (-0.4+-.5,-1.1) to[out=90,in=-90] (-0.4+-1,0) to[out=90,in=-90] (-0.4+-.5,1)
  to[out=90,in=-90]   (-0.4+-1,2);

  \draw (-.5,-2) to[out=90,in=-90] node[below,at start]{$\ i_2$} (-1,-1)  to[out=90,in=-90](1,.2) to[out=90,in=-90]    (0,2);
  \draw (.5,-2) to[out=90,in=-90] node[below,at start]{$\ i_4$} (.5,0) to[out=90,in=-90] node[midway,circle,fill=black,inner sep=2pt]{} (1,2);
  \draw  (1,-2) to[out=90,in=-90]  node[below,at start]{$\ i_5$} (-1,1) to[out=90,in=-90]
   (.5,2);
  \draw (0,-2) to[out=90,in=-90] node[below,at start]{$\ i_3$}node[midway,below,circle,fill=black,inner sep=2pt]{}
  (0.05,0) to[out=90,in=-90]
  (-.5,2);
  \draw  (-1, -2) to[out=90,in=-90] node[below,at start]{$\ i_1$}
  (-.5,-1) to[out=90,in=-90] (-1,0) to[out=90,in=-90] (-.5,1)
  to[out=90,in=-90]   (-1,2);

\end{tikzpicture} 
\]
\caption{A $\theta$-diagram  for $\theta=(0,1)$  with northern and southern loading
given by ${\bf I}_\omega$ where $\omega= (\varnothing,(1^5))$. }
\label{adsfhlkjasdfhlkjsadfhkjlasdfhlkjadfshlkjdsafhljkadfshjlkasdf}
\end{figure}

\begin{defn}
We define a  $\theta$-{\sf diagram} {of type} $G(\ell,1,n)$ to  be a  {\sf frame}
$\mathbb{R}\times [0,1]$ with  distinguished  solid points on the northern and southern boundaries
given by   $\mathbf{I}_\mu$ and $\mathbf{I}_\lambda$ for some
$\lambda,\mu  \in \mptn \ell n$ and a 
collection of {\sf solid strands} each of which starts at a northern point, ${\bf I}_{(r,c,m)}$ for $(r,c,m)\in \mu$,  
 and ends at a southern
  point, ${\bf I}_{(r',c',m')}$ for $(r',c',m')\in \la$. 
  Each strand carries some residue, $i \in I$ say, and is referred to as a {\sf solid $i$-strand}.  
 We further require that each solid strand has a 
mapping diffeomorphically to $[0,1]$ via the projection to the $y$-axis.  
Each solid strand is allowed to carry any number of dots.  We draw
\begin{itemize}[leftmargin=*]
\item  a dashed line $\ell$ units to the left of each solid $i$-strand, which we call a {\sf ghost $i$-strand} or $i$-{\sf ghost};
\item vertical red lines at $\theta_m   \in 
\ZZ$ each of which carries a 
residue $\kappa_m$ for $1\leq m\leq \ell$ which we call a {\sf red $\kappa_m$-strand}. 
\end{itemize}
We refer to a point at which two strands cross as a {\sf double point}.  
We require that there are no triple points (points at which three strands cross) or tangencies
(points at which  two curves intersect without crossing one another at that point) involving any combination of strands, ghosts or red lines and no dots lie on crossings. \end{defn}

An example of a $\theta$-{\sf diagram} is given in  \cref{adsfhlkjasdfhlkjsadfhkjlasdfhlkjadfshlkjdsafhljkadfshjlkasdf}.

\begin{defn}[{\cite[Definition 4.1]{MR3732238}}]\label{defintino2}
The {\sf diagrammatic Cherednik algebra}, ${\bf A}(n,\theta,\kappa)$,  is the  $R$-algebra spanned by   all $\theta$-diagrams modulo the following local relations (here a local relation means one that 
can be applied on a small region of the diagram).
\begin{enumerate}[label=(2.\arabic*)] 
\item\label{rel1}  Any diagram may be deformed isotopically; that is,
 by a continuous deformation
 of the diagram which 
 avoids  tangencies, double points and dots on crossings. 
\item\label{rel2} 
For $i\neq j$ we have that dots pass through crossings. 
\[   \scalefont{0.8}\begin{tikzpicture}[scale=.5,baseline]
  \draw[very thick](-4,0) +(-1,-1) -- +(1,1) node[below,at start]
  {$i$}; \draw[very thick](-4,0) +(1,-1) -- +(-1,1) node[below,at
  start] {$j$}; \fill (-4.5,.5) circle (5pt);
    \node at (-2,0){=}; \draw[very thick](0,0) +(-1,-1) -- +(1,1)
  node[below,at start] {$i$}; \draw[very thick](0,0) +(1,-1) --
  +(-1,1) node[below,at start] {$j$}; \fill (.5,-.5) circle (5pt);
  \node at (4,0){ };
\end{tikzpicture}\]
 \item\label{rel3}  For two like-labelled strands we get an error term.
\[
\scalefont{0.8}\begin{tikzpicture}[scale=.5,baseline]
  \draw[very thick](-4,0) +(-1,-1) -- +(1,1) node[below,at start]
  {$i$}; \draw[very thick](-4,0) +(1,-1) -- +(-1,1) node[below,at
  start] {$i$}; \fill (-4.5,.5) circle (5pt);
   \node at (-2,0){=}; \draw[very thick](0,0) +(-1,-1) -- +(1,1)
  node[below,at start] {$i$}; \draw[very thick](0,0) +(1,-1) --
  +(-1,1) node[below,at start] {$i$}; \fill (.5,-.5) circle (5pt);
  \node at (2,0){+}; \draw[very thick](4,0) +(-1,-1) -- +(-1,1)
  node[below,at start] {$i$}; \draw[very thick](4,0) +(0,-1) --
  +(0,1) node[below,at start] {$i$};
\end{tikzpicture}  \quad \quad \quad
\scalefont{0.8}\begin{tikzpicture}[scale=.5,baseline]
  \draw[very thick](-4,0) +(-1,-1) -- +(1,1) node[below,at start]
  {$i$}; \draw[very thick](-4,0) +(1,-1) -- +(-1,1) node[below,at
  start] {$i$}; \fill (-4.5,-.5) circle (5pt);
       \node at (-2,0){=}; \draw[very thick](0,0) +(-1,-1) -- +(1,1)
  node[below,at start] {$i$}; \draw[very thick](0,0) +(1,-1) --
  +(-1,1) node[below,at start] {$i$}; \fill (.5,.5) circle (5pt);
  \node at (2,0){+}; \draw[very thick](4,0) +(-1,-1) -- +(-1,1)
  node[below,at start] {$i$}; \draw[very thick](4,0) +(0,-1) --
  +(0,1) node[below,at start] {$i$};
\end{tikzpicture}\]
\item\label{rel4} For double-crossings of solid strands with $i\neq j$, we have the following.
\[
\scalefont{0.8}\begin{tikzpicture}[very thick,scale=.5,baseline]
\draw (-2.8,-1) .. controls (-1.2,0) ..  +(0,2)
node[below,at start]{$i$};
\draw (-1.2,-1) .. controls (-2.8,0) ..  +(0,2) node[below,at start]{$i$};
\node at (-.5,0) {=};
\node at (0.4,0) {$0$};
\end{tikzpicture}
\hspace{.7cm}
\scalefont{0.8}\begin{tikzpicture}[very thick,scale=.5,baseline]
\draw (-2.8,-1) .. controls (-1.2,0) ..  +(0,2)
node[below,at start]{$i$};
\draw (-1.2,-1) .. controls (-2.8,0) ..  +(0,2)
node[below,at start]{$j$};
\node at (-.5,0) {=}; 

\draw (1.8,-1) -- +(0,2) node[below,at start]{$j$};
\draw (1,-1) -- +(0,2) node[below,at start]{$i$}; 
\end{tikzpicture}
\]
\end{enumerate}
\begin{enumerate}[resume, label=(2.\arabic*)]  
\item\label{rel5} If $j\neq i-1$,  then we can pass ghosts through solid strands.
\[\begin{tikzpicture}[very thick,xscale=1,yscale=.5,baseline]
\draw (1,-1) to[in=-90,out=90]  node[below, at start]{$i$} (1.5,0) to[in=-90,out=90] (1,1)
;
\draw[densely dashed] (1.5,-1) to[in=-90,out=90] (1,0) to[in=-90,out=90] (1.5,1);
\draw (2.5,-1) to[in=-90,out=90]  node[below, at start]{$j$} (2,0) to[in=-90,out=90] (2.5,1);
\node at (3,0) {=};
\draw (3.7,-1) -- (3.7,1) node[below, at start]{$i$}
;
\draw[densely dashed] (4.2,-1) to (4.2,1);
\draw (5.2,-1) -- (5.2,1) node[below, at start]{$j$};
\end{tikzpicture} \quad\quad \quad \quad 
\scalefont{0.8}\begin{tikzpicture}[very thick,xscale=1,yscale=.5,baseline]
\draw[densely dashed] (1,-1) to[in=-90,out=90] (1.5,0) to[in=-90,out=90] (1,1)
;
\draw (1.5,-1) to[in=-90,out=90] node[below, at start]{$i$} (1,0) to[in=-90,out=90] (1.5,1);
\draw (2,-1) to[in=-90,out=90]  node[below, at start]{$\tiny j$} (2.5,0) to[in=-90,out=90] (2,1);
\node at (3,0) {=};
\draw (4.2,-1) -- (4.2,1) node[below, at start]{$i$}
;
\draw[densely dashed] (3.7,-1) to (3.7,1);
\draw (5.2,-1) -- (5.2,1) node[below, at start]{$j$};
\end{tikzpicture}
\]
\end{enumerate} 
\begin{enumerate}[resume, label=(2.\arabic*)]  
\item\label{rel6} On the other hand, in the case where $j= i-1$, we have the following.
\[\scalefont{0.8}\begin{tikzpicture}[very thick,xscale=1,yscale=.5,baseline]
\draw (1,-1) to[in=-90,out=90]  node[below, at start]{$i$} (1.5,0) to[in=-90,out=90] (1,1)
;
\draw[densely dashed] (1.5,-1) to[in=-90,out=90] (1,0) to[in=-90,out=90] (1.5,1);
\draw (2.5,-1) to[in=-90,out=90]  node[below, at start]{$\tiny i\!-\!1$} (2,0) to[in=-90,out=90] (2.5,1);
\node at (3,0) {=};
\draw (3.7,-1) -- (3.7,1) node[below, at start]{$i$}
;
\draw[densely dashed] (4.2,-1) to (4.2,1);
\draw (5.2,-1) -- (5.2,1) node[below, at start]{$\tiny i\!-\!1$} node[midway,fill,inner sep=2.5pt,circle]{};
\node at (5.75,0) {$-$};

\draw (6.2,-1) -- (6.2,1) node[below, at start]{$i$} node[midway,fill,inner sep=2.5pt,circle]{};
\draw[densely dashed] (6.7,-1)-- (6.7,1);
\draw (7.7,-1) -- (7.7,1) node[below, at start]{$\tiny i\!-\!1$};
\end{tikzpicture}\]

\item\label{rel7} We also have the relation below, obtained by symmetry.  \[
\scalefont{0.8}\begin{tikzpicture}[very thick,xscale=1,yscale=.5,baseline]
\draw[densely dashed] (1,-1) to[in=-90,out=90] (1.5,0) to[in=-90,out=90] (1,1)
;
\draw (1.5,-1) to[in=-90,out=90] node[below, at start]{$i$} (1,0) to[in=-90,out=90] (1.5,1);
\draw (2,-1) to[in=-90,out=90]  node[below, at start]{$\tiny i\!-\!1$} (2.5,0) to[in=-90,out=90] (2,1);
\node at (3,0) {=};
\draw[densely dashed] (3.7,-1) -- (3.7,1);
\draw (4.2,-1) -- (4.2,1) node[below, at start]{$i$};
\draw (5.2,-1) -- (5.2,1) node[below, at start]{$\tiny i\!-\!1$} node[midway,fill,inner sep=2.5pt,circle]{};
\node at (5.75,0) {$-$};

\draw[densely dashed] (6.2,-1) -- (6.2,1);
\draw (6.7,-1)-- (6.7,1) node[midway,fill,inner sep=2.5pt,circle]{} node[below, at start]{$i$};
\draw (7.7,-1) -- (7.7,1) node[below, at start]{$\tiny i\!-\!1$};
\end{tikzpicture}\]
\end{enumerate}
\begin{enumerate}[resume, label=(2.\arabic*)]
\item\label{rel8} Strands can move through crossings of solid strands freely.
\[
\scalefont{0.8}\begin{tikzpicture}[very thick,scale=.5 ,baseline]
\draw (-2,-1) -- +(-2,2) node[below,at start]{$k$};
\draw (-4,-1) -- +(2,2) node[below,at start]{$i$};
\draw (-3,-1) .. controls (-4,0) ..  +(0,2)
node[below,at start]{$j$};
\node at (-1,0) {=};
\draw (2,-1) -- +(-2,2)
node[below,at start]{$k$};
\draw (0,-1) -- +(2,2)
node[below,at start]{$i$};
\draw (1,-1) .. controls (2,0) ..  +(0,2)
node[below,at start]{$j$};
\end{tikzpicture}
\]
\end{enumerate}
 for any $i,j,k\in I$.    Similarly, this holds for triple points involving ghosts, except for the following relations when $j=i-1$.
\begin{enumerate}[resume, label=(2.\arabic*)]  \Item\label{rel9}
\[
\scalefont{0.8}\begin{tikzpicture}[very thick,xscale=1,yscale=.5,baseline]
\draw[densely dashed] (-2.6,-1) -- +(-.8,2);
\draw[densely dashed] (-3.4,-1) -- +(.8,2); 
\draw (-1.1,-1) -- +(-.8,2)
node[below,at start]{$\tiny j$};
\draw (-1.9,-1) -- +(.8,2)
node[below,at start]{$\tiny j$}; 
\draw (-3,-1) .. controls (-3.5,0) ..  +(0,2)
node[below,at start]{$i$};

\node at (-.75,0) {=};

\draw[densely dashed] (.4,-1) -- +(-.8,2);
\draw[densely dashed] (-.4,-1) -- +(.8,2);
\draw (1.9,-1) -- +(-.8,2)
node[below,at start]{$\tiny j$};
\draw (1.1,-1) -- +(.8,2)
node[below,at start]{$\tiny j$};
\draw (0,-1) .. controls (.5,0) ..  +(0,2)
node[below,at start]{$i$};

\node at (2.25,0) {$-$};

\draw (4.9,-1) -- +(0,2)
node[below,at start]{$\tiny j$};
\draw (4.1,-1) -- +(0,2)
node[below,at start]{$\tiny j$};
\draw[densely dashed] (3.4,-1) -- +(0,2);
\draw[densely dashed] (2.6,-1) -- +(0,2);
\draw (3,-1) -- +(0,2) node[below,at start]{$i$};
\end{tikzpicture}
\]
\Item\label{rel10}
\[
\scalefont{0.8}\begin{tikzpicture}[very thick,xscale=1,yscale=.5,baseline]
\draw[densely dashed] (-3,-1) .. controls (-3.5,0) ..  +(0,2);  
\draw (-2.6,-1) -- +(-.8,2)
node[below,at start]{$i$};
\draw (-3.4,-1) -- +(.8,2)
node[below,at start]{$i$};
\draw (-1.5,-1) .. controls (-2,0) ..  +(0,2)
node[below,at start]{$\tiny j$};

\node at (-.75,0) {=};

\draw (0.4,-1) -- +(-.8,2)
node[below,at start]{$i$};
\draw (-.4,-1) -- +(.8,2)
node[below,at start]{$i$};
\draw[densely dashed] (0,-1) .. controls (.5,0) ..  +(0,2);
\draw (1.5,-1) .. controls (2,0) ..  +(0,2)
node[below,at start]{$\tiny j$};

\node at (2.25,0) {$+$};

\draw (3.4,-1) -- +(0,2)
node[below,at start]{$i$};
\draw (2.6,-1) -- +(0,2)
node[below,at start]{$i$}; 
\draw[densely dashed] (3,-1) -- +(0,2);
\draw (4.5,-1) -- +(0,2)
node[below,at start]{$\tiny j$};
\end{tikzpicture}
\]
\end{enumerate}
In the diagrams with crossings in \ref{rel9} and \ref{rel10}, we say that the solid (respectively ghost) strand bypasses the crossing of ghost strands (respectively solid strands).
The ghost strands may pass through red strands freely.
For $i\neq j$, the solid $i$-strands may pass through red $j$-strands freely. If the red and solid strands have the same label, a dot is added to the solid strand when straightening.  Diagrammatically, these relations are given by the following diagrams and their mirror images
\begin{enumerate}[resume, label=(2.\arabic*)] \Item\label{rel11}
\[
\scalefont{0.8}\begin{tikzpicture}[very thick,baseline,scale=.5]
\draw (-2.8,-1)  .. controls (-1.2,0) ..  +(0,2)
node[below,at start]{$i$};
\draw[wei] (-1.2,-1)  .. controls (-2.8,0) ..  +(0,2)
node[below,at start]{$i$};

\node at (-.3,0) {=};

\draw[wei] (2.8,-1) -- +(0,2)
node[below,at start]{$i$};
\draw (1.2,-1) -- +(0,2)
node[below,at start]{$i$};
\fill (1.2,0) circle (3pt);

\draw[wei] (6.8,-1)  .. controls (5.2,0) ..  +(0,2)
node[below,at start]{$j$};
\draw (5.2,-1)  .. controls (6.8,0) ..  +(0,2)
node[below,at start]{$i$};

\node at (7.7,0) {=};

\draw (9.2,-1) -- +(0,2)
node[below,at start]{$i$};
\draw[wei] (10.8,-1) -- +(0,2)
node[below,at start]{$j$};
\end{tikzpicture}
\]
\end{enumerate}\newpage
\noindent for $i\neq j$. All solid crossings and dots can pass through  red strands, with a
correction term:
\begin{enumerate}[resume, label=(2.\arabic*)]
\Item\label{rel12}
\[
\scalefont{0.8}\begin{tikzpicture}[very thick,baseline,scale=.5]
\draw (-2,-1)  -- +(-2,2)
node[at start,below]{$i$};
\draw (-4,-1) -- +(2,2)
node [at start,below]{$j$};
\draw[wei] (-3,-1) .. controls (-4,0) ..  +(0,2)
node[at start,below]{$k$};

\node at (-1,0) {=};

\draw (2,-1) -- +(-2,2)
node[at start,below]{$i$};
\draw (0,-1) -- +(2,2)
node [at start,below]{$j$};
\draw[wei] (1,-1) .. controls (2,0) .. +(0,2)
node[at start,below]{$k$};

\node at (2.8,0) {$+ $};

\draw (7.5,-1) -- +(0,2)
node[at start,below]{$i$};
\draw (5.5,-1) -- +(0,2)
node [at start,below]{$j$};
\draw[wei] (6.5,-1) -- +(0,2)
node[at start,below]{$k$};
\node at (3.8,-.2){$\delta_{i,j,k}$};
\end{tikzpicture}
\]
\Item\label{rel13}
\[
\scalefont{0.8}\begin{tikzpicture}[scale=.5,very thick,baseline=2cm]
\draw[wei] (-2,2) -- +(-2,2);
\draw (-3,2) .. controls (-4,3) ..  +(0,2);
\draw (-4,2) -- +(2,2);

\node at (-1,3) {=};

\draw[wei] (2,2) -- +(-2,2);
\draw (1,2) .. controls (2,3) ..  +(0,2);
\draw (0,2) -- +(2,2);

\draw (6,2) -- +(-2,2);
\draw (5,2) .. controls (6,3) ..  +(0,2);
\draw[wei] (4,2) -- +(2,2);

\node at (7,3) {=};

\draw (10,2) -- +(-2,2);
\draw (9,2) .. controls (10,3) ..  +(0,2);
\draw[wei] (8,2) -- +(2,2);
\end{tikzpicture}
\]

\Item\label{rel14}
\[
\scalefont{0.8}\begin{tikzpicture}[very thick,baseline,scale=.5]
\draw(-3,0) +(-1,-1) -- +(1,1);
\draw[wei](-3,0) +(1,-1) -- +(-1,1);
\fill (-3.5,-.5) circle (3pt);
\node at (-1,0) {=};
\draw(1,0) +(-1,-1) -- +(1,1);
\draw[wei](1,0) +(1,-1) -- +(-1,1);
\fill (1.5,.5) circle (3pt);

\draw[wei](1,0) +(3,-1) -- +(5,1);
\draw(1,0) +(5,-1) -- +(3,1);
\fill (5.5,-.5) circle (3pt);
\node at (7,0) {=};
\draw[wei](5,0) +(3,-1) -- +(5,1);
\draw(5,0) +(5,-1) -- +(3,1);
\fill (8.5,.5) circle (3pt);
\end{tikzpicture}
\]
\end{enumerate}
Finally, we have the following non-local idempotent relation.
\begin{enumerate}[resume, label=(2.\arabic*)]
\item\label{rel15}
Any idempotent in which a solid strand is $\ell n$ units to the left of the leftmost red-strand is referred to as unsteady and set to be equal to zero.
\end{enumerate}
The product $d_1 d_2$ of two diagrams $d_1,d_2 \in A(n, \theta, \kappa)$ is given by putting $d_1$ on top of $d_2$.
This product is defined to be $0$ unless the southern border of $d_1$ is given by the same loading as the northern border of $d_2$ with residues of strands matching in the obvious manner, in which case we obtain a new diagram with loading and labels inherited from those of $d_1$ and $d_2$.  
\end{defn}

\begin{prop}[{\cite[Section 4]{MR3732238}}]\label{grsubsec}
There is a $\ZZ$-grading on the  algebra ${\bf A}(n,\theta, \kappa)$   as follows:
 $(i)$ dots have degree 2;
 $(ii)$ 
the crossing of two strands has degree 0, unless they have the
same label, in which case it has degree $-2$;
 $(iii)$  the crossing of a solid strand with label $i$ and a ghost has degree 1 if the ghost has label $i-1$ and 0 otherwise;
 $(iv)$ 
the crossing of a solid strand with a red strand has degree 0, unless they have the same label, in which case it has degree 1.  
In other words,
\[
\deg\tikz[baseline,very thick,scale=1.5]{\draw
(0,.3) -- (0,-.1) node[at end,below,scale=.8]{$i$}
node[midway,circle,fill=black,inner
sep=2pt]{};}=
2 \qquad \deg\tikz[baseline,very thick,scale=1.5]
{\draw (.2,.3) --
(-.2,-.1) node[at end,below, scale=.8]{$i$}; \draw
(.2,-.1) -- (-.2,.3) node[at start,below,scale=.8]{$j$};} =-2\delta_{i,j} \qquad  
\deg\tikz[baseline,very thick,scale=1.5]{\draw[densely dashed] 
(-.2,-.1)-- (.2,.3) node[at start,below, scale=.8]{$i$}; \draw
(.2,-.1) -- (-.2,.3) node[at start,below,scale=.8]{$j$};} =\delta_{j,i+1} \qquad \deg\tikz[baseline,very thick,scale=1.5]{\draw (.2,.3) --
(-.2,-.1) node[at end,below, scale=.8]{$i$}; \draw [densely dashed]
(.2,-.1) -- (-.2,.3) node[at start,below,scale=.8]{$j$};} =\delta_{j,i-1}\]
\[
\deg\tikz[baseline,very thick,scale=1.5]{ \draw[wei]
(-.2,-.1)-- (.2,.3) node[at start,below, scale=.8]{$i$}; \draw
(.2,-.1) -- (-.2,.3) node[at start,below,scale=.8]{$j$};} =\delta_{i,j} 
\qquad \deg\tikz[baseline,very thick,scale=1.5] {\draw (.2,.3) --
(-.2,-.1) node[at end,below, scale=.8]{$i$}; \draw[wei]   (.2,-.1) -- (-.2,.3) node[at start,below,scale=.8]{$j$};} =\delta_{j,i}.
\]
\end{prop}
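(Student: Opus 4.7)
The plan is to prove that the stipulated degrees on the elementary pieces (dots, solid–solid crossings, solid–ghost crossings, solid–red crossings) extend to a well-defined $\mathbb{Z}$-grading on ${\bf A}(n,\theta,\kappa)$. Every $\theta$-diagram decomposes as a vertical composition of such elementary local pieces, so additive extension gives a tentative degree function on the free $R$-module of diagrams. The entire task is to show that this extension descends to the quotient by the relations \ref{rel1}--\ref{rel15}; equivalently, that both sides of every relation are homogeneous of the same degree.

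First I would dispose of the trivial cases: isotopy \ref{rel1} manifestly preserves the multiset of elementary crossings and dots, and the unsteady idempotent relation \ref{rel15} sets an entire diagram to zero (which is homogeneous of every degree). This reduces the problem to the finite list of local relations \ref{rel2}--\ref{rel14}.

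Next I would tabulate, from the statement of the proposition, the degree of each generator and then pass through the remaining relations case by case. For each one, compute by additivity the degree of every monomial appearing on either side and check the degrees agree. For example: in \ref{rel3} the like-label crossing contributes $-2$ and a single dot contributes $+2$, so the LHS has degree $0$; the first RHS term has the dot moved but is otherwise identical (degree $0$), and the correction term consists of two uncrossed like-label strands (degree $0$). In \ref{rel6} the crossing of an $i$-ghost with a solid $(i-1)$-strand contributes $+1$, giving degree $+1$ on the LHS; the first RHS term likewise has one such crossing; the subtracted term has a dot of degree $2$ and an opposite-type ghost crossing of degree $-1$, totalling $+1$. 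Relations \ref{rel2}, \ref{rel4}, \ref{rel5}, \ref{rel7}--\ref{rel10} and the red-strand interactions \ref{rel11}--\ref{rel14} are verified identically, reading off the red contributions from the last two formulas in the proposition.

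The main obstacle --- really only bookkeeping --- is ensuring that in every relation with a correction term (namely \ref{rel3}, \ref{rel6}, \ref{rel7}, \ref{rel9}, \ref{rel10}, \ref{rel12}) every summand shares a common degree. Because the local relations are designed to lift the KLR relations \eqref{rel1.1}--\eqref{rel1.11} to the Cherednik setting, the matching of degrees is forced by the homogeneity of the standard KLR grading
\[\deg(e(\underline i))=0,\qquad \deg(y_r)=2,\qquad \deg(\psi_r e(\underline i))=-a_{i_r,i_{r+1}},\]
under the dictionary ``solid strand $=$ KLR strand, ghost strand $=$ shifted copy $\ell$ units to the left, red strand $=$ cyclotomic boundary''; the extra ghost and red contributions in the diagrammatic grading are precisely what is needed to absorb the shift. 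Once each relation is individually verified to be homogeneous, the additively extended degree function factors through the quotient by \ref{rel1}--\ref{rel15} and endows ${\bf A}(n,\theta,\kappa)$ with the claimed $\mathbb{Z}$-grading.
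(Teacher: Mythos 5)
The paper does not prove this proposition; it is cited to Webster \cite[Section~4]{MR3732238}, so there is no internal proof to compare against. Your strategy---extend the degree additively over elementary local pieces of a diagram and then verify that each defining relation in \cref{defintino2} is homogeneous so that the degree descends to the quotient---is the correct and essentially the only sensible approach, and it is what Webster's argument amounts to. Two remarks are in order.

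First, your worked example for relation~\ref{rel6} is miscounted. On the left-hand side of~\ref{rel6} the solid $i$-strand makes a \emph{double} crossing with the $(i-1)$-ghost, and each of those crossings has degree $\delta_{(i-1),i-1}=1$, so the left-hand side is homogeneous of degree $2$, not $1$. Neither right-hand term contains a ghost crossing: each is a triple of vertical strands carrying a single dot (on the solid $(i-1)$-strand in the first term, on the solid $i$-strand in the subtracted term), each contributing degree $2$. The relation is homogeneous of degree $2$ throughout. This is only a slip in the illustration, not in the method, but it should be corrected: carrying out the case-check carelessly is exactly the failure mode the method is designed to avoid.

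Second, the closing appeal to the homogeneity of the KLR grading is suggestive but does not on its own do the work you want. The diagrammatic relations~\ref{rel5}--\ref{rel10} genuinely involve ghost strands and are not verbatim the KLR relations~\eqref{rel1.5}--\eqref{rel1.11}; passing to ${\sf E}_\omega {\bf A}(n,\theta,\kappa){\sf E}_\omega$ recovers the KLR algebra but only after the grading on ${\bf A}(n,\theta,\kappa)$ is already in hand, so invoking the KLR grading here would be circular. The finite case-check over~\ref{rel2}--\ref{rel14} must actually be done; once it is, the argument is complete.
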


  Let  $\SSTT$ be any tableau of shape $\lambda$ and weight $\mu$. Associated to 
 $\SSTT$, we have a $\theta$-diagram $C_\SSTT$   with 
distinguished  solid points on the  northern and  southern borders given by ${\bf I}_\la$ and  ${\bf I}_\mu$ respectively;  
 the $n$ solid strands each connect  a northern and southern distinguished point and  
 are drawn so that they trace out the bijection determined by $\SSTT$ in such a way that we use the minimal number of
  crossings; 
 the strand terminating at point   $({\bf I}_{(r,c,m)},0)$ for $(r,c,m) \in \lambda$ carries residue equal to  $\res(r,c,m)\in I$.  
 This diagram is not unique up to isotopy, but we can choose one such diagram arbitrarily.  
Given a pair of semistandard tableaux of the same shape
$(\SSTS,\SSTT)\in\SStd (\lambda,\mu)\times\SStd (\lambda,\nu)$, we have
a diagram  ${C}_{\SSTS  \SSTT}=C_\SSTS C_\SSTT^\ast$ where $C^\ast_\SSTT$
is the diagram obtained from $C_\SSTT $ by flipping it through the
horizontal axis.
 
 \begin{thm}[{\cite[Section 2.6]{MR3732238},\cite[Theorem 3.19]{manycell}}] \label{cellularitybreedscontempt}
The $R$-algebra ${\bf A}(n,\theta,\kappa)$  is a graded cellular algebra with  basis 
\[
 \{C_{\SSTS  \SSTT} \mid \SSTS \in \SStd (\lambda,\mu), \SSTT\in \SStd (\lambda,\nu), 
\lambda,\mu, \nu \in \mptn {\ell}n\}
\]
with respect to the $\theta$-dominance order on $\mptn \ell n(h)$ and the involution $\ast$.  

 \end{thm}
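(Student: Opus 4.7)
The plan is to verify the four axioms of \cref{defn1} for the proposed cell datum with weight poset $(\mptn{\ell}n,\trianglerighteq_\theta)$ and, for each $\la\in\mptn{\ell}n$, the indexing set consisting of pairs drawn from $\bigsqcup_{\mu}\SStd(\la,\mu)$.

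First I would establish homogeneity. Writing $C_{\SSTS\SSTT}=C_\SSTS C_\SSTT^\ast$, it suffices to check that $\deg(C_\SSTS)=\deg(\SSTS)$ using the rules of \cref{grsubsec}. This is a bookkeeping exercise: the solid/ghost crossings in $C_\SSTS$ encode precisely the addable and removable residue-boxes tracked by $\mathcal{A}_\SSTS(k)$ and $\mathcal{R}_\SSTS(k)$, and the solid/red crossings correctly account for the contributions from the multicharge. The anti-involution axiom (4) is immediate from the construction, since flipping $C_\SSTS C_\SSTT^\ast$ about the horizontal axis yields $C_\SSTT C_\SSTS^\ast=C_{\SSTT\SSTS}$.

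Next I would prove spanning. Given an arbitrary $\theta$-diagram $d$, the strategy is a straightening algorithm: use \ref{rel1}--\ref{rel3} to move dots into canonical positions; use \ref{rel4} to resolve non-like double crossings; use \ref{rel8}--\ref{rel10} to push triple points past one another; use \ref{rel11}--\ref{rel14} to transport solid crossings across the red strands (generating correction terms with lower numbers of crossings or strictly more dots); and use \ref{rel15} to kill any diagram whose solid strands stray too far to the left. The crucial combinatorial observation is that whenever a local move produces an ``error term,'' that term corresponds to a diagram whose endpoint data on one side has been rearranged in a direction compatible with $\theta$-dominance on $\mptn{\ell}n$. Iterating, every diagram reduces, modulo higher terms in the dominance order, to a linear combination of $C_{\SSTS\SSTT}$'s corresponding to minimal-crossing diagrams, and semistandardness $(i)$--$(iii)$ in \cref{semistandard:defn} arises as the normal form selecting one representative per bijection on each shape $\la$.

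The cellular multiplication property (3) is the heart of the argument. One computes the product $a\,C_{\SSTS\SSTT}=aC_\SSTS\cdot C_\SSTT^\ast$ by stacking the diagram of $a$ atop $C_\SSTS$ and straightening. Any straightening move that alters the bijection between northern and southern boundaries either preserves the shape $\la$ and changes $\SSTS$ to some $\SSTU\in\SStd(\la,\mu)$, or else it produces error diagrams in which two strands of the same residue are rerouted, which by the definition of $\SStd$ and the geometry of ${\bf I}_\la$ correspond to tableaux of shape strictly $\theta$-dominating $\la$; these belong to $A^{\vartriangleright\la}$. Crucially, the resulting coefficient $r_{\SSTS\SSTU}(a)$ does not involve $\SSTT$ because $C_\SSTT^\ast$ is untouched by the straightening above it.

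Finally, linear independence can be established by exhibiting, for each $\la$, a cell module on which the candidate basis acts faithfully; equivalently, one may invoke a dimension count matching the known rank of ${\bf A}(n,\theta,\kappa)$ established in \cite{MR3732238}. The main obstacle is the triangularity step: one must set up the bookkeeping carefully enough that the ``errors'' produced by the KLR-type relations \ref{rel3}, \ref{rel6}, \ref{rel7}, \ref{rel9}, \ref{rel10}, \ref{rel12}, \ref{rel14} are all checked to lie in $A^{\vartriangleright\la}$, and this requires a precise understanding of how the loading ${\bf I}_\la$ interacts with $\theta$-dominance as codified in \cref{domdef}.
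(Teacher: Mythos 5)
The paper does not prove this theorem: it is quoted verbatim from Webster \cite[Section 2.6]{MR3732238} and from \cite[Theorem 3.19]{manycell}, and no proof is supplied here. So there is no internal argument to compare against; I can only evaluate your sketch against what those sources actually do.

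Your outline follows the standard straightening strategy used in those references (move dots and crossings into canonical positions, identify error terms as higher in the dominance order, isolate semistandard diagrams as normal forms), so the general shape of the argument is correct. However, the proposal defers precisely the parts that carry the mathematical weight. The assertion that every error term produced by relations \ref{rel3}, \ref{rel6}, \ref{rel7}, \ref{rel9}, \ref{rel10}, \ref{rel12}, \ref{rel14} lands in $A^{\vartriangleright\la}$ is stated as an ``observation,'' but it is the core of the theorem and requires a case-by-case local analysis of how each relation reroutes strands relative to the loading ${\bf I}_\la$, together with a global argument for the unsteady relation \ref{rel15}, which is not local and interacts with the dominance order in a qualitatively different way. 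Likewise, ``semistandardness arises as the normal form'' is the content of a nontrivial combinatorial lemma in Webster's paper, not something that drops out of the straightening procedure for free; one must show both that every reduced diagram is of the form $C_{\SSTS\SSTT}$ with $\SSTS,\SSTT$ semistandard (surjectivity of the normal form) and that the construction is independent of the arbitrary choice of minimal-crossing diagram $C_\SSTS$ modulo higher terms, a well-definedness issue you do not address. Finally, resolving linear independence by ``a dimension count matching the known rank of ${\bf A}(n,\theta,\kappa)$ established in \cite{MR3732238}'' is circular if you are trying to reprove the theorem: that rank is established in \cite{MR3732238} as a consequence of constructing this very basis. The non-circular route, which Webster uses, is to build a faithful polynomial representation of the algebra and show the proposed spanning set acts by linearly independent operators; your ``exhibit a cell module on which the basis acts faithfully'' gestures at this but does not supply the module.
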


The  {\sf radical} of  a finite-dimensional $A$-module $M$,  denoted   $\rad M$, is     the smallest submodule of $M$ such that the corresponding quotient is semisimple.  We then let $\rad^2 M = \rad (\rad M)$ and inductively define the {\sf    radical series}, $ \rad^i M $, of $M$ by $\rad^{i+1} M = \rad(\rad^i M)$.  We have a finite chain
\[
M  \supset \rad (M) \supset \rad^2 (M) \supset \cdots \supset \rad^i (M) \supset \rad^{i+1} (M) \supset \cdots  \supset\rad^{s} (M)= 0.
\]

\begin{thm}[{\cite[Theorem 6.2]{MR3732238}}]\label{step1}
Over $\mathbb C$, the 
  diagrammatic Cherednik algebra  ${\bf A}(n,\theta,\kappa)$ is standard Koszul. The grading coincides with the radical filtration on standard modules as follows,
$$
d_{\lambda\mu}^{{\bf A}(n,\theta,\kappa)}(t) = \sum_k [ \rad^k(\Delta^{{\bf A}(n,\theta,\kappa)}(\la))  /  \rad^{k+1}(\Delta^{{\bf A}(n,\theta,\kappa)}(\la))    : L^{{\bf A}(n,\theta,\kappa)}(\mu)] t^k   
$$
and hence $d_{\lambda\mu}(t)\in t\ZZ_{\geq 0}[t]$ for $\lambda\neq \mu \in \mptn {\ell}n$.
\end{thm}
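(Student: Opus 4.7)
The plan is to deduce standard Koszulity of $\mathbf{A}(n,\theta,\kappa)$ over $\mathbb{C}$ by transferring known Koszulity through a Morita/derived equivalence with a more classical object, and then to extract the radical-filtration statement as a formal consequence of the Koszul property.

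First, I would use the graded cellular basis of \cref{cellularitybreedscontempt} together with \cref{grsubsec} to verify the structural prerequisites: $\mathbf{A}(n,\theta,\kappa)$ is a positively graded, finite-dimensional, quasi-hereditary algebra over $\mathbb{C}$ whose degree zero part is spanned by the idempotent diagrams and is therefore semisimple (a product of copies of $\mathbb{C}$, one for each loading/residue data corresponding to some $\mu\in\mptn{\ell}{n}$). Each standard module $\Delta(\lambda)$ is then graded with one-dimensional top concentrated in degree zero, which is the setup required to talk about standard Koszulity.

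Second, I would invoke (the $\mathbb{C}$-version of) Rouquier's conjecture, now a theorem via \cite{RSVV,losev,MR3732238}, which identifies $\mathbf{A}(n,\theta,\kappa)\mhyphen\textbf{Mod}$ with a truncation of affine parabolic Kazhdan--Lusztig category $\mathcal{O}$ of type $A$ (equivalently, with the category $\mathcal{O}$ of the rational Cherednik algebra for $G(\ell,1,n)$ at the appropriate parameter). By Beilinson--Ginzburg--Soergel (and its affine parabolic extension), the target category is standard Koszul. The key technical point is that the equivalence is grading-preserving: since both sides are positively graded quasi-hereditary with a fixed semisimple degree zero part, any Morita equivalence between their ungraded versions lifts (up to an overall shift normalised by demanding the simple heads sit in degree zero) to a unique grading-compatible equivalence. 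Transporting Koszulity through this equivalence yields standard Koszulity for $\mathbf{A}(n,\theta,\kappa)$.

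Once standard Koszulity is established, the radical-filtration statement is formal. For any standard Koszul algebra the graded piece $\Delta(\lambda)_k$ coincides with $\rad^{k}(\Delta(\lambda))/\rad^{k+1}(\Delta(\lambda))$: one direction uses that the degree zero part generates $\Delta(\lambda)$, forcing $\rad^k(\Delta(\lambda))\subseteq\Delta(\lambda)_{\geq k}$; the reverse inclusion follows from the linearity of the projective resolution of $\Delta(\lambda)$, which ensures that no composition factor at degree $k$ can occur strictly above $\rad^k$. Unwinding the definition of $d_{\lambda\mu}^{\mathbf{A}(n,\theta,\kappa)}(t)$ as the Poincaré series of multiplicities in the grading then gives exactly the displayed identity, and positivity of coefficients together with $d_{\lambda\mu}(t)\in t\ZZ_{\geq 0}[t]$ for $\lambda\neq\mu$ falls out because $L(\mu)$ can only appear in strictly positive degrees of $\Delta(\lambda)$ when $\lambda\neq \mu$.

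The main obstacle will be the grading-compatibility step: the Koszul grading on parabolic category $\mathcal{O}$ is defined homologically (via $\mathrm{Ext}$-vanishing) while the grading on $\mathbf{A}(n,\theta,\kappa)$ from \cref{grsubsec} is combinatorial (counting dots, like-label crossings, and ghost/red interactions). Reconciling them requires comparing graded dimensions of $\Delta(\lambda)$ on both sides with the relevant parabolic Kazhdan--Lusztig polynomials; this can be done by induction on the dominance order using the cellular filtrations, but the bookkeeping with weightings and residue sequences is delicate. Once the two gradings are matched, the rest of the argument is a direct application of standard tools (Beilinson--Ginzburg--Soergel type Koszul duality and the formal homological lemma above).
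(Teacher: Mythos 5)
The paper does not prove this statement at all: it is quoted verbatim as \cite[Theorem 6.2]{MR3732238} (Webster's paper on Rouquier's conjecture and diagrammatic algebra), so there is no internal proof to compare yours against. Your proposal is therefore a reconstruction of Webster's argument rather than an alternative to anything in the present paper.

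That said, a few remarks on the proposal itself. Your outline is in the right spirit and the formal lemma at the end (positive grading with semisimple degree-zero part plus Koszulity of the algebra, hence $A_{>0}^k = A_{\geq k}$, hence $\rad^k(\Delta(\lambda)) = \Delta(\lambda)_{\geq k}$ for a standard module generated in degree zero) is correct; this is exactly how the radical-filtration identity and the containment $d_{\lambda\mu}(t)\in t\ZZ_{\geq 0}[t]$ for $\lambda\neq\mu$ fall out once standard Koszulity is known. However, there is a circularity hazard in step two: you cite \cite{MR3732238} itself as one of the proofs of Rouquier's conjecture, but that is precisely the paper in which the theorem you are trying to prove appears, and in Webster's paper the Koszulity result is an \emph{input} to (not a consequence of) his proof of Rouquier's conjecture. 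To avoid the loop you would need to rely only on the independent proofs (\cite{RSVV,losev}), and in addition establish separately that $\mathbf{A}(n,\theta,\kappa)\mhyphen\textbf{Mod}$ is equivalent to the relevant Cherednik/parabolic category $\mathcal{O}$ as a graded highest-weight category — which is itself a substantial part of \cite{MR3732238}. You correctly flag the matching of the diagrammatic grading with the homological Koszul grading as the delicate point; that matching is carried out in Webster's paper via a uniqueness-of-Koszul-gradings argument in the style of Beilinson--Ginzburg--Soergel, and your sketch of it (normalise so heads sit in degree zero, then uniqueness) is the right idea, though the details are nontrivial. In short: the overall strategy is sound and close to the actual source, but as written it risks assuming a form of the conclusion, and the gap you identify is indeed where the real work lives.
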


\section{Modular representations of cyclotomic Hecke and \\  diagrammatic Cherednik algebras}\label{sec:4}

Let $\Bbbk$ be an arbitrary field of characteristic $p>0$.  
We wish to understand as much of the representation theory of    symmetric groups  and cyclotomic Hecke algebras over $\Bbbk$ as   possible. 
 As made precise  in \cite{MR3732238,manycell}, this is equivalent to understanding the representation theory of 
 diagrammatic Cherednik algebras for arbitrary weightings $\theta\in\ZZ^\ell$.
 A long-standing belief in modular representation theory of algebraic groups is that we should (first)
  restrict our attention to    fields whose characteristic is greater  than the Coxeter number of the group.  
  This is equivalent (via Ringel duality) 
to considering the  sub/quotient category of symmetric group representations labelled by partitions with at most 
  $h$  columns over a field of characteristic $p>h$.

 \begin{defn}
Let  $Q\subseteq\mptn \ell n$.
We say that $Q$ is {\sf saturated} if for any $\alpha\in Q$ and $\beta \in \mptn \ell n$ with $\beta \vartriangleleft_\theta \alpha$, we have that $\beta \in Q$. 
We say that $Q$ is {\sf cosaturated} if its complement in $\mptn \ell n$ is saturated.
We say that $Q$ is {\sf closed} if it is the intersection of a saturated  set and a co-saturated set.
\end{defn}

\begin{defn}[see {\cite[Proposition 2.4]{bs16}}]\label{bs16iguess}
Let $E$ (respectively $F$) denote a saturated (respectively co-saturated) subset of  $\mptn \ell n$, so that $E\cap F \subseteq \mptn \ell n$ is closed.  We let 
\[
e = \sum_{\mu \in E \cap F} {\sf1}_\mu \quad \text{and} \quad
f = \sum_{\mu \in F\setminus E} {\sf1}_\mu
\]
in ${\bf A} (n,\theta,\kappa)  $.
We let ${\bf A}_{E\cap F}(n,\theta,\kappa) $ denote the subquotient of ${\bf A}(n,\theta,\kappa)$ given by
\[
{\bf A}_{E\cap F}(n,\theta,\kappa) = e({\bf A}(n,\theta,\kappa)/({\bf A}(n,\theta,\kappa) f {\bf A}(n,\theta,\kappa)))e 
\]
which is  cellular with respect to the basis 
\[
\{C_{\SSTS \SSTT}\mid \SSTS \in \SStd (\lambda,\mu), \, \SSTT\in\SStd (\lambda,\nu), \, \lambda, \mu, \nu \in E \cap F\},
\]
\end{defn}

\begin{eg}
Let $\ell=1$ and let $\theta\in \ZZ$ and $\kappa \in I$
 be arbitrary.
  The set $ \mptn 1 n(h)\subseteq \mptn 1 n$ of partitions with at most $h$ columns is saturated in the $\theta$-dominance ordering.
  Over fields of characteristic $p>h$ the representation theory of the algebra 
${\bf A}_  {\mptn 1 n(h)}(n,\theta,\kappa)$
 can be understood in terms of the $p$-canonical basis \cite[Theorem 1.9]{w16}.  
 For $p\gg h$ this simplifies a great deal and can be understood combinatorially in terms of Kazhdan--Lusztig theory.  
 An iterative approach for passing from $p\gg h$ to smaller primes is the subject of  \cite{MR3316914}.  
\end{eg}

Let $\theta\in \ZZ^\ell$ be such that $ \theta_i - \theta_j >n\ell$ for any $1\leq i < j \leq \ell$.  We say that such a weighting is {\sf well separated}.  
For such a weighting, we have that ${\bf A}(n,\theta,\kappa)$ is Morita equivalent 
to the cyclotomic $q$-Schur algebra of Dipper, James, and Mathas \cite[Definition 6.1]{MR1658581} (see \cite[Section 3.3]{MR3732238}).  
This is the 
 most classical example of a $\theta$-dominance ordering.  
  This order is the worst possible ordering for our purposes.  
This is because if $Q$ is a   ``natural''  subset of $\mptn\ell n$ which is 
closed under the $\theta$-dominance order, then understanding the decomposition matrices of 
 ${{\bf A}_{Q }(n,\theta,\kappa)}   $ for $n\in \mathbb{Z}_{\geq 0}$  contains as a subproblem 
 that of understanding the entire decomposition matrix of $\mathfrak{S}_n$.   

\begin{eg}\label{funkyexample}
For example let $\ell=3$ and $\theta\in \ZZ^3$ be a  well-separated weighting and suppose 
 $Q\subseteq \mptn \ell n$ is a closed  subset.  If 
$(\lambda, \varnothing , \varnothing ), (  \varnothing , \varnothing , \nu) \in Q\subseteq  \mptn 3 n$, then  
$\{ (  \varnothing  , \mu, \varnothing ) \mid \mu \in \mptn 1 n\}\subseteq Q \subseteq  \mptn 3 n$.
In particular, for  $n\gg p$ we find that understanding ${\bf A}_Q(n,\theta,\kappa)$ is at least as difficult as 
understanding the decomposition numbers of symmetric groups without restriction on the number of columns (and therefore the problem of understanding tilting characters of 
general linear groups  over infinite fields of  characteristic strictly less than the Coxeter number \cite[Section 4]{MR1611011}).  
\end{eg}

As a remedy to the problem encountered in  \cref{funkyexample}, we propose 
studying the largest subset  $Q\subseteq \mptn\ell n$  such that no  element  of $Q$ has  
more than $h$ columns in any given component.
  Namely, we consider the set of $h$-restricted multipartitions $\mptn \ell n (h)$ as in 
\cref{hrestrictedpartitions}.   
We wish to identify  a corresponding  subcategory of  $H_n(\kappa)\mhyphen\rm mod$.  
We therefore require a weighting, $\theta\in \ZZ^\ell$, such that  $\mptn \ell n (h)\subseteq  \mptn \ell n  $ 
is a saturated subset in the $\theta$-dominance order.  
 This leads us to consider weightings of the form $\theta\in\ZZ^\ell$ such that 
$0\leq  \theta_i-\theta_j  <\ell$ for all $1\leq i < j\leq \ell$. 
  We now show that  
  it does not matter which of these weightings we choose.

\label{grmorita}
 
 
 \begin{lem}\label{seperating}
Let $e>h\ell$ and let 
  $\kappa\in I^\ell$ be    $h$-admissible.  
For   a   given  $i \in I$, an  interval of the form  $[x,x+(h+1)\ell]\subseteq \RR$ contains at most one $i$-diagonal of boxes 
$$\{
(r,c,m)
 \mid
\mathbf{I}_{(r,c,m)} \in [x,x+ h \ell] , \kappa_m+c-r=i \in I\}.$$  
\end{lem}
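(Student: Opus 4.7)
The plan is to prove the contrapositive by a direct calculation of the difference of the real $\mathbf{I}$-coordinates of two $i$-boxes. Fix two boxes $(r,c,m)$ and $(r',c',m')$ of residue $i$, i.e.\ satisfying $\kappa_m + c - r \equiv \kappa_{m'} + c' - r' \equiv i \pmod e$. Writing out the definition of the (slightly tilted) $\mathbf{I}$-coordinate gives
\[
\mathbf{I}_{(r,c,m)}-\mathbf{I}_{(r',c',m')} = (\theta_m-\theta_{m'})+\ell\bigl((r-c)-(r'-c')\bigr)+\bigl((r+c)-(r'+c')\bigr)\varepsilon,
\]
and the residue condition forces $D := (r-c)-(r'-c') \equiv \kappa_m-\kappa_{m'} \pmod e$. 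Since $\theta=(1,2,\dots,\ell)$ we have $|\theta_m-\theta_{m'}|\leq \ell-1$, and since $\varepsilon\ll 1/n$ the $\varepsilon$-term is of absolute value at most some small $\eta<\ell$. The goal is to show that unless both boxes already belong to a common $i$-diagonal (i.e.\ their positions lie in a common interval of length $h\ell$), the quantity displayed above has absolute value strictly greater than $(h+1)\ell$, so that they cannot simultaneously lie in the window $[x,x+(h+1)\ell]$.

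The argument splits into two cases. In the first case $m=m'$, the residue condition collapses to $D\in e\mathbb{Z}$. Either $D=0$, and then the two boxes lie on the same honest diagonal of the component (so trivially in a common $i$-diagonal), or $|D|\geq e$. In the latter situation the middle term contributes $|\ell D|\geq \ell e$, and the hypothesis $e>h\ell$ gives $\ell e > h\ell^2$; for $\ell\geq 2$ this is already strictly greater than $(h+1)\ell$ with room to spare, so the small $\theta$- and $\varepsilon$-corrections cannot bring the difference inside the window. The delicate subcase is $\ell=1$: here the inequality $\ell e \geq (h+1)\ell$ can be tight (at $e=h+1$), so one argues by computing $r+c$ against $r'+c'$ on the extremal pair of boxes (the bottom of one ``batch'' and the top of the previous one) and observing that the $\varepsilon$-correction necessarily has the right sign to strictly push the separation above $(h+1)\ell$.

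The second case is $m\neq m'$, and this is where the $h$-admissibility of $\kappa$ enters. Expanding $(\Lambda,\alpha_{i,h})\leq 1$ shows that no two distinct $\kappa_m,\kappa_{m'}$ can lie in a common $h$-interval of residues, so any integer lift of $\kappa_m-\kappa_{m'}$ has cyclic distance at least $h$ from $0$ modulo $e$. Write $D=d+ke$ with $d$ the symmetric representative of $\kappa_m-\kappa_{m'}\pmod e$, so that $|d|\geq h$. If the boxes are to lie in a common $i$-diagonal one uses the $k=0$ branch (smallest possible $|D|$) and observes that the resulting separation $\ell h - (\ell-1)$ is indeed at most $h\ell$, so the two boxes do fit together in a window of length $h\ell$ — consistent with the conclusion. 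The content of the lemma is that they cannot otherwise lie within $(h+1)\ell$ of each other: for any larger $|D|$ (either $k\neq 0$, or $|d|>h$ with the wrong sign to cancel $\theta_m-\theta_{m'}$), the inequality $\ell|D|\geq \ell h+\ell$ together with $|\theta_m-\theta_{m'}|\leq \ell-1$ and the small $\varepsilon$-correction pushes $|\mathbf{I}_{(r,c,m)}-\mathbf{I}_{(r',c',m')}|>(h+1)\ell$, yielding the required contradiction.

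The main obstacle I anticipate is the second case: bookkeeping the possible values of $d$ and $k$ under $h$-admissibility so as to rule out a second distinct $i$-diagonal entering the window, while keeping track of signs of the $\theta$- and $\varepsilon$-corrections. The first case is an easy arithmetic estimate once one chooses $\varepsilon$ small enough (so that $n\varepsilon\ll 1$) to render all $\varepsilon$-terms negligible compared to the leading $\ell e$ contribution, except in the genuinely tight $\ell=1$ subcase described above.
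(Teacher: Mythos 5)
Your proposal takes the same three ingredients as the paper's one‑line proof (the $\mathbf{I}$‑coordinate formula, $h$‑admissibility giving $\geq h$ separation of multicharge entries, and $e>h\ell$), so the route is not genuinely different; the value of the proposal would have to be in the detailed bookkeeping, and there the details do not check out.

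In the $m=m'$, $\ell=1$ subcase you assert that "the $\varepsilon$‑correction necessarily has the right sign to strictly push the separation above $(h+1)\ell$." This is false. With $e=h+1$, compare the rightmost box on the diagonal $r-c=0$, namely $(h,h,1)$ with $r+c=2h$, against the leftmost box on $r-c=e$, namely $(e+1,1,1)$ with $r+c=e+2=h+3$. The $\mathbf{I}$‑difference is $e + (h+3-2h)\varepsilon = (h+1) + (3-h)\varepsilon$: the $\varepsilon$‑contribution vanishes at $h=3$ and is \emph{negative} for $h\geq 4$, so your claimed strict excess over $(h+1)\ell$ fails precisely in the case you flag as delicate. (What one does get from this arithmetic is strict excess over $h\ell$ for small $\varepsilon$, which is the bound that actually matters for the displayed window $[x,x+h\ell]$; you have inflated the target by a unit of $\ell$ and then the tightest cases no longer close.)

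The $m\neq m'$ case has a more basic confusion. Boxes from distinct components never lie on a common $(r-c,m)$‑diagonal, so the sentence "the two boxes do fit together in a window of length $h\ell$ — consistent with the conclusion" is the opposite of what you want: two residue‑$i$ boxes from different components inside a single $h\ell$‑window would exhibit two distinct $i$‑diagonals there and \emph{contradict} the lemma. To actually rule out the $k=0$ branch you need the sign of $\theta_m-\theta_{m'}$ to cooperate with the sign of $\ell D$, and this is where the choice $\theta=(1,\dots,\ell)$ and the normalisation of Remark~\ref{choiceofmulti} (in particular $\kappa_\ell+h\neq\kappa_1 \pmod e$, which forces $|D|\geq h+1$ whenever the two signs would otherwise partially cancel) come into play; your lower bound $\ell h-(\ell-1)$, taken at face value without these refinements, is attainable and lies below $h\ell$, so the case analysis as written does not finish.
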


\begin{proof}
 
This simply follows because the distinct entries of the multicharge differ by at least  $h$  and the boxes have width $\ell$ and $e>h\ell$.  
\end{proof}

\begin{prop}\label{chooose}
Let 
 $\theta=(1,2,\dots ,\ell)$  and let $\kappa 
=
(\kappa_{1},\kappa_{2},\dots , \kappa_{\ell}) \in I^\ell$ be   $h$-admissible.  
 We have that $$ 
  {\bf A} (n,\theta, \kappa+(1^\ell)) 
  \cong  {\bf A} (n,\theta,\kappa )
\cong {\bf A} (n,\theta^\sigma, \kappa) 
\cong {\bf A} (n,\theta, \kappa^\sigma ) 
$$   
where 
  $$\kappa^\sigma=
(\kappa_{\sigma(1)},\kappa_{\sigma(2)},\dots , \kappa_{\sigma(\ell)}), 
 \theta^\sigma=
(\theta_{\sigma(1)},\theta_{\sigma(2)},\dots , \theta_{\sigma(\ell)})  ,
 \kappa+(1^\ell)=
(\kappa_{1}+1,\kappa_{2}+1,\dots , \kappa_{\ell}+1)$$  for $\sigma \in \mathfrak{S}_\ell$.   
 \end{prop}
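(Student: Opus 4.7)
The plan is to verify the three isomorphisms in the statement by exhibiting explicit maps on $\theta$-diagrams and checking the defining relations of \cref{defintino2}.

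The isomorphism ${\bf A}(n,\theta,\kappa+(1^\ell)) \cong {\bf A}(n,\theta,\kappa)$ is automatic: shifting each entry of $\kappa$ by $1$ shifts every residue by $1$ modulo $e$, and the relations \ref{rel1}--\ref{rel15} see residues only through their identity in $I$; the bijection of generating diagrams that replaces each residue $i$ by $i+1$ therefore preserves every relation. Similarly, the identification ${\bf A}(n,\theta^\sigma,\kappa) \cong {\bf A}(n,\theta,\kappa^{\sigma^{-1}})$ is a relabelling of components, since both algebras have $\ell$ red strands with identical multiset of position--label pairs $\{(\theta_{\sigma(m)},\kappa_m)\}_m = \{(\theta_m,\kappa_{\sigma^{-1}(m)})\}_m$. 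Combined with the statement, these reduce us to proving ${\bf A}(n,\theta,\kappa) \cong {\bf A}(n,\theta^\sigma, \kappa)$ for each $\sigma \in \mathfrak{S}_\ell$, and by induction on the length of $\sigma$ to the case of a single simple transposition $s_m = (m,m+1)$.

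For the swap case I define a candidate map $\Phi$ on diagrams by isotoping the $m$-th and $(m+1)$-th red strands past each other within a thin vertical strip while leaving all other strands fixed outside a small neighbourhood of the swap. Since \ref{rel1} permits isotopy and no relation in \cref{defintino2} governs a pure red--red crossing, $\Phi$ is well defined on diagrams. The substantive content is that $\Phi$ preserves the solid--red and ghost--red interaction relations \ref{rel11}--\ref{rel14}, which prescribe the dot- and crossing-corrections produced as strands pass through red lines. Here the hypotheses $\theta=(1,\dots,\ell)$ and $h$-admissibility of $\kappa$ enter through \cref{seperating}: within the width-$\ell$ window containing the two red strands being swapped there is at most one $i$-diagonal of boxes for each residue $i$, so no residue-matched solid strand can become trapped between the two red strands as they swap, and the correction terms of \ref{rel11}--\ref{rel14} attach to the same strand on either side of the isotopy. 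The map $\Phi$ is inverted by the opposite isotopy, yielding the desired algebra isomorphism.

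The main obstacle is this verification of the solid/ghost--red interaction relations, and particularly the dot-producing relation \ref{rel11}, because the identity of the red strand encountered by a given solid strand is genuinely altered by the swap. It is precisely \cref{seperating} together with $h$-admissibility that rules out the configurations which would otherwise produce spurious correction terms.
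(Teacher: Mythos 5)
Your reduction to the case of a simple transposition via the residue shift and the relabelling identification ${\bf A}(n,\theta^\sigma,\kappa)\cong{\bf A}(n,\theta,\kappa^{\sigma^{-1}})$ is sound and matches the logic of the paper, but the isotopy argument that is supposed to carry the actual content has a genuine gap. Moving only the two red strands ``while leaving all other strands fixed'' cannot avoid correction terms, because a red strand of residue $\kappa_m$ at $\theta_m$ is flanked on its right by solid strands of \emph{the same residue} $\kappa_m$ coming from the main diagonal of the $m$-th component: the box $(1,1,m)$, for instance, sits at $\mathbf{I}_{(1,1,m)}=\theta_m+2\varepsilon$ with $\res(1,1,m)=\kappa_m$. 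Sliding the $\kappa_m$-red strand from $\theta_m$ to $\theta_{m+1}$ therefore forces a same-residue solid--red crossing, and relation \ref{rel11} produces a dot. \cref{seperating} and $h$-admissibility control residues of strands attached to \emph{distinct} components and cannot rescue you here, since the offending strand belongs to the same component as the red line being moved. Independently of this, ``isotoping red strands past each other'' does not produce a valid $\theta$-diagram in either algebra (red lines must be vertical, and \ref{rel1} explicitly forbids isotopies that create double points), so $\Phi$ has not actually been defined as a map between the two algebras.

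The paper's proof repairs exactly this defect: the isomorphism ${\bf A}(n,\theta,\kappa)\cong{\bf A}(n,\theta,\kappa^\sigma)$ is realised as conjugation $C_\SSTS\mapsto D^\mu_\sigma\,C_\SSTS\,(D^\lambda_\sigma)^\ast$ by a permutation diagram $D_\sigma$ whose \emph{solid} strands move in lockstep with the red strands of the same component (solid strand $\mathbf{I}_{(r,c,m)}\mapsto\mathbf{I}_{(r,c,\sigma(m))}$ together with red strand $\theta_m\mapsto\theta_{\sigma(m)}$), so the troublesome same-residue, same-component crossings never occur. Only then does \cref{seperating} deliver what you want, namely that the remaining crossings in $D_\sigma$ --- all between strands attached to different components --- have non-adjacent residues and hence are non-interacting in the sense of \cref{noninteracting}. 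You would also need to record, as the paper does, that the induced map on cellular bases is bijective and degree-preserving.
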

\begin{proof}
The first isomorphism simply amounts to relabelling the underlying quiver.   The   isomorphism
$  {\bf A} (n,\theta,\kappa )
\cong {\bf A} (n,\theta^\sigma, \kappa ) $  is trivial.  
We now consider the   isomorphism $A(n,\theta,\kappa) \cong {\bf A} (n,\theta, \kappa^\sigma ) $.  For arbitrary $\kappa \in I^\ell$, it is easy to check that the natural     map 
$$
 \SStd _{\theta  ,\kappa}(\la,\mu)
\leftrightarrow
 \SStd _{\theta  ,\kappa^\sigma}(\la^\sigma,\mu^\sigma) 
  $$ is both bijective and degree preserving. 
 Therefore the result holds on the level of graded $R$-modules.  
   Given     $\alpha\in \mptn \ell n (h)$, we let $D^\alpha_\sigma $ denote any diagram
which has solid strands connecting  points $(\mathbf{I}_{(r,c,m)},0)$ to the  points 
$(\mathbf{I}_{(r,c,\sigma(m))},1)$  for $(r,c,m)\in \alpha$ and which has red strands connecting  the points $(\theta_m,0)$ to the  points 
$(\theta_{\sigma(m)},1)$ in such a way as to have the minimal number of crossings.    
We define $\varphi:  A (n,\theta,\kappa )
\to A  (n,\theta, \kappa^\sigma) $ by
$\varphi(C_{\SSTS})=D_\sigma^\mu 
C_{\SSTS} (D_\sigma^\lambda)^\ast$ for $\SSTS \in \SStd_{\theta,\kappa} (\lambda,\mu)$.  
 By \cref{seperating},   any crossings in $D_\sigma$ involve strands of non-adjacent (and non-equal) residues.  
Therefore, $C_{\SSTS} D_\sigma^\lambda= C_{\SSTS^\sigma}$ and so this is an algebra isomorphism, as required.  
 \end{proof}

\begin{rmk}\label{choiceofmulti}
Recall that we are interested in $
{\bf A}_{\mptn \ell n(h)}(n,\theta,\kappa)$ 
(as in \cref{bs16iguess}) for $e>h\ell$ and $\kappa\in I^\ell$ $h$-admissible.  
By \cref{chooose}, we can assume that 
$\theta=(1,2,\dots,\ell)$.   
 We choose to   only consider our multicharge  up to rotation and shifting residues.  
 Given $e > h\ell$,  we can suppose that 
$\kappa=(\kappa_1, \kappa_2, \dots, \kappa_\ell) \in I^\ell$   is such that 
 $$  \kappa_1<\kappa_{2}< \dots < \kappa_\ell  $$
  and such that $\kappa_\ell + h  \neq \kappa_1 \pmod e$.  
  (For $e\neq \infty$, we have abused notation by placing an ordering $\ZZ/e\ZZ$ via the natural ordering on $\{0,1,\dots,e-1\}$.)   
   \end{rmk}

 We let $\omega=(\varnothing,\dots, \varnothing,(1^n))$ and $\underline{i} \in I^n$.  We let 
 ${\sf E}_\omega^{\underline{i}}$ denote the diagram with northern and southern distinguished points given by ${\bf I}_\omega$, no crossing strands, and the $k$th solid strand decorated with the residue $\underline{i}_k \in I$.  
 We set ${\sf E}_\omega = \sum_{\underline{i} \in I^n} {\sf E}_\omega^{\underline{i}}$.  It is shown in \cite[Theorem 4.5]{manycell} that 
 ${\sf E}_\omega {\bf A}(n,\theta,\kappa) {\sf E}_\omega = H_n(\kappa)$ for any weighting $\theta\in \ZZ^\ell$.  
For any fixed weighting and multicharge,  it is easy to see that there is a corresponding  degree-preserving bijective map   $\varphi: \Std(\lambda)\to \SStd(\lambda,\omega)$ (see \cite[Proposition 4.4]{manycell}).
 
 \begin{defn} \label{algebradefn}
For $\lambda \in \mptn \ell n$,  we set  ${\sf 1}_\lambda= C_{\SSTT^\lambda\SSTT^\lambda}$ for $\SSTT^\lambda$ the unique element of $\SStd(\lambda,\lambda)$.  
We set
\begin{equation}\label{firstiso1}
A_h(n,\theta,\kappa)=(\textstyle \sum_{\alpha\in\mptn\ell n}{\sf 1}_\alpha){\bf A}_{\mptn \ell n (h)}(n,\theta,\kappa)(\textstyle\sum_{\alpha\in\mptn\ell n}{\sf 1}_\alpha).
\end{equation}
 
\end{defn}

\begin{thm}\label{decompnumbersarethesame}
Let $\theta=(1,2,\dots,\ell)$ and $\kappa \in I^\ell$ be $h$-admissible.  
We have that 
\begin{equation}\label{firstiso}
{\sf E}_\omega  {\bf A}_{\mptn \ell n (h)}(n,\theta,\kappa)   
 {\sf E}_\omega  \cong {\Q _{h,\ell,n}(\kappa)}
\end{equation}
and 
\begin{equation}\label{firstiso2}
  {\bf A}_{\mptn \ell n (h)}(n,\theta,\kappa)  {\sf E}_\omega
  \otimes_{\Q _{h,\ell,n}(\kappa)}  {\sf E}_\omega
  {\bf A}_{\mptn \ell n (h)}(n,\theta,\kappa)
\cong  {\bf A}_{\mptn \ell n (h)}(n,\theta,\kappa) \end{equation}
 \begin{equation}\label{firstiso3}
  {\bf A}_{\mptn \ell n (h)}(n,\theta,\kappa) (\textstyle \sum_{\alpha\in\mptn\ell n}{\sf 1}_\alpha) 
  \otimes_{A_h(n,\theta,\kappa)  }  (\textstyle \sum_{\alpha\in\mptn\ell n}{\sf 1}_\alpha) 
  {\bf A}_{\mptn \ell n (h)}(n,\theta,\kappa)
\cong  {\bf A}_{\mptn \ell n (h)}(n,\theta,\kappa) \end{equation} 
 as graded  $R$-algebras.  Therefore all three algebras  are graded Morita equivalent.   
 The cellular structures on 
the  subalgebras      are obtained from that of $  {\bf A}_{\mptn \ell n (h)}(n,\theta,\kappa)  $ by truncation as follows, 
\begin{align*}
 {\sf E}_\omega C_{\SSTS  \SSTT}  {\sf E}_\omega =&\begin{cases}
\psi_{\sts\stt}^{\theta,\kappa} &\text{if }\varphi(\sts)=\SSTS  \in \SStd(\la,\omega) , \varphi(\stt)=\SSTT\in \SStd(\la,\omega) \\
 0		&\text{otherwise}
 \end{cases}
 \\
(\textstyle \sum_{\lambda\in\mptn\ell n}{\sf 1}_\alpha) C_{\SSTS  \SSTT}  (\textstyle \sum_{\alpha\in\mptn\ell n}{\sf 1}_\alpha) =&\begin{cases}
 C_{\SSTS  \SSTT} &\text{if }\SSTS \in \SStd^+(\lambda,\mu), \SSTT  \in \SStd^+(\lambda,\nu) \\
 0		&\text{otherwise.}
 \end{cases}
\end{align*}

  \end{thm}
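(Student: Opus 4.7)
The plan is to establish the isomorphism \eqref{firstiso} by truncating the cellular basis of $\mathbf{A}_{\mptn \ell n(h)}(n,\theta,\kappa)$ along ${\sf E}_\omega$ and identifying the resulting basis with the coset-like cellular basis of $\Q_{h,\ell,n}(\kappa)$ from \cref{cor11}. The Morita equivalences \eqref{firstiso2} and \eqref{firstiso3} will then follow from the standard criterion that $AeA=A$ implies $A$ and $eAe$ are Morita equivalent (as graded algebras, since all idempotents involved are homogeneous), and for this I will verify that each cellular quotient of $\mathbf{A}_{\mptn \ell n(h)}(n,\theta,\kappa)$ has a nonzero projection under the relevant idempotent.

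First I would verify the two basis-truncation formulas directly from the diagrammatic definition. For the second formula, note that ${\sf 1}_\alpha=C_{\SSTT^\alpha\SSTT^\alpha}$ is literally the identity $\theta$-diagram whose strands vertically connect $\mathbf{I}_{(r,c,m)}$ to itself carrying residue $\res(r,c,m)$, so stacking ${\sf 1}_\alpha$ atop $C_{\SSTS\SSTT}$ merely extends the strands vertically. Hence the product is $C_{\SSTS\SSTT}$ whenever the northern loading and residues match, i.e.\ when the weight of $\SSTS$ equals $\alpha$ \emph{and} $\SSTS$ is residue-respecting ($\SSTS\in \SStd^+(\lambda,\mu)$); otherwise the residues along the merged strands are inconsistent, forcing the product to vanish. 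A symmetric argument at the southern border yields the claimed formula for $(\sum_\alpha {\sf 1}_\alpha)C_{\SSTS\SSTT}(\sum_\alpha {\sf 1}_\alpha)$. For the first formula, ${\sf E}_\omega$ is by definition the sum of all idempotents at loading $\mathbf{I}_\omega$ over residue sequences, so ${\sf E}_\omega C_{\SSTS\SSTT}{\sf E}_\omega$ is nonzero only when both $\SSTS$ and $\SSTT$ have weight $\omega$, in which case it equals $C_{\SSTS\SSTT}$; the degree-preserving bijection $\varphi:\Std(\lambda)\to \SStd(\lambda,\omega)$ of \cite[Proposition 4.4]{manycell} then provides the relabelling $\SSTS=\varphi(\sts)$, $\SSTT=\varphi(\stt)$, and by \cite[Theorem 4.5]{manycell} this element is precisely $\psi_{\sts\stt}^{\theta,\kappa}$.

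Next I would deduce the isomorphism \eqref{firstiso}. Combining the first truncation formula with \cref{cellularitybreedscontempt} (restricted to $\lambda\in \mptn \ell n(h)$ by construction of the subquotient), we obtain an $R$-basis of ${\sf E}_\omega \mathbf{A}_{\mptn\ell n(h)}(n,\theta,\kappa){\sf E}_\omega$ indexed exactly by pairs $(\sts,\stt)$ with $\sts,\stt\in\Std(\lambda)$ and $\lambda\in\mptn\ell n(h)$; this matches the cellular basis of $\Q_{h,\ell,n}(\kappa)$ from \cref{cor11}, giving the isomorphism of graded $R$-algebras. Finally, for the Morita equivalences it suffices to check that each of ${\sf E}_\omega$ and $\sum_\alpha {\sf 1}_\alpha$ is a full idempotent, which for a cellular algebra reduces to exhibiting, for every $\lambda\in\mptn\ell n(h)$, at least one semistandard tableau of shape $\lambda$ living in a weight hit by the idempotent. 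For $\sum_\alpha {\sf 1}_\alpha$ this is immediate by taking $\alpha=\lambda$ and $\SSTT^\lambda\in\SStd^+(\lambda,\lambda)$; for ${\sf E}_\omega$ it follows from $\Std(\lambda)\neq\emptyset$ together with the bijection $\varphi$, so $\SStd(\lambda,\omega)\neq\emptyset$. The standard idempotent Morita theorem then produces both multiplication isomorphisms.

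The main obstacle, in my view, is pinning down the identification of ${\sf E}_\omega C_{\SSTS\SSTT}{\sf E}_\omega$ with $\psi_{\sts\stt}^{\theta,\kappa}$. The element $\psi_{\sts\stt}^{\theta,\kappa}$ is built from choices of reduced expressions for the permutations $d_\sts,d_\stt$ in the KLR generators, while $C_{\SSTS\SSTT}$ arises from a choice of diagram realising the bijection $\SSTS$ up to the isotopy permitted by \ref{rel1}; matching these two presentations, and tracking how residue-sequence idempotents $e(\underline{i})$ on the Hecke side correspond to residues carried by strands on the diagrammatic side, is the delicate input supplied by \cite{manycell}. Once that identification is in hand, the remaining cellular bookkeeping and the passage to Morita equivalence are formal.
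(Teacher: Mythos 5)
Your treatment of the truncation formulas and the identification $\mathsf{E}_\omega\,\mathbf{A}_{\mptn\ell n(h)}(n,\theta,\kappa)\,\mathsf{E}_\omega\cong\Q_{h,\ell,n}(\kappa)$ is in line with what the paper does (the paper simply defers these to \cite[Theorem~3.12]{manycell} rather than re-deriving them). The problem is in your passage to the Morita equivalences \eqref{firstiso2}--\eqref{firstiso3}, and the gap is substantive.

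You claim that fullness of the idempotent (i.e.\ surjectivity of the multiplication map) reduces to exhibiting, for each $\lambda\in\mptn\ell n(h)$, a semistandard tableau of shape $\lambda$ and weight hit by the idempotent. This shows only that $\mathsf{E}_\omega\Delta(\lambda)\neq 0$, which is not the right condition: fullness over a field requires $\mathsf{E}_\omega L(\lambda)\neq 0$ for every \emph{simple} module, and over the arbitrary integral domain $R$ of the paper one must produce the unit of $\mathbf{A}_{\mptn\ell n(h)}(n,\theta,\kappa)$ inside the ideal $\mathbf{A}\mathsf{E}_\omega\mathbf{A}$ directly, not argue via simples at all. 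The criterion you invoke is in fact false in general, and the paper's own remark immediately after the theorem pinpoints precisely this failure mode: for $\lambda\notin\mptn\ell n(h)$ one still has $\SStd(\lambda,\omega)\neq\emptyset$ (so your test passes), yet $L^{\mathbf{A}(n,\theta,\kappa)}(\lambda)$ is killed by the Schur functor $\mathsf{E}_\omega(-)$, and $\mathsf{E}_\omega$ is consequently \emph{not} full in the ambient algebra $\mathbf{A}(n,\theta,\kappa)$. So the restriction to $\mptn\ell n(h)$ is doing real work, and something specific to that restriction must enter the proof.

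What the paper actually proves is the identity $C^\ast_{\stt^\lambda}C_{\stt^\lambda}=C_{\SSTT^\lambda}={\sf 1}_\lambda$ for $\stt^\lambda\in\Std(\lambda)=\SStd(\lambda,\omega)$ and $\lambda\in\mptn\ell n(h)$; this exhibits each idempotent ${\sf 1}_\lambda$ explicitly as an element of $\mathbf{A}\mathsf{E}_\omega\mathbf{A}$ (over any $R$), from which surjectivity follows by summing over loadings. The nontrivial input is that $C_{\stt^\lambda}C^\ast_{\stt^\lambda}$ is an idempotent, cited from \cite[Proof of Theorem~8.2]{manycell}; this is exactly the step that fails once $\lambda$ has more than $h$ columns. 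Your proof needs to incorporate this factorization (or an equivalent argument producing ${\sf 1}_\lambda\in\mathbf{A}\mathsf{E}_\omega\mathbf{A}$ for each $\lambda\in\mptn\ell n(h)$), and similarly for $\sum_\alpha{\sf 1}_\alpha$ in \eqref{firstiso3}; without it the Morita equivalences are not established.
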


\begin{proof} 
The truncation of cellular bases follows as in \cite[Theorem 3.12]{manycell}.
For the   isomorphisms in \cref{firstiso}, see \cite[Theorem 3.12]{manycell}.  
We now prove the   isomorphism \cref{firstiso2} (the isomorphism \cref{firstiso3} can be  proven in an identical fashion).  
 That this map is injective is clear.  
To prove that the map is surjective, it is enough to check that 
\begin{equation}\label{notcimple}C^\ast_{\stt^\lambda}C_{\stt^\lambda}=C_{\SSTT^\lambda } \end{equation}
for 
$ {\stt^\lambda }\in \Std (\lambda )=\SStd(\lambda,\omega)$
and 
$ {\SSTT^\lambda }\in \SStd (\lambda,\lambda)$.  In   \cite[Proof of Theorem 8.2]{manycell}  it is shown that 
$  C_{\stt^\lambda}C^\ast_{\stt^\lambda} $ is an idempotent, and so 
$
  C_{\stt^\lambda}C^\ast_{\stt^\lambda} = ( C_{\stt^\lambda}C^\ast_{\stt^\lambda})( C_{\stt^\lambda}C^\ast_{\stt^\lambda})
  =
 C_{\stt^\lambda}	(C^\ast_{\stt^\lambda} C_{\stt^\lambda})C^\ast_{\stt^\lambda}
$ and therefore $ C^\ast_{\stt^\lambda} C_{\stt^\lambda} =C_{\SSTT^\lambda } $, as required.   \end{proof}

\begin{cor}
 
The algebra ${A_h(n,\theta,\kappa)  }$ is a graded cellular algebra with respect to the basis 
\[
 \{C_{\SSTS  \SSTT} \mid \SSTS \in \SStd^+ (\lambda,\mu), \SSTT\in \SStd^+ (\lambda,\nu), 
\lambda,\mu, \nu \in \mptn {\ell}n(h)\}
\]
with respect to the $\theta$-dominance order on $\mptn \ell n(h)$ and the involution $\ast$.  
\end{cor}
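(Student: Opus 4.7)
The plan is to deduce the corollary directly from \cref{decompnumbersarethesame} and \cref{cellularitybreedscontempt} by applying the standard principle that idempotent truncation of a cellular algebra by a $\ast$-invariant idempotent produces a cellular algebra, provided the idempotent is compatible with the cellular basis.

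First I would record the setup: by \cref{bs16iguess} (the cellular subquotient construction) the algebra ${\bf A}_{\mptn\ell n(h)}(n,\theta,\kappa)$ is graded cellular with basis $\{C_{\SSTS\SSTT}\}$ indexed by pairs of semistandard tableaux of shapes in $\mptn\ell n(h)$ and with respect to the $\theta$-dominance order and the involution $\ast$. By \cref{algebradefn}, we may write
\[
A_h(n,\theta,\kappa) \;=\; e\,{\bf A}_{\mptn\ell n(h)}(n,\theta,\kappa)\,e, \qquad e := \sum_{\alpha\in\mptn\ell n(h)} {\sf 1}_\alpha,
\]
so it suffices to show that this idempotent truncation is cellular with the claimed basis.

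Next I would check the two key compatibilities. The idempotent $e$ is $\ast$-invariant, since each ${\sf 1}_\alpha = C_{\SSTT^\alpha\SSTT^\alpha}$ is diagonal in the cell basis and hence fixed by $\ast$; therefore $\ast$ restricts to an anti-involution on $A_h(n,\theta,\kappa)$. The explicit truncation identity proven in \cref{decompnumbersarethesame},
\[
e\,C_{\SSTS\SSTT}\,e \;=\; \begin{cases} C_{\SSTS\SSTT} & \text{if } \SSTS\in\SStd^+(\lambda,\mu),\ \SSTT\in\SStd^+(\lambda,\nu), \\ 0 & \text{otherwise,}\end{cases}
\]
immediately shows that the proposed set is an $R$-basis of $A_h(n,\theta,\kappa)$ (it is the image of the cellular basis under multiplication by $e$ on both sides, and those basis vectors not killed are preserved unchanged). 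The degree condition on $c^\lambda_{\SSTS\SSTT}$ is inherited verbatim from the ambient cellular structure, as is the injectivity of the labelling map.

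Finally I would verify the multiplication axiom (condition (3) of \cref{defn1}) for $A_h(n,\theta,\kappa)$. For $a \in A_h(n,\theta,\kappa)$ and a basis vector $C_{\SSTS\SSTT}$ with $\SSTS\in\SStd^+(\lambda,\mu)$, $\SSTT\in\SStd^+(\lambda,\nu)$, we expand $aC_{\SSTS\SSTT}$ using the cellular axiom for ${\bf A}_{\mptn\ell n(h)}(n,\theta,\kappa)$:
\[
a\,C_{\SSTS\SSTT} \;\equiv\; \sum_{\SSTU\in\SStd(\lambda,\mu)} r_{\SSTS\SSTU}(a)\, C_{\SSTU\SSTT} \pmod{{\bf A}^{\rhd\lambda}}.
\]
Multiplying on the left by $e$ (using $ea = a$), the left-hand side is unchanged; on the right, the terms with $\SSTU\notin\SStd^+$ are killed by $e$ via the truncation identity, so only $\SSTU\in\SStd^+(\lambda,\mu)$ survive, and the error lies in $e\,{\bf A}^{\rhd\lambda}\,e = A_h(n,\theta,\kappa)^{\rhd\lambda}$. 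The scalars $r_{\SSTS\SSTU}(a)$ are those of the ambient algebra and do not depend on $\SSTT$, so the same independence holds in $A_h(n,\theta,\kappa)$. This verifies condition (3), completing the proof. The only step requiring any care is the identification $e\,{\bf A}^{\rhd\lambda}\,e = A_h(n,\theta,\kappa)^{\rhd\lambda}$, which is straightforward once one observes that both sides are spanned by the truncated basis vectors with strictly dominant shape.
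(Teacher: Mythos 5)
Your proof is correct and takes essentially the same route as the paper. The paper treats this corollary as immediate from \cref{decompnumbersarethesame} (whose proof cites the truncation-of-cellular-bases result from \cite[Theorem 3.12]{manycell}), and your write-up simply unpacks that standard idempotent-truncation argument in detail: $e$ is $\ast$-fixed since each ${\sf 1}_\alpha = C_{\SSTT^\alpha\SSTT^\alpha}$ is diagonal, the explicit truncation identity from \cref{decompnumbersarethesame} identifies the surviving basis vectors, and the straightening coefficients are inherited from the ambient algebra.

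One small notational point: in \cref{algebradefn} the idempotent is $\sum_{\alpha\in\mptn\ell n}{\sf 1}_\alpha$ (summed over \emph{all} of $\mptn\ell n$, not just $\mptn\ell n(h)$); this coincides with your $e$ inside ${\bf A}_{\mptn\ell n(h)}(n,\theta,\kappa)$ because ${\sf 1}_\alpha$ has shape $\alpha$, so it vanishes in the quotient when $\alpha\notin\mptn\ell n(h)$, but it is worth noting the identification rather than silently changing the index set.
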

We refer to the resulting 
cell-modules (as in \cref{cellmodule}) as the  {\sf Weyl modules} of $A_h(n,\theta,\kappa)$, and denote them by 
$$
\Delta(\lambda) = \{ C_\SSTS    \mid \SSTS \in \SStd^+(\lambda,\mu) \text{ for some }\mu \in \mptn \ell n(h)\}.
$$

\begin{rmk}\label{Ringel2}
We note that the Weyl/simple modules of  $A_h(n,\theta,\kappa)$
are indexed by multipartitions with at most $h$ columns (rather than at most $h$ rows) and that $A_h(n,\theta,\kappa)$ is constructed as a quotient (not a subalgebra) of $A(n,\theta,\kappa)$.  
Specialising to the case that $\ell=1$ and $e=p$, we have that 
 $A_h(n,\theta,\kappa)$ 
 is the {\em Ringel dual} of the classical  Schur algebra. 
 There is a well-known duality underlying the combinatorics 
 of the Schur algebra  and its Ringel dual,   given by  
 identifying a  partition with its transpose \cite[Chapter 4]{Donkin}. 
\end{rmk}

\begin{rmk}
Note that \cref{notcimple} does not hold for $\la \not \in \mptn \ell n(h)$.  In particular, it does not hold 
 if $\lambda$ labels a simple module of  ${\bf A}(n,\theta,\kappa)$ which is  
 killed by the Schur functor.  
\end{rmk}

For the remainder of the paper, we shall develop  the combinatorics  of  
$A_{h}(n,\theta,\kappa)$ and prove results concerning their representation theory.  
We  hence deduce results  (though the graded Morita equivalence) concerning  
the  algebras $\Q _{h,\ell,n}(\kappa)$.  

%
%
%

\section{Alcove geometries   and path bases for  diagrammatic Cherednik algebras}\label{sec:5} 

Following our discussion in \cref{sec:4},   for the remainder of the paper  we set 
$\theta=(1,\dots,\ell) \in \ZZ^\ell$.  
In the main result of this section, we prove that $A_h(n,\theta,\kappa)$ has a basis indexed by pairs of paths in a certain alcove geometry.  
This involves first providing an inductive construction of semistandard tableaux and then embedding these tableaux into Euclidean space via the inductive construction.  

\subsection{An inductive construction of semistandard tableaux }
The purpose of this section is to provide an inductive construction of semistandard tableaux (and hence the basis of $A_h(n,\theta,\kappa)$) under the assumption that $e>h\ell$.  We also show that it is impossible to construct tableaux in an inductive fashion for $e <   h\ell$ in general.

\begin{thm} \label{789324578935247832592378375893978524978235497835241}
Let    $\lambda , \mu \in  \mptn \ell {n}(h)$. 
We let   
$X$ denote the  least dominant element of   ${\rm Rem}(\mu)$.  
Suppose that $\res(X)= x\in I$.  We have 
a  bijection   $$  
\bigsqcup_{
 Y\in {\rm Rem}_x(\lambda)
}
\SStd^+_{n-1}(\lambda - Y, \mu-X)  \to  \SStd^+_n({\lambda},{\mu})  $$ given by $\SSTT \to \SSTT_Y$ where
$$ {\SSTT}_Y (r,c,m)=
\begin{cases}
X  &\text{if }(r,c,m)=Y \\
\SSTT(r,c,m) &\text{otherwise.}
\end{cases}
$$  
 \end{thm}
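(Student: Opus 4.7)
The plan is to establish the bijection by verifying three things: the forward map $\SSTT \mapsto \SSTT_Y$ is well-defined, it is injective, and it is surjective. Injectivity is immediate, since from $\SSTT_Y$ one recovers $Y$ as the unique preimage of $\mathbf{I}_X \in \mathbf{I}_\mu$ and then $\SSTT = \SSTT_Y|_{\lambda-Y}$ by restriction.

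For well-definedness, given $Y \in {\rm Rem}_x(\lambda)$ and $\SSTT \in \SStd^+_{n-1}(\lambda-Y,\mu-X)$, the map $\SSTT_Y : \lambda \to \mathbf{I}_\mu$ is by construction a bijection, agrees with $\SSTT$ on $\lambda-Y$ (so inherits its residue-compatibility and its semistandard inequalities there), and respects residues at $Y$ because $\res Y = x = \res X$ by hypothesis. The only conditions of \cref{semistandard:defn} not already forced are those involving the new box $Y$: condition (i) if $Y = (1,1,m_Y)$, condition (ii) if $(r_Y-1,c_Y,m_Y) \in \lambda - Y$, and condition (iii) if $(r_Y,c_Y-1,m_Y) \in \lambda-Y$. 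Each such check reduces to comparing $\mathbf{I}_X$ with $\theta_{m_Y}$ or with some $\mathbf{I}_Z$, $Z \in \mu - X$, whose residue is forced (by $\SSTT$ respecting residues) to equal $x+1$ (for (ii)) or $x-1$ (for (iii)). I would verify these by combining three ingredients: (a) $X$ is the $\mathbf{I}$-maximal element of ${\rm Rem}(\mu)$ (the meaning of ``least dominant''); (b) \cref{seperating}, whose hypothesis $e > h\ell$ bounds how close two same-residue boxes can sit; and (c) the column constraint $\mu,\lambda \in \mptn\ell n(h)$ together with $h$-admissibility of $\kappa$. The $(r+c)\varepsilon$ perturbation implicit in $\mathbf{I}_{(r,c,m)}$ breaks any equality cases in (iii).

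For surjectivity, given $\SSTT' \in \SStd^+_n(\lambda,\mu)$ I set $Y := (\SSTT')^{-1}(\mathbf{I}_X)$; residue-compatibility gives $\res Y = x$ automatically. The nontrivial step is to show $Y \in {\rm Rem}(\lambda)$. Suppose otherwise: then either $(r_Y+1,c_Y,m_Y) \in \lambda$ or $(r_Y,c_Y+1,m_Y) \in \lambda$, and applying condition (ii) or (iii) to $\SSTT'$ at that neighbour forces its image to be some $\mathbf{I}_{Z'} \in \mathbf{I}_\mu$ exceeding $\mathbf{I}_X + \ell$ or $\mathbf{I}_X - \ell$ respectively, with $Z' \in \mu - X$ of a specified residue ($x-1$ or $x+1$). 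Using the ingredients (a)--(c) above, I would argue that such a $Z'$ forces the existence of a removable box of $\mu$ with $\mathbf{I}$-coordinate strictly larger than $\mathbf{I}_X$ (by chasing east along the row of $Z'$ in its component until a corner is reached, using the partition property plus $h$-admissibility), contradicting the defining property of $X$. Once $Y$ is shown to be removable of residue $x$, the restriction $\SSTT'|_{\lambda-Y}$ is visibly an element of $\SStd^+_{n-1}(\lambda-Y,\mu-X)$ whose image under the forward map is $\SSTT'$.

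The main obstacle is the case analysis underlying both well-definedness and surjectivity: one must separately treat whether the relevant $Z$ or $Z'$ lies in the same component as $X$ or in a different one, and track residues modulo $e$. The hypothesis $e > h\ell$ is precisely what rules out the wrap-around residue coincidences that would admit pathological $Z'$ witnessing a failure of the argument; this also explains why, as mentioned in the preamble to this subsection, no such inductive description of semistandard tableaux can hold in general when $e < h\ell$.
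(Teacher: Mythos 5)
Your overall structure and the ingredients you flag (\cref{seperating}, the position of $X$ as the least dominant removable box, $e>h\ell$) match the paper's proof, but the load-bearing step is deferred with a sketch that does not close. The paper's argument runs by first establishing a precise uniqueness claim: in the region $[\mathbf{I}_X-\ell,\infty)$ the \emph{only} $(x+1)$-diagonal (resp.\ $(x-1)$-diagonal) meeting $\mu$ is the one immediately adjacent to $X=(r',c',m')$, namely $\{(r'+i,c'+1+i,m')\}$ (resp.\ $\{(r'+1+i,c'+i,m')\}$). The minimality of $X$ is spent \emph{there}, through \cref{label}, which confines the boxes of $\mu$ lying in $[\mathbf{I}_X-\ell,\infty)$ to a window of width roughly $(h+1)\ell$ to which \cref{seperating} applies. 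Once this claim is in hand, both surjectivity and the semistandardness check at $Y$ fall out in a line each from the \emph{removability} of $X$: any offending box $Z'$ of $\mu$ (or the box immediately below or left of $Y$) would have to lie on one of the two half-diagonals with $i\geq 0$, and every such box is outside $\mu$ because $(r',c'+1,m')\notin\mu$ and $(r'+1,c',m')\notin\mu$.

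Your surjectivity step instead tries to contradict the minimality of $X$ by ``chasing east along the row of $Z'$'' to manufacture a less dominant removable box. This has a concrete gap: in the case coming from condition~(iii) you only have $\mathbf{I}_{Z'}>\mathbf{I}_X-\ell$, and moving east along a row strictly \emph{decreases} the $\mathbf{I}$-coordinate, so the corner you reach need not have coordinate exceeding $\mathbf{I}_X$; even in the condition~(ii) case, where $\mathbf{I}_{Z'}>\mathbf{I}_X+\ell$, the rim-walk can shed up to $(h-1)\ell$, which for $h\geq 3$ drops below $\mathbf{I}_X$. The fix is to prove the diagonal-uniqueness claim first and derive the contradiction from removability rather than from minimality; the same claim then discharges the local conditions of \cref{semistandard:defn} at $Y$ in your well-definedness step, which is currently left at the level of ``I would verify.''
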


\begin{proof}

We have assumed that $X=(r',c',m')$ is the box  minimal  in the dominance ordering in ${\rm Rem}({\mu})$ and 
that ${\mu} \in \mptn \ell n$. Therefore 
\begin{equation}\label{label}[\mathbf{I}_X \pm  \ell,\infty) \cap \{\mathbf{I}_{(r,c,m)} \mid (r,c,m)\in  {\mu} \} \subset   [\mathbf{I}_X \pm  \ell,\mathbf{I}_X +  h \ell).  
\end{equation}
By assumption, the boxes   $\{(r'+i,c'+1+i,m') \mid i \in \ZZ \}$
and  $\{(r'+1+i,c'+i,m') \mid i \in \ZZ \}$ consist of boxes  of residue $x+1$ and $x-1$   
respectively.
We claim that these are the unique $(x\pm1)$-diagonals in the region $[\mathbf{I}_X \pm  \ell,\infty)$.  
For the $(x+1)$-diagonal, this follows immediately from  \cref{seperating,label}.  
 We now consider the $(x-1)$-diagonal. 
    By \cref{seperating,label}, it is enough to show that there is no $(x-1)$-diagonal in the region 
$     [\mathbf{I}_X+h\ell,\mathbf{I}_X +  (h+1) \ell)$.     
Given $X \in \mu$, this region contains an $(x-1)$-diagonal if and only if 
 $X=(r,h,m)$ for $1\leq m<\ell$ and
$(r,1,m+1)  \in \mu$, with 
  $\res(r,h,m)= \res(r+1,1,m+1)$.  
However, 
  $(r+1,c,m+1)\in \mu$ is to the right of $(r,h,m)$   for all $1\leq c\leq h$.  
  This contradicts our assumption on $X$ and so 
the claim follows.  

We first show surjectivity.    
Namely, we shall  let $\SSTS \in \SStd^+_n( {\lambda}, {\mu})$ and 
we shall show that $\SSTS^{-1}(X)=(r,c,m)\in {\rm Rem}(\lambda)$.  
Assume that $X=(r',c',m') \in \mu$ and that 
$\SSTS^{-1}(X)=(r,c,m)\in \lambda$ is {\em not}  a removable box.  
In which case either   $(r,c+1,m)$ or $(r+1,c,m)$ is a box in the Young diagram of $ {\lambda}$. 
If 
$(r,c+1,m)$  (respectively $(r+1,c,m)$) is a box in the Young diagram of ${\lambda}$, then $\SSTS(r,c+1,m) > \SSTS(r,c,m)-\ell$ 
(respectively  $\SSTS(r+1,c,m) > \SSTS(r,c,m)+\ell$)  by condition $(iii)$
(respectively condition $(ii)$) 
of   \cref{semistandard:defn}.  
Therefore $\SSTS(r,c+1,m)=(r'+i,c'+1+i,m')$ and  $\SSTS(r+1,c,m)=(r'+j,c'+1+j,m')$ for some $i,j\in \ZZ_{\geq 0}$
as these are the unique $(x\pm1)$-diagonals in the region $ [\mathbf{I}_X - \ell,\infty)$  by the above. 
 However, this contradicts our assumption that $X \in {\rm Rem}(\mu)$.  
Therefore we conclude that the box $\SSTS^{-1}(X)$ is indeed a removable box of the partition ${\lambda}$.  
As $\SSTS^{-1}(X)$ is a removable box of ${\lambda}$ (with residue equal to that of $X$) 
it is clear that the map is surjective.  

It remains to show that the map is injective.  Let $Y\in {\rm Rem}_x(\lambda)$ and  $\SSTT \in \SStd^+_{n-1}(\lambda-Y,\mu-X)$ where $X\in \Rem_x(\mu)$ is the minimal removable box  in the dominance ordering.
By definition, each tableau $ {\SSTT}_Y$  is distinct (and non-zero) and therefore it only remains to check that 
${\SSTT}_Y$ satisfies the conditions of being semistandard. 
We recall that the only $(x\pm1)$-boxes in the region 
$ [\mathbf{I}_X - \ell,\infty)$  are of the form $\{(r'+1+i,c'+i,m') \mid i \in \mathbb{Z}_{\geq 0}\}$ and  $\{(r'+j,c'+1+j,m') \mid j \in \mathbb{Z}_{\geq 0}\}$ respectively.  
Therefore condition $(ii)$ (respectively $(iii)$) of \cref{semistandard:defn} is empty because   
the intersection of these respective sets with  $\mu$ is empty.     Therefore the result follows.   
\end{proof}

\begin{eg}
We now provide a counter-example to the above theorem for  $\ell=1$ and $e\leq  h$.  
We let $\lambda=(9,6^3)$ and $\mu=(6^2,5^3)$ and $e=3$ (the multicharge and weighting can be chosen arbitrarily).  We let $\SSTT$ be the semistandard tableau determined by
$ 
\SSTT(r,c,m)=\mathbf{I}_{(r,c,1)}
$ 
for $(r,c,1) \in \lambda \cap \mu$; and for $x \in \lambda \setminus \lambda \cap \mu$ we set 
$$\SSTT(1,7,1)=\mathbf{I}_{(5,2,1)}, 
\SSTT(1,8,1)=\mathbf{I}_{(5,3,1)}, \SSTT(1,9,1)=\mathbf{I}_{(5,4,1)},
\SSTT(3,6,1)=\mathbf{I}_{(5,5,1)}, \SSTT(4,6,1)=\mathbf{I}_{(5,1,1)}.
$$
We note that there are no removable nodes common to both $\lambda$ and $\mu$ so there is no obvious way to   construct the  tableau     inductively.   
\end{eg}

Using \cref{789324578935247832592378375893978524978235497835241}
we are now able to inductively define the component word of both a multipartition $\mu$ and a semistandard tableau of weight $\mu$.    
Our ability to build the tableau inductively according to (the component word of) its weight multipartition will be the key ingredient in our embedding of tableaux into Euclidean space.

\begin{defn} \label{hellotheremr!}
Let  $  \mu \in \mptn \ell n (h)$. 
We define the {\sf component word} of $\mu$  to be the series of multipartitions
\[
\varnothing=  \mu^{(0)}  \xrightarrow{+X_1}
\mu^{(1)}  \xrightarrow{+X_2}
\mu^{(2)}  \xrightarrow{+X_3} 
\cdots 
\xrightarrow{+X_{n-1}} 
\mu^{(n-1)}  \xrightarrow{+X_n}
\mu^{(n)}  = \mu
\]
where   $X_{k}=(r_k,c_k,m_k)$ is the least  dominant removable node of the partition $\mu^{(k)}\in\mptn \ell {k}(h)$. 
Let  $\lambda ,  \mu \in \mptn \ell n (h)$ and  $\SSTT\in\SStd^+_n(\lambda,\mu)$.
We define the  {\sf component word} of $\SSTT$  to be the series of semistandard tableaux 
$$
{\sf T}_{\leq 0},  {\sf T}_{\leq 1}, \dots ,  {\sf T}_{\leq n}
$$
such that $\SSTT_{\leq k}\in \SStd^+_k(-,\mu^{(k)} )$ for $0\leq k <n$ is obtained from 
$\SSTT_{\leq k+1}\in \SStd^+_{k+1}(-,\mu^{(k+1)} )$ via the isomorphism of \cref{789324578935247832592378375893978524978235497835241}.    
For $0\leq k \leq n$, we let $\lambda^{(k)}=\Shape(\SSTT_{\leq k})$ and  $Y_k\in {\rm Rem}_k(\lambda^{(k)})$ be such that $\lambda^{(k-1)}+\{Y_k\} = \lambda^{(k)}$.  We shall also refer to the ordered sequence of nodes $(Y_1,Y_2, \dots, Y_n)$ as the component word of $\SSTT$ 
(as each clearly determines the other: $\SSTT(Y_k)={\bf I}_{X_k}$ for $1\leq k \leq n$).  
 \end{defn}

\begin{rmk}Given $\mu \in \mptn \ell n$, we note that the component word of the multipartition $\mu$ is equal to the component word of the tableau  $\SSTT^\mu$.  
 \end{rmk}

\begin{eg}\label{egeg}
Let $e=6$, $h=1$, $\ell=3$, and $\kappa=(0,2,4)\in (\ZZ/7\ZZ)^3$.  
 For $\mu=((1^5),\varnothing,\varnothing)$, we have that the component word of $\mu$ is given by the ordered sequence of nodes 
$$
 ((1,1,1),(2,1,1),(3,1,1),(4,1,1),(5,1,1))
$$
for $\lambda=((1^2),(1),(1^2))$ there is a unique  element of $\SStd^+(\lambda,\mu)$ with component word  
$$
 ((1,1,1),(2,1,1),(1,1,3),(2,1,3),(1,1,2)).  
$$

\end{eg}

\begin{defn}
Given $\lambda,\mu\in \mptn \ell n(h)$, 
we define the {\sf    degree} of   tableau $\SSTT\in \SStd^+_n(\lambda,\mu)$ via the component word as follows.  
 If  $$
[{\sf T}_{\leq 0}] \xrightarrow{+Y_1}  [{\sf T}_{\leq 1}] \xrightarrow{+Y_2}  \cdots  \xrightarrow{+Y_n}   [{\sf T}_{\leq n}]
$$
is the component word of the tableau $\SSTT$, then we set 
\begin{align*}
\deg(Y_{k}) =&
\big|\{\text{addable $r_k$-nodes of $ \Shape(\SSTT_{\leq {k-1}})$ to the right of $Y_{k}$} \}\big| 
\\ &  -\big|\{\text{removable $r_k$-nodes  of $ \Shape(\SSTT_{\leq {k-1}})$ to the right of $Y_{k}$} \}\big| 
\end{align*}
where $r_k\in \ZZ/e\ZZ$ is the residue of the node $Y_{k} $.  We set $\deg(\SSTT) = \sum_{1\leq k\leq n}\deg(Y_k)$.  
\end{defn}
 
%

\begin{prop}
Given $\SSTT \in \SStd^+_n(\la,\mu)$, we have that $\deg(\SSTT)= \deg(C_\SSTT)$.  
\end{prop}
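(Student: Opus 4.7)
The plan is to proceed by induction on $n$, the base case $n = 0$ being vacuous. For the inductive step, fix $\SSTT \in \SStd^+_n(\lambda,\mu)$ and suppose its component word ends with the placement of $X_n \in \Rem(\mu)$ at the removable node $Y_n \in \Rem_{r_n}(\lambda)$ (both $X_n$ and $Y_n$ being the $\theta$-minimal such nodes by construction). By \cref{789324578935247832592378375893978524978235497835241} the truncated tableau $\SSTT_{\leq n-1}$ lies in $\SStd^+_{n-1}(\lambda - Y_n, \mu - X_n)$, and by the definition of the component-word degree,
$$
\deg(\SSTT) = \deg(\SSTT_{\leq n-1}) + \deg(Y_n).
$$
The inductive hypothesis gives $\deg(\SSTT_{\leq n-1}) = \deg(C_{\SSTT_{\leq n-1}})$, so the problem reduces to the local claim that $\deg(C_\SSTT) - \deg(C_{\SSTT_{\leq n-1}}) = \deg(Y_n)$.

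To recast this difference as the degree of a single extra strand, I would use the minimality of $X_n$ and $Y_n$ to pick a representative of $C_\SSTT$ that is obtained from $C_{\SSTT_{\leq n-1}}$ by adjoining one additional solid $r_n$-strand $\gamma$ running from the northern point $\mathbf{I}_{Y_n}$ to the southern point $\mathbf{I}_{X_n}$, together with its ghost $\ell$ units to the left. The hypothesis $e > h\ell$ together with \cref{seperating} guarantee that no crossings of $\gamma$ are forced with strands associated to boxes lying to the left of $Y_n$ on the north or of $X_n$ on the south. By \cref{grsubsec} the difference $\deg(C_\SSTT) - \deg(C_{\SSTT_{\leq n-1}})$ then equals the sum of the weights of the crossings in which $\gamma$ or its ghost participate.

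The combinatorial heart of the argument is to establish a signed bijection between these crossings and the addable and removable $r_n$-nodes of $\Shape(\SSTT_{\leq n-1})$ lying strictly to the right of $Y_n$. For each removable $r_n$-node $Z$ to the right of $Y_n$ there is a solid $r_n$-strand of $C_{\SSTT_{\leq n-1}}$ emanating at $\mathbf{I}_Z$, whose interaction with $\gamma$ (and of their respective ghosts) produces a net contribution of $-1$ after combining the $-2$ same-label crossing with the two compensating $+1$ contributions from the adjacent-label ghost crossings. For each addable $r_n$-node $Z$ to the right of $Y_n$, the neighbouring $(r_n \pm 1)$-box of $Z$ in $\Shape(\SSTT_{\leq n-1})$ (or, in the boundary case, a red strand of residue $r_n$) provides an uncancelled adjacent-label crossing with $\gamma$ or its ghost, contributing $+1$. \cref{seperating} ensures the relevant $r_n$- and $(r_n\pm 1)$-diagonals are sufficiently well-separated that each contributing node is counted exactly once.

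The main obstacle is the careful book-keeping in the last paragraph: one must verify that the pairing is genuinely a bijection, ruling out spurious contributions from crossings with strands to the left of $Y_n$, confirming that no node is counted twice, and checking that the red-strand crossings correspond precisely to the addable boundary nodes that open a new occupied component. Summing the weighted crossings then yields $\deg(Y_n)$, completing the induction.
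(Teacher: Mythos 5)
Your inductive framework mirrors the paper's: build $C_\SSTT$ one strand at a time along the component word, reduce to the local statement $\deg(C_{\SSTT_{\leq n}}) - \deg(C_{\SSTT_{\leq n-1}}) = \deg(Y_n)$, and account for the degree of the new strand $\gamma$ via the addable and removable $x_n$-nodes to the right of $Y_n$. That much is sound and is exactly how the paper proceeds (including the reliance on \cref{seperating} and the hypothesis $e > h\ell$).

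The gap is in the local accounting, and it is not just arithmetic. You attribute, to each removable $x_n$-node $Z$ to the right of $Y_n$, a contribution of $-1$ coming from ``the $-2$ same-label crossing with the two compensating $+1$ contributions from the adjacent-label ghost crossings.'' First, $-2 + 1 + 1 = 0$, not $-1$. Second, and more seriously, the mechanism you describe does not produce those $+1$'s. If $\beta$ is a solid $x_n$-strand of $C_{\SSTT_{\leq n-1}}$, then $\gamma$ (solid $x_n$) crossing $\beta$ gives $-2$, but $\gamma$ crossing $\beta$'s ghost (ghost $x_n$) has degree $\delta_{x_n, x_n - 1} = 0$, and $\gamma$'s ghost (ghost $x_n$) crossing $\beta$ has degree $\delta_{x_n, x_n+1}=0$. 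So one solid $x_n$-strand plus its ghost contributes exactly $-2$, not $-1$; the compensating $+1$'s must come from crossings with \emph{different} strands of residue $x_n \pm 1$, i.e. from other boxes on the same diagonal. The correct bookkeeping, as in the paper, is not per-node but per-diagonal: a diagonal whose addable corner is at $(r,c,m)$ with, say, $r - c < 0$ contains $r$ solid $x_n$-strands, $r+1$ ghost $(x_n-1)$-strands and $r$ solid $(x_n+1)$-strands in the relevant window, giving a telescoping total of $r(-2) + (r+1)(+1) + r(+1) = +1$; the number of crossings grows with $r$ even though the net is $\pm 1$ or $0$. Your proposal attributes the net to one or two crossings near $Z$, which is not how the cancellation works, so the ``signed bijection'' between crossings and addable/removable nodes, as you describe it, does not exist — the correct bijection is between \emph{diagonals} and their net contributions. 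You also elide the argument (the factorisation $C_{\SSTT_{\leq n}} = \overline{C}_{\SSTT_{\leq n-1}} \cdot \mathsf{1}_\lambda^{\lambda - Y_n + X_n}$ and equations (5.2)–(5.4) in the paper) that isolates the degree-contributing crossings to the second factor; simply saying \cref{seperating} rules out forced crossings on the left is not quite a substitute, since one must control crossings of $\gamma$ with nonvertical strands that straddle $Y_n$.
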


\begin{proof}
We can inductively  construct the element $C_\SSTT$ by drawing each strand, one at a time, according to the ordering of \cref{hellotheremr!} (see \cref{howtobuildshit}).    
In more detail: at the $k$th stage, we add a strand to the diagram connecting the southern point $(\mathbf{I}_{Y_k},0)$ to the northern point $(\mathbf{I}_{X_k},1)$   (making sure that we draw this strand so as to include the minimal number of crossings with strands from earlier in the process).  
We let $A_k$ denote the strand connecting the southern point $\mathbf{I}_{Y_k}$ to the northern point $\mathbf{I}_{X_k}$.
By construction, 
$\deg(C_{\SSTT_{\leq k}})-\deg(C_{\SSTT_{\leq k-1}}) $
is equal to the number of crossings of  $A_k$ with strands $A_i$ for $1\leq i <k$,  each crossing counted with degree given  by \cref{grsubsec}.  
We shall show that 
$$\deg(C_{\SSTT_{\leq k}})-\deg(C_{\SSTT_{\leq k-1}}) = \deg(Y_{k})$$
and hence deduce the result.  
We shall set $Y_k=(p_k,q_k,t_k)$ and  $X_k=(r_k,c_k,m_k)$ and $\res(A_k)=x_k \in \ZZ/e\ZZ$.  
Clearly, the only crossings  in $C_{{\SSTT^{(n)}}}$ which are not in $C_{{\SSTT^{(n-1)}}}$ involve 
strands labelled by $1 \leq k < n$ 
such that either 
\begin{itemize}[leftmargin=*] 
\item[ $(i)$]  $\mathbf{I}_{(p_k,q_k,t_k)} < \mathbf{I}_{(p_n,q_n,t_n)}$ and   $\mathbf{I}_{ (r_k,c_k,m_k)} > \mathbf{I}_{(r_n,c_n,m_n)}$ or 
\item[ $(ii)$] $\mathbf{I}_{(p_k,q_k,t_k)} > \mathbf{I}_{(p_n,q_n,t_n)}$ and   $\mathbf{I}_{ (r_k,c_k,m_k)} < \mathbf{I}_{(r_n,c_n,m_n)}$.
\end{itemize} 
We are only interested in those crossings labelled by a box $(p_k,q_k,t_k)$ of residue  $x_n-1$, $x_n$, or $x_{n} +1 \in I$ by \cref{grsubsec}.  
We write 
\begin{equation}\label{980980980098098098890}
C_{\SSTT_{\leq n}}
=       \overline{C}_{\SSTT_{\leq n-1}} \times {\sf1}_{\lambda}^{\lambda - Y_n +X_n}
\end{equation}
where 
\begin{itemize}[leftmargin=*] \item[$(1)$]  we obtain $\overline{C}_{\SSTT_{ \leq n-1}}$
from  
$C_{\SSTT_{ \leq n-1}}$ by adding a  vertical solid strand with $x$-coordinate 
$ {\bf I}_{X_n}$;
\item[$(2)$]    we obtain
${\sf1}_{\lambda}^{\lambda - Y_n +X_n} $  from  ${\sf 1}_{\la - Y_n }$
by adding a solid strand from        $(\mathbf{I}_{Y_n},0)$ to  $({\bf I}_{X_n} ,1)$ in such a way as to create no double-crossings.  
\end{itemize}  
By construction, any crossing as in $(i)$ (or $(ii)$) occurs in the diagram 
$C_{\SSTT_{ \leq n-1}}$ (or ${\sf1}_{\lambda}^{\lambda - Y_n +X_n} $) in the factorisation of \cref{980980980098098098890}.   
Now, recall  our assumption that $X_n=(r_n,c_n,m_n)$ is the rightmost removable node of $\mu$.   
Arguing as  in the proof of \cref{789324578935247832592378375893978524978235497835241}, we deduce that 
\begin{align}\label{2nsjljnkghjkfghjfhjkldghjkjhk}
(\mathbf{I}_{(r_n-1,c_n,m_n)}  ,\infty)&\cap \{ \mathbf{I}_{(r_k,c_k,m_k)}	\mid   x_k = x_n+1, 1\leq k <n 	\}= \varnothing, \\
\label{2nsjljnkghjkfghjfhjkldghjkjhk2}
(\mathbf{I}_{(r_n,c_n,m_n)},\infty)		&\cap 		\{ \mathbf{I}_{(r_k,c_k,m_k)}	\mid x_k = x_n , 1\leq k <n	\}= \varnothing, \\
\label{2nsjljnkghjkfghjfhjkldghjkjhk3}
(\mathbf{I}_{(r_n,c_n-1,m_n)}  ,   \infty) &\cap \{ \mathbf{I}_{(r_k,c_k,m_k)}	\mid x_k = x_n-1 , 1\leq k <n	\}= \varnothing.
\end{align}
Therefore  if $A_k$  and $A_n$ are crossing strands of adjacent (or equal) residue, then  we are in case $(ii)$ above.  
  In particular,  
$$\deg(\overline{C}_{\SSTT_{ \leq k-1}})=\deg( {C}_{\SSTT_{ \leq k-1}}).$$ 
Therefore we only need consider crossings of the strand $A_n$ within the  diagram ${\sf1}_{\lambda}^{\lambda - Y_n +X_n} $.
 By \cref{2nsjljnkghjkfghjfhjkldghjkjhk,2nsjljnkghjkfghjfhjkldghjkjhk2,2nsjljnkghjkfghjfhjkldghjkjhk3}, the
strands $A_n$  and  $A_k$  cross   if and only if 
 the box $Y_k$ occurs to the {\em right} of $Y_n$.  
 It remains to check that the total degree contribution  of these crossings is given by the total 
number of addable $x_n$-nodes minus the total number of removable $x_n$-nodes (to the right of $Y_n$)  as claimed.  
In order to do this, we first require some notation.  
Let $1\leq  m \leq \ell$  and let $(r,c,m) \in  \lambda  \cup  
{\rm Add}(\lambda  )$ 
be a box of residue  $x_n\in I$.  
We refer to the set of nodes 
\[
\mathbf{D}= \{(a,b,m)\in \lambda^{(k-1)}
   \mid a-b \in \{r-c-1,r-c,r-c+1\} \}
\]
as the associated {\sf    $x_n$-diagonal}.  
If $a-b$ is greater than, less than, or equal to zero,
we say that the $x_n$-diagonal is to  the left of, right of, or centred on $\theta_m$, respectively. 

Clearly all boxes of $\lambda$ of residue $x_n-1$, $x_n$, or $x_n+1$ belong  to some $x_n$-diagonal. 
 We say that the $x_n$-diagonal containing the node $(r,c,m)\in \lambda   \cup  
{\rm Add}(\lambda  )$ is to the right of  the node $Y_n$ if ${\bf I}_{(r,c,m)}> \mathbf{I}_{Y_n}$. 
We have already seen that each non-zero degree crossing of  the  $A_n$-strand (or its ghost) occurs with a strand belonging to an $x_n$-diagonal to the right of $Y_n$.  

Let ${\bf D}$ be an $x_n$-diagonal in $ \la$  to the right of $Y_{ k }$.  Suppose that  ${\bf D}$ has an addable $x_n$-node, which we denote by $(r,c,m) \in  \la$.  If   $(i)$  $r-c<0$, or $(ii)$  $r-c>0$, $(iii)$ $r-c=0$, then there are 
\begin{itemize}[leftmargin=*]
\item[$(i)$]  a total of $r$ distinct solid $x_n$-strands, $r+1$ distinct ghost  $(x_n-1)$-strands,    $r $   distinct solid $(x_n+1)$-strands, and no red strands,  
\item[$(ii)$] a total of  $c$ distinct solid $x_n$-strands, $c+1$  distinct solid $(x_n+1)$-strands,  and  $c $   distinct  ghost $(x_n-1)$-strands, and no red strands,  
\item[$(iii)$] a total of  $r$ distinct solid $x_n$-strands, $r$  distinct solid $(x_n+1)$-strands, and  $r$   distinct ghost $(x_n-1)$-strands, and 1 red strand (note that $r=c$), 
\end{itemize} 
within the region $$( [\mathbf{I}_{(r,c,m) }-2\ell,\mathbf{I}_{(r,c,m) }+2\ell]\times \{0\} ) \cap  {\sf1}_{\lambda } .$$   
The solid $A_n$-strand   crosses all of these strands and   the sum over these crossings has total  degree equal to  1.  
The other cases (an $x_n$-diagonal with a removable $x_n$-node, or an $x_n$-diagonal with no addable or removable $x_n$-node) can be checked in a similar fashion (and have total degree contribution $-1$ or $0$ respectively).
 \end{proof}

%

\subsection{The geometry}\label{1.1}
In this section, we are going to consider a variant of the classical alcove geometries encountered in Lie theory.  
Fix integers $h,\ell \in \ZZ_{>0}$ and $e \in \ZZ_{>0} \cup \{\infty\}$.
For each $ 1\leq \I \leq h$ and $ 0\leq \M < \ell$ we let
$\varepsilon_{h\M+\I}$ denote a
formal symbol, and set
\[
{\mathbb E}_{h,\ell }
=\bigoplus_{
\begin{subarray}c 
1\leq \I \leq  h   \\ 
0 \leq \M <  \ell  
 \end{subarray}
} \mathbb{R}\varepsilon_{h\M+\I}
\]
to be the associated $\ell h$-dimensional real vector space.
We have an inner product $\langle \, , \, \rangle$ given by extending
linearly the relations 
\[
\langle \varepsilon_{h\M+\I} , \varepsilon_{ht+\J} \rangle= 
\delta_{\I,\J}\delta_{t,\M}
\]
for all $1\leq \I, \J \leq h$ and $0 \leq  \M , t< \ell $, where
$\delta_{i,j}$ is the Kronecker delta.  We let $\Phi$ (respectively
$\Phi_0$) denote the root system of type $A_{\ell h -1}$ (respectively
of type $A_{h -1} \times A_{h -1}  \times \dots A_{h -1} $) consisting of the
roots 
$$\{\varepsilon_{h\M+ \I}-\varepsilon_{ht+ \J}\mid 1\leq \I,\J
\leq h \text{ and } 0\leq \M,t < \ell\text{ with } (i,m)\neq (j,t)\}$$ 
respectively
$$\{\varepsilon_{h\M+ \I}-\varepsilon_{h\M+ \J}\mid 1\leq \I,\J \leq h
\text{ with } \I\neq \J \text{ and } 0\leq \M < \ell\}.$$
 Suppose that  $e >h \ell$.
    We identify $\lambda \in \mptn \ell n(h)$ with a point  in $\mathbb{E}_{h\ell} $ via the   map 
$$ ( \lambda^{(1)} ,\ldots, \lambda^{(\ell )} )\mapsto \sum_{\begin{subarray}c
1\leq i \leq \ell   	\\
1\leq j \leq h
\end{subarray}}( \lambda^{(\M)})^T_\I \varepsilon_{h(m-1)+i},$$ 
(where the $T$ denotes the transpose partition).  
For example,  
  we have that $((2^2,1^2),(2,1)) \mapsto 4\varepsilon_1+2\varepsilon_2+2\varepsilon_3+\varepsilon_4$.  
(For Lie theorists who find the appearance of the transpose of the partition peculiar, we refer   to  \cref{Ringel2}.) 
For $e\in\mathbb{Z}_{>0}$ (respectively $e=\infty$) we assume that $\kappa \in I^\ell$ is $h$-admissible (respectively $1$-admissible).  
Given    $r\in \ZZ$ and  $\alpha \in \Phi$ we let  
$ s_{\alpha,re }$ denote the reflection which acts on ${\mathbb E}_{h,\ell}$ by 
\[s_{\alpha, re}x =x-(\langle x, \alpha \rangle -re) \alpha.\]
Given  $e \neq \infty$ we let $W^e $ be the affine reflection group
generated by the reflections 
$$\mathcal{S} = \{s_{\alpha, re} \mid   \alpha \in \Phi, r\in\mathbb{Z}\}$$
and let $W^e_0$  denote the   parabolic subgroup generated by 
$$\mathcal{S}_0=\{s_{\alpha, 0} \mid   \alpha \in \Phi_0\}.$$
If $e=\infty$ then we let $W^e$ be the finite reflection group
generated by the reflections 
$$\mathcal{S} = \{s_{\alpha, 0} \mid   \alpha \in \Phi\}$$
and let $W^e_0$  denote the   parabolic subgroup generated by 
$$\mathcal{S}_0=\{s_{\alpha, 0} \mid   \alpha \in \Phi_0\}.$$
  We shall consider a shifted action  of these groups  on ${\mathbb E}_{hl}^{\circledcirc}$ by the element
   $\rho = (\rho_1,\rho_2, \ldots,\rho_\ell) \in I^{h }$ where 
 $$\rho_i = (e-\kappa_i, e-\kappa_i-1, \dots,e-\kappa_i-h+1)\in I^{h\ell}.$$   
  Given an element $w\in W  ^e$,  we define the ``dot'' action of 
  $w$ on ${\mathbb
   E}_{h,\ell}$ by
\[
w\cdot_{\rho}x=w(x+\rho)-\rho.
\]
%
We let ${\mathbb E} ({\alpha, re})$ denote  the affine hyperplane
consisting of the points  
$${\mathbb E} ({\alpha, re}) = 
\{ x\in{\mathbb E}_{h,\ell} \mid  s_{\alpha,r } \cdot x = x\} .$$
We say that a point $\lambda\in {\mathbb
  E}_{h,\ell}$ is $e$-regular if it does not lie on any hyperplane.  Our assumption that $e>h\ell$ 
  implies that $e$-regular lattice points do exist.  
In particular, we let $\circledcirc$ denote the origin $(0, \dots ,0) \in {\mathbb
  E}_{h,\ell}$; note that our choice of $\rho$ ensures that
$\circledcirc$ is $e$-regular.

Given a
hyperplane ${\mathbb E} ( \alpha,re)$ we 
remove the hyperplane from ${\mathbb E}_{h,\ell}$ to obtain two
distinct subsets ${\mathbb 
  E}^{\great}(\alpha,re)$ and ${\mathbb E}^{\less}(\alpha,re)$
where $\circledcirc \in {\mathbb E}^{\less }(\alpha,re)$.  We define
${\mathbb E}^{\greatoreq }(\alpha,re) = {\mathbb E} (\alpha,re)\cup
{\mathbb E}^{\great }(\alpha,re)$ and similarly ${\mathbb
  E}^{\lessoreq }(\alpha,re) = {\mathbb E} (\alpha,re)\cup {\mathbb
  E}^{\less}(\alpha,re)$.  The dominant Weyl chamber, denoted
${\mathbb E}_{h,\ell }^\circledcirc $, is set to be 
$${\mathbb E}_{h,\ell }^{\circledcirc}=\bigcap_{ \begin{subarray}c
 \alpha \in \Phi_0
 \end{subarray}
 } {\mathbb E}^{\less} (\alpha,0).$$

\begin{eg}
For $\ell=1$ we obtain the parabolic affine geometry  which
controls the representation theory of the (quantum) general linear
group of $(h\times h)$-matrices in (quantum) characteristic $e$. 
\end{eg}

\begin{eg}
Setting $h=1$ we have $W_0^e=1$ and we obtain the (non-parabolic)
affine geometry which controls the representation theory of the
Kac--Moody algebras of type $\widehat{A}_{\ell-1}$ and the quiver
Temperley--Lieb algebras in characteristic $e$.   
\end{eg}

\begin{defn}
Let $\lambda \in {\mathbb E }_{h,\ell} $.     There are only finitely many hyperplanes   
lying {\em strictly} between  the   point  $\lambda  \in {\mathbb E}^\circledcirc_{h,\ell}$
and the origin $\circledcirc\in {\mathbb E}^\circledcirc_{h,\ell}$.  
For  $\alpha \in \Phi$, we let $\ell_\alpha (\la )$ denote the total number of these hyperplanes which are perpendicular to $\alpha\in \Phi$.
  We let $\ell (\la )= \sum_{\alpha \in \Phi}\ell_\alpha(\la)$.  
\end{defn}

\begin{rmk}
Note that we do not count any hyperplane upon which $\lambda$ actually lies. 
 \end{rmk}

\subsection{Paths in the geometry for $e\neq \infty$ }\label{1.2}
Let $e\in \mathbb{Z}_{>0}$.  We now introduce paths in our 
Euclidean space ${\mathbb E}_{h,\ell}$; the reader may find it helpful to consider the examples in \cref{sec:9}.  
We define a
degree function on such paths in terms of the hyperplanes in our
geometry.  We show how to identify these paths with semistandard tableaux and hence provide a graded path-theoretic basis of $A_h(n,\theta,\kappa)$.  

\begin{defn}  
 Given   a map  $s: \{1,\dots
  , n\}\to \{1,\dots , \ell h\}$ we define points ${\sf s}(k)\in
  {\mathbb E}_{h,\ell}$ by
\[
{\sf s}(k)=\sum_{1\leq i \leq k}\varepsilon_{s(i)} 
\]
for $1\leq i \leq n$. We define the associated path of length $n$ in
our alcove geometry ${\mathbb E}_{h,\ell}$ by $${\sf s}=({\sf
  s}(0),{\sf s}(1),{\sf s}(2), \ldots, {\sf s}(n)),$$  where we fix all
paths to begin at the origin, so that ${\sf s}(0)=\circledcirc \in
{\mathbb E}_{h,\ell}$.  We let ${\sf s}_{\leq \ik}$ denote the subpath
of ${\sf s}$ of length $\ik$ corresponding to the restriction of the
map $s$ to the domain $\{1,\dots , k\} \subseteq \{1,\dots , n\} $.
   \end{defn}

\begin{defn}\label{Soergeldegreee}
Given a path $\sts=(\sts(0),\sts(1),\sts(2), \ldots, \sts(n))$  we set
$\deg(\sts(0))=0$ and define  
 \[
 \deg(\sts ) = \sum_{1\leq k \leq n} d (\sts(k),\sts(k-1)), 
 \]
 where $d(\sts(k),\sts(k-1))$ is defined as follows. 
For $\alpha\in\Phi$ we set $d_{\alpha}(\sts(k),\sts(k-1))$ to be
\begin{itemize}
\item $+1$ if $\sts(k-1) \in 
   {\mathbb E}(\alpha,re)$ and 
   $\sts(k) \in 
   {\mathbb E}^{\less}(\alpha,re)$;
   
\item $-1$ if $\sts(k-1) \in 
   {\mathbb E}^{\great}(\alpha,re)$ and 
   $\sts(k) \in 
   {\mathbb E}(\alpha,re)$;
\item $0$ otherwise.  
   \end{itemize}
We let 
$$d (\sts(k-1),\sts(k))= \sum_{\alpha \in \Phi}d_\alpha(\sts(k-1),\sts(k)).$$  
\end{defn}

\begin{rmk}\label{assumponkappused}
 Let $\delta=((1^\ell),(1^\ell),\dots, (1^\ell)) \in \mptn \ell n(h)$.  
Importantly, there exists a degree zero path from the origin to $\delta$  
if and only if $e> h\ell$.  
In \cref{choiceofmulti},  we  chose   $\kappa \in I^\ell$ (by applying \cref{chooose})
 so that the path $ (+\varepsilon_1,+\varepsilon_2,\dots, +\varepsilon_{h\ell})$ from 
 the origin to $\delta$  is of degree zero.  
 We note that every point in this path is $e$-regular. 
 
 \end{rmk}

\begin{defn}
Let  $\mu \in \mptn \ell n (h)$  with 
component word of $\mu$  equal to 
\[
\varnothing=  \mu^{(0)}  \xrightarrow{+X_1}
\mu^{(1)}  \xrightarrow{+X_2}
\mu^{(2)}  \xrightarrow{+X_3} 
\cdots 
\xrightarrow{+X_{n-1}} 
\mu^{(n-1)}  \xrightarrow{+X_n}
\mu^{(n)}  = \mu.
\]
We fix a distinguished path $\stt^\mu$ from the origin to $\mu$  given by 
$$
\stt^\mu=(+\varepsilon_{X_1},+\varepsilon_{X_2}, \dots, +\varepsilon_{X_n}).
$$
Here we have abused notation slightly by identifying the addable box $X_k=(r_k,c_k,m_k)$ with the 
corresponding   $ \varepsilon_{X_k}=  \varepsilon_{h(m_k-1)+c_k}$.
\end{defn}

Let $\sts $ be a path which passes through a hyperplane ${\mathbb
  E}_{\alpha,re}$ at point $\sts(\ik)$ (note that $\ik$ is not
necessarily unique).  Then, let $\stt$ be the path obtained from
$\sts$ by applying the reflection $s_{\alpha,re}$ to all the steps in
$\sts $ after the point $\sts(\ik)$.  In other words,
$\stt(i)=\sts(i)$ for all $1\leq i \leq \ik$ and
$\stt(i)=s_{\alpha,re} \cdot \sts(i)$ for $\ik \leq i \leq n$.  We
refer to the path $\stt$ as the reflection of $\sts$ in ${\mathbb
  E}_{\alpha,re}$ at point $\sts(\ik)$ and denote this by
$s_{\alpha,re}^{\ik}\cdot \sts$.  We write $\sts \sim \stt$ if the
path $\stt$ can be obtained from $\sts$ by a series of such
reflections.

\begin{defn}
  We let $\Path_n(\lambda,\stt^\mu)$
  denote the set of all paths from the origin to $\lambda$ which may
  be obtained by applying repeated reflections to $\stt^\mu$, in other
  words
 $$\Path_n(\lambda,\stt^\mu) = \{\sts \mid \sts(n)=\lambda, \sts \sim
  \stt^{\mu}\}.$$ 
We let 
 $\Path_n^+(\lambda,\stt^\mu)
 \subseteq \Path_n(\lambda,\stt^\mu) 
 $ denote the  set of  paths  which at no point 
 leave the dominant Weyl chamber, in other words 
 $$\Path_n^+(\lambda,\stt^\mu)
 =
  \{\sts \in \Path_n(\lambda,\stt^\mu)
  \mid \sts(k) \in {\mathbb E}_{h,\ell}^\circledcirc
  \text{ for all }1\leq k \leq n
  \}.$$
      
\end{defn}

 \begin{defn}

Let  $\SSTT \in \SStd^+_n(\lambda,\mu)$ be a tableau with component reading word
$$
[{\sf T}_{\leq 0}] \xrightarrow{+Y_1}  [{\sf T}_{\leq 1}] \xrightarrow{+Y_2}  \cdots  \xrightarrow{+Y_n}   [{\sf T}_{\leq n}].
$$
We define a map $\omega:\SStd^+_n(\lambda,\mu) \to \Path_n^+(\lambda,\stt^\mu)$  where
$\omega(\SSTT)=\stt$ is the path in the alcove geometry given by 
$$
\stt=(+\varepsilon_{Y_1},+\varepsilon_{Y_2}, \dots, +\varepsilon_{Y_n}).
$$
\end{defn}

\begin{rmk}\label{asfjklaaksasgnkfafglkn}Given the unique  $\SSTT^\mu \in \SStd^+(\mu,\mu)$, it is clear that 
$\omega(\SSTT^\mu) = \stt^\mu $.  
 \end{rmk}

 \begin{lem}\label{useful-lemma}Given $\lambda\in \mptn \ell n (h)$, we have that     
\[
\langle \lambda +\rho ,   \varepsilon_{hm+i}  - \varepsilon_{hl+j}   \rangle = re
\]
for some $r\in\ZZ$, if and only if the  
the nodes $((\lambda^{(m+1)})^T_i,i,m), ((\lambda^{(l+1)})^T_j,j,l)  \in  \lambda $ have the same residue.
\end{lem}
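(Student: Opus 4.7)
The proof will be a short direct computation, unpacking each of the three objects $\lambda$, $\rho$, and the pairing, and matching the resulting expression with the residue formula $\res(r,c,m) = \kappa_m + c - r \pmod{e}$.

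First I would compute the pairing of $\lambda$ with the two coordinate vectors. By the identification of multipartitions with points of $\mathbb{E}_{h,\ell}$, and unpacking the index $hm+i$ as corresponding to the $i$-th column of the $(m+1)$-st component, one has
\[
\langle \lambda, \varepsilon_{hm+i}\rangle = (\lambda^{(m+1)})^T_i, \qquad \langle \lambda, \varepsilon_{hl+j}\rangle = (\lambda^{(l+1)})^T_j.
\]
Next I would compute the pairing of $\rho$ with the same vectors. From the definition $\rho_m = (e-\kappa_m, e-\kappa_m -1,\dots, e-\kappa_m - h+1)$, the coefficient of $\varepsilon_{hm+i}$ in $\rho$ is $e-\kappa_{m+1}-i+1$, so
\[
\langle \rho, \varepsilon_{hm+i} - \varepsilon_{hl+j}\rangle = (\kappa_{l+1} - \kappa_{m+1}) + (j - i).
\]

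Combining these, I would obtain
\[
\langle \lambda + \rho,\ \varepsilon_{hm+i} - \varepsilon_{hl+j}\rangle = \bigl[(\lambda^{(m+1)})^T_i - i - \kappa_{m+1}\bigr] - \bigl[(\lambda^{(l+1)})^T_j - j - \kappa_{l+1}\bigr].
\]
On the other hand, the residue of the node $((\lambda^{(m+1)})^T_i, i, m+1)$ is, by definition, $\kappa_{m+1} + i - (\lambda^{(m+1)})^T_i \pmod e$, which is the negative of the first bracketed expression above (modulo $e$); similarly for the $(l,j)$ term. Hence the displayed pairing is congruent modulo $e$ to the difference of residues of the two claimed nodes (up to an overall sign), and it equals $re$ for some $r\in\mathbb{Z}$ if and only if those residues coincide.

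There is no real obstacle; the statement is a direct translation between two encodings of the same data. The only mildly delicate point is bookkeeping with the index shift $\varepsilon_{hm+i} \leftrightarrow$ column $i$ of component $m+1$ (because the range convention is $0\leq m<\ell$), and remembering that $\rho$ is the shift $(e-\kappa_m - i +1)$ rather than just $-\kappa_m$; once these are consistent, the rest is a one-line verification.
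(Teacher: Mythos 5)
Your proof is correct and takes essentially the same approach as the paper: both are a direct unpacking of the pairing $\langle\lambda+\rho,\varepsilon_{hm+i}-\varepsilon_{hl+j}\rangle$ followed by matching against the residue formula $\res(r,c,m)=\kappa_m+c-r\pmod e$. The paper's proof is a single displayed congruence; your version is more careful about the bookkeeping and in fact corrects two small slips in the paper's formulation (the lemma writes the component index as $m$ and $l$ where $m+1$ and $l+1$ are meant, and the paper's displayed congruence has $\kappa_i,\kappa_j$ where $\kappa_{m+1},\kappa_{l+1}$ are intended and omits the $-i,-j$ shifts coming from $\rho$; these cancel only because they appear symmetrically, which your computation makes explicit).
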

\begin{proof} To see this, note that both statements are equivalent to
\[
 (\lambda^{(m+1)})^T_i+e-\kappa_i  \equiv  	(\lambda^{(l+1)})^T_j+e-\kappa_j  \pmod e. \qedhere
\]

\end{proof}

\begin{thm}\label{step3}
Let $e\in \mathbb{Z}_{>0}\cup\{\infty\}$ and $\kappa\in I^\ell$ be  $h$-admissible.  
 For $\lambda,\mu \in   \mptn \ell n (h)$ the map  $\omega: 
\SStd^+_n(\lambda,\mu) \to \Path^+_n(\lambda,\stt^\mu) 
$ 
is bijective and degree preserving.  
\end{thm}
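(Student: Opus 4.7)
The plan is to proceed by induction on $n$. The base case $n=0$ is trivial since both $\SStd^+_0(\varnothing,\varnothing)$ and $\Path^+_0(\varnothing,\stt^\varnothing)$ are singletons of degree zero.

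For the inductive step, let $X_n$ denote the least-dominant removable node of $\mu$, so that the component word of $\mu$ ends in $X_n$. \cref{789324578935247832592378375893978524978235497835241} gives the decomposition
\[
\SStd^+_n(\la,\mu) \;=\; \bigsqcup_{Y \in \Rem_{\res X_n}(\la)} \SStd^+_{n-1}(\la-Y,\mu-X_n),
\]
and the component word of $\mu-X_n$ is visibly $(X_1,\dots,X_{n-1})$, so $\stt^{\mu-X_n}$ is precisely the first $n-1$ steps of $\stt^\mu$. The strategy is to establish the parallel decomposition
\[
\Path^+_n(\la,\stt^\mu) \;\longleftrightarrow\; \bigsqcup_{Y \in \Rem_{\res X_n}(\la)} \Path^+_{n-1}(\la-Y,\stt^{\mu-X_n})
\]
by cutting off each path's last step $+\varepsilon_Y$. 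The inductive hypothesis then applies in each summand, giving bijectivity at level $n$. Injectivity of $\omega$ at level $n$ is immediate, since distinct component words of tableaux produce distinct sequences of step vectors.

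The key combinatorial claim to verify is that the last step of any $\sts\in\Path^+_n(\la,\stt^\mu)$ takes the form $+\varepsilon_Y$ with $Y\in\Rem_{\res X_n}(\la)$. This reduces to showing that the residue sequence $(\res X_1,\dots,\res X_n)$ labelling the step-boxes is invariant under the reflection procedure defining $\Path$. For a reflection through $\mathbb{E}(\alpha,re)$ at a point $\sts(l)$, the subsequent step vectors are permuted by the linearisation $s_{\alpha,0}$, and by \cref{useful-lemma} the hyperplane condition at $\sts(l)$ exactly equates the bottom-box residues of the two columns swapped by $s_{\alpha,0}$. A direct computation using the $\rho$-shifted dot action of $s_{\alpha,re}$ then shows that the change of column index is compensated by the affine shift $re$ in the row index, leaving the residue of each subsequently-added box invariant modulo $e$. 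The claim that the truncated path $\sts_{\leq n-1}$ lies in $\Path^+_{n-1}(\la-Y,\stt^{\mu-X_n})$ follows by applying the same sequence of reflections to $\stt^{\mu-X_n}$, whose points coincide with those of $\stt^\mu_{\leq n-1}$.

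For degree preservation, we compare the contributions step by step. Recall $d(\sts(k-1),\sts(k))=\sum_{\alpha\in\Phi} d_\alpha(\sts(k-1),\sts(k))$. By \cref{useful-lemma}, a hyperplane $\mathbb{E}(\alpha,re)$ passing through $\sts(k-1)$ and transverse to the $k$-th step corresponds to another column of $\Shape(\SSTT_{\leq k-1})$ whose bottom box is $\res$-compatible with $Y_k$, and the side of $\sts(k)$ relative to this hyperplane distinguishes addable nodes ($+1$ contribution) from removable nodes ($-1$ contribution) of residue $\res Y_k$. The constraint that a hyperplane contributes at step $k$ rather than earlier translates precisely to the node lying to the right of $Y_k$ in the $\theta$-dominance order. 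Summing over $k$ matches the path degree against $\sum_k\deg(Y_k)=\deg(\SSTT)$ from \cref{hellotheremr!}. The main obstacle in the whole argument is the careful bookkeeping that ensures residue-invariance under reflection, since it couples the hyperplane arithmetic of \cref{useful-lemma} with the residue formula $\res(r,c,m)=\kappa_m+c-r$ via the $\rho$-shifted dot action on $\mathbb{E}_{h,\ell}$.
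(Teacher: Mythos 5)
Your proposal is correct and takes essentially the same route as the paper's own proof: induction on $n$ via \cref{789324578935247832592378375893978524978235497835241}, transferring the disjoint-union decomposition from $\SStd^+$ to $\Path^+$ by peeling off the final step, with residue-invariance under reflection supplied by \cref{useful-lemma} and degree preservation tracked through addable/removable nodes to the right of $Y_k$. The one point your sketch passes over lightly — and which the paper spells out explicitly — is what happens when a reflection is applied at the \emph{final} position $n$: the paper computes that such a reflection $s^{(n)}_{\varepsilon_{i_n}-\varepsilon_{j_n},m_ne}$ merely changes which removable box $Y=\sigma'(X)\in\Rem_x(\lambda)$ receives the last step, so truncation still lands in $\bigsqcup_Y\Path^+_{n-1}(\lambda-Y,\stt^{\mu-X_n})$; this is the computation you would need to insert to turn your "cut off the last step" bijection into a complete argument.
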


\begin{proof}
The result clearly holds for $n=1$ and so we proceed by induction.    
We now let ${\mu}\in\mptn \ell {n+1}(h)$.  Let 
$X 
 $ denote the least dominant  element of ${\rm Rem}(\mu)$ and suppose this box has  residue $x \in I$.  By induction and \cref{789324578935247832592378375893978524978235497835241}, we may assume that 
\begin{align}\label{bij}
\SStd^+_{n+1}(\lambda,\mu) \leftrightarrow 
\bigsqcup_{Y \in \Rem_x(\lambda)}\SStd^+_{n }(\lambda-Y,\mu-X)
\leftrightarrow\bigsqcup_{
Y \in \Rem_x(\lambda)
}
\Path_{n }^+(\lambda-Y,\mu-X)
\end{align}
and that these bijections are degree preserving.
Given any $\SSTT \in 
\SStd^+_{n+1}( \lambda, \mu)$ we can let $Y\in {\rm Rem}(\lambda)$  denote the box containing the entry $n+1$.  
By induction, the pair 
$$\SSTT_{\leq n} \in 
\SStd^+_{n}( \lambda-Y, \mu-X)\quad \text{ and } \quad 
\omega(\SSTT_{\leq n}) \in 
\Path_{n}^+( \lambda-Y, \mu-X)$$ 
are identified under the map \ref{bij} and  the degrees coincide.  
Moreover  given any $\SSTT \in \SStd^+_{n+1}(\lambda,\mu)$, we can write  
$$
\omega(\SSTT_{\leq n } )
=
 s_{\varepsilon_{i_t}-{\varepsilon_{j_t}},m_te}^{(k_t)}
\dots 
s_{\varepsilon_{i_1}-{\varepsilon_{j_1}},m_1e}^{(k_1)} 
\cdot \omega(\SSTT^{\mu-X})
$$
for some $k_1\leq k_2 \leq \dots \leq k_t$.  
  Now, given $\stt\in \Path_n^+(\lambda,\stt^\mu)$ such that 
   $\stt(n) \in \mathbb{E}_{i_n-j_n,m_ne}$ and   $\stt(n+1) \not \in \mathbb{E}_{i_n-j_n,m_ne}$ 
we have that   \begin{align*}
\stt =&s_{\varepsilon_{i_n}- \varepsilon_{j_n},m_ne}^{(n)}
( s_{\varepsilon_{i_t}-{\varepsilon_{j_t}},m_te}^{(k_t)}
\dots 
s_{\varepsilon_{i_1}-{\varepsilon_{j_1}},m_1e}^{(k_1)})
\cdot \stt^\mu 
\\  =
  &s_{\varepsilon_{i_n}- \varepsilon_{j_n},m_ne}^{(n)}
( s_{\varepsilon_{i_t}-{\varepsilon_{j_t}},m_te}^{(k_t)}
\dots 
s_{\varepsilon_{i_1}-{\varepsilon_{j_1}},m_1e}^{(k_1)})
\cdot (\stt^\mu{\downarrow}_{\leq n}\circ (+\varepsilon_X))
\\  =
  &s_{\varepsilon_{i_n}- \varepsilon_{j_n},m_ne}^{(n)}
( s_{\varepsilon_{i_t}-{\varepsilon_{j_t}},m_te}^{(k_t)}
\dots 
s_{\varepsilon_{i_1}-{\varepsilon_{j_1}},m_1e}^{(k_1)})
\cdot \omega(\SSTT^{\mu}_{\leq n} + \{X\})
\\  =
  &s_{\varepsilon_{i_n}- \varepsilon_{j_n},m_{p,q}e}^{(n)}
 \cdot  \omega(\SSTT_{\leq n}  
+\{\sigma(X)\}
 )
\\  =
  &    \omega(\SSTT_{\leq n}  +\{\sigma'(X)\})
 \end{align*} 
 where $\sigma=
 s_{\varepsilon_{i_t}-{\varepsilon_{j_t}}} 
\dots 
s_{\varepsilon_{i_1}-{\varepsilon_{j_1}} }\in \mathfrak{S}_{h\ell} $, 
$\sigma'=
 s_{\varepsilon_{i_n}- \varepsilon_{j_n}}s_{\varepsilon_{i_t}-{\varepsilon_{j_t}}} 
\dots 
s_{\varepsilon_{i_1}-{\varepsilon_{j_1}} }\in \mathfrak{S}_{h\ell} $  
and $\sigma'(X)=   Y \in  {\rm Rem}_x(\lambda)$.   This gives us the required   bijection
\begin{align}
 \Path_{n }^+(\lambda ,\mu )
\leftrightarrow  
\bigsqcup_{
Y \in \Rem_x(\lambda)
}
\Path_{n }^+(\lambda-Y,\mu-X). 
 \end{align}
  It remains to verify that the bijection is degree preserving.   
Let  $A_{(r,c,m)} \rhd A_{(r',c',m')}$ be two addable nodes of  some $\lambda\in \mptn \ell n$ and suppose that $r-c\neq r'-c'$. 
This implies that 
the $c'$th column of the $m'$th component of $\lambda$ is {\em strictly} greater than
the $c $th column of the $m $th component of $\lambda$.  
 Therefore   
\begin{align}\label{pm1}\langle\lambda+A_{(r',c',m')} , \varepsilon_{c'} - \varepsilon_{c}\rangle=\langle \lambda , \varepsilon_{c'} - \varepsilon_{c}\rangle+1
\quad
\langle\lambda+A_{(r,c,m)} , \varepsilon_{c'} - \varepsilon_{c}\rangle=\langle \lambda , \varepsilon_{c'} - \varepsilon_{c}\rangle-1
\end{align}are both are strictly positive.  
For any   $Y 
 \in {\rm Rem}_{x}({\lambda})$, we let 
\begin{align*}{\rm Add}_{x}(\lambda-Y) =\{A_{(r_1,c_1,m_1)}\rhd A_{(r_2,c_2,m_2)} \rhd \dots \rhd  A_{(r_a,c_a,m_a)}\}	  ={\rm Add}_{x}(\lambda-Y)  
\setminus \{Y\}
\\{\rm Rem}_{x}(\lambda ) =\{R_{(p_1,q_1,t_1)}\rhd R_{(p_2,q_2,t_2)} \rhd \dots \rhd  R_{(p_b,q_b,t_b)}\}	 = {\rm Rem}_{x}( {\lambda}-Y)  
\setminus \{Y\}.
\end{align*}
We note that ${\rm Rem}_{z}( {\lambda}-Y)  ={\rm Rem}_{z}( {\lambda})  $ and ${\rm Add}_{z}( {\lambda}-Y)  ={\rm Rem}_{z}( {\lambda})  $ for all $x  \neq z \in I$.  
We further note that $r_i-c_i \neq r_j-c_j$ (respectively 
$p_i-q_i \neq p_j-q_j$) for  $1\leq i < j \leq a$ (respectively $1\leq i < j \leq b$) by  
\cref{seperating}.
We set $Y=A_{(r_y,c_y,m_y)}=R_{(p_{y'},q_{y'},t_{y'})}$ for some $1\leq y\leq a$ and $1\leq y'\leq b$.

If $\lambda \in \mathbb{E}_{\alpha,me}$  and $\lambda+\varepsilon_Y \not \in \mathbb{E}_{\alpha,me}$ for some $m\in \mathbb{Z}_{>0}$, then $\alpha= \varepsilon_{c_i}-\varepsilon_{c_j}$ 
   for some
 $1\leq i < j \leq a$.   Similarly,  if  $\lambda \not \in \mathbb{E}_{\alpha,me}$  and $\lambda+\varepsilon_Y   \in \mathbb{E}_{\alpha,me}$ for some $m\in \mathbb{Z}_{>0}$, then  $\alpha= \varepsilon_{q_i}-\varepsilon_{q_j}$ 
   for some
 $1\leq i < j \leq b$.   
 By \cref{pm1}, we have that 
 $$d_{\varepsilon_{c_i}-\varepsilon_{c_j}}(\lambda, \lambda+\varepsilon_Y)
 =
 \begin{cases}
 1 &\text{ if }  i = y < j 
 \\
 0&\text{ otherwise}
 \end{cases}
 \qquad
 d_{\varepsilon_{q_i}-\varepsilon_{q_j}}(\lambda, \lambda+\varepsilon_Y)
 =
 \begin{cases}
 -1 &\text{ if }i = y' < j 
 \\
 0&\text{ otherwise}
 \end{cases}
 $$
 and summing over all these terms we obtain 
 $$
 d (\lambda, \lambda+\varepsilon_Y)
 = (a-y)-(b-y')
$$
which is equal to the number of addable $x$-boxes to the right of $Y$ minus the number of 
removable  $x$-boxes to the right of $Y$, as required.   
   \end{proof}

\begin{thm}\label{quotienthghghg}
The $R$-algebra $A_h(n,\theta,\kappa)$  is a graded cellular algebra with  basis 
\[
 \{C_{\sts\stt}   \mid \sts \in \Path^+_n(\lambda,\stt^\mu), 
\stt \in \Path^+_n(\lambda,\stt^\nu), 
\lambda,\mu, \nu \in \mptn {\ell}n(h)\}
\]
with respect to the $\theta$-dominance order on $\mptn \ell n(h)$ and the involution $\ast$.  
Here $C_{\sts\stt}:=C_{\SSTS\SSTT}$ for $\omega(\SSTS)=\sts$ and  $\omega(\SSTT)=\stt$.  
\end{thm}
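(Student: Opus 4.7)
The plan is to transport the cellular structure already established on $A_h(n,\theta,\kappa)$ along a bijection of index sets. The theorem is essentially a transport-of-structure statement, and all of the substantive work has been done in the preceding lemmas.

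First I would recall the corollary to \cref{decompnumbersarethesame}: $A_h(n,\theta,\kappa)$ is a graded cellular algebra with basis
\[
\{C_{\SSTS\SSTT} \mid \SSTS \in \SStd^+(\lambda,\mu),\ \SSTT \in \SStd^+(\lambda,\nu),\ \lambda,\mu,\nu \in \mptn \ell n(h)\},
\]
with weight poset $(\mptn \ell n(h), \trianglerighteq_\theta)$, degree $\deg(C_{\SSTS\SSTT}) = \deg(\SSTS) + \deg(\SSTT)$, and anti-involution $C_{\SSTS\SSTT}^{\ast} = C_{\SSTT\SSTS}$.

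Next I would invoke \cref{step3}, which provides for each pair $\lambda,\mu \in \mptn \ell n(h)$ a degree-preserving bijection $\omega : \SStd^+_n(\lambda,\mu) \to \Path^+_n(\lambda,\stt^\mu)$. Assembling these fibrewise yields the reindexing $C_{\sts\stt} := C_{\SSTS\SSTT}$ where $\omega(\SSTS) = \sts$ and $\omega(\SSTT) = \stt$, and by bijectivity the new indexed set is again an $R$-basis of the algebra.

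Finally, I would verify that each of the four axioms of \cref{defn1} survives the reindexing. Axiom~(2) (basis) is immediate from bijectivity of $\omega$. Axiom~(4) (anti-involution) is inherited since $\omega$ is applied symmetrically to both factors: $(C_{\sts\stt})^{\ast} = C_{\SSTS\SSTT}^{\ast} = C_{\SSTT\SSTS} = C_{\stt\sts}$. Axiom~(1) (homogeneity) follows at once from degree-preservation of $\omega$, giving $\deg(C_{\sts\stt}) = \deg(\sts) + \deg(\stt)$. Axiom~(3) (triangular multiplication) is an identity in the algebra itself governed by $\trianglerighteq_\theta$ on $\mptn \ell n(h)$; this poset is unchanged by the relabelling of basis elements, so the expansion
\[
a\,C_{\sts\stt} \equiv \sum_{\stu} r_{\sts\stu}(a)\, C_{\stu\stt} \pmod{A^{\vartriangleright \lambda}}
\]
translates directly from the tableaux version.

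There is no genuine obstacle here: the content of the theorem is entirely packaged into the inductive construction of semistandard tableaux in \cref{789324578935247832592378375893978524978235497835241} and the careful matching of degrees in \cref{step3}. The remainder of the argument is bookkeeping, so the write-up should focus on making explicit that $\omega$ is simultaneously compatible with the weight labelling, the anti-involution, and the grading, and hence carries the cellular datum in the sense of \cref{defn1} from one index set to the other.
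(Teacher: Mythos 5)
Your proposal is correct and matches the paper's (implicit) argument: Theorem~\ref{quotienthghghg} is stated without a separate proof precisely because it is a transport-of-structure corollary obtained by relabelling the cellular basis of the Corollary following Theorem~\ref{decompnumbersarethesame} along the degree-preserving bijection $\omega$ of Theorem~\ref{step3}. Your verification that the four axioms of Definition~\ref{defn1} survive the reindexing is the right bookkeeping to make this explicit.
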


\subsection{The algebras $A(n,\theta,\kappa)$ for $e=\infty$ or $e>n$}

We now take a short detour to consider the algebras  $A(n,\theta,\kappa)$  with $e= \infty$ (or more generally $e>n$) and    
$\kappa\in I^\ell$ is  1-admissible (i.e a multicharge with no repeated entries).
 We can take the condition on the $e$-multicharge to be less restrictive because of our assumption that $e>n$.  
With minor technical modifications to the combinatorics, this can be treated in exactly the same way as the algebras
$A_h(n,\theta,\kappa)$ for $e>h\ell$ and $\kappa\in I^\ell$ $h$-admissible.  
 Let $\stt^\mu$ be the path from $\circledcirc$ to $\mu$, defined as above.  
We have two problems to address:
\begin{itemize}
\item[$(i)$] The  path $\stt^\mu$ is not necessarily of degree zero, however $\deg(\SSTT^\mu)=\deg(C_{\SSTT^\mu})=0$;
\item[$(ii)$] The map $\varphi:\SStd^+_n(\lambda,\mu)\to \Path_n^+(\lambda,\mu)$  is not necessarily surjective  and 
$\deg(\varphi(\SSTS))= \deg(\SSTS) - \deg(\stt^\mu)$ for $\SSTS\in \SStd^+_n(\lambda,\mu)$.  
\end{itemize}
We take care of these problems as follows. 
Let $$\Sigma(\mu)=\{k \mid   1\leq k \leq n, d_\alpha(\stt^\mu(k-1),\stt^\mu(k))\neq 0 \text{ for some }\alpha\in \Phi\}.$$   
 We let $\Path^\infty _n(\lambda,\mu)\subseteq  \Path^+_n(\lambda,\mu)$ denote the subset of paths $\sts$ such that 
 $$
 \sts \sim \sts^{(1)} \sim \sts^{(2)}\sim \dots \sim \sts^{(t)}=\stt^\mu
 $$
 where $\sts^{(i)}=s_{\alpha,re}^{k_i}\cdot \sts^{(i-1)}$ for some $k_i \not \in \Sigma(\mu)$.
 We then define the degree   as follows 
  \[
 \deg(\sts ) = \sum_{\begin{subarray}
c
1 \leq k \leq n \\
k \not \in \Sigma(\mu)
\end{subarray} } d (\sts(k),\sts(k-1)), 
 \]
 for   $\sts \in \Path^\infty _n(\lambda,\mu)$.  
   In the remainder of the paper, we shall only deal explicitly with the algebras $A_h(n,\theta,\kappa)$ with $e>h\ell$. 
   However all the results   can be easily generalised to   $A (n,\theta,\kappa)$ for 
 $e>n$ and  $\kappa\in I^\ell$   1-admissible.    
 
 \begin{thm}\label{eisinfinity}
 Let $e>n$ and  $\kappa\in I^\ell$ be   1-admissible. 
 The $R$-algebra $A (n,\theta,\kappa)$    is a graded cellular algebra with  basis 
\[
 \{C_{\sts\stt} \mid \sts \in \Path^\infty_n(\lambda,\stt^\mu), 
\stt \in \Path^\infty_n(\lambda,\stt^\nu), 
\lambda,\mu, \nu \in \mptn {\ell}n(\infty) \}
\]
with respect to the $\theta$-dominance order on $\mptn \ell n $ and the involution $\ast$.  
Here $C_{\sts\stt}:=C_{\SSTS\SSTT}$ for $\omega(\SSTS)=\sts$ and  $\omega(\SSTT)=\stt$.  
 \end{thm}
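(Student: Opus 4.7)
The plan is to reduce to the argument for \cref{quotienthghghg}, handling the two technical issues noted in the preceding discussion: the degree of $\stt^\mu$ need not be zero, and the image of $\omega$ need not exhaust $\Path_n^+(\lambda,\stt^\mu)$. First I would verify that the inductive tableau construction of \cref{789324578935247832592378375893978524978235497835241} still goes through under the weaker hypothesis $e>n$ with $\kappa$ 1-admissible. The crucial ingredient is \cref{seperating}, which is stated for $e>h\ell$; but for a multipartition of $n$, any two boxes of equal residue in its $\theta$-Russian array have $\mathbf{I}$-coordinates differing by at least $e\ell > n\ell$, so an interval of width $(h+1)\ell$ (with $h$ the maximum column length occurring in $\mu$) contains at most one $i$-diagonal. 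Hence the inductive decomposition $\SStd^+_n(\lambda,\mu)\leftrightarrow\bigsqcup_{Y\in\Rem_x(\lambda)}\SStd^+_{n-1}(\lambda-Y,\mu-X)$ remains valid, and the notion of component word extends without change.

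Next I would set up a degree-preserving bijection $\omega:\SStd^+_n(\lambda,\mu)\to\Path^\infty_n(\lambda,\stt^\mu)$. The definition of $\Path^\infty_n$ permits only reflections $s_{\alpha,re}^{(k)}$ at indices $k\notin\Sigma(\mu)$, where $\Sigma(\mu)$ records those steps of $\stt^\mu$ that already meet a hyperplane. On the tableau side, these $\Sigma(\mu)$-steps are intrinsic to $\mu$ itself and cannot be altered by the choice of $\SSTT$: they correspond to forced placements in the component word dictated by the shape of $\mu$. Consequently the inductive construction produces a tableau whose associated path differs from $\stt^\mu$ only by reflections at indices outside $\Sigma(\mu)$, so $\omega$ lands in $\Path^\infty_n(\lambda,\stt^\mu)$. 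The proof that $\omega$ is bijective proceeds by induction on $n$, exactly as in the proof of \cref{step3}, with the iterated-reflection formula rewritten to restrict to indices outside $\Sigma(\mu)$.

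For the degree-preservation, I would repeat the local analysis at the end of the proof of \cref{step3}: each step contributes $d(\stt(k-1),\stt(k))$ equal to the number of addable $x$-nodes to the right minus the number of removable $x$-nodes to the right of the inductively added box $Y_k$. The modified degree sum on $\Path^\infty_n$, which excludes indices in $\Sigma(\mu)$, therefore matches $\deg(\SSTT)=\deg(C_\SSTT)$ exactly, because the $\Sigma(\mu)$-terms are forced by $\mu$ alone and correspond to degree contributions already accounted for in the fixed identification $\deg(\SSTT^\mu)=\deg(C_{\SSTT^\mu})=0$. Combining this bijection with the cellular basis of $\mathbf{A}(n,\theta,\kappa)$ from \cref{cellularitybreedscontempt} and the truncation argument of \cref{decompnumbersarethesame} applied to the idempotent $\sum_{\alpha\in\mptn\ell n}{\sf 1}_\alpha$ (the column-unrestricted analogue, whose proof is identical), we obtain the claimed cellular basis $\{C_{\sts\stt}\}$ of $A(n,\theta,\kappa)$, the $\theta$-dominance order and anti-involution $\ast$ being inherited from the ambient algebra.

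The main obstacle is verifying that $\Sigma(\mu)$ and the corresponding restricted reflection set are defined so that they exhaust precisely the image of $\omega$ (and nothing more), so that the modified degree function genuinely equals $\deg(\SSTT)$ step-by-step. This is a careful inductive bookkeeping: at each stage one must check that the available removable boxes $Y\in\Rem_x(\lambda)$ in the inductive tableau construction correspond bijectively to the admissible reflections $s_{\alpha,re}^{(k)}$ with $k\notin\Sigma(\mu)$, and that the contribution of each such reflection to the local degree $d(\stt(k-1),\stt(k))$ matches the addable-minus-removable count at $Y$. Once this compatibility is established, the graded cellularity follows formally.
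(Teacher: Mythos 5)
Your overall plan --- reduce the $e>n$ case to the argument for $A_h(n,\theta,\kappa)$, handling the degree shift of $\stt^\mu$ and the non-surjectivity of the path map via $\Sigma(\mu)$ and $\Path^\infty_n$, then truncate the cellular basis through the idempotent $\sum_\alpha{\sf 1}_\alpha$ --- matches the paper's (very terse) sketch; the paper does not itself write out a proof, and you correctly identify the two difficulties that force the $\Path^\infty_n$ modification.

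However, the claim that ``any two boxes of equal residue in the $\theta$-Russian array have $\mathbf{I}$-coordinates differing by at least $e\ell>n\ell$'' is false, and it is what you use to carry over \cref{seperating}. The $e\ell$-separation holds for boxes in the \emph{same} component, where equal residue forces $r-c$ and $r'-c'$ to differ by a nonzero multiple of $e$. For boxes in \emph{different} components $m\neq m'$, 1-admissibility only guarantees $\kappa_m\neq\kappa_{m'}$, so the $\mathbf{I}$-separation is approximately $(m-m')+\ell(\kappa_m-\kappa_{m'})$, which can be as small as $1$. Concretely, with $\ell=2$, $\theta=(1,2)$, $\kappa=(0,1)$ and $e$ arbitrary, the boxes $(1,2,1)$ and $(1,1,2)$ both have residue $1$, at $\mathbf{I}$-coordinates approximately $-1$ and $2$. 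The point of $h$-admissibility in \cref{seperating} is precisely that the entries of $\kappa$ are spaced at least $h$ apart; replacing it by 1-admissibility destroys the separation bound. Consequently the uniqueness of $(x\pm1)$-diagonals in $[\mathbf{I}_X-\ell,\infty)$ --- the crux of the proof of \cref{789324578935247832592378375893978524978235497835241} --- is not established by your argument, and the bijection $\omega$ on which the rest of your proof rests is not yet shown to exist. The paper implicitly assumes it does (its item (ii) presupposes the map $\varphi:\SStd^+_n(\lambda,\mu)\to\Path^+_n(\lambda,\mu)$ is defined), so if the inductive construction does go through under $e>n$ with only 1-admissibility, the reason must be more delicate --- for instance exploiting the minimality of $X$ among removable boxes of $\mu$ to show that the offending cross-component residues cannot actually occur in the relevant region --- and that is exactly what your proposal fails to supply. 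The remainder of your outline (restricting to $\Path^\infty_n$, comparing the modified degree with $\deg(\SSTS)-\deg(\stt^\mu)$, truncating via $\sum_\alpha{\sf 1}_\alpha$) is consistent with the paper's intended route once the first step is repaired.
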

 \begin{rmk}\label{see}
We remark that the  case $e=\infty$ (or more  generally $e>n$) is expected to be far simpler  than the  case  $e\leq n$.   
Indeed, this prompted an optimistic conjecture of Kleshchev--Ram \cite[Conjecture 7.3]{MR2777040} (later proven false in   \cite[Section 4.2]{MR3163410}).  
From our point of view, this simplification is a consequence of the fact that the affine Weyl group ``controlling" the alcove geometry for $e=\infty$ is finite (whereas it is infinite  for $e<n$).  This is reflected in the fact that 
  \cref{eisinfinity} deals with the entire diagrammatic Cherednik algebra, rather than the quotient considered in \cref{quotienthghghg}.

 The KLR algebras $H_n(\kappa)$ for $e=\infty$ and $\ell=2$ 
have received a great deal of attention from Brundan--Stroppel    (see for example \cite{MR2600694,MR2781018}) 
and  some of their results were extended to the case $e>n$ by Mathas--Hu 
 \cite[Appendix B]{MR3356809}.   
\end{rmk}

%
%

 \begin{rmk}
Let $e>n$. Up to a trivial re-ordering of the weighting, any two diagrammatic  Cherednik algebras 
for distinct weightings (with $\kappa\in I^\ell$ fixed) are isomorphic as graded $R$-algebras.  
(This is certainly not true for $e<n$ and can be seen as another way in which the overall picture simplifies for $e=\infty$.)  
  Therefore,  the graded decomposition matrix  of $H_n(\kappa)$ does not depend on our choice of weighting 
by \cite[Corollary 5.3]{manycell}.  Therefore we can speak of calculating the graded decomposition matrix  of $H_n(\kappa)$ (as we shall in \cref{conj2}) without reference to our chosen weighting.    
\end{rmk}

\section{Inductively constructing basis elements from the path}\label{howtobuildshit}
We now pause in order to highlight how one can inductively construct a basis element
$C_\SSTS$ of the algebra directly from the corresponding path $\omega(\SSTS)=\sts \in\Path^+(\lambda,\stt^\mu)$.  
Let 
$$\stt^\mu=(+\varepsilon_{X_1},+\varepsilon_{X_2},\dots , +\varepsilon_{X_n}) $$
and let 
$$\sts =(+\varepsilon_{Y_1},+\varepsilon_{Y_2},\dots , +\varepsilon_{Y_n}). $$
Given $1\leq k\leq n$, 
we obtain $C_{\SSTT_{\leq k}}$ from $C_{\SSTT_{\leq k-1}}$
by adding a strand 
connecting the northern point at the end of the $i_k$th column  
to the southern point at the end of the $j_k$th column.

\begin{figure}[ht!] 
$$   \scalefont{0.7} \begin{tikzpicture}[scale=.65] 
\draw (-3,-1) rectangle (13,2);   
     \draw[wei2]   (0.1,-1)--(0.1,2) ;  
     \draw[wei2]   (0.5,-1)--(0.5,2) ;  
             \draw[wei2]   (0.9,-1)--(0.9,2) ;  

\draw     (0.2,-1)  to     (0.2,2) ;  
\draw[densely dashed]     (0.2-3+0.1,-1)   to   (0.2-3+0.1,2) ;  
\node [below] at (0.2,-1)  {  $0$};     
            \end{tikzpicture}
$$ 
$$   \scalefont{0.7} \begin{tikzpicture}[scale=.65] 
\draw (-3,-1) rectangle (13,2);   
     \draw[wei2]   (0.1,-1)--(0.1,2) ;  
     \draw[wei2]   (0.5,-1)--(0.5,2) ;  
             \draw[wei2]   (0.9,-1)--(0.9,2) ;  
\draw     (0.2,-1)  to     (0.2,2) ;  
\draw[densely dashed]     (0.2-3+0.1,-1)   to   (0.2-3+0.1,2) ;  
\draw     (3.2,-1)      to  (3.2,2) ;  
\draw[densely dashed]     (3.2-3+0.1,-1)   to   (3.2-3+0.1,2) ;  
\node [below] at (0.2,-1)  {  $0$};     
\node [below] at (3.2,-1)  {  $5$};     

           \end{tikzpicture}
$$ 
$$   \scalefont{0.7}\begin{tikzpicture}[scale=.65] 
\draw (-3,-1) rectangle (13,2);   
     \draw[wei2]   (0.1,-1)--(0.1,2) ;  
     \draw[wei2]   (0.5,-1)--(0.5,2) ;  
             \draw[wei2]   (0.9,-1)--(0.9,2) ;  
\draw     (0.2,-1)  to     (0.2,2) ;  
\draw[densely dashed]     (0.2-3+0.1,-1)   to   (0.2-3+0.1,2) ;  
\draw     (3.2,-1)      to  (3.2,2) ;  
\draw[densely dashed]     (3.2-3+0.1,-1)   to   (3.2-3+0.1,2) ;  
\draw     (1,-1)    to[in=-170,out=50]         (6.2,2) ;  
\draw[densely dashed]      (1-3+0.1,-1)    to[in=-160,out=50]   (6.2-3+0.2,2) ;

\node [below] at (0.2,-1)  {  $0$};     
 \node [below] at (1,-1)  {  $4$};     
\node [below] at (3.2,-1)  {  $5$};     
 
           \end{tikzpicture}
$$ 
$$   \scalefont{0.7} 
\begin{tikzpicture}[scale=.65] 
\draw (-3,-1) rectangle (13,2);   
     \draw[wei2]   (0.1,-1)--(0.1,2) ;  
     \draw[wei2]   (0.5,-1)--(0.5,2) ;  
             \draw[wei2]   (0.9,-1)--(0.9,2) ;  
\draw     (0.2,-1)  to     (0.2,2) ;  
\draw[densely dashed]     (0.2-3+0.1,-1)   to   (0.2-3+0.1,2) ;  
\draw     (3.2,-1)      to  (3.2,2) ;  
\draw[densely dashed]     (3.2-3+0.1,-1)   to   (3.2-3+0.1,2) ;  
\draw     (1,-1)    to[in=-170,out=50]         (6.2,2) ;  
\draw[densely dashed]      (1-3+0.1,-1)    to[in=-160,out=50]   (6.2-3+0.2,2) ;  
\draw     (4,-1)    to[in=-170,out=50]   (9.2,2) ;  
\draw[densely dashed]      (4-3+0.1,-1)    to[in=-160,out=50]     (9.2-3+0.3,2) ;  

\node [below] at (0.2,-1)  {  $0$};     
 \node [below] at (1,-1)  {  $4$};     
\node [below] at (3.2,-1)  {  $5$};     
\node [below] at (4,-1)  {  $3$};     

           \end{tikzpicture}
$$ 
$$   \scalefont{0.7} \begin{tikzpicture}[scale=.65] 
\draw (-3,-1) rectangle (13,2);   
     \draw[wei2]   (0.1,-1)--(0.1,2) ;  
     \draw[wei2]   (0.5,-1)--(0.5,2) ;  
             \draw[wei2]   (0.9,-1)--(0.9,2) ;  
\draw     (0.2,-1)  to     (0.2,2) ;  
\draw[densely dashed]     (0.2-3+0.1,-1)   to   (0.2-3+0.1,2) ;  
\draw     (3.2,-1)      to  (3.2,2) ;  
\draw[densely dashed]     (3.2-3+0.1,-1)   to   (3.2-3+0.1,2) ;  
\draw     (1,-1)    to[in=-170,out=50]         (6.2,2) ;  
\draw[densely dashed]      (1-3+0.1,-1)    to[in=-160,out=50]   (6.2-3+0.2,2) ;  
\draw     (4,-1)    to[in=-170,out=50]   (9.2,2) ;  
\draw[densely dashed]      (4-3+0.1,-1)    to[in=-160,out=50]     (9.2-3+0.3,2) ;  
\draw     (.6,-1)   to[in=-150,out=15]   (12.2,2) ;  
\draw[densely dashed]      (.6-3+0.1,-1)   to[in=-150,out=15]     (12.2-3+0.2,2) ;

\node [below] at (0.2,-1)  {  $0$};     
\node [below] at (0.6,-1)  {  $2$};     
\node [below] at (1,-1)  {  $4$};     
\node [below] at (3.2,-1)  {  $5$};     
\node [below] at (4,-1)  {  $3$};     

           \end{tikzpicture}
$$ 
\caption{The elements $C_{\sts{\downarrow}_{\leq k}}$ for $k=1,\dots 5$}
\label{sequence}
\end{figure}

\begin{eg}
We continue with \cref{egeg}.  
Let $e=7$, $h=1$, $\ell=3$, and $\kappa=(0,2,4)\in (\ZZ/7\ZZ)^3$.   
For $\mu=((1^5),\varnothing,\varnothing)$, we have that 
$$
\stt^\mu=(+\varepsilon_1,+\varepsilon_1,+\varepsilon_1,+\varepsilon_1,+\varepsilon_1)
$$
for $\lambda=((1^2),(1),(1^2))$ we consider the path
$$
s_{\varepsilon_2-\varepsilon_3,-e}^{(4)}  s_{\varepsilon_1-\varepsilon_3,e}^{(2)} \cdot \stt^\mu= \sts=(+\varepsilon_1,+\varepsilon_1,+\varepsilon_3,+\varepsilon_3,+\varepsilon_2)
\in \Path^+_5(\lambda,\stt^\mu).$$
For each of the 5 steps in $\sts$, the corresponding  basis elements (corresponding to
$\sts{\downarrow}_{\leq k}$ and   $1\leq k \leq 5$) are depicted in \cref{sequence} below.  
For each $1\leq k\leq 5$, the  northern (respectively southern) residue sequence is given 
by $\Shape(\stt^\mu {\downarrow}_{\leq k})$ 
(respectively $\Shape(\sts{\downarrow}_{\leq k})$).  
For example, if $k=3$ then $\sts{\downarrow}_{\leq 3}\in \Path^+(((1^2), \varnothing,(1)),((1^3),\varnothing,\varnothing))$ has northern loading given by $((1^3),\varnothing,\varnothing)$ and southern loading given by $((1^2), \varnothing,(1))$.

\end{eg}

\section{Tensoring with the determinant}\label{determinant} We now   identify our higher-level analogue 
 of the stability obtained by ``tensoring with the determinant'' for general linear groups.  Notice that we are working in the Ringel dual setting and so ``tensoring with the determinant" means ``adding a row" as opposed to ``adding a column".  This has an obvious higher level generalisation, as we shall now see.
    For the remainder of the paper  we shall have to actually multiply diagrams together in order to prove various isomorphisms.  
   We therefore fix some notation regarding the manipulation of diagrams using the relations of \cref{defintino2}.

 \begin{rmk}\label{noninteracting}
We shall refer to the relations \ref{rel1}, \ref{rel2}, \ref{rel5}, \ref{rel8}, \ref{rel13} and \ref{rel14} and the latter relation in both \ref{rel4} and \ref{rel11} as {\sf non-interacting relations}.
These are   the relations given by pulling strands through one another in the na\"ive fashion (without acquiring     error terms or dots or sending the diagram to zero).  We refer to a  {\sf critical point} as any local neighbourhood in the diagram with   non-zero degree.   
When manipulating diagrams, we  focus on the ``critical points"
at    which we cannot use the non-interacting relations (as this is where things get tricky).
 Upon reaching a critical point (by manipulating the diagrams as much as possible using the non-interacting relations) 
 we  resolve  this critical point (if necessary) in order to obtain a linear combination of diagrams.  
\end{rmk}

\begin{thm}\label{dettensor}
Given a partition $\lambda=(\lambda_1 ,\lambda _2,\dots)$, we set 
  ${\rm det}_h(\lambda) =  	(h, \lambda_1 ,\lambda _2,\dots) .$  We have 
 an injective map  of partially ordered sets 
${\rm det}_h:\mptn \ell n (h) \hookrightarrow \mptn \ell {n+h\ell} (h)  $ given by 
 $${\rm det}_h(\lambda^{(1)},\lambda^{(2)},\dots ,\lambda^{(\ell)}) =  
 	({\rm det}_h(\lambda^{(1)}),{\rm det}_h(\lambda^{(2)}),\dots,{\rm det}_h(\lambda^{(\ell)})) .$$
The image, ${\rm det}_h(\mptn \ell n (h))$, is a closed subset of $
\mptn \ell {n+h\ell} (h)
 $ under the $\theta$-dominance ordering and we have a degree-preserving  bijective map
$${\rm det}_h:\Path_n(\lambda,\stt^\mu)
\to \Path_{n+h\ell}({\rm det}_h(\lambda),\stt^{{\rm det}_h(\mu)})
$$
given by  
$$\det(\sts)=(+\varepsilon_1,+\varepsilon_2, \dots, +\varepsilon_{ h\ell})  \circ \sts  \in   \Path_{n+h\ell}({\rm det}_h(\lambda),\stt^{{\rm det}_h(\mu)})$$   
for $\sts\in \Path_n(\lambda,\stt^\mu)$. We have an isomorphism of graded $R$-algebras
$$
 A_{{\rm det}_h(\mptn \ell {n+h\ell} (h))}(n+h\ell,\theta,\kappa)
 \cong 
 A_{\mptn \ell {n } (h)}(n,\theta,\kappa)	 $$
 In particular, over an arbitrary field $\Bbbk$ we have that 
$$d_{\lambda,\mu}(t)=d_{{\rm det}_h(\lambda),{\rm det}_h(\mu) }(t)$$
for all $\lambda,\mu\in \mptn \ell {n } (h)$.  
\end{thm}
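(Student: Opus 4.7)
The plan is to establish the four assertions in turn, leveraging the path-theoretic cellular basis of \cref{quotienthghghg}. Let $\delta := \{(1,c,m) : 1 \leq c \leq h,\ 1 \leq m \leq \ell\}$ be the set of $h\ell$ boxes prepended by $\det$. Injectivity of $\det$ is immediate, and monotonicity in the $\theta$-dominance order follows because $\det$ adds the same fixed set of boxes $\delta$ to both $\mu$ and $\lambda$ without disturbing the relative dominance of existing boxes. For closedness of the image, I will use that $\mathbf{I}_{(1,c,m)} = \theta_m + \ell(1-c)$ is strictly smaller (hence the box strictly more $\theta$-dominant) than $\mathbf{I}_{(r,c',m')}$ for any $r \geq 2$ and $1 \leq c' \leq h$; consequently the boxes in $\delta$ are the $h\ell$ most $\theta$-dominant among all boxes of any element of $\mptn{\ell}{N}(h)$. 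It follows that $\det(\mptn{\ell}{n}(h)) = \{\nu \in \mptn{\ell}{n+h\ell}(h) : \delta \subseteq \nu\}$, which is the intersection of the saturated set $\mptn{\ell}{n+h\ell}(h) \subseteq \mptn{\ell}{n+h\ell}$ (saturated because having more than $h$ columns is strictly more dominant) with the cosaturated set $\{\nu \in \mptn{\ell}{n+h\ell} : \delta \subseteq \nu\}$ (cosaturated by the maximal $\theta$-dominance of the boxes in $\delta$).

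Next I will show that the path map $\det$ is a degree-preserving bijection. By \cref{assumponkappused} and the choice of $\kappa$ made in \cref{choiceofmulti}, every intermediate point of the segment $(+\varepsilon_1, +\varepsilon_2, \ldots, +\varepsilon_{h\ell})$ is $e$-regular and dominant, so prepending it to any $\sts \in \Path^+_n(\lambda, \stt^\mu)$ produces a dominant path $\det(\sts)$ from the origin to $\det(\lambda)$ of length $n+h\ell$. Since no hyperplane is crossed during the prepended segment, it contributes zero in the sense of \cref{Soergeldegreee}, and the hyperplane crossings in the tail of $\det(\sts)$ are exactly those of $\sts$; hence $\det$ is degree-preserving. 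To check $\det(\sts) \in \Path^+_{n+h\ell}(\det(\lambda), \stt^{\det(\mu)})$, I observe that $\det(\stt^\mu)$ and $\stt^{\det(\mu)}$ are two lattice paths from the origin to $\det(\mu)$ that differ only by the ordering of their $n+h\ell$ unit steps $+\varepsilon_j$; any two such paths are connected by a sequence of step-transposing reflections in hyperplanes of the form $\mathbb{E}(\varepsilon_i-\varepsilon_j,0)$ taken through the midpoint of the swapped steps, and are therefore $\sim$-equivalent. The inverse map, stripping off the first $h\ell$ steps, is clearly well-defined.

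For the algebra isomorphism, both $A_{\mptn{\ell}{n}(h)}(n, \theta, \kappa)$ and $A_{\det(\mptn{\ell}{n}(h))}(n+h\ell, \theta, \kappa)$ carry graded cellular bases indexed by pairs of dominant paths with common endpoint (by \cref{quotienthghghg} and the truncation mechanism of \cref{bs16iguess}), and the path bijection above is degree- and order-preserving. This induces a graded $R$-module isomorphism $\varphi : C_{\sts, \stt} \mapsto C_{\det(\sts), \det(\stt)}$. To see $\varphi$ is a ring map, I pass to the diagram level: every basis element $C_{\det(\sts), \det(\stt)}$ has, on each of its northern and southern boundaries, a block of $h\ell$ strands (one per box of $\delta$) located strictly to the left of all other strands, with prescribed residues $\kappa_m, \kappa_m+1, \ldots, \kappa_m+h-1$ in each component block. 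By $h$-admissibility of $\kappa$ and \cref{seperating}, these $\delta$-strands have residues differing by more than $1$ from those of any other strand they would meet in a product diagram; hence all interactions are governed by the non-interacting relations of \cref{noninteracting}, the $\delta$-strands can be drawn as straight vertical lines, and the product $C_{\det(\sts), \det(\stt)} \cdot C_{\det(\stt'), \det(\stu)}$ reduces to $C_{\sts, \stt} \cdot C_{\stt', \stu}$ with these straight strands simply appended. Composition of $\varphi^{-1}$ with the natural inclusion of the subquotient (valid since the image is closed) yields the claimed injection $A_h(n, \theta, \kappa) \hookrightarrow A_h(n+h\ell, \theta, \kappa)$, and the equality $d_{\lambda, \mu}(t) = d_{\det(\lambda), \det(\mu)}(t)$ is then immediate from the graded algebra isomorphism. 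The principal obstacle will be the diagrammatic analysis in this last paragraph: one must verify that the $\delta$-prefix of every product diagram genuinely decomposes without creating hidden critical points, which relies crucially on the residue separation provided by \cref{seperating} and the hypothesis $e > h\ell$.
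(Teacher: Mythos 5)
Your overall strategy closely mirrors the paper's: establish the bijection on paths first, then upgrade the resulting graded $R$-module isomorphism to an algebra isomorphism by a diagrammatic argument based on \cref{seperating}. However, there are two genuine gaps.

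First, and most seriously, the claim that \emph{any} two lattice paths from the origin to a common endpoint, built from the same multiset of unit steps $+\varepsilon_j$, are connected by step-transposing reflections and therefore $\sim$-equivalent is false. In this paper's formalism, a reflection $s^k_{\alpha,re}\cdot\sts$ may only be applied at a vertex $\sts(k)$ that actually \emph{lies on} the hyperplane $\mathbb{E}(\alpha,re)$; it cannot be performed ``through the midpoint of the swapped steps,'' which is not a lattice point and is generically not on any affine hyperplane. Two lattice paths that never touch a common hyperplane simply cannot be $\sim$-related. The correct assertion, which the paper's proof uses implicitly, is the stronger and cleaner fact that $\det(\stt^\mu) = \stt^{\det(\mu)}$ \emph{on the nose}: this follows by observing that as long as some component of $\nu \subseteq \det(\mu)$ still has a box in a row $r\geq 2$, the least dominant removable box of $\nu$ is necessarily a non-first-row box (for $r\geq 2$ and $c\leq h$ one has $r-c \geq 2-h > 1-h$, so $\mathbf{I}_{(r,c,m')} > \mathbf{I}_{(1,h,m)}$ for every $m,m'$), and the prefix $(+\varepsilon_1,\dots,+\varepsilon_{h\ell})$ is exactly $\stt^\delta$. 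Once this equality is in hand, the bijection $\det\colon\Path_n(\lambda,\stt^\mu)\to\Path_{n+h\ell}(\det_h(\lambda),\stt^{\det_h(\mu)})$ is immediate because the prepended segment crosses no hyperplane, precisely as in \cref{assumponkappused}.

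Second, the closedness argument rests on the claim that $\mathbf{I}_{(1,c,m)} < \mathbf{I}_{(r,c',m')}$ for \emph{every} $r\geq 2$ and $1\leq c'\leq h$, so that the $\delta$-boxes are the $h\ell$ most $\theta$-dominant boxes overall. This is incorrect: already for $\ell=1$, $h=3$ one has $\mathbf{I}_{(1,1,1)} = \theta_1 + 2\varepsilon$ while $\mathbf{I}_{(2,3,1)}=\theta_1-\ell + 5\varepsilon$, so the second-row box is strictly more $\theta$-dominant. The inequality you want only holds for $c=h$ (which is exactly what is needed for the path argument above). To show that $\{\nu : \delta\subseteq\nu\}$ is cosaturated requires a different argument, and indeed the comparisons relevant to $\theta$-dominance must be restricted to boxes of the same residue, where the $h$-admissibility of $\kappa$ enters. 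Finally, your residue-separation claim for the diagrammatic step (``$\delta$-strands have residues differing by more than $1$ from any strand they would meet'') is stronger than what the paper's proof asserts (merely that the crossings are degree zero and removable by non-interacting relations); it deserves verification rather than assertion, since boxes $(r+1,c',m')$ with $r\geq 1$ can certainly share a residue with $\delta$-boxes, and what must be checked is that this cannot happen for strands that actually cross a $\delta$-strand in the diagram.
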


\begin{proof}
The  map ${\rm det}_h$ is easily checked to be injective and its image a closed subset under the $\theta$-dominance ordering.   
Our assumption on $\kappa \in I^\ell$ (see \cref{assumponkappused}) implies that $\deg(\sts)=\deg(\det(\sts))$.  
As remarked in  \cref{assumponkappused}, the path 
$(+\varepsilon_1,+\varepsilon_2, \dots, +\varepsilon_{ h\ell})$  
does not pass through any  $W^e$-hyperplane   and so we obtain the required bijection.
Therefore
$$
 A_{{\rm det}_h(\mptn \ell {n+h\ell} (h))}(n+h\ell,\theta,\kappa)
 \cong 
 A_{\mptn \ell {n } (h)}(n,\theta,\kappa)	 $$
on the level of graded $R$-modules.  It remains to prove that the isomorphism holds on the level of $R$-algebras.  
Proceeding as in \cref{howtobuildshit}, we see that the diagram $C_{\det(\sts)\det(\stt)}$ 
is obtained from that of $C_{\sts\stt}$ by 
\begin{itemize}[leftmargin=*]
\item shifting any solid or ghost strand $X$ (note that we are excluding the case that $X$ is  a vertical red strand) rightwards by 
 $(\ell+\varepsilon)$-units 
  (we now refer to this strand as $\det(X)$) 
  \item and adding $h\ell$ `new' vertical solid strands (with their accompanying ghosts)
 with $x$-coordinates given by $\mathbf{I}_{(1,c,m)}$ for $1\leq c \leq h$ and $1\leq m \leq \ell$.  
 \end{itemize}
Let $X$ be a strand of residue $i \in I$   in $C_{ \sts \stt }$.  
Let $A$ denote any of the $h\ell$ distinct strands in 
$C_{\det(\sts)\det(\stt)}$
 which do  not appear in $C_{\sts \stt }$.  
There is no crossing of a solid and red strand of the same residue in either    $C_{\sts\stt}$ 
or $C_{\det(\sts)\det(\stt)}$, by \cref{seperating}.  
 Again by  \cref{seperating},
any crossing of $\det(X)$ with a {\em new} vertical strand, $Y$,  in  $C_{\det(\sts)\det(\stt)}$ is of degree zero and can be removed using only the non-interacting relations, i.e. the relations which do not annihilate the diagram (\ref{rel4}), change the number of dots on a strand, or which create  error terms.   
Any crossing of strands $\det(X)$  and  $\det(Y)$ in   $C_{\det(\sts)\det(\stt)}$  can be removed in exactly the same fashion as $X$ and $Y$ in 
$C_{\det(\sts)\det(\stt)}$.  
  The  $R$-algebra isomorphism follows.  
\end{proof}

\section{The super-strong linkage principle}\label{sec:6}
Throughout this section,   $\Bbbk$ is an arbitrary field.  
Let $\lambda,\mu \in {\mathbb E}_{h,\ell}^\circledcirc$.  We say that $\lambda$ and $\mu$ are $W^e$-{\sf linked} if they belong to the same orbit under the dot action of the affine Weyl group, that is if $\lambda \in W^e\cdot \mu$. 
 Given two polynomials $f,g \in \mathbb{N}[t,t^{-1}]$, we write $f \leq g$ if and only if $f-g \in  \mathbb{N}[t,t^{-1}]$.

\begin{thm} [The super-strong linkage principle]\label{strongerman}  
 We have that 
\begin{equation}\label{412321434321442314213}
d_{\lambda\mu}(t) \leq \textstyle \sum_{\sts \in \Path^+(\lambda,\stt^\mu)}t^{\deg(\sts)} 
\end{equation}
as degree-wise polynomials, in other words for every  $k \in \ZZ$ we have that 
$$
[\Delta(\lambda):L(\mu)\langle k \rangle]
\leq 
|\{\sts \mid \sts \in \Path^+(\lambda,\stt^\mu) ,  \deg(\sts) = k\}|
$$
for $\lambda,\mu \in \mptn \ell n (h)$.  
In particular, if   $d_{\lambda,\mu}(t)\neq 0$ then $\Path^+(\lambda,\stt^\mu)\neq \emptyset$.
 \end{thm}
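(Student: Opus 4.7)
The strategy is to compute the graded dimension of the $\mu$-weight space ${\sf 1}_\mu \Delta(\lambda)$ in two different ways: via the path-theoretic cellular basis of \cref{quotienthghghg}, and via composition factors of $\Delta(\lambda)$.

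On one hand, since ${\sf 1}_\mu$ is a homogeneous idempotent that picks out those basis vectors $C_\sts$ with $\sts \in \Path^+(\lambda, \stt^\mu)$, and each such vector is homogeneous of degree $\deg(\sts)$, we have
\[
\dim_t({\sf 1}_\mu \Delta(\lambda)) \; = \; \sum_{\sts \in \Path^+(\lambda,\stt^\mu)} t^{\deg(\sts)}.
\]
On the other hand, since ${\sf 1}_\mu$ has degree zero, left multiplication by it is a graded exact functor on the category of graded $A_h(n,\theta,\kappa)$-modules. Applied to a graded composition series of $\Delta(\lambda)$, this yields
\[
\dim_t({\sf 1}_\mu \Delta(\lambda)) \; = \; \sum_{\nu \in \mptn \ell n(h)} d_{\lambda\nu}(t)\cdot\dim_t({\sf 1}_\mu L(\nu)).
\]

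The crucial step is to show that the constant coefficient of $\dim_t({\sf 1}_\mu L(\mu))$ is at least $1$, so that the $\nu=\mu$ summand on the right already majorises $d_{\lambda\mu}(t)$ coefficient-wise. I will argue directly with the bilinear form on $\Delta(\mu)$. The basis element $C_{\SSTT^\mu}\in\Delta(\mu)$ is homogeneous of degree zero, lies in the $\mu$-weight space, and (via $\omega$) corresponds to the distinguished degree-zero path $\stt^\mu\in\Path^+(\mu,\stt^\mu)$. A direct diagrammatic calculation (essentially the identity ${\sf 1}_\mu\cdot{\sf 1}_\mu={\sf 1}_\mu$, which holds modulo $A^{\vartriangleright\mu}$) gives $\langle C_{\SSTT^\mu},C_{\SSTT^\mu}\rangle_\mu=1\neq 0$. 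Consequently $C_{\SSTT^\mu}$ has non-zero image in $L(\mu)=\Delta(\mu)/\rad\langle\,,\,\rangle_\mu$, contributing a $1$ to the constant coefficient of $\dim_t({\sf 1}_\mu L(\mu))$.

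Writing $\dim_t({\sf 1}_\mu L(\mu))=1+q(t)$ with $q(t)\in\mathbb{Z}_{\geq 0}[t,t^{-1}]$, and using that every summand on the right-hand side of the second displayed equation is a polynomial with non-negative coefficients, we conclude
\[
d_{\lambda\mu}(t) \;\leq\; d_{\lambda\mu}(t)\cdot\dim_t({\sf 1}_\mu L(\mu)) \;\leq\; \sum_{\nu}d_{\lambda\nu}(t)\dim_t({\sf 1}_\mu L(\nu)) \;=\; \sum_{\sts\in\Path^+(\lambda,\stt^\mu)}t^{\deg(\sts)}
\]
coefficient-by-coefficient in $t$, which is the required inequality. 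The final assertion that $\Path^+(\lambda,\stt^\mu)\neq\emptyset$ whenever $d_{\lambda\mu}(t)\neq 0$ is then immediate, since a zero sum on the right would force $d_{\lambda\mu}(t)=0$. The only non-formal ingredient is the non-vanishing $\langle C_{\SSTT^\mu},C_{\SSTT^\mu}\rangle_\mu\neq 0$; everything else is bookkeeping with the path-theoretic cellular basis and the graded exactness of the weight-space functor.
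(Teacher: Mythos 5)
Your proof is correct, but it takes a slightly different route than the paper's. The paper bounds $d_{\lambda\mu}(t)$ by identifying it with $\sum_k \dim_\Bbbk\Hom(P(\mu),\Delta(\lambda)\langle k\rangle)\,t^k$ and then observing that $P(\mu)$ is a cyclic direct summand of $A_h(n,\theta,\kappa){\sf 1}_\mu$, so a homomorphism $\varphi$ is determined by $\varphi({\sf 1}_\mu)\in{\sf 1}_\mu\Delta(\lambda)$; this directly injects the Hom-space into the $\mu$-weight space of $\Delta(\lambda)$, whose graded dimension is read off from the path basis. You instead compute $\dim_t({\sf 1}_\mu\Delta(\lambda))$ via a graded composition series and extract the $\nu=\mu$ term. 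The two arguments are of comparable length and both are elementary. Yours has the advantage of staying entirely within the cellular bookkeeping and never invoking projective covers explicitly, which makes the role of the bilinear form calculation $\langle C_{\SSTT^\mu},C_{\SSTT^\mu}\rangle_\mu=1$ transparent; note however that the paper's step ``$P(\mu)$ is a summand of $A{\sf 1}_\mu$'' ultimately relies on the same fact (that $L(\mu)$ has a nonzero $\mu$-weight space), so the nontrivial ingredient is shared. One small point worth spelling out in your write-up: the reason the constant coefficient of $\dim_t({\sf 1}_\mu L(\mu))$ being at least $1$ suffices is that you are comparing Laurent polynomials with non-negative coefficients, so multiplication by $1+q(t)$ with $q\in\ZZ_{\geq 0}[t,t^{-1}]$ is coefficient-wise non-decreasing — the symmetry of $\dim_t L(\mu)$ in $t\leftrightarrow t^{-1}$ alone would not be enough.
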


\begin{proof}
By definition, we have that 
\[ 
d_{\lambda\mu}(t)=\textstyle\sum_k\dim_\Bbbk(\Hom_{A_h(n,\theta,\kappa)}(P(\mu),\Delta(\lambda)\langle k\rangle))t^k .\]
Now, the projective module $P(\mu)$  is a direct summand of $A_h(n,\theta,\kappa) {\sf 1}_\mu$  and so ${\sf 1}_\mu$ acts trivially on the image of any such homomorphism above.  
For any homomorphism $\varphi\in \Hom_{A_h(n,\theta,\kappa)}(P(\mu),\Delta(\lambda)\langle k\rangle)$,  $\varphi({\sf 1}_\mu)\in {\sf1}_\mu \Delta(\lambda)$.  Moreover, $P(\mu)$ is cyclic and so $\varphi$   is determined by $\varphi({\sf 1}_\mu)$.  
Therefore 
\[
\dim_\Bbbk(\Hom_{A_h(n,\theta,\kappa)}(P(\mu),\Delta(\lambda)\langle k\rangle))
\leq 
|\{ C_\SSTS \mid C_\SSTS \in  {\sf1}_\mu \Delta(\lambda), \deg( C_\SSTS)=k\}| 
 \]
and so the result follows.  \end{proof}

We now show how our super-strong linkage principle is a (considerable) strengthening of the usual ``strong linkage principle'' of \cite{MR1670762}.  
 

  \begin{defn}
Let $\lambda,\mu\in {\mathbb E}_{h,\ell}^\circledcirc$   be such that    
 $\lambda = s_{\alpha,me} \cdot \mu$ for some $\alpha= \varepsilon_{i }-\varepsilon_{j }\in \Phi$, $m\in \ZZ$.  
 We write   $    \lambda   \uparrow_{\alpha,me} \mu$ if  $\lambda \in  {\mathbb E}^{\less }(\alpha,me)$, $\mu \in {\mathbb E} ^{\great }(\alpha,me)$.  
We   write $\lambda \uparrow \mu$ if there exists a sequence
 $$
\lambda 
 = \lambda^{(0)} \uparrow_{\varepsilon_{i_1}-\varepsilon_{j_1},m_1e} 
 \lambda^{(1)} \uparrow  _{\varepsilon_{i_2}-\varepsilon_{j_2},m_2e} 
 \lambda^{(2)} \uparrow  _{\varepsilon_{i_3}-\varepsilon_{j_3},m_3e} 
 \dots	 \uparrow  
 \lambda^{(k-1)} \uparrow  _{\varepsilon_{i_k}-\varepsilon_{j_k},m_ke} 
 \lambda^{(k)} =  \mu $$
for some $k\geq 0$.   We 
say that $\lambda$ and $\mu$ are {\sf strongly linked} if $\lambda \uparrow \mu$ or 
$\mu\uparrow \lambda$.   
\end{defn}

  \begin{rmk}
Note that $\circledcirc$ is the most dominant 
point in  this ordering and that this is the opposite convention to that used in conventional Lie theory.  
  \end{rmk}

\begin{thm}  \label{strongman}
If $\Path_n(\lambda,\stt^\mu) \neq \emptyset$, then  $\la$ and $\mu$ are strongly linked with 
$\lambda \uparrow  \mu$.  
\end{thm}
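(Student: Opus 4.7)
The plan is to proceed by induction on the minimal number $t$ of reflections needed to transform $\stt^\mu$ into $\sts \in \Path_n(\lambda,\stt^\mu)$.  The base case $t=0$ is immediate: $\sts = \stt^\mu$ forces $\lambda = \mu$, and the empty chain witnesses $\lambda \uparrow \mu$.

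For the inductive step, I would write $\sts = s_{\alpha,re}^{(k)} \cdot \sts'$ where $\sts' \in \Path_n(\lambda',\stt^\mu)$ uses strictly fewer reflections.  By the inductive hypothesis, $\lambda' \uparrow \mu$.  Since $\sts'(k) = \sts(k) \in \mathbb{E}(\alpha,re)$ and $\lambda = s_{\alpha,re} \cdot \lambda'$, it suffices to prepend a single link $\lambda \uparrow_{\alpha,re} \lambda'$ to the chain witnessing $\lambda' \uparrow \mu$.  This link is valid precisely when $\lambda' \in \mathbb{E}^{\geq}(\alpha,re)$, or equivalently $\lambda \in \mathbb{E}^{\leq}(\alpha,re)$.

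The hard part will be ensuring that the reflection always points in the correct direction.  The problematic scenario is $\lambda' \in \mathbb{E}^{<}(\alpha,re)$, in which case the reflection would give $\lambda' \uparrow_{\alpha,re} \lambda$, i.e.\ the wrong direction.  My strategy for ruling this out is to exploit the specific structure of $\stt^\mu$ coming from the component word: since each intermediate point $\stt^\mu(j) = \mu^{(j)}$ is itself a multipartition (we add the most dominant removable box first, so prefixes remain in $\mptn{\ell}{j}(h)$), the partition inequality $(\mu^{(j)})^T_a \geq (\mu^{(j)})^T_b$ (for $\alpha = \varepsilon_a-\varepsilon_b$ with $a<b$ in the same component) controls the values $\langle \stt^\mu(j)+\rho,\alpha\rangle$.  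One then checks that their overshoot relative to $\langle \mu+\rho,\alpha\rangle$ is strictly less than the spacing $e$ between consecutive hyperplanes (which is possible precisely because $e > h\ell$, our standing assumption).  The key lemma will read: whenever $\stt^\mu$ meets $\mathbb{E}(\alpha,re)$ at some $k<n$, the endpoint $\mu$ lies in $\mathbb{E}^{\geq}(\alpha,re)$, so the reflection is automatically downward.

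To finish, I would propagate this monotonicity through the inductive chain: once $\sts'$ has been built from $\stt^\mu$ by downward reflections, its endpoint $\lambda'$ is ``more dominant'' than $\mu$ and the same constraint applies to which hyperplanes $\sts'$ can meet.  Combined with the hypothesis $\lambda \in \mptn{\ell}{n}(h)$, which keeps $\lambda$ in the dominant chamber (so that $\uparrow$ is defined at every stage), we obtain a downward reflection at each step.  Concatenating them gives the chain witnessing $\lambda \uparrow \mu$; strong linkage follows at once from the definition.
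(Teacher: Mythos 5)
Your decomposition---peeling off the last reflection in a minimal chain and inducting on the chain length---is not the paper's argument, and it has a genuine gap exactly where you flag ``the hard part.'' You correctly observe that the danger is $\lambda' \in \mathbb{E}^{<}(\alpha,re)$, which would yield $\lambda' \uparrow_{\alpha,re} \lambda$ rather than $\lambda \uparrow_{\alpha,re} \lambda'$, and you correctly intuit that the cure should come from the structure of $\stt^\mu$. But the key lemma you propose is a statement about $\stt^\mu$, whereas the inductive step needs the corresponding statement about $\sts'$---a path that has already been folded away from $\stt^\mu$ by earlier reflections, and whose behaviour across a given hyperplane is no longer governed by the component-word structure of $\stt^\mu$. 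The closing claim that ``the same constraint applies to which hyperplanes $\sts'$ can meet'' is asserted without proof, and I do not see how it could be justified in general: once $\sts'$ has been reflected, nothing forces its endpoint to sit on the correct side of a wall that $\sts'$ touches at an interior step, and knowing only $\lambda'\uparrow\mu$ plus $\lambda'\uparrow\lambda$ gives no chain from $\lambda$ to $\mu$.

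The paper avoids this by never decomposing $\sts$ into a reflection chain at all. It straightens $\sts$ one divergence at a time: take the first index $k$ where $\sts$ and $\stt^\mu$ disagree (so $\sts(k-1)=\stt^\mu(k-1)\in\mathbb{E}(\alpha,me)$), take the next return $k'$ of $\sts$ to that wall if it exists, and apply $s^{(k')}_{\alpha,me}s^{(k)}_{\alpha,me}\cdot\sts$ (or $s^{(k)}_{\alpha,me}\cdot\sts$ if $k'$ is undefined) to match $\stt^\mu$ through step $k$; this strictly lengthens the common prefix, so the process terminates, and each step contributes at most one link $\lambda^{(i)}\uparrow\lambda^{(i+1)}$. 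The crucial sign argument---that $\sts$ stays on a single side of $\mathbb{E}(\alpha,me)$ between $k$ and $k'$---only ever invokes the behaviour of $\stt^\mu$ at the shared prefix, so the monotonicity lemma is applied to $\stt^\mu$ itself and no propagation to reflected paths is needed. To repair your version you would have to show that the lemma survives arbitrary downward reflections (which I doubt), or else switch, as the paper does, to inducting on the length of the common prefix of $\sts$ with $\stt^\mu$, which keeps every invocation of the lemma on $\stt^\mu$.
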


\begin{proof}
Given  $\sts \in \Path_n(\lambda,\stt^\mu)$ for $\lambda \neq \mu$, we let $1\leq k\leq n$ denote the first integer such that  $\sts(k)\neq \stt^\mu(k)$.  By assumption,   $\sts(k-1)=\stt^\mu(k-1) \in {\mathbb E} {(\alpha,me)}$ and $\sts(k)\not  \in {\mathbb E} {(\alpha,me)}$ for some $\alpha \in \Phi$, $m\in \ZZ$.  
 We let $k<k'\leq n$ denote the minimal integer such that $\sts(k')\in {\mathbb E} {(\alpha,me)}$ if such an integer exists, and be undefined otherwise.  

  By the minimality of both  $k$ and $k'$ and the definition of $\stt^\mu$, we   deduce that 
$\sts(j)  \in {\mathbb E}^{\great } {(\alpha,me)}$  for all $k\leq j\leq k'$ if $k'$ is defined and  
 for all $k\leq j \leq n$ otherwise.  
We let         $\lambda= \lambda^{(1)} $  if 
 $k'$ is defined and set 
$  \lambda  \uparrow   \lambda^{(1)} =s_{\alpha,me}\cdot \lambda\in \mathbb{E}_{h, \ell} $ otherwise.   
We let $\sts^{(1)}\in   \Path_n(  \lambda^{(1)},\stt^\mu)$ denote the path 
 \begin{align*}
\sts^{(1)}  =
\begin{cases} s_ {(\alpha,me)}^{ k'}s_ {(\alpha,me)}^{ k}\cdot \sts 
 &\text{if $k'$ is defined} \\
 s_ {(\alpha,me)}^{ k}\cdot \sts &\text{otherwise}.
   \end{cases}
    \end{align*}  
 Repeat this procedure with the path $\sts^{(1)}$ to obtain a path  $\sts^{(2)}$.  Continuing in this fashion we obtain an ordered sequence of multipartitions 
 $$
\lambda = \lambda^{(0)}   \uparrow
 \lambda^{(1)}   \uparrow 
 \lambda^{(2)}   \uparrow 
 \dots	   \uparrow 
 \lambda^{(k-1)}   \uparrow 
 \lambda^{(k)} =\mu.
 $$
(given by the terminating points  of the corresponding paths) as required.  
\end{proof}

\begin{cor} [Strong linkage principle]\label{Strong linkage principle}
 If   $d_{\lambda,\mu}(t)\neq 0$ for $\lambda,\mu \in \mptn \ell n(h)$,  then $\lambda\uparrow  \mu$.  
  \end{cor}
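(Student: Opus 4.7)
The plan is to deduce this corollary as an immediate consequence of the two results that have just been established, namely the super-strong linkage principle (\cref{strongerman}) and \cref{strongman}. Since each of these has already been proved in the excerpt, the proof of the corollary reduces to stringing them together, with only a trivial observation about how the relevant path sets sit inside one another.

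First I would observe that by \cref{strongerman}, the hypothesis $d_{\lambda,\mu}(t) \neq 0$ forces the existence of at least one path $\sts \in \Path^+(\lambda,\stt^\mu)$. Indeed, the theorem bounds $d_{\lambda,\mu}(t)$ degree-wise by $\sum_{\sts \in \Path^+(\lambda,\stt^\mu)} t^{\deg(\sts)}$, so a non-zero decomposition polynomial is incompatible with $\Path^+(\lambda,\stt^\mu) = \emptyset$.

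Next I would note that, by definition, $\Path^+(\lambda,\stt^\mu) \subseteq \Path_n(\lambda,\stt^\mu)$: the superscript $+$ only imposes the additional restriction that the path never leaves the dominant Weyl chamber. Hence $\Path_n(\lambda,\stt^\mu)$ is also non-empty, and I can apply \cref{strongman} to conclude that $\lambda \uparrow \mu$, which is precisely the statement of the corollary.

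There is essentially no obstacle to overcome here, since all the work has been absorbed into \cref{strongerman,strongman}; the corollary is the clean representation-theoretic consequence of the combinatorial inequality \eqref{412321434321442314213}, packaging the super-strong bound into the familiar form of a linkage principle that can be compared directly with \cite{MR564523,MR1670762}.
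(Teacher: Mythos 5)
Your proof is correct and is essentially identical to the paper's own argument: both deduce $\Path^+(\lambda,\stt^\mu)\neq\emptyset$ from \cref{strongerman}, use the containment $\Path^+(\lambda,\stt^\mu)\subseteq\Path(\lambda,\stt^\mu)$, and then invoke \cref{strongman}. You have simply spelled out the two invocations a little more explicitly than the paper's one-line proof.
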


\begin{proof}
 The statement of the  result follows by \cref{strongman,strongerman} as $\Path^+(\lambda,\stt^\mu)\subseteq 
\Path (\lambda,\stt^\mu)$.
\end{proof}

\begin{rmk}
For $\ell=1$ and $ p=e$ the algebra $\Q_{1,h,n}(\kappa)$ is isomorphic 
to the image of the symmetric group on $n$ letters   in ${\rm End}_{\Bbbk}((\Bbbk^h)^{\otimes n})$.  Therefore the decomposition numbers $d_{\lambda\mu}(t)$ are the (graded) decomposition numbers of symmetric groups and  
 \cref{Strong linkage principle} is equivalent to 
 the strong linkage principle for general linear groups for $p>h$ 
 (as Ringel duality preserves the quasi-hereditary ordering). 
\end{rmk}

We now provide an example which illustrates how (even in level $\ell=1$) our super-strong linkage principle is a significant strengthening of the usual strong linkage principle.

 \begin{eg}\label{egstrong}
Let $\Bbbk$ be an arbitrary field.  Let   $h=3$ and $\ell=1$ and  let $e=6$.  
For $\mu=(2,1^{12})$,  
 there are 6 elements of the set 
$$\Pi=\{\lambda \in \mptn 1 {12}(3) \mid \lambda \uparrow \mu\}=\{\lambda \in \mathbb{E}_{3,1}^\circledcirc 
\mid  \Path(\lambda,\stt^\mu)\neq \emptyset\}.$$
There are a total of 8 paths in the set  $\{\sts \mid \sts \in   \Path(\lambda,\stt^\mu), \lambda \in \Pi\}$.
We have pictured  one path for each $  \lambda \in \Pi$ in the leftmost diagram in  \cref{pathsforme2222} below (for ease of notation, we do not depict the other 2 paths).   
By \cref{Strong linkage principle}, we deduce that if $\lambda \not \in \Pi$, then $d_{\lambda\mu}(t)=0$.  
We now wish to see what additional information can be deduced by \cref{strongerman}.  
Notice that only 4 of the 8  paths are dominant.
These    paths   terminate at the points 
\begin{equation}\label{egstronger}
(2,1^{12}) \quad (2^2,1^{10})
 \quad
(3^2,2^3,1^2)
\quad
(3^3,2^2,1)
\end{equation}
and are pictured in the rightmost diagram in \cref{pathsforme2222}.  
Therefore we can immediately deduce that $d_{(3^2,1^8),(2,1^{12})}=0 = d_{(2^7),(2,1^{12})}$.  
We can also deduce the following bounds on graded decomposition numbers, 
\begin{equation}\label{someexamplesthatififdon}
d_{(2,1^{12}),(2,1^{12})}\leq 1
\quad
d_{ (2^2,1^{10}),(2,1^{12})}\leq t^1
 \quad
d_{ (3^3,2^2,1),(2,1^{12})}\leq t^2
\quad
d_{ (3^2,2^3,1^2),(2,1^{12})}\leq t^1.
\end{equation}
 In fact, we shall see in  \cref{egegeegggg} that all these bounds are sharp.  
 \end{eg}

\!\!\!\!\!
\begin{figure}[ht!]   \[\scalefont{0.8} 
   \begin{tikzpicture}[scale=0.9]
 
   \path  (0,0)  coordinate (origin); 
 
      \clip(0,0)-- (120:0.4*18)--++(0:0.4*18)--(0,0);

       \begin{scope}

        \foreach \i in {0,...,35}
  {
    \path (origin)++(60:0.4*\i cm)  coordinate (a\i);
    \path (origin)++(120:0.4*\i cm)  coordinate (b\i);
    \path (a\i)++(120:12cm) coordinate (ca\i);
    \path (b\i)++(60:12cm) coordinate (cb\i);
}
   \path (b20)++(60:0.4*5)  coordinate (step1);
      \path (step1)++(0:0.4*5)  coordinate (step2);
      \path (step2)++(-60:0.4*5)  coordinate (step3); 
      \path (step3)++(0:0.4*5)  coordinate (step4);       
      \path (step4)++(-60:0.4*5)  coordinate (step5);

  \clip(0,0)-- (b18)--(a18)--(0,0);

 \foreach \i in {0,...,35}    
  { \draw[gray, thin]  (a\i) -- (ca\i)  (b\i) -- (cb\i);
     \draw[gray, thin]   (a\i) -- (b\i)  ; } 

 \foreach \i in {0,6,12,...,24}    
  { \draw[thick]  (a\i) -- (ca\i)  (b\i) -- (cb\i);
     \draw[thick]  (a\i) -- (b\i)  ; } 
 
   \end{scope} 
   
      \path  (0,0)  --++(120:0.4*2) --++(0:0.4*1) coordinate (origin); 
 \draw(origin) node {$\circledcirc$};
 \draw[very thick, red](origin)--++(120:1*0.4)--++(0:1*0.4)
 --++(120:12*0.4);
  \draw[very thick, red](origin)--++(120:1*0.4)--++(0:1*0.4)
 --++(120:11*0.4)--++(0:1*0.4);
   \draw[very thick, red](origin)--++(120:1*0.4)--++(0:1*0.4)
 --++(120:9*0.4)--++(-120:2*0.4)--++(0:1*0.4);
   \draw[very thick, red](origin)--++(120:1*0.4)--++(0:1*0.4)
 --++(120:5*0.4)--++(0:4*0.4)--++(0:2*0.4)--++(120:1*0.4);
  \draw[very thick, red](origin)--++(120:1*0.4)--++(0:1*0.4)
 --++(120:5*0.4)--++(0:4*0.4)--++(-120:2*0.4)--++(120:1*0.4);
  \draw[very thick, red](origin)--++(120:1*0.4)--++(0:1*0.4)
 --++(120:5*0.4)--++(0:4*0.4)--++(-120:2*0.4)--++(-120:1*0.4);
  \draw(origin) node {$\circledcirc$};
\end{tikzpicture}
\quad
\begin{tikzpicture}[scale=0.9]
 
   \path  (0,0)  coordinate (origin); 
 
      \clip(0,0)-- (120:0.4*18)--++(0:0.4*18)--(0,0);

       \begin{scope}

        \foreach \i in {0,...,35}
  {
    \path (origin)++(60:0.4*\i cm)  coordinate (a\i);
    \path (origin)++(120:0.4*\i cm)  coordinate (b\i);
    \path (a\i)++(120:12cm) coordinate (ca\i);
    \path (b\i)++(60:12cm) coordinate (cb\i);
}
   \path (b20)++(60:0.4*5)  coordinate (step1);
      \path (step1)++(0:0.4*5)  coordinate (step2);
      \path (step2)++(-60:0.4*5)  coordinate (step3); 
      \path (step3)++(0:0.4*5)  coordinate (step4);       
      \path (step4)++(-60:0.4*5)  coordinate (step5);

  \clip(0,0)-- (b18)--(a18)--(0,0);

 \foreach \i in {0,...,35}    
  { \draw[gray, thin]  (a\i) -- (ca\i)  (b\i) -- (cb\i);
     \draw[gray, thin]   (a\i) -- (b\i)  ; } 

 \foreach \i in {0,6,12,...,24}    
  { \draw[thick]  (a\i) -- (ca\i)  (b\i) -- (cb\i);
     \draw[thick]  (a\i) -- (b\i)  ; } 
 
   \end{scope} 
   
      \path  (0,0)  --++(120:0.4*2) --++(0:0.4*1) coordinate (origin); 
 \draw(origin) node {$\circledcirc$};
 \draw[very thick, red](origin)--++(120:1*0.4)--++(0:1*0.4)
 --++(120:12*0.4);
  \draw[very thick, red](origin)--++(120:1*0.4)--++(0:1*0.4)
 --++(120:11*0.4)--++(0:1*0.4);
  \draw[very thick, red](origin)--++(120:1*0.4)--++(0:1*0.4)
 --++(120:5*0.4)--++(0:4*0.4)--++(-120:2*0.4)--++(120:1*0.4);
  \draw[very thick, red](origin)--++(120:1*0.4)--++(0:1*0.4)
 --++(120:5*0.4)--++(0:4*0.4)--++(-120:2*0.4)--++(-120:1*0.4);
 \draw(origin) node {$\circledcirc$};
\end{tikzpicture}
\]
\caption{
The leftmost diagram depicts    6  of the 8 paths   obtainable from $\stt^\mu$ in the
 geometry of type ${A}_2\subseteq \widehat{{A}}_2$. 
The rightmost diagram depicts  all 4 {\em dominant} paths   obtainable from $\stt^\mu$ in the
 geometry of type ${A}_2\subseteq \widehat{{A}}_2$. 
  }
 \label{pathsforme2222}
   \end{figure}

\begin{rmk}
The two paths which are not depicted in the leftmost diagram of \cref{pathsforme2222}
are 
$$(+\varepsilon_1,+\varepsilon_2,+\varepsilon_1,+\varepsilon_1,+\varepsilon_1,
+\varepsilon_3,
+\varepsilon_3,
+\varepsilon_2,
+\varepsilon_2,
+\varepsilon_2,
+\varepsilon_2,
+\varepsilon_1,+\varepsilon_1,+\varepsilon_3),$$
$$(+\varepsilon_1,+\varepsilon_2,+\varepsilon_1,+\varepsilon_1,+\varepsilon_1,
+\varepsilon_3,
+\varepsilon_3,
+\varepsilon_2,
+\varepsilon_2,
+\varepsilon_2,
+\varepsilon_2,
+\varepsilon_1,+\varepsilon_1,+\varepsilon_1).$$
\end{rmk}

\begin{rmk}\label{Ringel3}
We note that   \cref{egstrong} calculates 
 some of the decomposition numbers 
for  symmetric groups  labelled by 3-column partitions.  These are equal to the decomposition multiplicities for tilting modules for ${\rm SL}_3(\Bbbk)$ via Ringel duality \cite[Section 4]{Donkin}.
\end{rmk}

\begin{rmk}
By \cref{strongman},  the simplest case of \cref{strongerman}
(the righthand-side of \ref{412321434321442314213} is zero) is already a considerable 
 strengthening of    the classical strong linkage principle (\cref{Strong linkage principle}).   
 This  is illustrated by our discarding of non-dominant paths in   \cref{egstrong}.  
Our super-strong linkage principle is also stronger in the sense that 
it generalises the statement of \cref{strongman} (and hence  (\cref{Strong linkage principle})) to more complicated upper-bounds on decomposition numbers.  
\end{rmk}

\begin{rmk}
It is easy to see, for any $e$-regular partition $(1^n)$ and any $h\in \mathbb{N}$, 
that we can obtain a zero of the decomposition matrix generalising 
the example $d_{(3^2,1^8),(2,1^{12})}=0$ in \cref{egstrong}.  In particular, we easily obtain infinitely many  zeroes of the decomposition matrix of $\mathfrak{S}_n$ not covered by  \cite[Theorem 1]{MR564523}.     
\end{rmk}

\begin{rmk}
In \cref{maximal terms prop} below, we shall obtain the converse statement to \cref{Strong linkage principle}.  
In 
\cref{generic} we shall see that the strong and super-strong linkage orderings  coincide for {\em non-parabolic} geometries.   
\end{rmk}

\section{Generic   behaviour}\label{sec:7}

In this section, we introduce  our idea of ``generic behaviour'' for diagrammatic Cherednik algebras.  
It encapsulates the idea that  ``generically'' the behaviour of 
a {\em parabolic}  geometry can mimic that of  
  a {\em non-parabolic} geometry (and hence simplifies).   
 We prove results concerning  homomorphisms and decomposition numbers  of $A_h(n,\theta,\kappa)$ which  are   independent of the field $\Bbbk$.  

In \cref{maximalparabolic,nonparabolic}, we shall  generalise  
the `local behaviour' seen in the $\ell=1$ case (concerning points which are close together in the alcove geometry) 
to higher levels.  
In \cref{maximalnonparabolic}, we shall encounter a 
 new 
 kind of generic behaviour 
given by relating points which are `as far away from each other as possible' in the alcove geometry.  
We refer the reader to \cref{sec:9} for examples of this generic behaviour.  

\begin{defn}
We say that a subset   $\Gamma \subseteq \mptn \ell n (h)$  is  {\sf generic}    if  for every  $\la,\mu\in \Gamma$ 
we have that $(i)$ $\Path_n^+(\lambda,\stt^\mu)=\Path_n(\lambda,\stt^\mu)$ and $(ii)$ if $\la\uparrow \nu \uparrow\mu$, then $\nu \in \Gamma$.  
\end{defn}

 \begin{eg}
 Let $h=3$, $\ell=1$, and $e=4$.  The set 
 $$\Gamma =\{(3^3),(3,2^2,1^2), (2^4,1)\}$$
 is 
  not generic.   To see this, note that  the (unique) 
 path  $\stt \in \Path_9((3^3), \stt^{(2^4,1)})$ given by 
 $$
 \stt=(+\varepsilon_1+\varepsilon_2+\varepsilon_1+\varepsilon_2
 +\varepsilon_3+\varepsilon_2
  +\varepsilon_3+\varepsilon_1+\varepsilon_3
  )
 $$
does not belong to $  \Path_9^+((3^3), \stt^{(2^4,1)})$.  
This is because the point $\stt(6)=(2,3,1)\not \in \mptn 1 n$ belongs to the  
 $s_{1,2}$-wall of the dominant Weyl chamber.  
In the literature, one would say that the set $\Gamma$ is 
{\em close to the walls} of the dominant chamber.  For a similar example, revisit \cref{egstrong}.  
 \end{eg}

\begin{eg}
For  $\ell=1$,   the points lying ``around the Steinberg weight'' form a generic set.  
For arbitrary $\ell,h\in \mathbb{Z}_{>0}$, any pair of points lying in two adjacent alcoves (of the dominant region) form a generic set (see \cref{stein,stein2}).  
\end{eg}

\begin{prop} \label{maximal terms prop}
Let $ \mu \in \mathbb{E}_{h,\ell}^\circledcirc$, $\lambda \in \mathbb{E}_{h,\ell}$  and suppose 
$\lambda\uparrow \mu$.  We have that
$$\sum_{\sts\in \Path_n(\lambda,\stt^\mu)}t^{\deg(s)}= t^{ \ell(\mu) -\ell(\la)}
+\sum_{0<k<\ell(\mu) -\ell(\la)} a_{k}t^{\ell(\mu) -\ell(\la)-2k} $$
for   coefficients $a_k\in \mathbb{Z}_{\geq 0}.$  \end{prop}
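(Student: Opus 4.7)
The plan is to proceed by induction on $d := \ell(\mu) - \ell(\lambda) \geq 0$, analyzing how the degree function on paths transforms under the reflection operations $s_{\alpha,re}^{(k)}$ that generate the equivalence relation $\sim$. For the base case $d = 0$, the hypothesis $\lambda \uparrow \mu$ forces $\lambda = \mu$ (any single step $\nu \uparrow_{\alpha,me} \nu'$ strictly increases the hyperplane count $\ell$ by at least one), and a direct inspection would show that $\Path_n(\mu, \stt^\mu) = \{\stt^\mu\}$ with $\deg(\stt^\mu) = 0$ by \cref{asfjklaaksasgnkfafglkn}, giving the sum $t^0 = t^d$.

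For $d > 0$, I would produce the leading term $t^d$ by an explicit construction. Pick a chain $\lambda = \lambda^{(0)} \uparrow_{\alpha_1, m_1 e} \cdots \uparrow_{\alpha_d, m_d e} \lambda^{(d)} = \mu$ of length exactly $d$ (which exists because $d = \ell(\mu) - \ell(\lambda)$) and build the canonical maximal path $\sts^{\max}$ by reflecting $\stt^\mu$ successively through $\mathbb{E}(\alpha_d, m_d e), \ldots, \mathbb{E}(\alpha_1, m_1 e)$, each at the latest step where the current path sits on the relevant hyperplane. I would verify that each such reflection contributes precisely $+1$ to the total degree in accordance with \cref{Soergeldegreee}, summing to $\deg(\sts^{\max}) = d$. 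Uniqueness of the degree-$d$ path then follows from a minimality argument: any alternative arrangement either omits one of the required reflections (which would change the endpoint) or inserts extra reflections that, by the sign conventions of \cref{Soergeldegreee}, strictly decrease the total degree.

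For the remaining terms and the parity statement, I would show that every $\sts \in \Path_n(\lambda, \stt^\mu)$ can be obtained from $\sts^{\max}$ by inserting a sequence of wall-bounces (pairs of reflections through the same hyperplane whose product fixes the endpoint), and each such bounce changes the total degree by exactly $\pm 2$. This forces all degrees to lie in $\{d, d-2, d-4, \ldots\} \cap \mathbb{Z}_{\geq 0}$, with the coefficients $a_k$ being non-negative since we are counting paths. The main obstacle will be the local degree analysis at points where several hyperplanes intersect: a single reflection may simultaneously affect several contributions $d_\alpha(\sts(k-1), \sts(k))$, and one must rule out accidental cancellations that could either yield a term $t^{d+1}$ or break the parity. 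The cleanest resolution is likely to identify $\Path_n(\lambda, \stt^\mu)$ modulo wall-bounces with an interval in the Bruhat order of $W^e$ and interpret $\sum t^{\deg(\sts)}$ as a Poincar\'{e}-type polynomial for the length function $d - \deg(\sts)$, making the stated structure manifest.
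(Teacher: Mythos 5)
Your approach is genuinely different from the paper's, and it has substantial gaps. The paper inducts on the \emph{length} $n$ of the path (not on $d=\ell(\mu)-\ell(\lambda)$) and establishes the single recurrence
\[
\bigl(\ell(\stt^\mu(n{+}1))-\ell(\sts(n{+}1))\bigr)-\deg(\sts)
=\bigl(\ell(\stt^\mu(n))-\ell(\sts(n))\bigr)-\deg(\sts{\downarrow}_{\leq n})
+2\,\bigl|\{(\alpha,me)\mid \sts(n)\in\mathbb{E}^{>}(\alpha,me),\ \sts(n{+}1)\in\mathbb{E}(\alpha,me)\}\bigr|.
\]
Since the rightmost term is a non-negative even integer, this recurrence delivers, all at once, the upper bound $\deg(\sts)\leq\ell(\mu)-\ell(\lambda)$, the parity constraint, and (after identifying exactly when the added term vanishes at every step) the existence and uniqueness of the top-degree path by a short descent argument inside a coset of the stabiliser. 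Your plan instead builds a canonical maximal path by reflecting along a chain $\lambda=\lambda^{(0)}\uparrow\cdots\uparrow\lambda^{(d)}=\mu$ and then tries to reach every other path by degree-$\pm 2$ ``wall-bounces.''

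Three steps in your plan need real work and one is in error. (i) The existence of an $\uparrow$-chain of length \emph{exactly} $d$ is not automatic: a single $\uparrow$-step via a general (non-simple) affine reflection may increase $\ell$ by more than one, so you must first prove a Bruhat-saturation statement for the $\uparrow$-order, a fact the paper's argument never uses. (ii) The claim that each of the $d$ reflections raises the degree by exactly $+1$ is precisely the cancellation problem you flag at the end but do not resolve: applying $s_{\alpha,re}^{(k)}$ alters not only $d_\alpha$ at step $k$ but also $d_\beta$ for other roots at later steps, and these corrections must be shown to cancel. (iii) The wall-bounce connectivity claim, that $\Path_n(\lambda,\stt^\mu)$ is a single orbit under endpoint-preserving bounces of degree $\pm 2$, is itself a global structural theorem of comparable weight to the proposition; the step-by-step recurrence sidesteps proving it. (iv) Finally, you state the degrees lie in $\{d,d-2,\ldots\}\cap\mathbb{Z}_{\geq 0}$, but the proposition's sum runs over $0<k<d$, so the degrees $d-2k$ descend to $-(d-2)$, which is negative whenever $d\geq 3$; intersecting with $\mathbb{Z}_{\geq 0}$ would incorrectly discard those terms.
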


\begin{proof}
Let   $\sts \in \Path_n(\la,\stt^\mu)$.  
The result clearly holds for $n=1$, we shall assume that the result holds for all paths of length less than or equal to $n$.  
Suppose that $\stt^\mu$ is a path of length $n+1$.  
We have that 
\begin{align*}
\ell(\stt^\mu(n+1)) =\ell(\stt^\mu(n)) +& |\{(\alpha,me) \mid \stt^\mu(n) \in \Hyp,\stt^\mu(n+1) \not\in \Hyp		\}| 
\intertext{and $\deg(\stt^\mu)=\deg(\stt^\mu_{\leq n})=0$.  
Given $\sts \in \Path_{n+1}(\lambda,\stt^\mu)$, we have that }
 \ell(\sts (n+1)) =\ell(\sts(n))+& 
|\{(\alpha,me) \mid \sts(n) \in \Hyp,\sts(n+1) \in \Hyp^{>}		\}| \\
&-
|\{(\alpha,me) \mid \sts(n) \in \Hyp^{>},\sts(n+1) \in \Hyp		\}|
\end{align*}
First of all, we note that 
\begin{align*}
& |\{(\alpha,me) \mid \sts(n) \in \Hyp,\sts(n+1) \in \Hyp^{<}		\}| 
\\ =  
 &|\{(\alpha,me) \mid \sts(n) \in \Hyp,\sts(n+1) \not\in \Hyp		\}| 
 -
 |\{(\alpha,me) \mid \sts(n) \in \Hyp,\sts(n+1) \in \Hyp^{>}		\}| 
\\ =  
 &|\{(\alpha,me) \mid \stt^\mu(n) \in \Hyp,\stt^\mu(n+1) \not\in \Hyp		\}| 
 -
 |\{(\alpha,me) \mid \sts(n) \in \Hyp,\sts(n+1) \in \Hyp^{>}		\}|, 
\end{align*}
and therefore, 
\begin{align*}
\ell(\stt^\mu(n+1))-\ell(\sts(n+1))=\ell(\stt^\mu(n)) - \ell(\sts(n))
&+
 |\{(\alpha,me) \mid \sts(n) \in \Hyp,\sts(n+1) \in \Hyp^{<}		\}| \\
&+
 |\{(\alpha,me) \mid \sts(n) \in \Hyp^{>},\sts(n+1) \in \Hyp 		\}| 
\end{align*}
 and by definition, we have that 
 \begin{align*}\deg(\sts)=  \deg(\sts{\downarrow}_{\leq n} )
&+ 
|\{(\alpha,me) \mid \sts(n) \in \Hyp,\sts(n+1) \in \Hyp^{<}		\}| \\
&-
|\{(\alpha,me) \mid \sts(n) \in \Hyp^{>},\sts(n+1) \in \Hyp		\}|.  
\end{align*} 
Putting these two statements together, we have that
\begin{align*}
 &\ell(\stt^\mu(n+1)) - \ell(\sts(n+1)) - \deg(\sts)\\
 =&
 (\ell(\stt^\mu(n)) - \ell(\sts(n)) - \deg(\sts{\downarrow}_{\leq n}))
 +2 |\{(\alpha,me) \mid \sts(n) \in \Hyp^{>},\sts(n+1) \in \Hyp		\}|.
\end{align*}
 The   upper degree bound statement statement and the  degree parity  follow by induction.
 The lower bound on degree follows as the first reflection through a hyperplane always increases the degree of the path (by the definition of $\stt^\mu$).  
 Finally, we  note that  $\ell(\stt^\mu(n+1))-\ell(\sts(n+1))=\deg(\sts)$ if and only if both of the following conditions are satisfied
\begin{align}\label{1}
&|\{(\alpha,me) \mid \sts(n) \in {\mathbb E}^{>} (\alpha,me),\sts(n+1) \in \Hyp		\}|=0  \text{ and }\\  \label{2}
&\deg(\sts {\downarrow}_{\leq n})=\ell(\stt^\mu(n))-\ell(\sts(n)).\end{align}
We now prove that for arbitrary $\lambda\in \mathbb{E}_{h,\ell}$ and $\mu \in \mathbb{E}_{h,\ell}^\circledcirc$ 
there exists a unique (not necessarily dominant)
 path satisfying both these conditions (and therefore is of   degree $ \ell(\mu)-\ell(\lambda)$). 
Set $\mu'= \stt^\mu(n)$ and let $\lambda'\in \mathbb{E}_{h,\ell}$ be such that $\lambda' \uparrow \mu'$.  
We may assume (by  induction) that for any such $\lambda'$, there exists a 
 unique 
path $\sts$ satisfying  condition \ref{2}.
Now suppose that $\lambda'$ is such that 
\begin{equation}\label{1111112222222}
|\{(\alpha,me) \mid \lambda'=\sts(n) \in {\mathbb E}^{\less 0} (\alpha,me),\sts(n+1) \in \Hyp		\}|=k   
\end{equation}
for $k>1$.  We set 
$\lambda'' = s_{(\alpha,me)} \cdot \lambda'$  
(for any $ (\alpha,me) $ in the above set).
We have that $\lambda'' \uparrow \lambda'$ and 
\begin{equation}\label{111111222222} |\{(\alpha,me) \mid \lambda''=\stu(n) \in {\mathbb E}^{\less 0} (\alpha,me),\sts(n+1) \in \Hyp		\}|<k    
\end{equation}
and indeed the set in \cref{111111222222} is a subset of that in 
\cref{1111112222222}.  While the reflection is not unique, there is a unique coset of the stabiliser, ${\rm Stab}(\lambda')$, of the point  $\lambda'$ in  $W^e$ for $e\in \ZZ_{>0}\cup \{\infty\}$.  
   Continuing in this fashion, we eventually obtain the unique path $\sts$ and unique point $\sts(n)=\lambda'''		\in \mathbb{E}_{h,\ell}$ satisfying \ref{1} (where \ref{2} is satisfied by our inductive assumption).
%
     The result follows.  
\end{proof}

\begin{defn}
We let    
 $\stt^\mu_\lambda \in \Path (\lambda,\stt^\mu)$ denote the  unique path of degree
$     \ell(\mu) -\ell(\la)$.  
\end{defn}

\begin{prop}\label{lots of decomp numbers}
Let $\Bbbk$ be an arbitrary field, $\lambda,\mu \in \mathbb{E}_{h,\ell}^\circledcirc$ and suppose that 
$\lambda\uparrow \mu$.  
If 
\begin{equation}\label{statement1}
[\Delta(\lambda):L(\mu)\langle k \rangle]\neq 0
\end{equation}
this implies that    $\ell(\lambda)- \ell(\mu)+2 \leq k \leq   \ell(\mu)-\ell(\lambda) $.  
We  have that 
\begin{equation}\label{statement2}
[\Delta(\lambda):L(\mu)\langle \ell(\mu)-\ell(\lambda) \rangle]=1
\end{equation}
if $\stt^\mu_\lambda \in \Path^+(\lambda,\stt^\mu)$   and is zero otherwise.  

\end{prop}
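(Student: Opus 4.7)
The plan is to derive both parts of the proposition by combining the super-strong linkage principle (\cref{strongerman}) with the explicit path-generating polynomial of \cref{maximal terms prop}. The upper bounds for~\eqref{statement1} and~\eqref{statement2} are immediate; only the matching lower bound in~\eqref{statement2} requires additional work.

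First, by \cref{strongerman} together with the inclusion $\Path^+(\lambda,\stt^\mu) \subseteq \Path(\lambda,\stt^\mu)$,
\[
d_{\lambda\mu}(t) \;\leq\; \sum_{\sts \in \Path^+(\lambda,\stt^\mu)} t^{\deg(\sts)} \;\leq\; \sum_{\sts \in \Path(\lambda,\stt^\mu)} t^{\deg(\sts)}.
\]
\Cref{maximal terms prop} expresses the rightmost sum as $t^{\ell(\mu)-\ell(\lambda)} + \sum_{0 < k < \ell(\mu)-\ell(\lambda)} a_k t^{\ell(\mu)-\ell(\lambda)-2k}$, whose support lies in $\{\ell(\lambda)-\ell(\mu)+2,\, \ell(\lambda)-\ell(\mu)+4,\, \ldots,\, \ell(\mu)-\ell(\lambda)\}$, with smallest exponent attained at $k = \ell(\mu)-\ell(\lambda)-1$. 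This yields~\eqref{statement1}, and moreover shows that the coefficient of $t^{\ell(\mu)-\ell(\lambda)}$ in $d_{\lambda\mu}(t)$ is at most $1$, with this bound being $0$ unless the unique maximal-degree path $\stt^\mu_\lambda$ belongs to $\Path^+(\lambda,\stt^\mu)$. This already settles the non-dominant case in~\eqref{statement2}.

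For the matching lower bound in~\eqref{statement2} when $\stt^\mu_\lambda \in \Path^+$, I would proceed by induction on $\ell(\mu) - \ell(\lambda)$. The base case $\ell(\mu) = \ell(\lambda)$ forces $\lambda = \mu$ and gives $d_{\mu\mu}(t) = 1$ by unitriangularity of the graded decomposition matrix. For the inductive step, equate the two expressions for the graded dimension of the $\mu$-weight space of $\Delta(\lambda)$:
\[
\sum_{\sts \in \Path^+(\lambda,\stt^\mu)} t^{\deg(\sts)} \;=\; \dim_t {\sf 1}_\mu \Delta(\lambda) \;=\; \sum_{\nu} d_{\lambda\nu}(t)\,\dim_t {\sf 1}_\mu L(\nu),
\]
and extract the coefficient of $t^{\ell(\mu)-\ell(\lambda)}$. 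Non-vanishing contributions on the right require $\lambda \uparrow \nu \uparrow \mu$, and applying the already-proven statement~\eqref{statement1} to the pairs $(\lambda,\nu)$ and $(\nu,\mu)$ bounds the relevant degrees of $d_{\lambda\nu}(t)$ and of $\dim_t {\sf 1}_\mu L(\nu)$ respectively. These bounds force the only top-degree contribution to come from the term with $\nu = \mu$ and $k = \ell(\mu)-\ell(\lambda)$, in which case $[\dim_t {\sf 1}_\mu L(\mu)]_0 = 1$ via the canonical degree-zero generator of $\Delta(\mu)$ surviving in the head. The main obstacle is controlling strictly intermediate $\nu$ (with $\lambda\uparrow\nu\uparrow\mu$ and $\nu\neq\lambda,\mu$): one must verify that such $\nu$ cannot contribute at degree exactly $\ell(\mu)-\ell(\lambda)$, which follows because strict intermediacy forces strict inequality in the degree bound from \cref{maximal terms prop} for at least one of the two factors $d_{\lambda\nu}(t)$ and $\dim_t {\sf 1}_\mu L(\nu)$, so their product lands strictly below the top degree.
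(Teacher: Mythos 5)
Your handling of \eqref{statement1} and of the non-dominant case in \eqref{statement2} matches the paper, and your degree bookkeeping (top degree of $d_{\lambda\nu}(t)$ at most $\ell(\nu)-\ell(\lambda)$, top degree of $\dim_t{\sf 1}_\mu L(\nu)$ at most $\ell(\mu)-\ell(\nu)$, and these must saturate for a contribution at $t^{\ell(\mu)-\ell(\lambda)}$) is exactly the arithmetic the paper uses. However, there is a genuine gap in the final step, and your framing also differs in scaffolding: the paper does not induct, but instead observes directly that the basis vector $C_{\stt^\mu_\lambda}\in{\sf 1}_\mu\Delta(\lambda)$ (of degree $\ell(\mu)-\ell(\lambda)$) must be supported on some composition factor $L(\nu)\langle a\rangle$, and then forces $\nu=\mu$.

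The gap: your claim that ``strict intermediacy forces strict inequality in the degree bound \ldots\ for at least one of the two factors'' is asserted but not justified, and the degree bounds from \cref{maximal terms prop} alone do \emph{not} give it. A priori it is entirely consistent with the bounds that, for some intermediate $\nu$, $d_{\lambda\nu}(t)$ achieves degree $\ell(\nu)-\ell(\lambda)$ (which by the inductive hypothesis it does, whenever $\stt^\nu_\lambda$ is dominant) \emph{and} ${\sf 1}_\mu L(\nu)$ has a nonzero vector in degree $\ell(\mu)-\ell(\nu)$. Ruling out the latter is precisely where the paper invokes \cref{humathasprop}: the palindromicity $\dim_t L(\nu)\in\mathbb{Z}_{\geq 0}[t+t^{-1}]$ is what forbids $L(\nu)$ from having a $\mu$-weight vector saturating the top-degree bound unless $\nu=\mu$ (where the saturating vector sits in degree $0$). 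Your proof never uses \cref{humathasprop}, so this step cannot go through as written. A second, related omission: you explicitly set aside $\nu=\lambda$, but the term $d_{\lambda\lambda}(t)\,\dim_t{\sf 1}_\mu L(\lambda)=\dim_t{\sf 1}_\mu L(\lambda)$ can, on the face of the degree bounds, contribute at degree $\ell(\mu)-\ell(\lambda)$ (this is the question of whether $C_{\stt^\mu_\lambda}$ survives into the head $L(\lambda)$), and excluding it requires the same palindromicity argument. One more minor point: you justify the bound on $\dim_t{\sf 1}_\mu L(\nu)$ by citing \eqref{statement1} applied to $(\nu,\mu)$, but \eqref{statement1} bounds $d_{\nu\mu}(t)$, not the simple-module weight space; the correct source is $\dim_t{\sf 1}_\mu L(\nu)\leq\dim_t{\sf 1}_\mu\Delta(\nu)$ together with \cref{maximal terms prop}.
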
 
\begin{proof}Equation \ref{statement1} follows directly from \cref{maximal terms prop}.  
 We know that $C_{\stt^\mu_\lambda} \in \Delta(\lambda)$ and that 
$C_{\stt^\mu_\lambda}$ is a vector   belonging to some simple module $L(\nu)$ 
(with $\lambda \uparrow \nu \uparrow \mu $) appearing in the submodule lattice of $\Delta(\lambda)$.   
This implies that   there exists $a,b\geq 0$ such that  
 $d_{\lambda\nu}(t) = t^a+\dots $  and $\dim_t({\sf1}_\mu L(\nu))=t^b+\dots $ such that $a+b=\ell(\mu)-\ell(\lambda)$.  
 It remains to show that $\nu=\mu$.   
We have that 
$$( \ell(\mu) -\ell(\nu))+(\ell(\nu)-\ell(\la))=  \ell(\mu) - \ell(\la).$$   
By  \cref{maximal terms prop,humathasprop}   
 we deduce that $a=\ell(\mu)-\ell(\lambda)$ and $b=0$ and $\nu=\mu$, as required.   
\end{proof}

\begin{eg}\label{egegeegggg}
By \cref{lots of decomp numbers}, we immediately  deduce
 that  the first three inequalities in \cref{someexamplesthatififdon} are actually equalities.  
For the final inequality, we suppose   $\dim_t({\sf 1}_{(2,1^{12})}\Delta (3^2,2^3,1^2))=t^1\neq d_{(3^2,2^3,1^2),{(2,1^{12})}}(t)=0$.  
Then there exists  $\nu$ 
such that ${\sf 1}_{(2,1^{12})}L(\nu)  \neq 0$ and  ${\sf 1}_{\nu}L (3^2,2^3,1^2)   \neq 0 $.  
However, we have already seen that all elements  of  $\Path^+(\nu,\stt^{(2,1^{12})})$ are of strictly positive degree. 
Therefore there does not exist any $\nu$ such that  ${\sf 1}_{(2,1^{12})}L(\nu)  \neq 0$ by \cref{humathasprop}.  
Therefore we conclude that $d_{(3^2,2^3,1^2),{(2,1^{12})}}(t)=t^1$.  
Notice that while we were unable to apply \cref{lots of decomp numbers} directly in this final case, we are 
applying the same argument as in the proof of \cref{lots of decomp numbers}.  
 \end{eg}

\begin{cor}\label{fanssdfghjlkdslfhjgdfhljkgdfshljkgsdffgdhsljkgfhldgf}
 Let $\lambda,\mu \in \mptn \ell n(h)$ be a generic pair and suppose 
  $\lambda\uparrow \mu$.    
Then
 $$
[\Delta(\lambda):L(\mu)\langle \ell(\mu)-\ell(\lambda) \rangle]=1.
$$ 

\end{cor}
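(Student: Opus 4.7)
The plan is to show that the corollary is essentially a formal consequence of \cref{lots of decomp numbers} combined with the definition of a generic pair. First I would invoke \cref{maximal terms prop}, applied to the pair $\lambda \uparrow \mu$: this produces a (not necessarily dominant) path $\stt^\mu_\lambda \in \Path_n(\lambda,\stt^\mu)$, uniquely characterised as the single path of the maximal possible degree $\ell(\mu)-\ell(\lambda)$, with every other path in $\Path_n(\lambda,\stt^\mu)$ of strictly smaller degree (and of the same parity).

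Next I would use the first defining property of a generic set: since $(\lambda,\mu)$ is a generic pair, we have $\Path^+_n(\lambda,\stt^\mu) = \Path_n(\lambda,\stt^\mu)$, so in particular the distinguished path $\stt^\mu_\lambda$ automatically lies in $\Path^+_n(\lambda,\stt^\mu)$. This is the only moment where genericity enters; the second clause of the definition (closure under $\uparrow$) is not needed for this particular statement but would be needed for related composition-series claims.

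Finally I would appeal to the second assertion of \cref{lots of decomp numbers}: for $\lambda \uparrow \mu$ in $\mathbb{E}_{h,\ell}^\circledcirc$ the multiplicity $[\Delta(\lambda):L(\mu)\langle \ell(\mu)-\ell(\lambda)\rangle]$ equals $1$ precisely when $\stt^\mu_\lambda$ is dominant, and equals $0$ otherwise. Combining this with the previous step yields the corollary.

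There is no real obstacle here, since the work is already absorbed into \cref{maximal terms prop,lots of decomp numbers}. The only thing to double-check when writing the proof is that genericity is correctly being applied to the unique maximal-degree path $\stt^\mu_\lambda$ rather than to some other chosen path, and that the hypothesis $\lambda \uparrow \mu$ (which guarantees $\Path_n(\lambda,\stt^\mu)\neq\emptyset$ via the reflection construction in the proof of \cref{strongman}) is genuinely available, so that $\stt^\mu_\lambda$ actually exists as an element of $\Path_n(\lambda,\stt^\mu)$ before invoking property (i) of genericity.
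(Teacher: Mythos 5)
Your argument matches the paper's proof exactly: genericity (condition (i)) forces the unique maximal-degree path $\stt^\mu_\lambda$ from \cref{maximal terms prop} to be dominant, and then the dichotomy in the second assertion of \cref{lots of decomp numbers} gives the multiplicity $1$. The extra care you take — checking that $\lambda\uparrow\mu$ guarantees $\Path_n(\lambda,\stt^\mu)\neq\emptyset$ and noting that condition (ii) of genericity is not needed — is sound and makes the implicit steps explicit, but the route is the same as the paper's.
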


\begin{proof}
 By assumption  $\Path(\lambda,\stt^\mu)=\Path^+(\lambda,\stt^\mu)$; the result follows by \cref{lots of decomp numbers,maximal terms prop}.
\end{proof}
\begin{eg}\label{generic}
Let $h=1$.  In which case,  any subset of $  \mathbb{E}_{1,\ell}^\circledcirc$ is generic.  
Therefore 
$$
d_{\lambda\mu}(t)=t^{  \ell(\mu)-\ell(\lambda)} + \dots 
$$
for all  $\lambda \uparrow \mu$. 
 In particular, $d_{\lambda\mu}(t)\neq 0$ if and only if $\lambda\uparrow\mu$.  For $h=1$ and $\ell=2$ these decomposition numbers were first calculated in \cite[(9.4) Theorem]{MW00} and \cite[Section 8]{MR1995129}.  
 For $\Bbbk=\mathbb{C}$ and $\ell$ arbitrary,
  this case was studied extensively in \cite{bcs15}.  
\end{eg}

We now present the main result of this section.    
 It will allow us to deduce the existence of many homomorphisms  between Weyl and Specht modules.  
For those not familiar with the diagram combinatorics, we recommend reading the (simpler) proof of  \cref{maximalparabolic} below, first.
We first require a simple lemma.  

\begin{lem}\label{anotherbloominlemmer}
Let $\lambda,\mu \in \mathbb{E}_{h,\ell}^+$ and suppose  $\lambda\uparrow\mu$ and  $\ell(\lambda)=\ell(\mu)-1$. 
 If $(r,c,m)\in {\rm Rem}_i(\lambda\cap\mu)$, then there is no crossing of 
$X_{(r,c,m)}$ (or its ghost) with an  $i$- or $(i+1)$-strand   (or its ghost)   in 
$C_{\stt^\mu_\lambda}$.  
\end{lem}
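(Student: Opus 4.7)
The plan is to exploit the rigid structure of the unique degree-one path $\stt^\mu_\lambda$ guaranteed by \cref{maximal terms prop}, together with the separation of diagonals in \cref{seperating} (which relies on the standing hypotheses $e>h\ell$ and $\kappa$ $h$-admissible).

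Since $\ell(\mu)-\ell(\lambda)=1$, the multipartitions $\lambda$ and $\mu$ are separated by a single hyperplane $\mathbb{E}(\alpha,me)$ with $\alpha=\varepsilon_a-\varepsilon_b$, and $\mu=s_{\alpha,me}\cdot\lambda$. Unpacking the dot action gives $\mu=\lambda+N(\varepsilon_a-\varepsilon_b)$ for some positive integer $N$, so $\lambda$ and $\mu$ have identical column heights outside coordinates $a$ and $b$. As $(r,c,m)\in\Rem(\lambda\cap\mu)$, the column $c$ of component $m$ has height exactly $r$ in both diagrams, forcing $h(m-1)+c\notin\{a,b\}$. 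Using the inductive construction of \cref{howtobuildshit}, the diagram $C_{\stt^\mu_\lambda}$ is obtained from $\mathbf{1}_\mu$ (whose strands are all vertical) by perturbing only those strands whose endpoints lie in column $a$ or $b$; consequently $X_{(r,c,m)}$ and its ghost are vertical strands at $x$-coordinates $\mathbf{I}_{(r,c,m)}$ and $\mathbf{I}_{(r,c,m)}-\ell$ respectively.

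Any candidate crossing partner of $X_{(r,c,m)}$ (or its ghost) must therefore be one of the ``swap'' strands linking the top $N$ boxes of column $b$ of $\lambda$ to the top $N$ positions of column $a$ of $\mu$. I would then verify that no such swap strand $Y$ can simultaneously have residue in $\{i,i+1\}$ and straddle the vertical line at $\mathbf{I}_{(r,c,m)}$ (or at $\mathbf{I}_{(r,c,m)}-\ell$ in the case of ghost crossings). If $Y$ carried residue $j\in\{i,i+1\}$, both of its endpoints would lie on $j$-diagonals, which by \cref{seperating} must be at least $(h+1)\ell$ units apart. Combined with the removability of $(r,c,m)$ in both $\lambda$ and $\mu$, which prohibits any box of residue $i\pm 1$ at position $(r,c+1,m)$ or $(r+1,c,m)$ from lying in either Young diagram, this geometric rigidity rules out every configuration of $Y$.

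The main obstacle is the final case analysis: handling all four combinations of solid-versus-ghost interactions between $X_{(r,c,m)}$ and $Y$ (solid--solid, solid--ghost, ghost--solid, ghost--ghost), and for each verifying that the $h$-admissibility of $\kappa$ together with \cref{seperating} leaves no residue $j\in\{i,i+1\}$ compatible with both the straddling requirement and the fact that $(r,c,m)\in\Rem_i(\lambda\cap\mu)$.
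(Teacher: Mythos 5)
Your proposal takes a genuinely different route from the paper. The paper's argument is an alcove-combinatorial one: it notes that $X$ is vertical, that any crossing partner $Y$ is a strand joining a box $y_0\in\lambda\setminus(\lambda\cap\mu)$ to a box $y_1\in\mu\setminus(\lambda\cap\mu)$, and — crucially — that $Y$ is added \emph{after} $X$ in the inductive construction of \cref{howtobuildshit}. One then examines the quantity $\ell(\,\cdot\,)$ at the step where $Y$ is added and shows that a degree-relevant crossing would force the path to pick up too many hyperplane separations, violating the constraint imposed by $\ell(\mu)$ and the maximality of $\stt^\mu_\lambda$. This is a counting argument about the alcove geometry, not about spatial positions in the diagram. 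Your approach instead tries to show that no such $Y$ can even be geometrically positioned to cross $X$ or its ghost, using \cref{seperating} and $h$-admissibility.

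There are two concrete gaps. First, your claim that "both of its endpoints would lie on $j$-diagonals, which by \cref{seperating} must be at least $(h+1)\ell$ units apart" is unjustified: \cref{seperating} bounds the distance between \emph{distinct} $j$-diagonals, but $y_0$ and $y_1$ carry the same residue $j$ and could a priori lie on the \emph{same} $j$-diagonal (for instance in the same component on the diagonal $\{(a+t,b+t,m)\}$), in which case their $x$-coordinates differ by $O(\varepsilon)$ and the argument collapses. You would need an additional argument (e.g., analyzing which boxes on the diagonal through $(r,c,m)$ can lie in $\lambda\setminus\mu$ vs. $\mu\setminus\lambda$, exploiting removability of $(r,c,m)$ in both) to rule this out. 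Second, even granting that $y_0$ and $y_1$ are far apart, this does not obviously \emph{prevent} a crossing — a strand with widely separated endpoints crosses many vertical strands — so the final sentence ("this geometric rigidity rules out every configuration of $Y$") does not follow from what precedes it. The paper sidesteps both issues by never contesting the geometric possibility of the crossing; it instead derives a contradiction from the length function $\ell$, which your proposal does not touch.
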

\begin{proof}
The diagram     $C_{\stt^\mu_\lambda}$ traces out  the   unique residue preserving bijection between the strip of northern nodes 
  $\mu \setminus  (\mu \cap \lambda)$ 
and the strip of southern nodes 
  $\lambda \setminus  (\mu \cap \lambda)$.  
A necessary condition for a  crossing between 
$X_{(r,c,m)}$ and any other strand $Y  $  is that $Y$ is non-vertical (as $X$ is vertical, by assumption) and therefore $Y$
connects the points  $({\bf I}_{y_0},0)$ and $({\bf I}_{y_1},1)$ 
 for   some $y_0\in \lambda \setminus  (\mu \cap \lambda)$,  $y_1 \in \mu \setminus  (\mu \cap \lambda)$.    
By the definition of the path $\stt^\mu_\lambda$, the strand $Y$ is added at a later stage  than the strand $X$  in the process outlined in \cref{howtobuildshit}
(this follows because   $\mathbf{I}_{y_1} > {\bf I}_{(r,c,m)} $).  
Therefore we can suppose that $X$ is added at the $a$th step and 
$Y$ is added at the $b$th step for $1\leq a < b\leq n$.  
By induction we can assume that  
$C_{\stt^\mu_\lambda{\downarrow}_{\leq b-1}}$   contains no
 crossing contradicting the statement of the lemma.  
If   $Y$ is
an $i$-strand which crosses $X_{(r,c,m)}$, then  
$$\ell(\Shape(\stt^\mu_\lambda {\downarrow}_{\leq b}) + y_0 )+2 =
 \ell(\Shape(\stt^\mu_\lambda {\downarrow}_{\leq b})+ y_1 )  
\leq 
\ell(\mu) 
$$
as we have stepped onto a hyperplane from above (since we have added an $i$-node corresponding to $y_0$ to the left of the removable $i$-node corresponding to $X$)
and off-of another hyperplane towards the origin (as we have added an $i$-node $y_0$ to the left of the addable $i$-node $y_1$).  
 If $Y$ is an $(i-1)$-strand which crosses $X_{(r,c,m)}$, 
 then again
 $$\ell(\Shape(\stt^\mu_\lambda {\downarrow}_{\leq b}) + y_0 )+2 =
 \ell(\Shape(\stt^\mu_\lambda {\downarrow}_{\leq b})+ y_1 )  
\leq 
\ell(\mu)
$$
as we have stepped  off-of two hyperplanes towards the origin (as we have added an $(i-1)$-node $y_0$ to the left of two  addable $(i-1)$-nodes $(r,c,m)$ and $y_1$).  
The result follows.  
\end{proof}

\begin{thm}\label{howtobuildshit2}
We let 
\begin{equation}\label{botherit}
\lambda= \mu^{(t)} \uparrow \mu^{(t-1)} \uparrow \dots \uparrow \mu^{(0)} =\mu
\end{equation}
be a sequence of points  in $\mathbb{E}_{h,\ell}^\circledcirc$ such that 
$$s_{\alpha^{(k)},m_k e} \cdot \mu^{(k)}=\mu^{(k-1)}$$
for $1\leq k \leq t$.  
Suppose that the path
$\stt^\mu_\lambda \in \Path (\lambda,\stt^{\mu})$ is dominant and  is  obtained by
$$
\stt^\mu_\lambda= s_{\alpha^{(t)},m_t e}^{i_t}
\dots 
s_{\alpha^{(2)},m_2 e}^{i_2}
s_{\alpha^{(1)},m_1 e}^{i_1}\cdot \stt^\mu
$$
for some sequence $i_1\leq i_2\leq \dots \leq  i_t$. 
Then  for any $\nu =\mu^{(k)}$ for $1\leq k \leq t$ we have that 
 $$\stt^{\nu}_{\lambda}=   
s_{\alpha^{(t)},m_t e}^{i_t}
\dots  
s_{\alpha^{({k+1 })},m_{k+1} e}^{i_{k+1 }}\cdot
  \stt^{\nu}
\qquad   \qquad 
\stt_{\nu}^{\mu}=   
 s_{\alpha^{(k)},m_{k} e}^{i_{k}}
\dots  
s_{\alpha^{(1)},m_1 e}^{i_1}\cdot  
   \stt^\mu_\lambda 
 $$ are both dominant paths 
    and 
we have that 
 $$  C_{\stt_{\nu}^{\mu}}C_{\stt^{\nu}_{\lambda}}  =   C_{\stt_\lambda^\mu}.
$$ 
  
\end{thm}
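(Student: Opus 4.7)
The plan is to split the theorem into two parts: first verify that the intermediate paths $\stt^\nu_\lambda$ and $\stt^\mu_\nu$ are dominant (and therefore coincide with the distinguished minimal-degree paths of \cref{maximal terms prop}), and then establish the multiplicative identity $C_{\stt^\mu_\nu}C_{\stt^\nu_\lambda} = C_{\stt^\mu_\lambda}$ by an inductive diagrammatic argument.

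For dominance, I would argue as follows. Since the indices $i_1 \leq i_2 \leq \cdots \leq i_t$ are non-decreasing, each reflection $s_{\alpha^{(j)},m_j e}^{i_j}$ acts only on steps of the path that occur at position $i_j$ or later. Therefore applying only the initial segment of reflections (to get $\stt^\mu_\nu$) or the terminal segment (to get $\stt^\nu_\lambda$) produces a path that agrees with the dominant path $\stt^\mu_\lambda$ at every position where a reflection has been applied, and agrees with the original $\stt^\mu$ elsewhere. A non-dominant step would thus have to occur on a segment shared with either $\stt^\mu$ or $\stt^\mu_\lambda$, contradicting their dominance. Combined with the degree additivity $(\ell(\mu)-\ell(\nu))+(\ell(\nu)-\ell(\lambda))=\ell(\mu)-\ell(\lambda)$ and the uniqueness statement of \cref{maximal terms prop}, this identifies the intermediate paths with the distinguished ones.

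For the composition identity, I would induct on $t-k$, the number of reflections separating $\nu$ from $\lambda$. The base case is $\ell(\nu)=\ell(\lambda)+1$, where $\stt^\nu_\lambda$ reflects $\stt^\nu$ through a single hyperplane at a single instant $i_t$. Here I would build the stacked diagram $C_{\stt^\mu_\nu}C_{\stt^\nu_\lambda}$ using the inductive-strand construction of \cref{howtobuildshit}: draw the strands one at a time in the order dictated by the component word. The only potentially problematic interactions are crossings of strands of equal or adjacent residue near the reflecting hyperplane, but \cref{anotherbloominlemmer} rules these out precisely in this configuration, so every crossing that occurs is ``non-interacting'' in the sense of \cref{noninteracting}. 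The stacked diagram therefore simplifies, via isotopy and the non-error-term relations \ref{rel1},\ref{rel2},\ref{rel5},\ref{rel8},\ref{rel13},\ref{rel14}, to the diagram $C_{\stt^\mu_\lambda}$ built by the same inductive procedure. The inductive step is immediate: pick an intermediate point $\xi = \mu^{(k+1)}$, apply the induction hypothesis to get $C_{\stt^\mu_\xi}C_{\stt^\xi_\lambda}=C_{\stt^\mu_\lambda}$, and then factor $C_{\stt^\mu_\xi}=C_{\stt^\mu_\nu}C_{\stt^\nu_\xi}$ using the single-reflection case (or alternatively induct directly on the length).

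The main obstacle will be the diagrammatic verification in the base case: one must check that when the single non-vertical strand of $C_{\stt^\nu_\lambda}$ is stacked beneath $C_{\stt^\mu_\nu}$, the absence of critical crossings guaranteed by \cref{anotherbloominlemmer} genuinely lets us resolve the resulting diagram using only non-interacting relations, with no error or annihilation terms from \ref{rel3},\ref{rel4},\ref{rel6},\ref{rel7},\ref{rel9},\ref{rel10},\ref{rel12},\ref{rel15}. The degree bookkeeping of \cref{maximal terms prop} guarantees no error term can contribute a non-zero summand (since any would have strictly smaller degree than the unique minimal-degree path), but the unsteady idempotent relation \ref{rel15} requires slightly more care and will rely on the fact that $\stt^\mu_\lambda$ remains in the dominant chamber throughout.
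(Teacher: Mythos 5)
Your proposal takes a genuinely different route from the paper --- you want to induct on the number of intermediate reflections separating $\nu$ from $\lambda$, with the two-factor product as a base case; the paper instead fixes the notation $\stt^{k}=\stt^{\mu^{(k-1)}}_{\mu^{(k)}}$ for each consecutive pair, inducts on $n$ (the number of strands), and constructs all $t$ diagrams $C_{\stt^{1}_n},\dots,C_{\stt^{t}_n}$ and the full product $C_{\stt^{1}_n}\cdots C_{\stt^{t}_n}$ simultaneously, verifying at each stage that no double-crossings of adjacent or equal residue arise when the $(n{+}1)$-st composite strand is added. The theorem then follows by re-grouping. There are two genuine gaps in your version.

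First, the dominance argument is incorrect as stated. You claim $\stt^\mu_\nu$ agrees with $\stt^\mu_\lambda$ where a reflection has been applied and with $\stt^\mu$ elsewhere. For $\nu=\mu^{(k)}$ and any position $j>i_{k+1}$, the path value $\stt^\mu_\nu(j)=s_{\alpha^{(k)},m_ke}\cdots s_{\alpha^{(1)},m_1e}\cdot\stt^\mu(j)$ agrees with \emph{neither} $\stt^\mu(j)$ (the earlier reflections have permanently moved it) \emph{nor} $\stt^\mu_\lambda(j)$ (reflections $k{+}1,\dots,t$ have not yet been applied). The ``shared segment'' intuition only covers $[0,i_{k+1}]$, so dominance for the remaining tail needs an argument. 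The paper sidesteps this by deriving dominance from \cref{strongman} and the uniqueness statement of \cref{maximal terms prop}, which is a more robust route.

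Second, and more seriously, your ``base case'' at depth one (where $\stt^\nu_\lambda$ is a single reflection but $\stt^\mu_\nu$ involves $t{-}1$ reflections) is not actually a simplification --- it still contains essentially the full difficulty of the theorem, because $C_{\stt^\mu_\nu}$ can have many non-vertical strands. Saying that \cref{anotherbloominlemmer} ``rules these out precisely in this configuration'' glosses over the paper's actual mechanism: for a putative double-crossing of composite strands $X^{n+1}$ and $X^i$, one chooses $b$ \emph{minimal} such that $X^{n+1}_b$ and $X^i_b$ cross in the $b$-th layer, observes that $X^{n+1}_b$ is vertical and corresponds to a removable node of $\mu^{(b)}\cap\mu^{(b+1)}$, applies \cref{anotherbloominlemmer} to pin down the residue, and then runs a further case analysis (introducing an auxiliary strand $Y^i_b$ and re-applying the argument, or invoking the minimality of $b$ again) to derive a contradiction. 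None of that chained case analysis is present in your sketch, and it cannot be dispensed with just because one of the two layers is a single reflection. Your instinct that the hard work is in the diagrammatic base case is right, but the content of that base case is exactly what needs to be supplied; once you supply it, you find you have essentially re-derived the paper's full $t$-fold product equation $C_{\stt^1}\cdots C_{\stt^t}=C_{\stt^\mu_\lambda}$, from which the two-factor statement is an immediate re-bracketing.
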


\begin{proof}
The statements concerning paths follow     from  \cref{strongman,maximal terms prop}.
 For ease of notation, we set 
$$\stt^{k}=\stt^{\mu^{(k-1)} }_{\mu^{(k)} } \in \Path({\mu^{(k)},{\mu^{(k-1)} } })\quad \text{ and }\quad 
\stt_n^{k}  =  \stt^{k} {\downarrow_{\leq n}}$$
for $1\leq k \leq t$ and    
  for  $  n\in \mathbb{Z}_{\geq 0}$.  
 We shall now  inductively construct the elements 
\begin{equation}\label{elements of interest} C_{\stt^{ i }_n}  
\qquad 
  C_{\stt^{ 1 }_n}C_{\stt^{ 2 }_n}\dots C_{\stt^{ t }_n}
\qquad 
\text{ and }
\qquad 
C_{\stt^\mu_\lambda{\downarrow}_{\leq n}}
\end{equation}
for $1\leq i \leq t$ simultaneously by induction on $n \in \mathbb{Z}_{\geq 0}$ 
and  verify that 
\begin{align}\label{07532849157304} C_{\stt^{ 1 } }C_{\stt^{ 2 } }\dots C_{\stt^{ t } }=C_{\stt^\mu_\lambda }
 \end{align}
  For $n=1$,  the   $t$   elements and the product  in \cref{elements of interest} 
are all equal to the same idempotent with one solid strand.  
 Given a fixed $  n$, we obtain each of the diagrams  
$$C_{\stt^{ i }_{n+1}} \text{ 
 and  }
   C_{\stt^{ 1 }_{n+1}}C_{\stt^{ 2 }_{n+1}}\dots C_{\stt^{ t }_{n+1}}$$ from those in \cref{elements of interest}
 by adding a single strand.  
 For $1\leq i \leq t$ we denote this strand by $X^i_{n+1}$ and we let $X_{n+1}=X^1_{n+1} \circ X^2_{n+1}\circ \dots \circ X^t_{n+1}$ denote the composite strand. 
 We suppose that $\stt^\mu({n+1} ) =\stt^\mu({n})+\varepsilon_c$ and therefore (in the notation of \cref{howtobuildshit2}) 
 that $X^i_{n+1}$ connects the northern and southern points labelled by boxes  added in the 
 $$c_{k-1} = s_{\alpha^{(k-1)}}   \dots s_{\alpha^{(1)}}  (c)
  \quad
  \text{and}
  \quad 
c_{k} =  s_{\alpha^{(k)} }\dots s_{\alpha^{(1)} }(c)$$
columns 
respectively.   
 In particular, the elements 
\begin{equation}\label{BOOM!} C_{\stt^{ 1 }_{n+1}}C_{\stt^{ 2 }_{n+1}}\dots C_{\stt^{ t }_{n+1}}\quad \text{ and }\quad C_{\stt^\mu_\lambda{\downarrow}_{\leq {n+1}}}
\end{equation}   trace out the same bijections, as required. It remains to show that the product on the lefthand-side of \cref{BOOM!} contains no double-crossings.    
  In fact, it is enough to show that the product  contains no double-crossing of strands labelled by {\em adjacent} or {\em equal} residues (as all other double-crossings can be trivially removed).  

By induction, we can assume that  there are no double-crossings in   $C_{\stt^{ 1 }_{n}}C_{\stt^{ 2 }_{n}}\dots C_{\stt^{ t }_{n}} $. 
We suppose that there is a double-crossing of the strands  
 $X^{n+1} $ and $X^i $   in 
 $C_{\stt^{ 1 }_{n+1}}C_{\stt^{ 2 }_{n+1}}\dots C_{\stt^{ t }_{n+1}} $ for some $1\leq i \leq n$. 
In which case,  
 there is a crossing both of the strands  $X^{n+1}_a$ and $X^i_a$  
  in the diagram $C_{\stt^{a}_{n+1}}$
  and  the strands  $X^{n+1}_b$ and $X^i_b$  in $C_{\stt^{b}_{n+1}}$ for $1\leq a < b\leq t$.  
 We assume that $b$ is minimal with this property.  
The strands $X^i_a$ and $X^{n+1}_b$ are both vertical.  
We shall identify strands with the corresponding nodes at which they terminate in the obvious fashion.  


 Importantly, 
 $X^{n+1}_b$ is a vertical strand corresponding to a {\em removable} node 
of 
  $\mu^{(b)}\cap \mu^{(b+1)} $.  
Therefore,  
$ \res(X^{i}_b ) \neq    	\res(X^{n+1}_b ), $
  $ \res(X^{n+1}_b )-1 $ 
 by \cref{anotherbloominlemmer}.  
By assumption, the strands  $X^{i} $ and $X^{n+1} $ are of adjacent or equal residue;  therefore
$ \res(X^{i} )= 				\res(X^{n+1} )+1$.  

Suppose that  $X^{i}_b$ connects  nodes $(r,c,m) \in 
\mu^{(b)}\setminus  \mu^{(b)}\cap \mu^{(b+1)}$
and $(r',c',m') \in 
\mu^{(b+1)}\setminus  \mu^{(b)}\cap \mu^{(b+1)}$.  
 Now, we assume that $(r,c,m)$   is {\em not} the removable node in the strip $\mu^{(b)}\setminus  \mu^{(b)}\cap \mu^{(b+1)}$;
in other words we 
suppose that   $(r+1,c,m) \in \mu^{(b)}\setminus  \mu^{(b)}\cap \mu^{(b+1)}$. 
 We let $Y^{i}_b$ denote the strand connecting points 
 $(\mathbf{I}_{(r+1,c,m)},1)$
and  $(\mathbf{I}_{(r'+1,c',m')},0)$. 
  We have assumed that  
$ \res(r,c,m)-1=  	\res(X^{n+1} )$ and  $\mathbf{I}_{(r',c',m')}$ is strictly less than the $x$-coordinate of the vertical strand  $X^{k+1}_b$.  
Therefore by \cref{seperating} it follows that   $\mathbf{I}_{(r'+1,c',m')} <\mathbf{I}_{(r',c',m')}+h\ell$ is strictly less than the $x$-coordinate of the vertical strand  $X^{k+1}_b$.  
 Therefore   
 $X^{k+1}_b$
 and  
$Y^{i}_b$  cross   in $C_{\stt^b_{n+1}}$     and have the same residue.  
 Therefore we can repeat the argument above to get a contradiction; we hence  deduce that 
$X^{i}_b$ is a removable node of $\mu^{(b)}$ and $\mu^{(b+1)}$.   
 Therefore we can assume that 
$X^{i}_b$
is a removable node of $\mu^{(b)}$ and 
$\res(X^{i}_b)= \res(X^{k+1}_b)+1$.  

Now, suppose that $X^{i}_b$
 is not a removable node of $\mu^{(a)}\cap \mu^{(a+1)}$.  This implies that there is some reflection labelled by $a< d <b$, which adds a strip at the end of the column containing $X^{i}_b$.  
This  results in  either a double-crossing between   strands 
in 
$$ C_{\stt^{ d }_k}\dots C_{\stt^{ b }_k}\quad \text{ or }\quad C_{\stt^{ 1 }_{k+1}}C_{\stt^{ 2 }_{k+1}}\dots C_{\stt^{ d }_{k+1}}$$ and hence a contradiction either   by induction, 
 or by  our assumption of the minimality of $b$, respectively.  
Therefore we can assume that $X^{i}_b$
 {\em is}   a removable node of $\mu^{(a)}\cap \mu^{(a+1)}$. 
 Finally, we have that 
 $X^{i}_a$ is a removable node 
of the partition $\mu^{(a)}$ and   $ \res(X^{i}_a )= 				\res(X^{k+1}_a )+1$.  
Therefore,  
 we obtain a contradiction  by \cref{anotherbloominlemmer}.  
     Thus we conclude that there are no double-crossings in the product.  The result follows.  
 \end{proof}

\begin{cor}\label{howtobuildshit3}
If  $\la,\mu$ are a generic pair such that $\lambda\uparrow\mu$, we have that
\begin{align*}
\dim_t(\Hom_{A_h(n,\theta,\kappa)}(  \Delta(\mu) ,\Delta(\lambda))) 
= t ^{ \ell(\mu)-\ell(\lambda)} + \dots 	
\end{align*}
where the remaining terms are all of strictly smaller degree.  
This highest-degree homomorphism is given (up to scalar multiplication)   by 
 $$\varphi_\la^\mu: C_{\stt^\mu} \mapsto C_{\stt_\la^\mu}.$$ 
If $\lambda\uparrow \nu \uparrow \mu$ with $\nu$ belonging to the sequence \cref{botherit}, then 
$$\varphi_\la^\nu\circ\varphi_\nu^\mu= \varphi_\la^\mu.$$
 
  \end{cor}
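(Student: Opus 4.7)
The plan is to combine the multiplicative identity of \cref{howtobuildshit2} with the degree-graded analysis of \cref{maximal terms prop}, deducing all three claims from a prior construction of $\varphi_\lambda^\mu$ as a well-defined homomorphism of degree $\ell(\mu) - \ell(\lambda)$. Since $\Delta(\mu)$ is cyclically generated by $C_{\stt^\mu} \in {\sf 1}_\mu \Delta(\mu)$, the assignment $C_{\stt^\mu} \mapsto C_{\stt^\mu_\lambda}$ extends to an $A_h(n,\theta,\kappa)$-module homomorphism if and only if every algebra element $a$ annihilating $C_{\stt^\mu}$ in $\Delta(\mu)$ also annihilates $C_{\stt^\mu_\lambda}$ in $\Delta(\lambda)$. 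I would reduce this to the elementary case (where $\mu^{(k-1)}$ and $\mu^{(k)}$ differ by a single reflection, so that $\ell(\mu^{(k-1)}) - \ell(\mu^{(k)}) = 1$) by factoring the reflection sequence $\lambda = \mu^{(t)} \uparrow \cdots \uparrow \mu^{(0)} = \mu$ from \cref{howtobuildshit2}; the generic hypothesis guarantees that every intermediate $\mu^{(k)}$ lies in the dominant chamber. For each elementary step, \cref{anotherbloominlemmer} controls the strand crossings in $C_{\stt^{\mu^{(k-1)}}_{\mu^{(k)}}}$ (ruling out crossings with strands of equal or adjacent residue), allowing a direct verification that the defining relations of $\Delta(\mu^{(k-1)})$ are respected, using only the non-interacting relations of \cref{noninteracting}.

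Once $\varphi_\lambda^\mu$ is known to be a well-defined homomorphism, the composition identity $\varphi_\lambda^\nu \circ \varphi_\nu^\mu = \varphi_\lambda^\mu$ is immediate: both sides are module maps $\Delta(\mu) \to \Delta(\lambda)$ determined by their value on the cyclic generator $C_{\stt^\mu}$, and the diagrammatic identity $C_{\stt^\mu_\nu} C_{\stt^\nu_\lambda} = C_{\stt^\mu_\lambda}$ of \cref{howtobuildshit2} translates directly into the required equality of images. For the dimension formula, any $\varphi \in \Hom_{A_h(n,\theta,\kappa)}(\Delta(\mu), \Delta(\lambda))$ is determined by its homogeneous image $\varphi(C_{\stt^\mu}) \in {\sf 1}_\mu \Delta(\lambda)$, so that
$$\dim_t \Hom_{A_h(n,\theta,\kappa)}(\Delta(\mu), \Delta(\lambda)) \leq \dim_t({\sf 1}_\mu \Delta(\lambda)) = \sum_{\sts \in \Path^+(\lambda, \stt^\mu)} t^{\deg(\sts)}$$
by \cref{quotienthghghg}. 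The generic hypothesis identifies $\Path^+(\lambda, \stt^\mu)$ with $\Path(\lambda, \stt^\mu)$, whence \cref{maximal terms prop} gives $\dim_t({\sf 1}_\mu \Delta(\lambda)) = t^{\ell(\mu) - \ell(\lambda)} + (\text{strictly lower-degree terms})$. The unique top-degree basis vector $C_{\stt^\mu_\lambda}$ is exactly $\varphi_\lambda^\mu(C_{\stt^\mu})$, so the leading coefficient is realised, which simultaneously establishes the dimension formula and the scalar-uniqueness of the top-degree homomorphism.

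The principal technical obstacle is the well-definedness step. While the reduction to single reflections is conceptually transparent, verifying that the ideal of relations cutting out $\Delta(\mu^{(k-1)})$ from its cyclic presentation is preserved when passing to $C_{\stt^{\mu^{(k-1)}}_{\mu^{(k)}}}$ requires careful diagrammatic bookkeeping. The crucial input is \cref{anotherbloominlemmer}: without its control of strand residues, the diagram manipulations would risk acquiring error terms from the relations of \cref{defintino2} that would obstruct well-definedness; with it in hand, the manipulations reduce to straightforward applications of the non-interacting relations.
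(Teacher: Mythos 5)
Your structural reduction — factor the reflection chain into elementary steps, then establish well-definedness of each elementary map and compose — is the right shape, and the dimension count via \cref{maximal terms prop} and the composition identity via \cref{07532849157304} are both correct. But the well-definedness step, which you correctly identify as the crux, is not actually proved, and the route you sketch for it does not go through.

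You frame the problem correctly: $\Delta(\mu)$ is cyclically generated by $C_{\stt^\mu}$, so the assignment $C_{\stt^\mu}\mapsto C_{\stt^\mu_\lambda}$ is well defined if and only if the annihilator of $C_{\stt^\mu}$ in $A_h(n,\theta,\kappa)$ also annihilates $C_{\stt^\mu_\lambda}\in\Delta(\lambda)$. But you then assert that \cref{anotherbloominlemmer} ``controls the strand crossings in $C_{\stt^{\mu^{(k-1)}}_{\mu^{(k)}}}$ \dots allowing a direct verification that the defining relations of $\Delta(\mu^{(k-1)})$ are respected, using only the non-interacting relations.'' This does not follow. \Cref{anotherbloominlemmer} is a statement about crossings inside the particular basis diagram $C_{\stt^\mu_\lambda}$; it is exactly what one needs to show that a product of such diagrams is again a basis element without double-crossings (which is how it is used in \cref{howtobuildshit2}). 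It says nothing about the left ideal $\{a\in A_h(n,\theta,\kappa)\mid a\,C_{\stt^\mu}=0\text{ in }\Delta(\mu)\}$, and no presentation of $\Delta(\mu)$ as a cyclic module is ever written down in the paper. To make your argument work you would need an explicit set of generators for that annihilator and a diagram computation showing each generator kills $C_{\stt^\mu_\lambda}$; neither is supplied by \cref{anotherbloominlemmer} or by the non-interacting relations.

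The paper's proof avoids this obstacle entirely. For the elementary step $\ell(\lambda)=\ell(\mu)-1$ it first computes $d_{\lambda\mu}(t)=t$ (via \cref{lots of decomp numbers} and \cref{fanssdfghjlkdslfhjgdfhljkgdfshljkgsdffgdhsljkgfhldgf}), which gives $\dim_t\Hom(P(\mu),\Delta(\lambda))=t$ for free. The resulting degree-$1$ map from the projective cover $P(\mu)$ is then shown to factor through the quotient $P(\mu)\twoheadrightarrow\Delta(\mu)$ by highest-weight theory, using precisely the observation that there is no $\alpha$ with $\lambda\uparrow\alpha\uparrow\mu$ (so the kernel of $P(\mu)\to\Delta(\mu)$, being filtered by $\Delta(\alpha)$ for $\alpha\vartriangleright\mu$, cannot map nontrivially into $\Delta(\lambda)$). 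That replaces your presentation-verification step with a purely homological one and is the piece your proposal is missing. Once the elementary maps exist, the paper composes them exactly as you do, invoking \cref{07532849157304}. Replacing your paragraph on well-definedness with this decomposition-number argument would repair the proof.
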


\begin{proof}
  If $\ell(\lambda)=\ell(\mu)-1$ and $\lambda \uparrow \mu$, then $d_{\lambda\mu}(t)=t$.
Therefore
$\dim_t(\Hom_{A_h(n,\theta,\kappa)}(  P(\mu) ,\Delta(\lambda))) 
= t ^1
$.  That this homomorphism factors through the projection $P(\mu)\to \Delta(\mu)$ 
 follows   from highest weight theory and 
the fact
that there does not exist any $\alpha$ such that $\lambda\uparrow \alpha \uparrow \mu$.  
Now, let $\lambda$ and $\mu$ be an  arbitrary generic pair. 
By \cref{07532849157304}, we have that the composition of the degree 1 homomorphisms along the sequence \cref{botherit} is itself a homomorphism and equal to $\varphi_\la^\mu$.  The result follows.  
 \end{proof}

\subsection{Maximal parabolic behaviour}\label{maximalparabolic}
We shall now consider  generic sets  whose elements  are permuted by some finite group, $\mathfrak{S}_{a+b}\leq \widehat{\mathfrak{S}_{h\ell}}$,  
and such that the  stabiliser of any given point  $\lambda \in \Gamma$ is a  maximal parabolic subgroup  $\mathfrak{S}_{a} \times \mathfrak{S}_{b} \leq \mathfrak{S}_{a+b}$.

\begin{defn} Let  $a,b \in \mathbb{Z}_{>0}$ and $x \in \ZZ/e\ZZ$.  
Let $\gamma\in \mptn \ell n (h)$ be any multipartition with precisely $a+b$  
addable $x$-nodes and zero  removable $x$-nodes.    
We let   $\Gamma_{a,b}\subseteq \mptn \ell n (h)$  denote the set of multipartitions which can be obtained by adding a total of $a$ 
distinct  $x$-nodes to
$\gamma\in \mptn \ell n (h)$. 
\end{defn}

\begin{eg}   If $a+b=h $ and  $\ell=1$ then $\gamma$ is a translate of the Steinberg point $(e-1)\rho$.  
\end{eg}

\begin{eg}[Stepping off of a wall]
Let $\mu \in \mathbb{E}_{h,\ell}^\circledcirc$ be an $e$-regular point.  We have that  $\gamma=\mu-\varepsilon_X$ for any $X \in {\rm Rem}(\mu)$ belongs to either one or zero hyperplanes.  If  $\gamma=\mu-\varepsilon_X \in \mathbb{E}_{\alpha,me}$ for some  $(\alpha,me)$,  then we say that $\mu$ is obtained from $\gamma$ by {\sf stepping off the $(\alpha,me)$-wall}.  
If $\mu$ is obtained from $\gamma$ by {\sf stepping off the $(\alpha,me)$-wall}, then
$\Gamma_{1,1} =\{\mu, s_{(\alpha,me)}\cdot \mu\}$ forms a generic set.  
\end{eg}

We let $\mathfrak{S}_{a+b} \leq \widehat{\mathfrak{S}_{h\ell}}$ denote the group which acts by faithfully permuting
 the elements of $\Gamma_{a,b}$.  We remark that $\mathfrak{S}_{a+b}$ fixes the point  $\gamma\in \mathbb{E}_{h,\ell}^\circledcirc$.    
Given any fixed choice $\lambda \in \Gamma_{a,b}$, we let   
$\mathfrak{S}_{a,b}=\mathfrak{S}_{a} \times \mathfrak{S}_{b} \leq \mathfrak{S}_{a+b}$ denote the subgroup which fixes 
$\lambda \in \Gamma_{a,b}$  -- this is the subgroup whose elements   
  trivially permute the columns with a removable (respectively addable) $x$-node amongst themselves.  

\begin{lem}\label{littlelema}
For $\lambda,\mu \in  \Gamma_{a,b}$  we have that $\Path_n^+(\lambda,\stt^\mu)=\Path_n(\lambda,\stt^\mu)$ (hence
  the set  $ \Gamma_{a,b}$ is generic).  
Any pair of partitions $\lambda$ and $\mu$ can be written in the form 
$$
\mu = \gamma + A_{c_1}+ A_{c_2}+ \dots + A_{c_a}
\quad 
\lambda = \gamma + A_{q_1}+ A_{q_2}+ \dots + A_{q_a}
$$
where $ A_{c_1} \rhd A_{c_2} \rhd  \dots  \rhd A_{c_a}$ and $ A_{q_1} \rhd A_{q_2} \rhd  \dots  \rhd A_{q_a}$.  
For $\lambda \uparrow \mu$, we have that 
\begin{align}\label{lhlfadsqywuridhskjluyweqt}
\stt^\mu_\lambda = 
s_{\varepsilon_{q_a-c_a},m_re} ^{i_a} \dots 
s_{\varepsilon_{q_2-c_2},m_2e} ^{i_2} 
s_{\varepsilon_{q_1-c_1},m_1e}^{i_1}  \cdot \stt^\mu  
\end{align}
for some $i_1<i_2<\dots <i_a$.  
\end{lem}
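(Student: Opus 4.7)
The proof naturally decomposes into three parts, matching the three claims in the lemma. The decomposition $\mu = \gamma + A_{c_1} + \ldots + A_{c_a}$ is immediate from the definition of $\Gamma_{a,b}$, since $\mu$ arises from $\gamma$ by adding $a$ of its $a+b$ addable $x$-nodes. By \cref{seperating}, any two addable $x$-nodes of $\gamma$ lie in distinct columns of the $\theta$-Russian array, so they are totally ordered by $\rhd_\theta$; listing them in decreasing order produces the asserted canonical form (and similarly for $\lambda$).

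For the genericity claim, observe that the path $\stt^\mu$ factors as $\stt^\gamma$ followed by the $a$ steps $+\varepsilon_{c_1}, +\varepsilon_{c_2}, \ldots, +\varepsilon_{c_a}$, added in order of decreasing $\rhd_\theta$-dominance per the construction of $\stt^\mu$. Any $\sts \in \Path_n(\lambda,\stt^\mu)$ agrees with $\stt^\mu$ through step $n-a$, because the hyperplane reflections needed to send $\mu$ to another element of $\Gamma_{a,b}$ act nontrivially only on these last $a$ steps. Each intermediate point $\sts(k)$ for $k > n-a$ therefore equals $\gamma$ plus some subset of its addable $x$-nodes; since $\gamma$ itself has no removable $x$-node, every such configuration is a bona fide dominant multipartition, so $\sts(k) \in {\mathbb E}_{h,\ell}^\circledcirc$ and $\sts$ is dominant. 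This proves $\Path_n(\lambda,\stt^\mu) = \Path_n^+(\lambda,\stt^\mu)$; the closure property under $\uparrow$ follows by the same argument applied to any intermediate $\nu$, so $\Gamma_{a,b}$ is generic.

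Finally, assume $\lambda \uparrow \mu$ and construct $\stt^\mu_\lambda$ explicitly. By \cref{maximal terms prop} it is the unique path of degree $\ell(\mu)-\ell(\lambda)$; we produce it by induction on $|\{k : c_k \neq q_k\}|$. At the step $i_k$ at which $+\varepsilon_{c_k}$ appears in $\stt^\mu$, the partial endpoint sits on the hyperplane ${\mathbb E}(\varepsilon_{q_k}-\varepsilon_{c_k},m_ke)$ for the appropriate $m_k$, determined by the relative column positions of $A_{q_k}$ and $A_{c_k}$ within $\gamma$; applying $s_{\varepsilon_{q_k}-\varepsilon_{c_k},m_ke}^{i_k}$ exchanges the final $+\varepsilon_{c_k}$ for $+\varepsilon_{q_k}$. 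Composing these reflections over the indices $k$ with $c_k\neq q_k$, in the order $k=1,2,\ldots,a$, yields the asserted formula; the step-indices satisfy $i_1 < i_2 < \ldots < i_a$ because $\stt^\mu$ adds the $A_{c_k}$'s in $\rhd_\theta$-decreasing order, with $A_{c_1}$ first. The main obstacle is verifying that this composite path actually has degree $\ell(\mu)-\ell(\lambda)$: one must check that each single reflection strictly increases the degree by one, with no cancellation. This follows because the absence of removable $x$-nodes in $\gamma$ ensures that the path never backtracks across any hyperplane perpendicular to some $\varepsilon_i - \varepsilon_j$ with $i,j$ addable-column indices, so each $d_\alpha$ contribution in \cref{Soergeldegreee} is accumulated without cancellation along the sequence.
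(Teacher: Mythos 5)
Your decomposition of $\mu$ and $\lambda$ and the use of \cref{seperating} to totally order the addable $x$-nodes are both correct and match the paper. However, your genericity argument rests on a false factorization claim, and this is a genuine gap.

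You assert that $\stt^\mu$ factors as $\stt^\gamma$ followed by the $a$ steps $+\varepsilon_{c_1},\ldots,+\varepsilon_{c_a}$, and hence that any $\sts \in \Path_n(\lambda,\stt^\mu)$ agrees with $\stt^\mu$ through step $n-a$. This is not true in general. The component word of $\mu$ removes the \emph{least dominant removable node} of $\mu$ at each stage, and there is no reason the $A_{c_k}$ should be the $a$ least dominant removable nodes of $\mu$: a removable node $R$ of $\gamma$ of some residue $\neq x$ can easily be less dominant than one of the added $x$-nodes, in which case $R$ is removed first. Concretely, take $h=4$, $\ell=1$, $e=5$, $\kappa=(0)$ and $\gamma = (3,2^4,1^4)$, which has three addable $1$-nodes $(2,3)$, $(6,2)$, $(10,1)$ and no removable $1$-nodes. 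For $\mu = \gamma + (6,2) + (10,1) \in \Gamma_{2,1}$, the node $(9,1)$ (residue $2$) becomes removable again once $(10,1)$ is stripped off and is less dominant than $(6,2)$, so the component word of $\mu$ removes $(10,1)$, then $(9,1),(8,1),(7,1)$, and only then $(6,2)$. The $1$-node additions occur at steps $13$ and $17$ of $\stt^\mu$, not the last two, and this $\mu$ is the maximal element of $\Gamma_{2,1}$ under $\uparrow$, so the relevant path sets $\Path(\lambda,\stt^\mu)$ are nonempty. Your description of the intermediate points $\sts(k)$ for $k > n-a$ as ``$\gamma$ plus a subset of addable $x$-nodes'' therefore fails, and the dominance of $\sts$ is not established by this route.

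The fact that the hyperplane ${\mathbb E}(\varepsilon_{c_i}-\varepsilon_{c_j}, me)$ may be met by $\stt^\mu$ \emph{before} the $x$-node addition also means a reflection can act nontrivially on more than the terminal segment: in the example above the reflection sending $\mu$ to $\gamma+(2,3)+(10,1)$ is applied at step $11$ even though the affected $1$-node is added at step $13$. The conclusion (that all such paths are dominant) is still true, but it must be argued differently, essentially by the observation that the reflections can only swap which independent addable $x$-node of $\gamma$ is added, and \cref{seperating} guarantees each intermediate configuration remains a genuine multipartition. Your final paragraph establishing the reflection formula for $\stt^\mu_\lambda$ is closer to the paper's route (it is in effect verifying the two conditions from the proof of \cref{maximal terms prop}), but it too leans on the factorization for the ordering claim $i_1 < \cdots < i_a$, which should instead be deduced from the fact that $\stt^\mu$ adds the $A_{c_k}$ in $\rhd_\theta$-decreasing order whether or not they are consecutive steps.
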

\begin{proof}
 By construction,  any $\sts \in \Path(\lambda,\stt^\mu)$ is of the form
\begin{align}\label{tagmeimyours}
  s^{i_{a+b-1}}_{\varepsilon_{\sigma(c_{a+b-1})}-\varepsilon_{c_{a+b}},m_{a+b-1}e}
  \dots 
 s^{i_2}_{\varepsilon_{\sigma(c_2)}-\varepsilon_{c_2 },m_2e}
  s^{i_1}_{\varepsilon_{\sigma(c_1)}-\varepsilon_{c_1 },m_1e} \cdot \stt^\mu. 
\end{align}
 where $\lambda = \sigma \cdot \mu$ for  some $\sigma \in \mathfrak{S}_{a+b}$ (the converse is not true).
All such paths are dominant, by construction.  Set   $\sigma(c_i)=q_i$ for $1\leq i \leq a$.   
The $i_k$th step in  the path \cref{tagmeimyours} corresponds to adding a box   as far to the left as possible.
Therefore by \cref{seperating}, there are 
 no removable $x$-nodes  to the right of $A_{q_i}$.
 Hence this step satisfies \cref{1}.  Repeating for every step, we deduce that \cref{2} is also satisfied.  The result follows.  
 \end{proof}
 
\begin{thm}\label{eisarbandellisarb}
Let $\Bbbk$ be an arbitrary field.  Given $\lambda,\mu \in \Gamma_{a,b}\subseteq \mptn \ell n (h)$, we have that 
$$
d_{\lambda,\mu}(t)=n_{\lambda,\mu}(t)
$$
is the associated Kazhdan--Lusztig polynomial of type $A_{a-1}\times A_{b-1}  \subseteq {A}_{a+b}$.  A closed  combinatorial formula  for these   polynomials is given in \cite[Section 3]{MR3105756} and \cite[Section 5]{MR2600694}.
\end{thm}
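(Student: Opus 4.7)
The plan is to squeeze $d_{\lambda,\mu}(t)$ between matching upper and lower bounds: the upper bound from the super-strong linkage principle applied to the generic set $\Gamma_{a,b}$, and the lower bound from iterated composition of the generic homomorphisms constructed in \cref{howtobuildshit3}. Both bounds will be identified with the Kazhdan--Lusztig polynomial $n_{\lambda,\mu}(t)$ through the same combinatorial model, namely lattice paths (equivalently, sequences of reflections) inside a Grassmannian-type coset $\mathfrak{S}_{a+b}/(\mathfrak{S}_a\times\mathfrak{S}_b)$.

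First, I would apply \cref{littlelema}, which tells us that $\Path_n^+(\lambda,\stt^\mu)=\Path_n(\lambda,\stt^\mu)$ for any $\lambda,\mu\in\Gamma_{a,b}$, and that every such path has the ``greedy'' factorisation \cref{lhlfadsqywuridhskjluyweqt}. Hence \cref{strongerman} immediately yields the upper bound
\[
d_{\lambda,\mu}(t)\leq\sum_{\sts\in\Path_n(\lambda,\stt^\mu)}t^{\deg(\sts)}.
\]
Next I would set up a bijection between $\Gamma_{a,b}$ and the minimal-length coset representatives of $\mathfrak{S}_{a+b}/(\mathfrak{S}_a\times\mathfrak{S}_b)$, sending $\nu=\gamma+A_{q_1}+\cdots+A_{q_a}$ to the $01$-word recording which of the $a+b$ addable $x$-columns is occupied; this identifies the Bruhat order on the coset with $\uparrow$ restricted to $\Gamma_{a,b}$. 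Because every reflection appearing in \cref{lhlfadsqywuridhskjluyweqt} involves two of the $a+b$ columns in $\gamma$ and each addable $x$-node contributes $\pm 1$ to the path degree exactly when it corresponds to a genuine inversion in the $01$-word, the path-generating function on the left-hand side coincides term by term with the classical combinatorial formula for the parabolic Kazhdan--Lusztig polynomial of type $A_{a-1}\times A_{b-1}\subseteq A_{a+b}$ recalled in \cite[Section 3]{MR3105756} and \cite[Section 5]{MR2600694}. This yields $\sum_{\sts}t^{\deg(\sts)}=n_{\lambda,\mu}(t)$.

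For the matching lower bound I would invoke \cref{howtobuildshit3}: since $\Gamma_{a,b}$ is generic and closed under $\uparrow$, for each chain $\lambda=\nu^{(t)}\uparrow\nu^{(t-1)}\uparrow\cdots\uparrow\nu^{(0)}=\mu$ inside $\Gamma_{a,b}$ the composition $\varphi_{\nu^{(t)}}^{\nu^{(t-1)}}\circ\cdots\circ\varphi_{\nu^{(1)}}^{\nu^{(0)}}=\varphi_\lambda^\mu$ is a nonzero homomorphism $\Delta(\mu)\to\Delta(\lambda)$ of degree $\ell(\mu)-\ell(\lambda)$. Combined with \cref{fanssdfghjlkdslfhjgdfhljkgdfshljkgsdffgdhsljkgfhldgf}, which supplies $[\Delta(\lambda):L(\mu)\langle\ell(\mu)-\ell(\lambda)\rangle]=1$, an induction on the $\uparrow$-interval length produces a composition factor of $\Delta(\lambda)$ isomorphic to $L(\mu^{(k)})\langle\ell(\mu^{(k)})-\ell(\lambda)\rangle$ for each intermediate vertex, so that each monomial on the right-hand side of the path sum is realised by a genuine composition factor. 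Combining the two inequalities gives the desired equality.

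The principal obstacle is the degree bookkeeping in the second step: one must verify that the contribution of each reflection $s_{\varepsilon_{q_i}-\varepsilon_{c_i},m_ie}^{i_i}$ to $\deg(\sts)$ agrees with the inversion count in the corresponding $01$-word, and that this matches the normalisation used in the combinatorial formula for $n_{\lambda,\mu}(t)$ in \cite{MR3105756,MR2600694}. The key input is \cref{seperating}, which guarantees that at each step there are no extraneous removable $x$-nodes to the right of the node being added, so that conditions \cref{1,2} are automatically satisfied and the reflection contributes precisely $+1$ to the degree; once this local analysis is in place, summing over paths reduces to a standard Grassmannian generating function identity.
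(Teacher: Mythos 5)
Your sandwich argument does not close, and the paper takes a completely different route: its proof is a one-line citation of \cite[Theorem 4.30]{BS15}, applicable because the $h$-admissibility of $\kappa$ places the $\Gamma_{a,b}$-subquotient within the hypotheses of that external result.

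There are two concrete gaps. The upper bound coming from \cref{strongerman} is $d_{\lambda\mu}(t)\leq N_{\lambda\stt^\mu}:=\sum_{\sts\in\Path_n(\lambda,\stt^\mu)}t^{\deg(\sts)}$, and you then assert that $N_{\lambda\stt^\mu}$ coincides term by term with $n_{\lambda\mu}(t)$. That is a misreading of the combinatorics: \cref{KLpoly} gives
\[
N_{\lambda\stt^\mu}=\sum_{\lambda\uparrow\nu\uparrow\mu}n_{\lambda\nu}\,\underline{N}_{\nu\mu},
\]
so $N_{\lambda\stt^\mu}$ dominates $n_{\lambda\mu}(t)$ but is strictly larger whenever some $\underline{N}_{\nu\mu}\neq\delta_{\nu\mu}$, which already happens for $a,b\geq 2$ where \cref{maximal terms prop} shows the path sum acquires several coefficients. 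The monomial formula of \cite{MR2600694} that you wish to match is therefore a proper truncation of the path sum, and the inequality $d\leq N$ cannot rule out the extraneous lower-degree coefficients. Symmetrically, the lower bound you extract from \cref{howtobuildshit3,fanssdfghjlkdslfhjgdfhljkgdfshljkgsdffgdhsljkgfhldgf} controls only the single top coefficient $[\Delta(\lambda):L(\mu)\langle\ell(\mu)-\ell(\lambda)\rangle]=1$ and says nothing about lower degrees. Over $\mathbb{C}$ both gaps are closed inductively via Koszulity (\cref{step1}) together with \cref{KLpoly}, exactly as in \cref{main:theorem:general}; but that input is precisely what is unavailable over an arbitrary field, i.e.\ what the theorem is about. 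A self-contained argument in your spirit would need the upper bound $d_{\lambda\mu}(t)\leq n_{\lambda\mu}(t)$ directly, which amounts to showing $\dim_t({\sf 1}_\mu L(\nu))=\underline{N}_{\nu\mu}$ in the $\Gamma_{a,b}$-subquotient over every field; the path combinatorics alone do not deliver this.
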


\begin{proof} 

Our assumption that $\kappa\in I^\ell$ is $h$-admissible implies that we can apply 
  \cite[Theorem 4.30]{BS15}. The result follows.  
\end{proof}   

\begin{rmk}
 These Kazhdan--Lusztig polynomials have recently made two prominent appearances in representation theory.  
 The first is in the work of Kleshchev, Chuang, Miyachi, Tan  for $\ell=1$ and $e\in\mathbb{N}$ \cite{klesh1box,cmt08,tanteo13}
 and the latter is in the work of Brundan--Stroppel and Mathas--Hu   for $\ell=2$ and  $e=\infty$ \cite{MR2781018} and  \cite[Theorems B3 \& B5]{MR3356809}.  
  \cref{eisarbandellisarb} applies to the former family of results ($\ell=1$ and $e\in\mathbb{N}$) and generalises all these results to higher levels.
 In the latter case ($\ell=2$ and  $e=\infty$) these results follow easily from \cref{eisinfinity} as the algebra $A(n,\theta,\kappa)$ is a basic algebra.  
 We are unaware of any direct link explaining   these two distinct appearances of the same family of Kazhdan--Lusztig polynomials.
\end{rmk}

As the combinatorics of this case is particularly simple, we are able to prove a strengthened version of \cref{howtobuildshit2}.
 Namely, we can understand the composition of {\em any} chain of these homomorphisms.  

\begin{thm}\label{maximalparabolic}
Let $\Bbbk$ be a field of arbitrary characteristic. 
For  $\la,\mu\in \Gamma_{a,b}$ such that $\lambda\uparrow\mu$, we have that
\begin{align*}
\dim_t(\Hom_{A_h(n,\theta,\kappa)}(  \Delta(\mu) ,\Delta(\lambda))) 
= t ^{ \ell(\mu)-\ell(\lambda)} + \dots 	
\end{align*}
where the remaining terms are all of strictly smaller degree.  
This highest-degree homomorphism is given  (up to scalar multiplication)    by 
 $$\varphi_\la^\mu: C_{\stt^\mu} \mapsto C_{\stt_\la^\mu}$$ 
and the composition of these homomorphisms is given by 
$$\varphi_\la^\nu \circ \varphi_\nu^\mu= \varphi_\la^\mu $$
for all $\lambda, \nu , \mu \in \Gamma_{a,b}$ with $\lambda \uparrow \nu \uparrow \mu$.\end{thm}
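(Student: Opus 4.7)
The plan is to deduce the dimension formula and the explicit description of $\varphi_\la^\mu$ directly from Corollary \ref{howtobuildshit3}; this applies because Lemma \ref{littlelema} already shows that $\Gamma_{a,b}$ is a generic subset of $\mptn \ell n (h)$. What is genuinely new in Theorem \ref{maximalparabolic} relative to Corollary \ref{howtobuildshit3} is that the composition identity $\varphi_\la^\nu \circ \varphi_\nu^\mu = \varphi_\la^\mu$ is required to hold for \emph{every} intermediate $\nu \in \Gamma_{a,b}$ with $\lambda \uparrow \nu \uparrow \mu$, not merely for the $\nu$'s belonging to one fixed chain of the form \eqref{botherit}. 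So the real task is to reduce the general case to the special one treated in howtobuildshit2.

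To do this I would exploit the explicit combinatorial description from Lemma \ref{littlelema}. Write $\mu = \gamma + A_{c_1} + \cdots + A_{c_a}$, $\nu = \gamma + A_{p_1} + \cdots + A_{p_a}$, and $\lambda = \gamma + A_{q_1} + \cdots + A_{q_a}$, with the $A$-nodes in each case ordered decreasingly in dominance. The formula \eqref{lhlfadsqywuridhskjluyweqt} realises $\stt^\mu_\lambda$ as a product of commuting reflections $s_{\varepsilon_{q_k} - \varepsilon_{c_k},m_k e}^{i_k}$ indexed by those $k$ with $c_k \ne q_k$, at times $i_1 < i_2 < \cdots$. I would factor this product into the block of swaps $c_k \to p_k$ (for the $k$ with $c_k \ne p_k$), taking $\mu$ to $\nu$, followed by the block of swaps $p_k \to q_k$ (for the $k$ with $p_k \ne q_k$), taking $\nu$ to $\lambda$, and then refine each block one transposition at a time to produce an explicit chain of the form \eqref{botherit} passing through $\nu$. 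Once such a chain is in hand, Theorem \ref{howtobuildshit2} applied to the two sub-chains yields $C_{\stt_\nu^\mu} C_{\stt^\nu_\lambda} = C_{\stt_\lambda^\mu}$, which is precisely the composition identity.

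The main obstacle is checking that these two sub-chains can be consistently concatenated into a single chain of type \eqref{botherit}: the time indices for the $\mu \to \nu$ passage are dictated by the component word of $\mu$, while those for the $\nu \to \lambda$ passage are dictated by the component word of $\nu$, so \emph{a priori} the two could conflict and prevent a coherent interleaving. The resolution is that, by the maximal parabolic structure $\mathfrak{S}_{a,b} \leq \mathfrak{S}_{a+b}$ and Lemma \ref{seperating}, all the reflections appearing involve pairwise disjoint coordinates among the $a+b$ columns labelling the addable and removable $x$-nodes, hence commute. This commutativity allows any total order compatible with the two individual time-orderings to be used, and a short combinatorial check then shows that performing all the $\mu \to \nu$ swaps first and all the $\nu \to \lambda$ swaps second does produce a valid chain of the required shape. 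Once this verification is carried out, the rest of the theorem is essentially formal.
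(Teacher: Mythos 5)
The first part of your proposal — invoking Lemma \ref{littlelema} to show $\Gamma_{a,b}$ is generic and then citing Corollary \ref{howtobuildshit3} for the dimension formula and the explicit generator of the top-degree Hom space — is correct, matches the paper's setup, and you correctly isolate the genuinely new content: that the composition identity must hold for \emph{every} intermediate $\nu\in\Gamma_{a,b}$, not only those appearing in a preferred chain of the form \eqref{botherit}.

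Your strategy for that last part, however, does not go through as stated. The key claim that ``all the reflections appearing involve pairwise disjoint coordinates among the $a+b$ columns\dots hence commute'' is false in general. Write $\mu,\nu,\lambda$ in the form of Lemma \ref{littlelema}, so $\mu=\gamma+\sum A_{c_i}$, $\nu=\gamma+\sum A_{p_i}$, $\lambda=\gamma+\sum A_{q_i}$ with each sequence sorted and $c_i\leq p_i\leq q_i$ (or the reverse, depending on the convention for $\uparrow$). The $\mu\to\nu$ swap at position $k$ is $(c_k\,p_k)$ and the $\nu\to\lambda$ swap at the same position is $(p_k\,q_k)$; these share the coordinate $p_k$ and do not commute. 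Already in $\Gamma_{2,1}$ with column sets $\{2,3\}\uparrow\{1,3\}\uparrow\{1,2\}$ the relevant transpositions are $(1\,2)$ and $(2\,3)$. Moreover $(p_k\,q_k)(c_k\,p_k)$ is a $3$-cycle, not the transposition $(c_k\,q_k)$ of \eqref{lhlfadsqywuridhskjluyweqt}, so ``factoring the product'' does not realise the same sequence of reflections, and it is then unclear that the concatenated chain satisfies the hypotheses of Theorem \ref{howtobuildshit2} (in particular the time-ordering $i_1\leq\dots\leq i_t$, which is tied to when $\stt^\mu$ reaches each hyperplane, not just to the combinatorics of the $a$-subsets).

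The paper avoids all of this by never constructing a refined chain through $\nu$. Instead it computes directly with the diagrams: by Lemma \ref{littlelema}, each $C_{\stt^\mu_\lambda}$ is obtained from $C_{\SSTT^\gamma}$ by appending $a$ mutually non-crossing strands joining $\{\mathbf{I}_{A_{c_i}}\}$ to $\{\mathbf{I}_{A_{q_i}}\}$ (the remaining strands being vertical). Because the column indices $c_i,p_i,q_i$ are monotone in the Gale/Bruhat order, the stacked diagram $C_{\stt_\nu^\mu}C_{\stt^\nu_\lambda}$ again has $a$ non-crossing strands and no double-crossings are created, so it is literally equal to $C_{\stt^\mu_\lambda}$ without applying any relations. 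This establishes \eqref{align} for \emph{any} intermediate $\nu$, and the theorem follows. If you wish to salvage the chain-theoretic route you would need to actually produce a valid sequence of reflections through $\nu$ with the required time-ordering — this is believable given the Bruhat structure on $a$-subsets, but it requires an argument; commutativity will not do the job.
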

\begin{proof}
 
    If $\ell(\lambda)=\ell(\nu)-1$ and $\lambda \uparrow \nu$, then the result follows as in \cref{howtobuildshit2}.
To deduce the result, it will now suffice to show that, for {\em any} sequence of the form
 $$
\lambda 
 = \mu^{(t)} \uparrow 
 \mu^{(t-1)} \uparrow  
 \dots	 \uparrow  
 \mu^{(1)} \uparrow  
 \mu^{(0)} = \mu, $$ 
 that 
\begin{align}\label{align}
C_{\stt_t^{t-1}}
C_{\stt_{t-1}^{t-2}}
\dots 
C_{\stt^0_{1}}
= C_{\stt^\mu_\lambda}
\quad
\text{ for   }  {\stt^{k-1}_{k}}= {\stt^{\mu^{(k-1)}}_{\mu^{(k)}}}\in \Std({\mu^{(k )}},{\mu^{(k-1)}}).  
\end{align}
 Let $\lambda,\mu \in \Gamma_{a,b}$.  
We continue  with the notation   of \cref{littlelema}.   By \cref{lhlfadsqywuridhskjluyweqt}, 
 we have that 
$ {\stt^\mu_\lambda}(A_{q_i})=\mathbf{I}_{A_{p_i}}  $    and 
$ {\stt^\mu_\lambda}(r,c,m)=\mathbf{I}_{(r,c,m)}  $ for $(r,c,m) \neq A_{q_i}$ for $1\leq i \leq a$.  
In particular,  
$ C_{\stt^\mu_\lambda}$ is obtained from $C_{\SSTT^\gamma}$ by adding a total of $a$ strands  connecting 
the northern points $\{(\mathbf{I}_{A_{c_i}},1)\mid 1\leq i \leq a\}$ with the southern points  
 $\{(\mathbf{I}_{A_{q_i}},0) \mid 1\leq i \leq a\}$ in such a fashion that these $a$ strands {\em do not cross  each other} at any point.  
 Now, composing any number of such diagrams, we clearly obtain a diagram with $a$ strands which do not cross, and trace out the bijection between two such sets.  Hence \cref{align} and the result follows.  
\end{proof}

\begin{eg} \label{contannnnn}
 We let $\gamma=(\varnothing, (1), (1^2))\in \mptn 3 4 (1)$ with $e=4$ and $\kappa=(0,1,2)$.  
We have that 
$$
\Gamma_{2,1}=
\{
 (\varnothing, (1^2), (1^3))
((1), (1), (1^3)),
 ((1), (1^2), (1^2))
 \}.  
$$
The graded decomposition  matrix for this subquotient  is given by 
$$
\left(\begin{array}{ccc}1 & 0 & 0 \\t & 1 & 0 \\t^2 & t & 1\end{array}\right)
$$
and every decomposition number is given by a homomorphism between Weyl modules.  
  The degree 2 homomorphism is determined as the  composition of the two degree 1 homomorphisms, this can be seen by concatenating the two diagrams  in \cref{yetanotherfgure}.  
\begin{figure}[ht!]

$$   \scalefont{0.7} \begin{tikzpicture}[scale=0.7] 
\draw (-3,-1) rectangle (13,2);   
     \draw[wei2]   (0.1,-1)--(0.1,2) ;  
     \draw[wei2]   (0.5,-1)--(0.5,2) ;  
             \draw[wei2]   (0.9,-1)--(0.9,2) ;  
 \draw     (3.6,2)    to[in=90,out=-90]   (0.2,-1) ;  
 \draw[densely dashed]     (3.6-3+0.1,2)    to[in=90,out=-90]   (0.2-3+0.1,-1) ;  
 
  \draw     (7,2)    to[in=90,out=-90]   (7,-1) ;  
 \draw[densely dashed]     (7-3+0.1,2)    to[in=90,out=-90]   (7-3+0.1,-1) ;
   \node [below] at (7,-1)  {  $0$};   
 \draw     (.6,-1)   to    (.6,2) ;  
 \draw[densely dashed]     (.6-3+0.1,2)   to    (.6-3+0.1,-1) ;  
  \draw     (4,-1)      to  (4,2) ;  
 \draw[densely dashed]     (4-3+0.1,-1)   to   (4-3+0.1,2) ;  
  \draw     (1,2)   to    (1,-1) ;  
 \draw[densely dashed]     (1-3+0.1,2)   to    (1-3+0.1,-1) ;  
  \node [below] at  (4,-1)    {  $1$};    \node [below] at (0.2,-1)  {  $0$};     
 \node [below] at (0.6,-1)  {  $1$};     
 \node [below] at (1,-1)  {  $2$};     


           \end{tikzpicture}
$$ 
$$   \scalefont{0.7} \begin{tikzpicture}[scale=0.7] 
\draw (-3,-1) rectangle (13,2);   
     \draw[wei2]   (0.1,-1)--(0.1,2) ;  
     \draw[wei2]   (0.5,-1)--(0.5,2) ;  
             \draw[wei2]   (0.9,-1)--(0.9,2) ;  
 \draw     (0.2,2)   to[in=90,out=-90]   (0.2,-1) ;  
 \draw[densely dashed]     (0.2-3+0.1,2)    to[in=90,out=-90]   (0.2-3+0.1,-1) ;  
 
  \draw     (7,2)    to[in=90,out=-90]   (3.6,-1) ;  
 \draw[densely dashed]     (7-3+0.1,2)    to[in=90,out=-90]   (3.6-3+0.1,-1) ;
   \node [below] at (3.5,-1)  {  $0$};   
 \draw     (.6,-1)   to    (.6,2) ;  
 \draw[densely dashed]     (.6-3+0.1,2)   to    (.6-3+0.1,-1) ;  
  \draw     (4,-1)      to  (4,2) ;  
 \draw[densely dashed]     (4-3+0.1,-1)   to   (4-3+0.1,2) ;  
  \draw     (1,2)   to    (1,-1) ;  
 \draw[densely dashed]     (1-3+0.1,2)   to    (1-3+0.1,-1) ;  
  \node [below] at  (4,-1)    {  $1$};    \node [below] at (0.2,-1)  {  $0$};     
 \node [below] at (0.6,-1)  {  $1$};     
 \node [below] at (1,-1)  {  $2$};     


           \end{tikzpicture}
$$

\caption{Concatenating  basis elements labelled by 
$\stt^{ (\varnothing, (1^2), (1^3))}_{
((1), (1), (1^3)) }$
and $\stt^{((1), (1), (1^3)) }_{
 ((1), (1^2), (1^2))}$ as in 
\cref{contannnnn} }
\label{yetanotherfgure}
\end{figure}
\end{eg}

\begin{rmk}\label{blob}
Fix our field to be $\mathbb{C}$.
For the type $A$ Temperley Lieb-algebra, $\Q_{2,1,n}(\kappa)$, 
these homomorphisms   provide {\em all possible} homomorphisms between Weyl modules.
For the type $B$ Temperley Lieb-algebra (or blob algebra), $\Q_{1,2,n}(\kappa)$, 
these homomorphisms are obtained by lifting homomorphisms from 
$\Q_{2,1,n}(\kappa)$ (and provide `half' of all homomorphisms between Weyl modules \cite{MW00}).  
\end{rmk}

\subsection{Non-parabolic finite behaviour}\label{nonparabolic}
We now consider  generic sets $\Gamma_{h\ell}$ whose elements  are permuted by the finite group, $\mathfrak{S}_{h\ell  }\leq \widehat{\mathfrak{S}_{h\ell}}$,  
and such that the  stabiliser of any given point  $\lambda \in \Gamma_{h\ell}$ is the trivial subgroup.

\begin{defn}\label{stein} We say that $\gamma \in \mathbb{E}_{h,\ell}^\circledcirc$ is a {\sf maximal vertex} if for every $\alpha \in \Phi$ there exists some $m\in \ZZ$ such that $\gamma \in \mathbb{E}_{\alpha,me}$.  
   By assumption, the parts  are $\gamma$ are all distinct and  therefore  there exists $\sigma \in {\mathfrak{S}_{h\ell}}$ such that 
$$
\langle \gamma , \varepsilon_{\sigma(h\ell)} \rangle
<\dots <
\langle \gamma , \varepsilon_{\sigma(2)} \rangle  
<\langle \gamma , \varepsilon_{\sigma(1)} \rangle
$$
We let  $\mathfrak{S}_{h\ell} \leq \widehat{\mathfrak{S}_{h\ell}}$ denote the   group generated by the reflections which fix $\gamma \in \mptn \ell n (h)$.  
We let $\alpha \in \mptn \ell n$ denote any multipartition such that 
$$
\langle \alpha , \varepsilon_{\sigma(h\ell)} \rangle
<\dots <
\langle \alpha , \varepsilon_{\sigma(2)} \rangle  
<\langle \alpha , \varepsilon_{\sigma(1)} \rangle < e.
$$
We let $\Gamma_{h\ell}$ denote  the set $ \Gamma_{h\ell}= \mathfrak{S}_{h\ell} \cdot  (\gamma+\alpha) $.  
 \end{defn}
There are  $(h\ell)!$ distinct elements of $\Gamma_{h\ell}$ and the group     $\mathfrak{S}_{h\ell}$ acts faithfully by permuting the elements of  $ \Gamma_{h\ell}$.  
 We have chosen  $\alpha$ in such a way that $\gamma+w\alpha$ is $e$-regular 
and  
$\ell(\gamma+w\alpha)= \ell(\gamma + \alpha) - \ell (w)$ for any $w \in \mathfrak{S}_{h\ell}$.  

\begin{eg}\label{stein2}
For $\ell=1$, the point  
  $\gamma$ is a 
 translate of the Steinberg point $(e-1)\rho$. 
 The corresponding set $
 \Gamma_{h\ell}$ is given by an $e$-regular set of points   ``around a Steinberg vertex''.   
\end{eg}

\begin{thm}
For  $\la,\mu\in \Gamma_{h\ell}$ such that $\lambda\uparrow\mu$, we have that
\begin{align*}
\dim_t(\Hom_{A_h(n,\theta,\kappa)}(  \Delta(\mu) ,\Delta(\lambda))) 
= t ^{ \ell(\mu)-\ell(\lambda)} + \dots 	
\end{align*}
where the remaining terms are all of strictly smaller degree.  
 \end{thm}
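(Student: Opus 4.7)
The plan is to deduce this theorem from the general generic-pair result \cref{howtobuildshit3}. What has to be done, therefore, is to verify that $\Gamma_{h\ell}$ is a generic subset of $\mptn \ell n(h)$ in the sense of \cref{sec:7}. Once this is established, the formula for the graded dimension of the $\Hom$-space and the identification of the top-degree homomorphism as $\varphi_\lambda^\mu: C_{\stt^\mu}\mapsto C_{\stt^\mu_\lambda}$ follow immediately by applying \cref{howtobuildshit3}.

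To check genericity I need two things: (i) closure of $\Gamma_{h\ell}$ under the strong-linkage order $\uparrow$ between its members, and (ii) the equality $\Path_n^+(\lambda,\stt^\mu)=\Path_n(\lambda,\stt^\mu)$ for all $\lambda,\mu\in\Gamma_{h\ell}$. The key structural input is that every element of $\Gamma_{h\ell}$ is of the form $\gamma+w\alpha$ for a unique $w\in\mathfrak{S}_{h\ell}$, and that every reflection $s_{\beta,re}\in\mathfrak{S}_{h\ell}$ fixes $\gamma$ (because $\gamma$ is a maximal vertex and therefore lies on all the relevant hyperplanes $\mathbb{E}(\beta,re)$). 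Combined with the choice of $\alpha$ — namely the strict inequalities $\langle\alpha,\varepsilon_{\sigma(h\ell)}\rangle<\dots<\langle\alpha,\varepsilon_{\sigma(1)}\rangle<e$ that were imposed in \cref{stein} — one verifies that $\ell(\gamma+w\alpha)=\ell(\gamma+\alpha)-\ell(w)$, where on the right $\ell(w)$ is the Coxeter length of $w$ in $\mathfrak{S}_{h\ell}$. Therefore the restriction of the $\uparrow$-order to $\Gamma_{h\ell}$ coincides with the Bruhat order on $\mathfrak{S}_{h\ell}$, which is exactly the condition needed to deduce closure in (i): any $\nu$ sandwiched as $\lambda\uparrow\nu\uparrow\mu$ with $\lambda=\gamma+u\alpha$, $\mu=\gamma+w\alpha$ must be of the form $\gamma+v\alpha$ with $w\leq v\leq u$ in Bruhat order, hence $\nu\in\Gamma_{h\ell}$.

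For (ii) the idea is that any path $\sts\in\Path_n(\lambda,\stt^\mu)$ is obtained from $\stt^\mu$ by a sequence of reflections $s_{\beta_j,r_je}^{k_j}$. Because the endpoints are both in the $\mathfrak{S}_{h\ell}$-orbit of $\gamma+\alpha$, and $\mathfrak{S}_{h\ell}$ is generated by reflections fixing $\gamma$, one can arrange for all the hyperplanes that the path meets to be among those passing through $\gamma$; these all have the form $\mathbb{E}(\beta,r e)$ with $\beta\in\Phi_0$ after the shift by $\rho$. Since $\gamma\in\mathbb{E}_{h,\ell}^\circledcirc$ and $\alpha$ was chosen with $\langle\alpha,\varepsilon_{\sigma(i)}\rangle<e$, none of the intermediate points of such a path can cross a wall of the dominant chamber; hence every such $\sts$ is dominant. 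This part is the main technical obstacle: one must carefully combine the decomposition of reflections (as in the argument following equation \ref{tagmeimyours} in the proof of \cref{littlelema}) with the fact that $\gamma$ sits on every relevant hyperplane, to rule out the possibility that intermediate steps escape the dominant Weyl chamber.

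Having established genericity, the theorem follows by direct quotation of \cref{howtobuildshit3}. As a bonus one also sees, by the analogous argument to \cref{maximalparabolic} but now using the Bruhat-order presentation of $\Gamma_{h\ell}$, that non-trivial compositions $\varphi^\mu_\nu\circ\varphi^\nu_\lambda=\varphi^\mu_\lambda$ exist along every chain in $\Gamma_{h\ell}$; in particular, since $|\Gamma_{h\ell}|=(h\ell)!$ grows without bound when $h\ell$ does, this produces arbitrarily long chains of non-zero composable homomorphisms, as announced in the introduction.
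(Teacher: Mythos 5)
Your proof is substantially the paper's proof, with the same strategy (verify that $\Gamma_{h\ell}$ is generic, then invoke the generic-pair result \cref{howtobuildshit3}). The observations about $\mathfrak{S}_{h\ell}$ fixing $\gamma$, the length identity $\ell(\gamma+w\alpha)=\ell(\gamma+\alpha)-\ell(w)$, and the smallness of $\alpha$ are all the same ingredients the paper uses (the length identity is literally stated in the text after \cref{stein}). Two points worth noting.

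First, for the dominance condition the paper's argument is organized slightly differently: it observes that $\ell_\alpha(\mu)-\ell_\alpha(\lambda)\in\{0,1\}$ for every $\alpha\in\Phi$, and deduces from this that any $\sts\in\Path(\lambda,\stt^\mu)$ is obtained from $\stt^\mu$ by reflections in $\mathfrak{S}_{h\ell}$ only (never a parallel translate), whence dominance. Your phrasing --- that the reflections ``can be arranged'' to be those fixing $\gamma$ and that the $\langle\alpha,\varepsilon_{\sigma(i)}\rangle<e$ bound prevents escape --- is the same mechanism, but the $\ell_\alpha$-counting is what actually rules out the use of a translated reflection, so it deserves to be said. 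Your parenthetical assertion that the relevant hyperplanes are of the form $\mathbb{E}(\beta,re)$ with $\beta\in\Phi_0$ is not right: $\gamma$ being a maximal vertex means it lies on a hyperplane for \emph{every} $\beta\in\Phi$, not only $\Phi_0$, and the reflections in $\mathfrak{S}_{h\ell}$ generically involve roots outside $\Phi_0$.

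Second, on the closure condition (your point (i), the paper's definition (ii)): the claim ``any $\nu$ with $\lambda\uparrow\nu\uparrow\mu$ must be of the form $\gamma+v\alpha$'' is stated but not really proved; the identification of the $\uparrow$-order on $\Gamma_{h\ell}$ with Bruhat order only tells you how points that are \emph{already} in $\Gamma_{h\ell}$ compare, and does not by itself rule out an intermediate $\nu$ lying in the $W^e$-orbit but outside $\Gamma_{h\ell}$. To close this you again want the $\ell_\alpha$-counting observation: since $\ell_\alpha(\lambda)$ and $\ell_\alpha(\mu)$ differ by at most one for every $\alpha$, any $\nu$ sandwiched between them in the $\uparrow$-order is confined to the same tight band around $\gamma$, which is precisely $\Gamma_{h\ell}$. (The paper itself leaves this part implicit; but since you set out to verify genericity explicitly, you should fill the gap explicitly.)
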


\begin{proof} 

Given $\lambda,\mu \in \Gamma_{h,\ell}$ we have that $\ell_\alpha(\mu)-\ell_\alpha(\lambda)=0,1$ for any $\alpha \in \Phi$.  Therefore any $\sts  \in \Path(\lambda,\stt^\mu)$ is obtained from $\stt^\mu$ by applying a sequence of reflections from $\mathfrak{S}_{h\ell}$ (and {\em not} any of the parallel  translates of these reflections). 
Now,  any hyperplane labelled by such a reflection belongs to 
$ \mathbb{E}_{h,\ell}^\circledcirc$ and so this path is dominant, as required.  The result follows by  \cref{howtobuildshit2}.   
   \end{proof}
 
\subsection{Non-parabolic affine behaviour }\label{maximalnonparabolic} One of the features of the previous two sections (and the classical  results that these sections generalise) is 
that they relate Weyl and Specht  modules which are labelled by points which are ``close together'' in the alcove geometry.
We now consider a family of homomorphisms from alcoves which are ``as far apart as possible''.  
These points are related by  permuting like-labelled columns between distinct components.

\begin{defn}\label{affinevertec}
   We say that 
$ 
\gamma\in \mathbb{E}_{h,\ell}^\circledcirc $ is an {\sf affine vertex} if 
$\gamma^{(i)}_{t+1} < 
\gamma^{(j)}_{t-1}
 \text{ for all }1\leq i,j \leq \ell \}
$   for some fixed $1\leq t \leq h$.
We let $\mathfrak{S}_\ell$ denote the subgroup generated by the reflections 
\begin{align}\label{somrecltions}
\langle s_{\varepsilon_{hi+t}-\varepsilon_{hj+t},me}\mid  
1\leq i < j \leq \ell,	m\in \ZZ	\rangle 
\end{align}
and we set 
$$\Gamma_t = \{	\lambda \in  \mathbb{E}_{h,\ell}^\circledcirc
 \mid 
\lambda \in \mathfrak{S}_{\ell} \cdot \gamma 
	\}.
  $$ \end{defn}

\begin{thm}\label{affinevertec2}
Let $1\leq t \leq h$.  
For  $\la,\mu\in \Gamma_{t}$ such that $\lambda\uparrow\mu$, we have that
\begin{align*}
\dim_t(\Hom_{A_h(n,\theta,\kappa)}(  \Delta(\mu) ,\Delta(\lambda))) 
= t ^{ \ell(\mu)-\ell(\lambda)} + \dots 	
\end{align*}
where the remaining terms are all of strictly smaller degree.  
  \end{thm}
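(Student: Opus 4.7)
The plan is to reduce to \cref{howtobuildshit3} by verifying that every pair $\lambda,\mu\in \Gamma_t$ with $\lambda\uparrow\mu$ forms a generic pair. That is, I must check that $\Path_n^+(\lambda,\stt^\mu)=\Path_n(\lambda,\stt^\mu)$ and that $\Gamma_t$ is closed under the intermediate relation: if $\lambda\uparrow\nu\uparrow\mu$ with the endpoints in $\Gamma_t$, then $\nu\in \Gamma_t$. Once both conditions are verified, the asserted leading term for $\dim_t(\Hom_{A_h(n,\theta,\kappa)}(\Delta(\mu),\Delta(\lambda)))$ is immediate from \cref{howtobuildshit3}.

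To prove the dominance statement, I would observe that any $\sts\in \Path_n(\lambda,\stt^\mu)$ is obtained from $\stt^\mu$ by a sequence of reflections. Since $\lambda$ and $\mu$ both lie in the $\mathfrak{S}_\ell$-orbit of $\gamma$, with $\mathfrak{S}_\ell$ generated by the reflections listed in \cref{somrecltions}, the only hyperplanes which can appear in such a sequence are of the form $\mathbb{E}(\varepsilon_{hi+t}-\varepsilon_{hj+t},me)$ for $0\leq i<j<\ell$ and $m\in\ZZ$. The affine vertex condition $\gamma^{(i)}_{t+1}<\gamma^{(j)}_{t-1}$ from \cref{affinevertec} creates a uniform gap between the coordinates indexed by $ht+t$ across components and those indexed by $t\pm1$; this gap, together with the standing assumption $e>h\ell$, ensures that composing any number of the reflections from \cref{somrecltines} only permutes the $t$-th coordinates among themselves (with affine shifts) and never disturbs the ordering constraints defining $\mathbb{E}_{h,\ell}^\circledcirc$. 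Consequently, every intermediate point of such a path remains dominant.

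For the closure statement, suppose $\lambda\uparrow\nu\uparrow\mu$ with $\lambda,\mu\in \Gamma_t$. Each $\uparrow$ step is a reflection in a hyperplane, and the composition of all these reflections sends $\mu$ to $\lambda$. Because the only hyperplanes that can separate distinct points of $\Gamma_t$ are those from \cref{somrecltions} (there are no others lying strictly between any two elements of $\mathfrak{S}_\ell\cdot\gamma$, by the affine vertex condition again), each individual reflection also belongs to $\mathfrak{S}_\ell$; hence $\nu\in\mathfrak{S}_\ell\cdot\mu=\Gamma_t$.

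The main technical obstacle will be verifying that the separation inherent in the affine vertex condition is preserved under \emph{arbitrary} compositions of reflections from $\mathfrak{S}_\ell$, including those with large affine shifts $me$ for $m\in\ZZ$. Concretely, I need to show that for any $w\in\mathfrak{S}_\ell$ and any $i,j$, the inequality $(w\cdot\gamma)^{(i)}_{t+1}<(w\cdot\gamma)^{(j)}_{t-1}$ continues to hold; this will require a bookkeeping argument in the spirit of \cref{seperating}, exploiting the strict inequality in \cref{affinevertec} together with $e>h\ell$ to keep $\gamma$ safely inside a single chamber relative to every hyperplane not already listed in \cref{somrecltions}. The dominance of the path $\stt^\mu_\lambda$ itself will then follow in the same manner as \cref{littlelema}, by identifying the unique minimal-degree path explicitly as a composition of reflections in \cref{somrecltions} applied in left-to-right column order.
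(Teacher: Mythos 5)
Your overall strategy — show that $\Gamma_t$ is generic and then invoke \cref{howtobuildshit3} — matches the paper in outline, but the verification route differs. The paper's proof is a two-line reduction: for $h=t=1$ the parabolic $W_0^e$ is trivial, so $\mathbb{E}_{1,\ell}^\circledcirc=\mathbb{E}_{1,\ell}$ is the whole space and every subset is generic (cf.\ \cref{generic}); for general $h,t$ the paper states the result ``follows by translating the paths within the geometry'', i.e.\ the affine vertex inequality $\gamma^{(i)}_{t+1}<\gamma^{(j)}_{t-1}$ of \cref{affinevertec} confines all paths in $\Path_n(\lambda,\stt^\mu)$ for $\lambda,\mu\in\Gamma_t$ to an embedded, affinely translated copy of the $h=1$ geometry (the sub-lattice spanned by $\{\varepsilon_{hi+t}\mid 0\leq i<\ell\}$, based at the fixed non--$t$-th coordinates of $\gamma$), and that strip lies entirely inside $\mathbb{E}_{h,\ell}^\circledcirc$. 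Your direct verification is a more laborious way to get at the same picture, and it correctly identifies the affine vertex inequality and $e>h\ell$ as the hypotheses that matter.

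The genuine gap, which you flag yourself as the ``main technical obstacle'', is the claim that every $\sts\in\Path_n(\lambda,\stt^\mu)$ is reachable from $\stt^\mu$ using \emph{only} the reflections of \cref{somrecltions}, so that no $\Phi_0$-wall is ever crossed. This is not automatic: the folding relation $\sts\sim\stt^\mu$ constrains the \emph{net} transformation taking $\mu$ to $\lambda$, but a priori individual folds could use any hyperplane the intermediate path touches, including $\Phi_0$-walls, with a later fold undoing the detour; it is precisely such detours that would leave the dominant chamber. Deferring this to ``a bookkeeping argument in the spirit of \cref{seperating}'' does not resolve it — that lemma separates $i$-diagonals in the Russian array and says nothing directly about which hyperplanes a path in $\mathbb{E}_{h,\ell}$ can meet. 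The translation viewpoint avoids the problem by building a degree-preserving identification of $\Path_n(\lambda,\stt^\mu)$ with a path set in the $h=1$ geometry, where the wall-avoidance is vacuous; I would recommend reframing along those lines, which also dissolves your worry about large affine shifts $m e$. Your closure argument is fine once you note that $\lambda\uparrow\nu\uparrow\mu$ already forces $\nu$ to be dominant, so $\nu\in(\mathfrak{S}_\ell\cdot\gamma)\cap\mathbb{E}_{h,\ell}^\circledcirc=\Gamma_t$.
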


\begin{proof} 
For $h=t=1$ the fact that $ \Gamma_t$ is generic is clear.  
For more general $h,t \in \mathbb{Z}_{>0}$,  the result follows by translating the paths within the geometry.  
\end{proof}

\begin{rmk}
For $\ell=1$, we have that  $\mathfrak{S}_\ell$ is the trivial group and   these sets are clearly trivial.  For any $\ell>1$, we will {\em always} obtain many infinite chains (as $n\rightarrow \infty$) of homomorphisms of the form 
in \cref{affinevertec,affinevertec2}.
\end{rmk}

  \begin{figure}[ht]\captionsetup{width=0.9\textwidth}
\[
\begin{tikzpicture}[scale=0.4]
\begin{scope}
\draw[thick](2,-1.95)--(-1.5,-1.95); 
{     \path (0,0.8) coordinate (origin);     \draw[wei2] (origin)   circle (2pt);
 
\clip(-2.2,-1.95)--(-2.2,8.3)--(8.7,8.3)--(8.7,-1.95)--(-2.2,-1.95); 
\draw[wei2] (origin)   --(0,-2); 
\draw[thick] (origin) 
--++(130:3*0.7)
--++(40:1*0.7)
--++(-50:1*0.7)	
--++(40:1*0.7)   --++(-50:1*0.7) 
--++(40:9*0.7)   --++(-50:1*0.7) 
--++(-50-90:11*0.7);
 \clip (origin) 
--++(130:3*0.7)
--++(40:1*0.7)
--++(-50:1*0.7)	
--++(40:1*0.7)   --++(-50:1*0.7) 
--++(40:9*0.7)   --++(-50:1*0.7) 
--++(-50-90:11*0.7);
\path  (origin)--++(40:0.35)--++(130:0.35)  coordinate (111);  
\foreach \i in {1,...,100}
{
\path (origin)++(40:0.7*\i cm)  coordinate (a\i);
\path (origin)++(130:0.7*\i cm)  coordinate (b\i);
\path (a\i)++(130:20cm) coordinate (ca\i);
\path (b\i)++(40:20cm) coordinate (cb\i);
\draw[thin,gray] (a\i) -- (ca\i)  (b\i) -- (cb\i); } 
}
\end{scope}
\begin{scope}
{   
\path (0,-1.3)++(40:0.3*0.7)++(-50:0.3*0.7) coordinate (origin);  
\draw[wei2]  (origin)   circle (2pt);
\clip(-2.2,-1.95)--(-2.2,8.3)--(8.7,8.3)--(8.7,-1.95)--(-2.2,-1.95); 
\draw[wei2] (origin)   --++(-90:4cm); 
\draw[thick] (origin) 
--++(130:3*0.7)
--++(40:2*0.7)
--++(-50:1*0.7)	
--++(40:12*0.7) --++(-50:2*0.7)
 --++(-50-90:14*0.7);
 \clip (origin)  
--++(130:3*0.7)
--++(40:2*0.7)
--++(-50:1*0.7)	
--++(40:12*0.7) --++(-50:2*0.7)
 --++(-50-90:14*0.7);
 \path (40:1cm) coordinate (A1);
\path (40:2cm) coordinate (A2);
\path (40:3cm) coordinate (A3);
\path (40:4cm) coordinate (A4);
\path (130:1cm) coordinate (B1);
\path (130:2cm) coordinate (B2);
\path (130:3cm) coordinate (B3);
\path (130:4cm) coordinate (B4);
\path (A1) ++(130:3cm) coordinate (C1);
\path (A2) ++(130:2cm) coordinate (C2);
\path (A3) ++(130:1cm) coordinate (C3);
\foreach \i in {1,...,19}
{
\path (origin)++(40:0.7*\i cm)  coordinate (a\i);
\path (origin)++(130:0.7*\i cm)  coordinate (b\i);
\path (a\i)++(130:12cm) coordinate (ca\i);
\path (b\i)++(40:12cm) coordinate (cb\i);
\draw[thin,gray] (a\i) -- (ca\i)  (b\i) -- (cb\i); } 
}  \end{scope}
\end{tikzpicture}\!\!\!\!
 \begin{tikzpicture}[scale=0.4]
\begin{scope}
\draw[thick](2,-1.95)--(-1.5,-1.95); 
{     \path (0,0.8) coordinate (origin);     \draw[wei2] (origin)   circle (2pt);
 
\clip(-2.2,-1.95)--(-2.2,8.3)--(8.7,8.3)--(8.7,-1.95)--(-2.2,-1.95); 
\draw[wei2] (origin)   --(0,-2); 
\draw[thick] (origin) 
--++(130:3*0.7)
--++(40:1*0.7)
--++(-50:1*0.7)	
--++(40:2*0.7)   --++(-50:1*0.7) 
--++(40:8*0.7)   --++(-50:1*0.7) 
--++(-50-90:11*0.7);
 \clip (origin) 
--++(130:3*0.7)
--++(40:1*0.7)
--++(-50:1*0.7)	
--++(40:2*0.7)   --++(-50:1*0.7) 
--++(40:8*0.7)   --++(-50:1*0.7) 
--++(-50-90:11*0.7); 
\path  (origin)--++(40:0.35)--++(130:0.35)  coordinate (111);  
\foreach \i in {1,...,100}
{
\path (origin)++(40:0.7*\i cm)  coordinate (a\i);
\path (origin)++(130:0.7*\i cm)  coordinate (b\i);
\path (a\i)++(130:20cm) coordinate (ca\i);
\path (b\i)++(40:20cm) coordinate (cb\i);
\draw[thin,gray] (a\i) -- (ca\i)  (b\i) -- (cb\i); } 
}
\end{scope}
\begin{scope}
{   
\path (0,-1.3)++(40:0.3*0.7)++(-50:0.3*0.7) coordinate (origin);  
\draw[wei2]  (origin)   circle (2pt);
\clip(-2.2,-1.95)--(-2.2,8.3)--(8.7,8.3)--(8.7,-1.95)--(-2.2,-1.95); 
\draw[wei2] (origin)   --++(-90:4cm); 
\draw[thick] (origin) 
--++(130:3*0.7)
--++(40:2*0.7)
--++(-50:1*0.7)	
--++(40:11*0.7)
--++(-50:1*0.7)	
--++(40:1*0.7)
 --++(-50:1*0.7)
 --++(-50-90:14*0.7);
 \clip (origin)  
--++(130:3*0.7)
--++(40:2*0.7)
--++(-50:1*0.7)	
--++(40:11*0.7)
--++(-50:1*0.7)	
--++(40:1*0.7)
 --++(-50:1*0.7)
 --++(-50-90:13*0.7);
  \path (40:1cm) coordinate (A1);
\path (40:2cm) coordinate (A2);
\path (40:3cm) coordinate (A3);
\path (40:4cm) coordinate (A4);
\path (130:1cm) coordinate (B1);
\path (130:2cm) coordinate (B2);
\path (130:3cm) coordinate (B3);
\path (130:4cm) coordinate (B4);
\path (A1) ++(130:3cm) coordinate (C1);
\path (A2) ++(130:2cm) coordinate (C2);
\path (A3) ++(130:1cm) coordinate (C3);
\foreach \i in {1,...,19}
{
\path (origin)++(40:0.7*\i cm)  coordinate (a\i);
\path (origin)++(130:0.7*\i cm)  coordinate (b\i);
\path (a\i)++(130:12cm) coordinate (ca\i);
\path (b\i)++(40:12cm) coordinate (cb\i);
\draw[thin,gray] (a\i) -- (ca\i)  (b\i) -- (cb\i); } 
}  \end{scope}
\end{tikzpicture}\!\!\!\!
\begin{tikzpicture}[scale=0.4]
\begin{scope}
\draw[thick](2,-1.95)--(-1.5,-1.95); 
{     \path (0,0.8) coordinate (origin);     \draw[wei2] (origin)   circle (2pt);
 
\clip(-2.2,-1.95)--(-2.2,8.3)--(8.7,8.3)--(8.7,-1.95)--(-2.2,-1.95); 
\draw[wei2] (origin)   --(0,-2); 
%
%
%
%
\draw[thick] (origin) 
--++(130:3*0.7)
--++(40:1*0.7)
--++(-50:1*0.7)	
--++(40:9*0.7) 
--++(-50:1*0.7)	
--++(40:1*0.7)
  --++(-50:1*0.7) 
 --++(-50-90:11*0.7);
 \clip (origin) 
--++(130:3*0.7)
--++(40:1*0.7)
--++(-50:1*0.7)	
--++(40:9*0.7) 
--++(-50:1*0.7)	
--++(40:1*0.7)
  --++(-50:1*0.7) 
 --++(-50-90:11*0.7);

\path  (origin)--++(40:0.35)--++(130:0.35)  coordinate (111);  
\foreach \i in {1,...,100}
{
\path (origin)++(40:0.7*\i cm)  coordinate (a\i);
\path (origin)++(130:0.7*\i cm)  coordinate (b\i);
\path (a\i)++(130:20cm) coordinate (ca\i);
\path (b\i)++(40:20cm) coordinate (cb\i);
\draw[thin,gray] (a\i) -- (ca\i)  (b\i) -- (cb\i); } 
}
\end{scope}
\begin{scope}
{   
\path (0,-1.3)++(40:0.3*0.7)++(-50:0.3*0.7) coordinate (origin);  
\draw[wei2]  (origin)   circle (2pt);
\clip(-2.2,-1.95)--(-2.2,8.3)--(8.7,8.3)--(8.7,-1.95)--(-2.2,-1.95); 
\draw[wei2] (origin)   --++(-90:4cm);

\draw[thick] (origin) 
 
 --++(130:3*0.7)
--++(40:2*0.7)
--++(-50:1*0.7)	
--++(40:4*0.7)
--++(-50:1*0.7)	
--++(40:8*0.7)
 --++(-50:1*0.7)
 --++(-50-90:14*0.7);

 \clip (origin)

 --++(130:3*0.7)
--++(40:2*0.7)
--++(-50:1*0.7)	
--++(40:4*0.7)
--++(-50:1*0.7)	
--++(40:8*0.7)
 --++(-50:1*0.7)
 --++(-50-90:14*0.7);

\foreach \i in {1,...,100}
{
\path (origin)++(40:0.7*\i cm)  coordinate (a\i);
\path (origin)++(130:0.7*\i cm)  coordinate (b\i);
\path (a\i)++(130:12cm) coordinate (ca\i);
\path (b\i)++(40:12cm) coordinate (cb\i);
\draw[thin,gray] (a\i) -- (ca\i)  (b\i) -- (cb\i); } 
}  \end{scope}
\end{tikzpicture}\!\!\!\!
\begin{tikzpicture}[scale=0.4]
\begin{scope}
\draw[thick](2,-1.95)--(-1.5,-1.95); 
{     \path (0,0.8) coordinate (origin);     \draw[wei2] (origin)   circle (2pt);
 
\clip(-2.2,-1.95)--(-2.2,8.3)--(8.7,8.3)--(8.7,-1.95)--(-2.2,-1.95); 
\draw[wei2] (origin)   --(0,-2); 
%
%
%
%

\draw[thick] (origin) 
--++(130:3*0.7)
--++(40:1*0.7)
--++(-50:1*0.7)	
--++(40:8*0.7)
--++(-50:1*0.7)	
--++(40:2*0.7)
   --++(-50:1*0.7) 
 --++(-50-90:11*0.7);
 \clip (origin) 
--++(130:3*0.7)
--++(40:1*0.7)
--++(-50:1*0.7)	
--++(40:8*0.7)
--++(-50:1*0.7)	
--++(40:2*0.7)
   --++(-50:1*0.7) 
 --++(-50-90:11*0.7);

\path  (origin)--++(40:0.35)--++(130:0.35)  coordinate (111);  
\foreach \i in {1,...,100}
{
\path (origin)++(40:0.7*\i cm)  coordinate (a\i);
\path (origin)++(130:0.7*\i cm)  coordinate (b\i);
\path (a\i)++(130:20cm) coordinate (ca\i);
\path (b\i)++(40:20cm) coordinate (cb\i);
\draw[thin,gray] (a\i) -- (ca\i)  (b\i) -- (cb\i); } 
}
\end{scope}
\begin{scope}
{   
\path (0,-1.3)++(40:0.3*0.7)++(-50:0.3*0.7) coordinate (origin);  
\draw[wei2]  (origin)   circle (2pt);
\clip(-2.2,-1.95)--(-2.2,8.3)--(8.7,8.3)--(8.7,-1.95)--(-2.2,-1.95); 
\draw[wei2] (origin)   --++(-90:4cm);

\draw[thick] (origin)  

 --++(130:3*0.7)
--++(40:2*0.7)
--++(-50:1*0.7)	
--++(40:5*0.7)
--++(-50:1*0.7)	
--++(40:7*0.7)
 --++(-50:1*0.7)
 --++(-50-90:14*0.7);

 \clip (origin)

 --++(130:3*0.7)
--++(40:2*0.7)
--++(-50:1*0.7)	
--++(40:5*0.7)
--++(-50:1*0.7)	
--++(40:7*0.7)
 --++(-50:1*0.7)
 --++(-50-90:14*0.7);

%
%
%
%
%
\foreach \i in {1,...,100}
{
\path (origin)++(40:0.7*\i cm)  coordinate (a\i);
\path (origin)++(130:0.7*\i cm)  coordinate (b\i);
\path (a\i)++(130:12cm) coordinate (ca\i);
\path (b\i)++(40:12cm) coordinate (cb\i);
\draw[thin,gray] (a\i) -- (ca\i)  (b\i) -- (cb\i); } 
}  \end{scope}
\end{tikzpicture}
\]
\caption{The set of bi-partitions $\Gamma_2$ as in \cref{beloweg}.  
}
\label{below}
\end{figure}

\begin{eg}\label{beloweg}
We let $\kappa=(0,3)$ and $e=7$.  
Given $\gamma=(  (3,2,1^9),(3^2,2^{12})) \in \mathbb{E}^\circledcirc_{h,\ell}$, we have that   
$$
\Gamma_2=
\{
   ((3,2,1^9), (3^2,2^{12})) 
,
(  (3,2^2,1^8),(3^2,2^{11},1))  
,
(  (3,2^{9},1),		(3^2,2^{4},1^8)) 
,
((3,2^8,1^2), (3^2,2^{5},1^7) ) 
 \}.  
$$
We picture these bi-partitions as in \cref{below}; we depict the final three by $\alpha, \beta$ and $\delta$ respectively. 
Notice that the third column (of any given component)  is much shorter
than the first column (of any given component). 

These multipartitions belong to a single line in the 6-dimensional space $\mathbb{E}_{3,2}^\circledcirc$.  Projecting down onto this line, we obtain the set of 4 points depicted in \cref{BOO!}.  We have illustrated the direction of the two affine vertex homomorphisms with arrows. 
Notice that they correspond to a reflection through a wall of the alcove containing the origin.

 \begin{figure}[ht!]
$$\scalefont{0.8}
\begin{tikzpicture} 

%
%
\begin{scope}

  \draw  (origin)  node {$\circledcirc$}; 
     \foreach \i in {0,...,12}
{ \path (origin)++(0:0.4*\i cm)  coordinate (a\i);
 \path (origin)++(180:0.4*\i cm)  coordinate (b\i);
 }
   \foreach \i in {1,...,12}
   {\draw[thick](a\i) node {$ \boldsymbol{\cdot}$};
 \draw(b\i) node {$\boldsymbol{\cdot}$};
 }

 \draw(b12)--(a12);
 \draw(a3) --++(90:0.4) (a3) --++(-90:0.4);
  \draw(a10) --++(90:0.4) (a10) --++(-90:0.4);
   \draw(b4) --++(90:0.4) (b4) --++(-90:0.4);
      \draw(b11) --++(90:0.4) (b11) --++(-90:0.4);
  \draw(a11)      node[below] {$\gamma$};
   \draw(a9)      node[below] {$\alpha$};
   \draw(b3)      node[below] {$\delta$};
   \draw(b5)      node[below] {$\beta$}; 
      \foreach \i in {0,...,12}
{ \path (a\i)++(90:0.2cm)  coordinate (c\i);
\path (b\i)++(90:0.2cm)  coordinate (d\i);;
 }

     \foreach \i in {0,...,12}
{ \path (a\i)++(-90:0.4cm)  coordinate (e\i);
\path (b\i)++(-90:0.4cm)  coordinate (f\i);;
 }
 
  \draw[thick, ->] (c11) to[in=40,out=140] (d5);
  \draw[thick, ->] (c9) to[in=40,out=140] (d3);


\end{scope}
\end{tikzpicture}$$
\caption{Some simple  homomorphisms from affine vertices.  This is the projection of $\mathbb{E}_{3,2}^\circledcirc$ onto the line 
$\RR\{\varepsilon_2,\varepsilon_5\} / (\varepsilon_2+\varepsilon_5)$.  
}
\label{BOO!}
\end{figure}
Finally, we note that this example can easily be  generalised
 to an infinite sequence (as $n\to \infty$) of homomorphisms whose piecewise composition is non-zero (by verifying the conditions of \cref{howtobuildshit3} for these paths).  
The first four homomorphisms of such a chain are depicted in   \cref{BOO!!}.   
This makes clear the fact that these homomorphisms are from points ``as far apart as possible'' in the geometry.

\begin{figure}[ht!]
$$\scalefont{0.8}
\begin{tikzpicture} 
   \clip(1.8,-1.6)--(26*0.6+0.4,-1.6)--(26*0.6+0.4,0.5)--(1.8,0.5)--(1.8,-1.5);
 \draw[densely dashed](origin)--++(0:20);
   \clip(2,-1.6)--(26*0.6,-1.6)--(26*0.6,0.5)--(6,0.5)--(2.7,0)--(2,-1.6);
   \foreach \i in {-1,0,...,26}
{ \path (origin)++(0:0.6*\i cm)  coordinate (a\i);}
           \foreach \i in {0,3,6,9,...,24}
           { \path (origin)++(0:0.6*\i cm)  coordinate (b\i); }
 \draw(a2)--(a26);
  \foreach \i in {3,6,...,24}
   \draw(b\i) --++(90:0.2) (b\i) --++(-90:0.2) ;
 
   \draw  (a13)  node {$\circledcirc$};

    \foreach \i in {-1,0,...,25}
{ \path (origin)++(90:0.1cm)++(0:0.6*\i cm)  coordinate (x\i);}

   \foreach \i in {-1,0,...,25}
{ \path (origin)++(-90:0.1cm)++(0:0.6*\i cm)  coordinate (y\i);}

   \draw[thick, ->] (x11) to[in=90,out=90] (x13);

   \draw[thick, ->] (y19) to[in=-20,out=-160] (y11);

   \draw[thick, <-] (x19) to[in=15,out=165] (x5);
    \draw[thick, <-] (y5) to[in=-165,out=-15] (y25);

    \draw[   dashed, ->] (origin) to[in=165,out=15] (x25);

 \end{tikzpicture}$$
 
 \caption{A simple infinite  chain  of homomorphisms arising from   an affine vertex. 
}
\label{BOO!!}
\end{figure}
\end{eg}

 \begin{eg}
 An example of affine vertices in a more complicated geometry is given in \cref{bigeg}.      
 \end{eg}
 
 \begin{rmk}
  
 For the  blob algebra, $\Q_{1,2,n}(\kappa)$, over $\mathbb{C}$, 
the   homomorphisms of  this subsection 
were first constructed in \cite[(9.1) Theorem]{MW00}.  
Moreover, these homomorphisms in conjunction with those of \cref{blob} provide all homomorphisms between Weyl modules for the blob algebra over $\mathbb{C}$.
Both families of homomorphisms continue to play an important role (and to be defined) over fields of positive characteristic \cite[Section 6]{MR1995129}.  
 \end{rmk}

\section{Decomposition numbers over fields of large and infinite characteristic}\label{sec:8}

In this section we shall give a combinatorial method for calculating
certain parabolic affine Kazhdan--Lusztig polynomials in terms of the
degrees of our paths in our alcove geometry.  
Over $\mathbb{C}$, this allows us to compute the decomposition matrix of these algebras (thus generalising earlier work of \cite{bcs15}).  However, our main interest (as in the previous chapters) will be in what can be said over fields of positive characteristic.  
 We
begin by reviewing the necessary Kazhdan--Lusztig theory, in the spirit
(and notation) of \cite{Soergel}.
 
Let $({\mathcal W},{\mathcal S})$ be a Coxeter system, and ${\mathcal
  S}_f\subset {\mathcal S}$. Then there is an associated parabolic
subgroup ${\mathcal W}_f< {\mathcal W}$ generated by the set
${\mathcal S}_f$. Deodhar \cite{DD1} showed how to associate to the
pair $({\mathcal W}_f,{\mathcal W})$ certain parabolic Kazhdan-Lusztig
polynomials In particular, let ${\mathcal W}^f$ be a set of minimal
length representatives of right cosets of ${\mathcal W}_f$ in
${\mathcal W}$ (with respect to the usual Coxeter length
function). Then for any pair of elements $x,y\in{\mathcal W}^f$, there
is an associated (inverse) parabolic Kazhdan-Lusztig polynomial
$n_{x,y}\in\ZZ[t]$. We are interested in the special case where
${\mathcal W}$ is the 
affine Weyl group of type $\widehat{A}_{h\ell -1}$ (or type $A_{h\ell -1}$ if
$e=\infty$) and the parabolic is the Weyl group of type 
$$\underbrace{A_{h-1}\times A_{h-1}\times\cdots\times A_{h-1}}_{ \ell \text{ copies}}$$
 As we have already seen, we can define a geometry associated with this
choice of ${\mathcal W}$ on ${\mathbb E}_{h,\ell}$. We call the connected
components of the complement of the union of the various reflecting
hyperplanes the alcoves of this geometry. As described in
\cite[Section 4]{Soergel} there is a natural bijection between
${\mathcal W}$ and the set of alcoves ${\mathcal A}$.

Soergel goes on to consider a certain set ${\mathcal A}^+\subset
{\mathcal A}$ which is precisely the set of alcoves lying in some
fixed choice of fundamental domain for the finite Weyl group of type
$A_{h\ell -1}$ inside $\widehat{A}_{h\ell -1}$.  
 Then there is an induced
bijection from the set of right cosets of this finite Weyl group
inside the affine Weyl group to the alcoves in ${\mathcal
  A}^+$. However, we can instead consider the finite parabolic
${\mathcal W}_f$ inside ${\mathcal W}$ as defining our choice of
fundamental domain ${\mathcal A}^+$, and then identify ${\mathcal
  W}^f$ with ${\mathcal A}^+$. This choice of ${\mathcal A}^+$
consists precisely of those alcoves contained in the dominant Weyl
chamber ${\mathbb E}_{h,\ell}^\circledcirc$  defined in   \cref{1.1}.

Choosing a fixed weight $\lambda$ in an alcove, there is for each
alcove in ${\mathcal A}$ an associated weight $w\cdot\lambda$ in
that alcove, and in this way we can identify pairs of alcoves with
pairs of weights in a given ${\mathcal W}$-orbit. Via these various
identifications we can now define for any pair of weights $\lambda$
and $\mu$ in the same ${\mathcal W}$-orbit a parabolic Kazhdan-Lusztig
polynomial $n_{\lambda\mu}\in\ZZ[t]$.

In \cite{gw01}, Goodman and Wenzl have shown how to associate a
parabolic Kazhdan-Lusztig polynomial to any pair of dominant weights,
not just those lying in the interior of an alcove. Further, they give
an algorithm for determining these polynomials in terms of certain
piecewise linear paths in the geometry. As in \cite{Soergel}, their
results are all stated for the case of a finite Weyl group considered
as a parabolic in the associated affine Weyl group, but by making the
same modifications as described above it is straightforward to see
that this procedure extends to pairs of weights in the dominant Weyl
chamber ${\mathbb E}_{h,\ell}^\circledcirc$ for our choice of
${\mathcal W}$ and ${\mathcal W_f}$.  

We can now re-express the main result from \cite{gw01} in terms of the
paths which we have defined in \cref{1.2}.  

\begin{thm}\label{KLpoly}
Let $\lambda,\mu\in {\mathbb E}_{h,\ell}^\circledcirc$ with
$\lambda\in{\mathcal W}\cdot \mu$ and  let 
 $\stt^\mu$ be any admissible path of degree $n$ from $\circledcirc $
to $\mu$.   The associated polynomial 
\begin{equation}\label{adefn}
 N_{\lambda \stt^\mu} = \sum_{\sts \in \Path^+_n(\lambda,\stt^\mu)} t^{\deg(\sts)} 
\end{equation}
is an element of $\ZZ_{\geq 0}[t,t^{-1}].$  
  We can rewrite the polynomials $ N_{\lambda \stt^\mu}$ in the form 
$$
 N_{\lambda \stt^\mu} = \sum_{\lambda \uparrow \nu \uparrow \mu}
 n_{\la \nu} 
 \underline{N}_{  \nu \mu}
 $$
 for some $  n_{\la \nu} 
  \in \ZZ_{\geq 0}[t]$, and 
  $  \underline{N}_{  \nu \mu}\in \ZZ_{\geq 0}[t+t^{-1}]$;  
  this expression is unique.  
Further, we have 
 $n_{\la\nu} $ is 
the (inverse) parabolic affine Kazhan--Lusztig polynomial
 associated to the pair $({\mathcal W}_f,{\mathcal W})$.
 \end{thm}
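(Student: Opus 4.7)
The plan is to reduce the theorem to the Goodman--Wenzl path algorithm of \cite{gw01}, adapted from their setting of a finite Weyl group viewed as a parabolic inside its affine counterpart to our setting of $\mathcal{W}_f$ of type $A_{h-1}^{\times \ell}$ inside $\mathcal{W} = \widehat{A}_{h\ell-1}$ (or its finite analogue when $e=\infty$). Positivity, the first assertion, is immediate: $\Path^+_n(\lambda,\stt^\mu)$ is finite, degrees are integers, so $N_{\lambda\stt^\mu}$ is a Laurent polynomial in $t$ with non-negative integer coefficients.

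Next I would establish the uniqueness of the decomposition $N_{\lambda\stt^\mu} = \sum_{\lambda\uparrow\nu\uparrow\mu} n_{\lambda\nu}\,\underline{N}_{\nu\mu}$ with $n_{\lambda\nu}\in\mathbb{Z}_{\geq 0}[t]$ and $\underline{N}_{\nu\mu}\in\mathbb{Z}_{\geq 0}[t+t^{-1}]$. The key input is Proposition~\ref{maximal terms prop}, which tells us that for any $\nu$ with $\lambda\uparrow\nu\uparrow\mu$ the polynomial $N_{\nu\stt^\mu}$ has leading term $t^{\ell(\mu)-\ell(\nu)}$ and all remaining monomials of strictly smaller degree sharing the same parity. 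This gives a unitriangular system with respect to the partial order $\uparrow$, with the diagonal fixed by the $t\mapsto t^{-1}$-invariance constraint on each $\underline{N}$. A standard downward induction on $\nu$ starting from $\nu=\mu$ (where $n_{\lambda\mu}$ is the coefficient of the top term), at each stage adjusting $n_{\lambda\nu}$ so that the residual $\underline{N}_{\nu\mu}$ becomes bar-invariant with strictly positive powers of $t$ appearing, yields the required unique expression, in exactly the same manner as the Kazhdan--Lusztig bar-inversion argument.

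For the third assertion I would translate Goodman--Wenzl's algorithm into the path language of Section~\ref{1.2}. Their algorithm computes parabolic affine Kazhdan--Lusztig polynomials as a sum over certain piecewise linear paths in the alcove geometry, weighted by a signed count of hyperplane crossings (positive when crossing a hyperplane from its $>$-side towards the origin, negative in the reverse direction). The correspondence sends each of their piecewise linear paths joining alcove centres in $\mathcal{A}^+$ to a concatenation of unit coordinate steps $+\varepsilon_{X_k}$ of the kind produced by the component word of $\stt^\mu$ and its reflections, and under this correspondence the Goodman--Wenzl statistic coincides with $\deg(\sts)$ defined in \ref{Soergeldegreee}. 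Hence their algorithm produces the same polynomials $N_{\lambda\stt^\mu}$ as ours, and hence the same bar-invariance decomposition, whence the $n_{\lambda\nu}$ are the parabolic Kazhdan--Lusztig polynomials associated to $(\mathcal{W}_f,\mathcal{W})$.

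The main obstacle will be making precise the path translation in the last step: Goodman--Wenzl work in a specific finite-in-affine setup, and one must check that their algorithm, being driven purely by the Coxeter-theoretic data of the ambient group and the chosen parabolic reflections, applies verbatim to our pair $\mathcal{W}_f\leq\mathcal{W}$. One also has to verify that independence of the computation from the choice of initial path $\stt^\mu$ (among those of degree $0$ from the origin to $\mu$) matches the independence built into the definition of the Kazhdan--Lusztig polynomial; here the degree-zero condition of \cref{assumponkappused} is what ensures $\stt^\mu$ is an admissible base path in the sense of \cite{gw01}. Once this translation is in place, uniqueness from step two pins the coefficients $n_{\lambda\nu}$ down unambiguously as the desired Kazhdan--Lusztig polynomials.
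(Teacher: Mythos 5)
Your proposal is correct and follows essentially the same route as the paper: reduce to the Goodman--Wenzl path algorithm for parabolic Kazhdan--Lusztig polynomials, check that the combinatorial paths of \cref{1.2} are instances of their piecewise linear paths, and verify that the degree statistic of \cref{Soergeldegreee} matches their hyperplane-crossing weight (\cite[equation (2.4)]{gw01}). The only divergence is one of emphasis: you spell out the unitriangularity/bar-inversion step via \cref{maximal terms prop}, whereas the paper treats the uniqueness of the decomposition as part of the Kazhdan--Lusztig framework already laid out in the preamble to the theorem and concentrates the proof almost entirely on the degree comparison.
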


\begin{proof}
All that remains to be verified is that our paths are examples of the
piecewise linear paths considered in \cite{gw01}, and that the
associated degree functions for the two definitions coincide. That our
paths are examples of their piecewise linear paths is obvious.
  
It  remains to show that \cite[equation (2.4)]{gw01} agrees with
\cref{Soergeldegreee}. Recall from \cref{Soergeldegreee}, that the
 degree of a path $\sts(k)$ is obtained from that of the path $\sts(k-1)$
by counting  hyperplanes with alternating signs; it is simple to see
rephrase this alternating sign in terms of the length function used in
\cite{gw01}. Setting $\Lambda$ in \cite[equation (2.4)]{gw01}
to be equal to the segment  $(\sts(k-1), \sts(k))$ in the path $\sts$,
we obtain the desired equality.   
\end{proof}
 
\subsection{Decomposition numbers over the complex field} 

 We now use the results above to determine
the decomposition numbers for our algebras in characteristic zero.  
 
 \begin{prop}\label{troll}
If $R=\mathbb{C}$, then \begin{itemize}
\item[$(i)$]   $ \dim_t{(\Delta_\mu(\lambda))} = N_{\lambda\stt^\mu} \in \ZZ_{\geq 0}[t,t^{-1}]$ and $ \dim_t{(L_\mu(\lambda))}  \in \ZZ_{\geq 0}[t+t^{-1}]$;
\item[$(ii)$] if $ \dim_t{(\Delta_\mu(\lambda))}=0$, then $d_{\lambda\mu}(t)=0$;
\item[$(iii)$] we have $ \dim_t{(\Delta_\mu(\mu))} =  \dim_t{(L_\mu(\mu))} =1$;
\item[$(iv)$] if $\Path(\lambda,\stt^\mu)=\emptyset$, then $\dim_t{(\Delta_\mu(\lambda))}=0$;
\item[$(v)$] if $\Path(\lambda,\stt^\mu)=\emptyset$, then $\dim_t{(L_\mu(\lambda))}=0$;
\item[$(vi)$] we have that
\[
\dim_t{(\Delta_\mu(\lambda)})= \;
\sum_
{  \mathclap{\begin{subarray}c \nu \neq \mu \\ 
\la \uparrow \nu \uparrow \mu  
   \end{subarray} }} \; d_{\lambda\nu}(t)\dim_t{(L_\mu(\nu))}+d_{ \lambda\mu}(t).
  \]
\end{itemize}
   \end{prop}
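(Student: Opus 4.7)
The plan is to obtain (i), (iii), (iv) as direct consequences of the path basis for $\Delta(\lambda)$ from \cref{quotienthghghg}, then deduce (ii) and (v) from elementary hom-space considerations, and finally derive (vi) from the composition-series identity combined with the strong linkage principle.

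First I would note that by \cref{quotienthghghg} the Weyl module $\Delta(\lambda)$ has basis $\{C_\sts \mid \sts \in \Path^+_n(\lambda,\stt^\mu),\ \mu \in \mptn \ell n(h)\}$, where $C_\sts \in {\sf 1}_\mu\Delta(\lambda)$ if and only if $\sts \in \Path^+_n(\lambda,\stt^\mu)$. Taking the graded dimension of the resulting weight space gives the first half of (i), with (iv) then immediate. Part (iii) will follow from the observation that the unique element of $\SStd^+(\mu,\mu)$ (equivalently the trivial degree-zero path $\stt^\mu$) provides a one-dimensional weight space concentrated in degree $0$, which survives in the simple head $L_\mu(\mu)$. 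For the symmetry statement $\dim_t(L_\mu(\lambda)) \in \ZZ_{\geq 0}[t+t^{-1}]$ my approach would be to appeal to \cref{humathasprop} and use that the involution $\ast$ fixes ${\sf 1}_\mu$, so the cellular form on $\Delta(\lambda)$ decomposes as a direct sum of forms on each weight space; the induced nondegenerate form on $L_\mu(\lambda)$ then inherits the required symmetry once we verify that the normalisation from \cref{assumponkappused} gives a trivial degree shift.

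For (ii) and (v) I would note that any $\varphi \in \Hom(P(\mu), \Delta(\lambda)\langle k\rangle)$ is determined by $\varphi({\sf 1}_\mu) \in \Delta_\mu(\lambda)$, so vanishing of the latter forces vanishing of $d_{\lambda\mu}(t)$; part (v) then follows because $L(\lambda)$ is a quotient of $\Delta(\lambda)$ and $\Path^+(\lambda,\stt^\mu) \subseteq \Path(\lambda,\stt^\mu) = \emptyset$ forces $\Delta_\mu(\lambda) = 0$ by (iv). Finally, for (vi) I would take graded dimensions of a Jordan--H\"older filtration of $\Delta(\lambda)$ weighted by grading shifts to obtain the identity
\[
\dim_t(\Delta_\mu(\lambda)) = \sum_\nu d_{\lambda\nu}(t)\,\dim_t(L_\mu(\nu)),
\]
and then restrict the indexing set: $d_{\lambda\nu}(t) \neq 0$ forces $\lambda \uparrow \nu$ by \cref{Strong linkage principle}, while $\dim_t(L_\mu(\nu)) \neq 0$ combined with (v) and \cref{strongman} forces $\nu \uparrow \mu$. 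Isolating the $\nu = \mu$ term via (iii) then yields the stated formula. The hard part will be the symmetry claim in (i), which requires careful verification that the cellular form on $\Delta(\lambda)$ truly respects the weight-space decomposition and that the associated degree shift vanishes under our conventions.
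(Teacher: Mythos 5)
Your proof is correct and follows essentially the same route as the paper, which likewise establishes (i), (iii), (iv) from the path basis and \cref{adefn}, gets (ii) from the hom-space argument via $\varphi \mapsto \varphi({\sf 1}_\mu)$, gets (v) from cellularity, and derives (vi) from the graded Jordan--H\"older identity restricted by strong linkage. The one place you genuinely deviate is (vi): the paper cites the Koszulity/radical-filtration result \cref{step1} alongside (i), (iii), (v), whereas your argument derives the restriction of the sum to $\lambda \uparrow \nu \uparrow \mu$ directly from \cref{Strong linkage principle} (for $d_{\lambda\nu}(t)\neq 0$) and from (v) together with \cref{strongman} (for $\dim_t(L_\mu(\nu))\neq 0$), which is actually more elementary and avoids an appeal to the heaviest theorem in the paper. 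You are also right to flag the second half of (i) as the subtle point: \cref{humathasprop} only asserts the $t\leftrightarrow t^{-1}$ symmetry for $\dim_t(L(\lambda))$, not a priori for the weight space $L_\mu(\lambda)$, and your observation that $\ast$ fixes the idempotent ${\sf 1}_\mu$ (so the cellular form decomposes over weight spaces and the argument of \cite[Lemma 2.7]{hm10} localises) is precisely the missing detail that the paper's one-line citation elides.
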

\begin{proof}
Part $(i)$ is clear by Proposition \ref{humathasprop} and \cref{adefn} $(iii)$ is a restatement of the condition that $\stt^\mu$ is the only path  in $\Path(\mu,\stt^\mu)$.  
A necessary condition for $\dim_t{(\Hom (P(\mu),\Delta(\lambda))}\neq 0$ is that $\Delta_\mu(\lambda)\neq 0$, therefore $(ii)$ follows.
Part $(iv)$ is by definition, and part $(v)$ follows from the cellular structure.
Finally, $(vi)$ follows from $(i), (iii), (v)$ and \cref{step1}.  
\end{proof}

    \begin{thm}\label{main:theorem:general}
Let $R=\mathbb{C}$.     The graded decomposition numbers of 
$A_h(n,\theta,\kappa)$ are given by  
\begin{align*} 
d_{\lambda\mu}	(t)= n_{   \lambda\mu }.   \end{align*}
       \end{thm}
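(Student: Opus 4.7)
The plan is to deduce this theorem as a direct consequence of the uniqueness clause in \cref{KLpoly} combined with \cref{step1} and the machinery assembled in \cref{troll}. The entire strategy is to show that the decomposition numbers $d_{\lambda\mu}(t)$ and the graded dimensions $\dim_t L_\mu(\nu)$ satisfy exactly the same relations, with the same positivity and normalization conditions, that uniquely pin down the polynomials $n_{\lambda\mu}$ and $\underline{N}_{\nu\mu}$ in \cref{KLpoly}.

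First I would invoke \cref{troll}$(i)$ to identify $\dim_t \Delta_\mu(\lambda) = N_{\lambda\stt^\mu}$, and then expand the graded dimension of $\Delta(\lambda)$ in terms of its composition factors via the cellular structure, writing
\begin{equation*}
N_{\lambda\stt^\mu} \;=\; \dim_t \Delta_\mu(\lambda) \;=\; \sum_{\nu} d_{\lambda\nu}(t)\, \dim_t L_\mu(\nu).
\end{equation*}
By \cref{Strong linkage principle} the sum reduces to those $\nu$ with $\lambda \uparrow \nu \uparrow \mu$, and by \cref{troll}$(v)$ together with $\dim_t L_\mu(\mu)=1$ from \cref{troll}$(iii)$, this sum has exactly the shape of the expansion in \cref{KLpoly}.

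Next I would verify the three properties that \cref{KLpoly} needs in order to apply its uniqueness clause. The coefficients $\dim_t L_\mu(\nu)$ lie in $\ZZ_{\geq 0}[t+t^{-1}]$ by \cref{humathasprop}, matching the role of $\underline{N}_{\nu\mu}$. For $d_{\lambda\nu}(t)$: when $\lambda=\nu$ we have $d_{\mu\mu}(t)=1$, and for $\lambda\neq \nu$ the Koszulity statement \cref{step1} forces $d_{\lambda\nu}(t)\in t\ZZ_{\geq 0}[t]$, matching the role of $n_{\lambda\nu}$. These are precisely the positivity and vanishing-at-$t=0$ conditions that make the decomposition in \cref{KLpoly} unique, so we may identify $d_{\lambda\nu}(t)=n_{\lambda\nu}$ and $\dim_t L_\mu(\nu)=\underline{N}_{\nu\mu}$ termwise.

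The main (and essentially only) obstacle is making sure that the uniqueness statement in \cref{KLpoly} is invoked in the correct form: one must know that the ``standard-basis'' expansion of $N_{\lambda\stt^\mu}$ in a family indexed by the Bruhat interval $[\lambda,\mu]_\uparrow$ is unique given the separation in the rings $t\ZZ_{\geq 0}[t]$ versus $\ZZ_{\geq 0}[t+t^{-1}]$ (which fails, e.g., over $\ZZ_{\geq 0}[t,t^{-1}]$ without the Koszul splitting). The input supplying this separation is precisely \cref{step1}, whose truth uses $\Bbbk=\mathbb{C}$, and this is the only place in the argument where characteristic zero enters. Once both sides are known to satisfy the same uniquely characterizing relations, the conclusion $d_{\lambda\mu}(t) = n_{\lambda\mu}$ is immediate.
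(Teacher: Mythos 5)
Your proposal is correct and follows essentially the same route as the paper's proof. Both arguments start from the identity $N_{\lambda\stt^\mu}=\dim_t\Delta_\mu(\lambda)=\sum_\nu d_{\lambda\nu}(t)\dim_t L_\mu(\nu)$ (which the paper records as \cref{troll}$(vi)$), observe that the coefficients live in the correct rings --- $d_{\lambda\nu}(t)\in t\ZZ_{\geq0}[t]$ for $\lambda\neq\nu$ by the Koszulity in \cref{step1}, $\dim_t L_\mu(\nu)\in\ZZ_{\geq0}[t+t^{-1}]$ by \cref{humathasprop}, and $d_{\lambda\lambda}(t)=1$ --- and then conclude by the uniqueness clause of \cref{KLpoly}. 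The only presentational difference is that you invoke uniqueness once and conclude the termwise identification in one stroke, whereas the paper unpacks the uniqueness as an explicit induction on $\ell(\mu)-\ell(\lambda)$ (since the ``unique solution'' to that system is really produced by the triangular Basic Algorithm of KN10); the content is the same, and your remark that \cref{step1} is the sole point where $\Bbbk=\mathbb{C}$ enters matches the paper's logic exactly.
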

   
\begin{proof}
By Proposition \ref{troll} $(ii)$, we may assume $\Path(\lambda,\stt^\mu)\neq \emptyset$.  We now calculate $d_{ \lambda\mu}(t)$ and $\dim_t{(L_\mu(\lambda))}$ by induction on the length ordering.  Induction begins when $\ell(\mu)-\ell(\lambda)=0$, hence $\mu=\lambda$, and we have $d_{\mu\mu}(t)=1$ by Proposition \ref{troll} $(iii)$ and $\dim_t{(L_\mu(\mu))}=1$.

Let $\ell(\mu)-\ell(\lambda)\geq 1$.  By induction on the length ordering, we know $d_{\lambda\nu}(t)$   and $\dim_t{(L_\mu(\nu))}$ for points 
$\nu\in \Lambda_n$ such that  $\lambda \uparrow \nu \uparrow \mu$.
 By Proposition \ref{troll} $(vi)$ we have
\[
\dim_t{(L_\mu(\lambda))} + d_{ \lambda\mu} (t)
 =
\dim_t{(\Delta_\mu(\lambda))} 
- \;
\sum_
{  \mathclap{\begin{subarray}c 
\nu \neq \mu, \,
\nu \neq \lambda \\ 
\la \uparrow \nu \uparrow \mu
\end{subarray} }} \;d_{\lambda\nu}(t)\dim_t{(L_\mu(\nu))}.
\]
By induction and Proposition \ref{troll} $(i)$, we have that 
\[
\dim_t{(L_\mu(\lambda))} + d_{ \lambda\mu} (t)
 =
N_{\lambda\stt^\mu} - \;
\sum_
{  \mathclap{\begin{subarray}c 
\nu \neq \mu, \, 
\nu \neq \lambda \\ 
\la \uparrow \nu \uparrow \mu  \end{subarray} }} \; n_{ \lambda\nu}\underline{N}_{\nu\mu}.
\] Recall that 
$\underline{N}_{\nu\mu}=\dim_t{(L_\mu(\nu))} \in \mathbb{Z}_{\geq 0} [t+t^{-1}]$  and $n_{ \lambda\nu}=d_{  \lambda\mu}(t)\in t\ZZ_{\geq 0}[t]$. Therefore there is a unique  solution to the equality (see \cite[Section 4.1: Basic Algorithm]{KN10}).  By \cref{KLpoly} this solution is given  by 
$ d_{\lambda\mu}(t)=n_{  \lambda\mu}$.  
\end{proof}

 \subsection{Decomposition numbers over fields of large characteristic} \label{onject}
As we have made clear throughout the paper, our principal interest is in fields of positive characteristic.  
Indeed, we informally defined our   algebra  $\Q_{\ell,h,n}(\kappa)$  as the {\em largest quotient of $H_n(\kappa)$ for which a generalised Lusztig conjecture could possibly hold} for fields of positive characteristic.

\begin{defn}
We say that  $\lambda \in \mathbb{E}_{h,\ell}^+$ belongs to {\sf the first $ep$-alcove} if $\ell_\alpha(\lambda) <p$ for all $\alpha \in \Phi$.
\end{defn}

    \begin{conj} \label{conj1} 
Let   
$e>h\ell$,   $\kappa\in I^\ell$ be an $h$-admissible  multicharge, and   $\Bbbk$ be  field of  characteristic $p \gg  h\ell$.  
 The decomposition numbers  of   $A_h(n,\theta,\kappa)$    are    given by 
$$d_{\lambda\mu}(t) = n_{\lambda\mu} (t)  $$ 
for   $\lambda,\mu\in \mptn \ell n(h)$  in 
 the first $ep$-alcove   and $n_{\lambda\mu} (t)$ 
  the associated  affine  parabolic Kazhdan--Lusztig polynomial    
of type ${A}_{ h-1} \times {A}_{ h-1} \times \dots \times {A}_{ h-1} \subseteq 
\widehat{A}_{\ell h-1}.$

       \end{conj}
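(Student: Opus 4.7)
The plan is to establish Conjecture~A by induction on the length distance $\ell(\mu) - \ell(\lambda)$ for $\lambda,\mu$ both inside the first $ep$-alcove, combining the super-strong linkage principle (Theorem~\ref{strongerman}) as a universal upper bound with a characteristic-independent lower bound constructed from explicit homomorphisms between Weyl modules. The base case $\lambda = \mu$ is trivial since $d_{\mu\mu}(t) = 1 = n_{\mu\mu}(t)$; the base case $\ell(\mu) - \ell(\lambda) = 1$ is forced by Proposition~\ref{lots of decomp numbers} together with the existence of a degree-one path whenever $\lambda \uparrow \mu$.

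For the inductive step, I would first invoke Theorem~\ref{strongerman} to obtain $d^{\Bbbk}_{\lambda\mu}(t) \leq N_{\lambda\stt^\mu}$, then use Theorem~\ref{KLpoly} to decompose $N_{\lambda\stt^\mu} = \sum_{\lambda \uparrow \nu \uparrow \mu} n_{\lambda\nu}\,\underline{N}_{\nu\mu}$, where the $\underline{N}_{\nu\mu}$ are the graded dimensions of the weight spaces of the characteristic zero simple modules (by Theorem~\ref{main:theorem:general}). For $p \gg h\ell$ and weights inside the first $ep$-alcove the strategy is then to argue that $L^{\Bbbk}(\mu)$ has the same graded character as $L^{\mathbb{C}}(\mu)$, so that each $\underline{N}_{\nu\mu}$ equals $\dim_t({\sf 1}_\nu L^{\Bbbk}(\mu))$. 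Once this is in hand, the Brauer--Humphreys-type identity
\[
\dim_t({\sf 1}_\mu \Delta(\lambda)) = \sum_{\lambda \uparrow \nu \uparrow \mu} d^{\Bbbk}_{\lambda\nu}(t)\, \dim_t({\sf 1}_\mu L^{\Bbbk}(\nu))
\]
combined with the upper bound and the unitriangularity of the decomposition matrix forces $d^{\Bbbk}_{\lambda\mu}(t) = n_{\lambda\mu}(t)$, exactly as in the proof of Theorem~\ref{main:theorem:general}.

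To produce a matching lower bound directly, I would construct nonzero homomorphisms $\Delta(\mu) \to \Delta(\lambda)$ of degree $\ell(\mu) - \ell(\lambda)$ from the path-basis elements $C_{\stt^\mu_\lambda}$ of Theorem~\ref{howtobuildshit3}, extending the generic machinery of Sections~\ref{maximalparabolic}--\ref{maximalnonparabolic} from generic pairs to arbitrary pairs in the first $ep$-alcove. The composition rule $\varphi_\lambda^\nu \circ \varphi_\nu^\mu = \varphi_\lambda^\mu$ together with the inductive grip on intermediate weights $\nu$ should realize the full polynomial $n_{\lambda\mu}(t)$ as an actual graded multiplicity, once it is shown that no cancellation occurs modulo higher terms in the cell filtration. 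Theorem~\ref{dettensor} provides a useful stability tool here: since $d_{\lambda\mu}(t) = d_{\det_h(\lambda),\det_h(\mu)}(t)$, one may assume the weights are ``deep'' in the interior of the alcove, which should control the behaviour of the path basis under the translation to affine vertex configurations of Section~\ref{maximalnonparabolic}.

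The main obstacle is establishing $\dim_t L^{\Bbbk}(\mu) = \dim_t L^{\mathbb{C}}(\mu)$ for $p \gg h\ell$ inside the first $ep$-alcove. For $\ell = 1$ this is precisely Riche--Williamson's identification of the $p$-canonical basis with the Kazhdan--Lusztig basis in the first $p$-alcove \cite{w16}, whose proof relies on parity sheaves, Koszul duality, and the geometry of affine flag varieties. A geometric counterpart for the higher-level algebras $A_h(n,\theta,\kappa)$ --- perhaps through modular category $\mathcal{O}$ for cyclotomic rational Cherednik algebras, affine type~$A$ quiver varieties, or a modular Koszul duality for diagrammatic Cherednik algebras extending the characteristic zero Koszulity of Theorem~\ref{step1} --- would be needed to transfer the argument to $\ell \geq 2$. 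Constructing such a framework, and in particular ruling out the modular ``collisions'' between distinct KL contributions that appear outside the first $ep$-alcove, is the essential difficulty that the paper leaves open.
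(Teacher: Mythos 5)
This statement is labelled as a \emph{conjecture} in the paper, not a theorem; the paper offers no proof, only supporting evidence (cases $(i)$--$(iv)$ listed after Conjecture~A, and the $\ell=1$ case which is Riche--Williamson). So there is no ``paper's own proof'' for you to match. Your proposal is an honest research sketch rather than a proof, and to your credit you say as much in the final sentence: the essential missing ingredient is a higher-level analogue of the Riche--Williamson identification of $p$-canonical and Kazhdan--Lusztig bases in the first $ep$-alcove, and this remains open. That diagnosis is correct, and nothing in your outline circumvents it.

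Two further technical caveats worth flagging in the earlier parts of your sketch. First, the inductive step relies on transporting $\dim_t L^{\Bbbk}(\mu) = \dim_t L^{\mathbb{C}}(\mu)$ into the Brauer--Humphreys identity; but this equality of graded characters is precisely the content one would like to prove, not something one can assume as an intermediate step --- so the argument as phrased is circular unless the lower-bound construction in your third paragraph is made unconditional. Second, the lower bound itself is delicate: \cref{howtobuildshit3} and \cref{maximalparabolic} only produce and compose the homomorphisms $\varphi_\lambda^\mu$ for \emph{generic} pairs, and being in the first $ep$-alcove for $p \gg h\ell$ does not obviously imply genericity in the paper's sense (i.e.\ $\Path(\lambda,\stt^\mu)=\Path^+(\lambda,\stt^\mu)$); nor is it shown that the composite homomorphisms survive passage to the cell quotient without cancellation outside the generic regime. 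Your invocation of \cref{dettensor} to push weights deep into the alcove is a sensible heuristic but is not developed to the point of closing these gaps. In short: the strategy is the natural one and is consistent with the evidence the paper assembles, but the conjecture is genuinely open, exactly as you conclude.
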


For $e=\infty$ we make a stronger conjecture concerning the algebras ${A(n,\theta,\kappa)}$ in  which the statement of \cref{conj1}  simplifies in two ways.    
The simpler geometry controlling the $e=\infty$ case
 means that there is no notion of a ``first $ep$-alcove''.   
 Secondly, 
   the subalgebra isomorphic to the Hecke algebra of the 
  symmetric group is semisimple; 
 therefore we need not restrict  the characteristic of the field according to the   number of columns   of $\lambda, \mu \in \mptn \ell n$.

 \begin{conj} \label{conj2}
 Let  
$e>n$  
and  $\Bbbk$ be  field of  characteristic $p \gg   \ell$.  Suppose that  $\kappa\in I^\ell$ has no repeated entries.     
The decomposition numbers of $ {A(n,\theta,\kappa)}$   are given by 
$$d^{A(n,\theta,\kappa)}_{\lambda\mu}(t)=n_{\lambda\mu} (t)  $$ 
for $\lambda,\mu \in \mptn \ell n$ where $n_{\lambda\mu} (t) $ is the associated    Kazhdan--Lusztig polynomials of type $ A_{n-1}\times A_{n-1}\times  \dots \times  A_{n-1} \subseteq {A}_{n\ell-1}$. 
  
       \end{conj}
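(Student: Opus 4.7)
My plan is to reduce Conjecture B to a characteristic-stability statement anchored on the characteristic-zero case already established in \cref{main:theorem:general}. Since the path-theoretic cellular basis of \cref{eisinfinity} is defined integrally, the algebra $A(n,\theta,\kappa)$ and its Weyl modules $\Delta_\ZZ(\lambda)$ admit integral forms. Standard upper-semicontinuity of composition multiplicities along specialisation then gives
\[
d^{A_\Bbbk(n,\theta,\kappa)}_{\lambda\mu}(t) \;\ge\; d^{A_\mathbb{C}(n,\theta,\kappa)}_{\lambda\mu}(t) \;=\; n_{\lambda\mu}(t),
\]
so the entire content of the conjecture is the matching upper bound in sufficiently large characteristic.

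The next step is to strengthen the super-strong linkage inequality of \cref{strongerman} in the regime $e>n$. Here the alcove geometry is controlled by a \emph{finite} reflection group, every path in $\Path^\infty_n(\lambda,\stt^\mu)$ satisfies the hypotheses of the generic homomorphism machinery of \cref{howtobuildshit3}, \cref{maximalparabolic} and \cref{affinevertec2}, and the path polynomial $N_{\lambda\stt^\mu}$ of \cref{KLpoly} decomposes via the Goodman--Wenzl recursion as $N_{\lambda\stt^\mu} = \sum_{\lambda \uparrow \nu \uparrow \mu} n_{\lambda\nu}\,\underline{N}_{\nu\mu}$. I would run an induction along the strong linkage order and, at each step, assemble the elementary degree-one maps $\varphi_\lambda^\nu$ produced by \cref{howtobuildshit3} into an explicit filtration of $\Delta(\lambda)$ whose graded layers are the simple modules $L(\nu)$ with multiplicities matching the coefficients of $n_{\lambda\mu}(t)$. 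The generic sets of \cref{sec:7} are already known to cover every local wall-configuration that the recursion encounters, so the composition rule $\varphi_\lambda^\nu\circ\varphi_\nu^\mu = \varphi_\lambda^\mu$ of \cref{howtobuildshit3} should suffice to globalise the local analysis.

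The main obstacle will be ensuring that this construction survives reduction modulo $p$. Each elementary homomorphism $\varphi_\lambda^\nu$ carries a scalar normalisation whose denominator is an integer depending only on the local wall-pattern crossed at that step; since every such pattern is controlled by a subquotient of the finite reflection group $\mathfrak{S}_\ell$ acting on $\mathbb{E}_{h,\ell}$, I expect these denominators to admit a universal bound depending only on $\ell$ rather than on $n$. The hard part will be making this bound precise and uniform over all $n$ and all chains $\lambda \uparrow \nu_1 \uparrow \dots \uparrow \nu_k \uparrow \mu$: this is the analogue, in our parabolic affine setting, of Andersen's control of the Jantzen sum formula for algebraic groups and is precisely the obstruction the hypothesis $p \gg \ell$ is meant to absorb. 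A plausible route is to trace the multiplicative constants appearing when resolving crossings via the relations \ref{rel3}, \ref{rel6}, \ref{rel7}, \ref{rel9}, \ref{rel10} of \cref{defintino2}, verifying that they are products of rationals whose prime denominators are bounded by a function of $\ell$ alone. Once the resulting bad-prime analysis is in place, the matching inequalities force $d_{\lambda\mu}(t) = n_{\lambda\mu}(t)$ and the conjecture follows.
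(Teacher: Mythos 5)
The statement you are trying to prove is \cref{conj2}, which the paper explicitly labels a \emph{conjecture} and does not prove. The paper's own treatment consists only of verifications in special cases (the examples following \cref{conj1,conj2}: maximal finite parabolic orbits, $\ell=2$ with $h=1$, $\ell=2$ or $\ell=3$ with $e>n$, and $\Bbbk=\mathbb{C}$ via \cref{main:theorem:general}). So there is no proof in the paper to compare against; any complete argument you produce would be new.

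Your proposal itself is a programme, not a proof, and you candidly identify the load-bearing gap: a uniform bound, depending only on $\ell$, on the primes appearing in the denominators of the normalisation scalars for the elementary homomorphisms $\varphi_\lambda^\nu$, uniform over all $n$ and all chains in the strong-linkage order. That is precisely the hard content that the hypothesis $p\gg\ell$ is supposed to absorb, and it is exactly what is missing; the paper offers no machinery that delivers it. Two further points deserve care. First, your semicontinuity step is not as automatic as stated: for graded cellular algebras the coefficient-wise inequality $d^{\Bbbk}_{\lambda\mu}(t)\geq d^{\mathbb{C}}_{\lambda\mu}(t)$ requires arguing via an integral form of the algebra (the path basis does give one) and tracking how the radical of the bilinear form of \cref{defn1} interacts with base change; the paper does not prove such a statement, and the inequality can fail degree by degree in general graded settings unless one establishes it. Second, the claim that the generic sets of \cref{sec:7} ``cover every local wall-configuration the Goodman--Wenzl recursion encounters'' is asserted without justification. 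The paper constructs generic homomorphisms only for specific generic sets $\Gamma_{a,b}$, $\Gamma_{h\ell}$, $\Gamma_t$, and \cref{maximal terms prop,lots of decomp numbers,howtobuildshit3} give information only about the leading coefficient $t^{\ell(\mu)-\ell(\lambda)}$; assembling these into a filtration of $\Delta(\lambda)$ with full graded-simple layers matching $n_{\lambda\mu}(t)$ in all degrees is a genuine new step, not a corollary of what is in the paper. Your strategy is consistent with the evidence the authors present and is a reasonable line of attack, but as written it reproduces the conjecture rather than proving it.
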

       
\begin{rmk}
Notice that the Kazhdan--Lusztig polynomials in the latter conjecture are non-affine
(see \cref{see}) and that the parabolic is determined by products of $A_{n-1}$ (rather than $A_{h-1}$) and that it covers {\em all} decomposition numbers of the diagrammatic Cherednik algebra.  Finally, recall that our assumption on $\kappa\in I^\ell$ is less strict than in \cref{conj2} is weaker than in \cref{conj1}.  
\end{rmk}

\begin{rmk}
For $e=\infty$, the unique graded decomposition matrix 
of $H_n(\kappa)$ appears as the (in general proper) submatrix of that of $A(n,\theta,\kappa)$ whose columns are labelled by so-called FLOTW multipartitions.  
\end{rmk}       
       
   \cref{conj1} is the first conjectural attempt
    to describe an infinite family of decomposition numbers of 
    (quiver) Hecke algebras for $e\in \mathbb{Z}_{>0}$.  
A more optimistic version of  \cref{conj2} appeared in \cite[Conjecture 7.3]{MR2777040} (without the assumptions on $\kappa\in I^\ell$ or $p\gg \ell$) but was later disproven in \cite[Section 4.2]{MR3163410}.  
This counterexample is for $\ell=5$ and $p<\ell$ and does not seem surprising from the point-of-view of this paper (indeed we posed 
 \cref{conj2} in its current form 
before being informed about the results of \cite{MR2777040,MR3163410} by Liron Speyer).

\begin{eg}
In \cref{maximalparabolic}, we saw that the above conjecture holds for any $\lambda,\mu$ in a  maximal finite parabolic   orbit with $\Bbbk$ arbitrary.  
\end{eg}

\begin{eg}
Let  $\ell=2$ and $h=1$ and $e >2$ and  $\Bbbk$ be  arbitrary. 
In this case $\Q_{h,\ell,n}(\kappa)$ is isomorphic to the blob algebra.  
 The conjecture follows from \cite[Section 8]{MR1995129}.  
\end{eg}

\begin{eg}
If $\ell=1$, then $\Q_{h,\ell,n}(\kappa)$ is isomorphic to the generalised Temperley--Lieb algebra of H\"arterich \cite[Section 1]{MR1680384}.  The result therefore follows by       \cite[Theorem 1.9]{w16}.  
\end{eg}

\begin{eg}
If $\ell=2$ and $e>n$, then $A (n,\theta,\kappa)$ is positively graded.  Therefore any simple module is 1-dimensional.  Therefore the decomposition numbers of $A_h(n,\theta,\kappa)$ are independent of the characteristic of the   field and the result follows.  An identical proof of this is given in \cite[Appendix B]{MR3356809} (in fact, the quiver Schur algebra  of Hu and Mathas is isomorphic to our algebra $A (n,\theta,\kappa)$ in this case) and an  
 earlier proof of this result  for $e=\infty$ was given  in \cite{MR2781018}.  
 \end{eg}

\begin{eg}
 Consider  $A (n,\theta,\kappa)$  with 
 $\ell=3$ and $e=\infty$.  This algebra is non-negatively graded.  
   For any $\lambda,\mu \in \mptn 3 n$ there is at most one path, $\sts$ say, of degree zero in $\Path(\lambda,\stt^\mu)$.  
   For example if $\kappa=(0,1,2)$ then $\Path((2),\varnothing, (2)), (1),(1),(2))$ has two  elements; one  of degree 0 and one of degree 2. 
Therefore it suffices to check that $C_{\sts}^\ast C_{\sts}= C_{\stt^\mu_\lambda}$ in order to conclude that $L(\lambda)$ is 2-dimensional   with basis $\{C_{\sts},C_{\stt^\lambda}\}$.  Having done so, one can conclude that the decomposition numbers are independent of the characteristic of the field. 
 We leave this as an exercise for the reader.  
 \end{eg}

\section{Alcove geometries in $\mathbb{R}^2$}\label{sec:9}

We now discuss the algebras $\Q_{\ell,h,n}(\kappa)$ which are controlled by geometries which can be visualised within the plane  $\mathbb{R}^2$.   
We   encounter all the usual Lie theoretic geometries, as well as new geometries which do not arise in the representation theory of (affine) Lie algebras and related objects.   

\medskip
\noindent There are a total of five distinct affine geometries that we can 
picture via an embedding  into $\mathbb{R}^2$.  
\medskip

The first 2 of these are of type 
$A_1\subseteq \widehat{A}_1$   for $h=2$ and $\ell=1$ and 
of type  $\widehat{A}_1$ for $\ell=2$ and $h=1$. 
In the former (respectively latter) case,  we  visualise  
 $\mathbb{E}_{h,\ell}^\circledcirc$ as half of the real line
  (respectively the whole real line).  These geometries already appear in classical Lie theory.    The former (respectively latter) controls the representation theory of the Lie algebra ${\mathfrak{sl}}_2$ (respectively the Kac--Moody algebra  $\widehat{\mathfrak{sl}}_2$).  
    
\medskip
\noindent
There are three further affine geometries which can be visualised in $\mathbb{R}^2$ (depicted in \cref{geometries}).  
\medskip

The first is that of type ${A}_2\subseteq \widehat{{A}}_2$ which controls the representation theory of $\Q_{1,3,n}(\kappa)$.  The algebra $\Q_{1,3,n}(\kappa)$ is the Ringel dual of the image of   $U_q(\mathfrak{sl}_3)$ in ${\rm End}((\Bbbk^3)^{\otimes n})$. Obviously, this is the same as the geometry controls the representation theory of the Lie algebra $\mathfrak{sl}_3$.  
 The dominant region  $\mathbb{E}_{h,\ell}^\circledcirc$ is one sixth of the plane $\mathbb{R}^2$.  

The second is that of type $ \widehat{{A}}_2$ which controls the representation theory of $\Q_{3,1,n}(\kappa)$.  This is the same as the geometry controls the representation theory of the Kac--Moody algebra $\widehat{\mathfrak{sl}}_3$.  
 The dominant region  $\mathbb{E}_{h,\ell}^\circledcirc$ is   the entirety of the plane $\mathbb{R}^2$.  

\!\!\!\!\!\!\!\!
\begin{figure}[ht!]
$$\scalefont{0.8}
\begin{tikzpicture}[scale=2]

\clip(4.5,1)--(16,1)--(16,-1)--(4.5,-1);
            
\begin{scope}

 \path  (6,-.75)  coordinate (origin); 
  \draw  (origin)++(60:0.2)++(180:0.1)  node {$\circledcirc$}; 
  \path  (origin)++(60:1.6cm) coordinate (a1);
  \draw (origin)--(a1);
  \draw  (origin)--++(120:1.6cm);
    \path  (4,0) coordinate (cc);
      \foreach \i in {1,...,4}
  {
    \path (origin)++(60:0.4*\i cm)  coordinate (a\i);
    \path (origin)++(120:0.4*\i cm)  coordinate (b\i);
    \path (a\i)++(120:2cm) coordinate (ca\i);
    \path (b\i)++(60:2cm) coordinate (cb\i);
}
 \draw[densely dotted]  (a4)--++(60:0.2);
  \draw[densely dotted]  (a4)--++(120:0.2);
   \draw[densely dotted]  (b4)--++(60:0.2);
  \draw[densely dotted]  (b4)--++(120:0.2);

  \path (b4)++(120:0.2)  coordinate (b5);
    \path (a4)++(60:0.2)  coordinate (a5);  
 \clip (origin)--(a5)--(b5)--(origin);      
 \foreach \i in {1,...,4}    
   { \draw[densely dotted]  (a\i) -- (ca\i)  (b\i) -- (cb\i);
  }

 \clip (origin)--(a4)--(b4)--(origin);    
  \foreach \i in {1,...,4}    
   { \draw  (a\i) -- (ca\i)  (b\i) -- (cb\i);
    \draw  (a\i) -- (b\i)  ; } 
 
   \end{scope}


\begin{scope}

 \path  (8.5,-.75)--++(-60:0.4)--++(-120:0.4)  coordinate (origin); 
 
    \path  (4,-2.5) coordinate (cc);
      \foreach \i in {0,...,12}
  {
    \path (origin)++(60:0.4*\i cm)  coordinate (a\i);
    \path (origin)++(120:0.4*\i cm)  coordinate (b\i);
    \path (a\i)++(120:2cm) coordinate (ca\i);
    \path (b\i)++(60:2cm) coordinate (cb\i);
}
%
   \path (b4)++(60:0.8)  coordinate (cc1);
     \path (cc1)++(-120:0.4)  coordinate (ee1);  
     
     \path (cc1)++(00:0.4)  coordinate (ee2);  
     \path (cc1)++(00:0.8)  coordinate (cc2);  
          \path (cc2)++(-60:0.4)  coordinate (ee3);  
     \path (cc1)++(-60:0.8)  coordinate (dd);  
            \draw  (dd)  ++(60:0.2)++(180:0.1)  node {$\circledcirc$}; 


     \path (dd)++(-60:0.4)++(-120:0.4)  coordinate (XXXX);  
     
\draw[densely dotted] (XXXX)--++(-120:0.2);
\draw[densely dotted]  (XXXX)--++(-60:0.2);
     
\draw[densely dotted] (a4)--++(60:0.2);
\draw[densely dotted]  (a4)--++(-60:0.2);
\draw[densely dotted]  (a4)--++(0:0.2);

\draw[densely dotted]  (a3)--++(-60:0.2);
\draw[densely dotted]  (a3)--++(0:0.2);

\draw[densely dotted] (a2)--++(-120:0.2);
\draw[densely dotted]  (a2)--++(-60:0.2);
\draw[densely dotted]  (a2)--++(0:0.2);

\draw[densely dotted] (b2)--++(-120:0.2);
\draw[densely dotted]  (b2)--++(-60:0.2);
\draw[densely dotted]  (b2)--++(180:0.2);

\draw[densely dotted] (b3)--++(-120:0.2);
\draw[densely dotted]  (b3)--++(180:0.2);

\draw[densely dotted] (b4)--++(120:0.2);
\draw[densely dotted]  (b4)--++(-120:0.2);
\draw[densely dotted]  (b4)--++(180:0.2);

\draw[densely dotted] (cc1)--++(120:0.2);
\draw[densely dotted]  (cc1)--++(60:0.2);
\draw[densely dotted]  (cc1)--++(180:0.2);

\draw[densely dotted] (cc2)--++(120:0.2);
\draw[densely dotted]  (cc2)--++(60:0.2);
\draw[densely dotted]  (cc2)--++(0:0.2);

\draw[densely dotted]  (ee1)--++(120:0.2);
\draw[densely dotted]  (ee1)--++(180:0.2);

\draw[densely dotted] (ee2)--++(120:0.2);
\draw[densely dotted]  (ee2)--++(60:0.2);

\draw[densely dotted] (ee3)--++(0:0.2);
\draw[densely dotted]  (ee3)--++(60:0.2);

\clip(a4)--(a2)--(b2)-- (b4)--(cc1)--(cc2)--(a4);
 \foreach \i in {0,...,8}    
  { \draw  (a\i) -- (ca\i)  (b\i) -- (cb\i);
     \draw  (a\i) -- (b\i)  ; } 
   
%
 
   \end{scope}

\begin{scope}
 \path  (11,-.75)--++(-60:0.4)--++(-120:0.4)  coordinate (origin); 

 
    \path  (4,-4.75) coordinate (cc);
      \foreach \i in {0,...,12}
  {
    \path (origin)++(60:0.4*\i cm)  coordinate (a\i);
    \path (origin)++(120:0.4*\i cm)  coordinate (b\i);
    \path (a\i)++(120:2cm) coordinate (ca\i);
    \path (b\i)++(60:2cm) coordinate (cb\i);
}
%
   \path (b4)++(60:0.8)  coordinate (cc1);
     \path (cc1)++(-120:0.4)  coordinate (ee1);  
     
     \path (cc1)++(00:0.4)  coordinate (ee2);  
     \path (cc1)++(00:0.8)  coordinate (cc2);  
          \path (cc2)++(-60:0.4)  coordinate (ee3);  
     \path (cc1)++(-60:0.8)  coordinate (dd);  
            \draw  (dd)  ++(60:0.1)++(0:0.2)  node {$\circledcirc$}; 

\draw[densely dotted] (a4)--++(60:0.2);
\draw[densely dotted]  (a4)--++(0:0.2);

%
%
%
%
%

\draw[densely dotted] (b4)--++(120:0.2);
\draw[densely dotted]  (b4)--++(180:0.2);

\draw[densely dotted] (cc1)--++(120:0.2);
\draw[densely dotted]  (cc1)--++(60:0.2);
\draw[densely dotted]  (cc1)--++(180:0.2);

\draw[densely dotted] (cc2)--++(120:0.2);
\draw[densely dotted]  (cc2)--++(60:0.2);
\draw[densely dotted]  (cc2)--++(0:0.2);

\draw[densely dotted]  (ee1)--++(120:0.2);
\draw[densely dotted]  (ee1)--++(180:0.2);

\draw[densely dotted] (ee2)--++(120:0.2);
\draw[densely dotted]  (ee2)--++(60:0.2);

\draw[densely dotted] (ee3)--++(0:0.2);
\draw[densely dotted]  (ee3)--++(60:0.2);

\clip(a4)-- (b4)--(cc1)--(cc2)--(a4);

 \foreach \i in {0,...,8}    
  { \draw  (a\i) -- (ca\i)  (b\i) -- (cb\i);
     \draw  (a\i) -- (b\i)  ; } 
   
%
 
   \end{scope}

\end{tikzpicture}
$$  

\!\!\!\!\!\!\!\! 
\caption{Geometries of type ${A}_2\subseteq \widehat{{A}}_2$, $ \widehat{{A}}_2$, and 
${A}_1\subseteq \widehat{{A}}_2$ respectively.  
These tile one sixth of $\mathbb{R}^2$, the whole of $\mathbb{R}^2$, and one half of $\mathbb{R}^2$, respectively.  }
\label{geometries}
   \end{figure}
Thirdly, we encounter    a geometry of type 
${A}_1\subseteq \widehat{{A}}_2$ which controls a portion of the representation theory of 
$\Q_{2,2,n}(\kappa)$; namely  the subcategory of   representations
whose simple constituents are labelled by points in the space
$$ \mathbb{E}_{h,\ell}^\circledcirc  \cap\mathbb{R}_{\geq0}\{\varepsilon_1,\varepsilon_3,\varepsilon_4\} \text{ or }
  \mathbb{E}_{h,\ell}^\circledcirc  \cap\mathbb{R}_{\geq0}\{\varepsilon_1,\varepsilon_2,\varepsilon_3\}.$$    
The corresponding  quotient algebra can be explicitly constructed explicitly as in \cref{bs16iguess}, but this is unnecessary
as all the results are identical.
 This geometry does not appear in classical Lie theory.  
In either case, the dominant region     is   one half  of the plane $\mathbb{R}^2$.

We now consider an example in this new geometry (of type ${A}_1\subseteq \widehat{{A}}_2$) 
which illustrates both the local generic behaviour of \cref{nonparabolic} and the non-local generic behaviour of \cref{maximalnonparabolic}.  
   
\begin{figure}[ht!]   \[\scalefont{0.8}\begin{tikzpicture}
 \begin{scope}
   \path  (0,0)  coordinate (origin); 

  \path (0,0)++(120:0.4*10 cm)  coordinate (step1);
  \path (step1)++(180:0.4*10)  coordinate (stepY);   
      \path (stepY)++(60:0.4*15)  coordinate (step175);      
      \path (step175)++(0:0.4*10)  coordinate (step275);         
      \path (step275)++(-60:0.4*15)  coordinate (step375);         
  \path (step1)++(0:0.4*3)  coordinate (step111);   

 \draw(step111) node[below] {\text{edge of dominant region}};

\clip    (origin)--(step1)--(stepY)--(step175)--(step275)--(step375)--(step1);  
    
       \begin{scope}

        \foreach \i in {0,...,35}
  {
    \path (origin)++(60:0.4*\i cm)  coordinate (a\i);
    \path (origin)++(120:0.4*\i cm)  coordinate (b\i);
    \path (a\i)++(120:12cm) coordinate (ca\i);
    \path (b\i)++(60:12cm) coordinate (cb\i);
}
   \path (b20)++(60:0.4*5)  coordinate (step1);
      \path (step1)++(0:0.4*5)  coordinate (step2);
      \path (step2)++(-60:0.4*5)  coordinate (step3); 
      \path (step3)++(0:0.4*5)  coordinate (step4);       
      \path (step4)++(-60:0.4*5)  coordinate (step5);       

\path (origin)++(120:0.4*7 cm) ++(60:0.4*4 cm)coordinate (origin) ;


\draw(origin) node{$\circledcirc$};

\path(origin) --++(120:0.4*9 cm)--++(60:0.4*2 cm)coordinate (alpha);

\path(origin) --++(120:0.4*7 cm)--++(60:0.4*3 cm)coordinate (beta');

\path(origin) --++(120:0.4*10 cm) coordinate (beta);
\path(origin) --++(120:0.4*9 cm)--++(-120:0.4*1 cm)coordinate (gamma);
 
\path(origin) --++(120:0.4*6 cm)--++(60:0.4*2 cm)coordinate (gamma');
 \path(origin) --++(120:0.4*9 cm)--++(-60:0.4*2 cm)coordinate (delta);

\path(origin) --++(60:0.4*8 cm)--++(-60:0.4*3 cm)coordinate (lambda);

\path(origin) --++(180:0.4*2 cm) coordinate (hello);
\path(origin) --++(180:0.4*3 cm) --++(120:0.4*1 cm)  coordinate (hello2);
\path(origin)  --++(180:0.4*5 cm)  coordinate (hello3);
 
\path(lambda) --++(-60:0.4*1 cm)--++(-120:0.4*1 cm)coordinate (mu);
\path(lambda) --++(120:0.4*1 cm)--++(180:0.4*1 cm)coordinate (mu');

\path(lambda) --++(-60:0.4*3 cm)--++(-180:0.4*3 cm)coordinate (nu);
\path(lambda) --++(120:0.4*0 cm)--++(180:0.4*3 cm)coordinate (nu');

\path(lambda) --++(-120:0.4*2 cm)--++(180:0.4*2 cm)coordinate (pi);

 \clip(b10)-- (b20)--(step1)--(step2)--(step3)--(step4)--(step5)--(a10)--(b10);

 \foreach \i in {0,...,35}    
  { \draw[gray, thin]  (a\i) -- (ca\i)  (b\i) -- (cb\i);
     \draw[gray, thin]   (a\i) -- (b\i)  ; } 

 \foreach \i in {0,5,10,...,35}    
  { \draw[thick]  (a\i) -- (ca\i)  (b\i) -- (cb\i);
     \draw[thick]  (a\i) -- (b\i)  ; } 
 
   \end{scope}

      \foreach \i in {0,...,35}
  {
    \path (0,0)++(0:0.4*5)++(60:0.4*\i cm)  coordinate (a\i);
    \path (0,0)++(0:0.4*5)++(120:0.4*\i cm)  coordinate (b\i);
    \path (a\i)++(120:12cm) coordinate (ca\i);
    \path (b\i)++(60:12cm) coordinate (cb\i);
}

 \foreach \i in {0,...,35}    
  { \draw[gray, thin]  (a\i) -- (ca\i)  (b\i) -- (cb\i);
     \draw[gray, thin]   (a\i) -- (b\i)  ; } 

 \foreach \i in {0,5,10,...,35}    
  { \draw[thick]  (a\i) -- (ca\i)  (b\i) -- (cb\i);
     \draw[thick]  (a\i) -- (b\i)  ; } 

   \foreach \i in {0,...,35}
  {
    \path (0,0)++(180:0.4*10)++(60:0.4*\i cm)  coordinate (a\i);
    \path (0,0)++(180:0.4*10)++(120:0.4*\i cm)  coordinate (b\i);
    \path (a\i)++(120:12cm) coordinate (ca\i);
    \path (b\i)++(60:12cm) coordinate (cb\i);
}

 \foreach \i in {0,...,35}    
  { \draw[gray, thin]  (a\i) -- (ca\i)  (b\i) -- (cb\i);
     \draw[gray, thin]   (a\i) -- (b\i)  ; } 

 \foreach \i in {0,5,10,...,35}    
  { \draw[thick]  (a\i) -- (ca\i)  (b\i) -- (cb\i);
     \draw[thick]  (a\i) -- (b\i)  ; }

\draw(alpha) node{$\bm\alpha$};
\draw(beta) node{$\bm\beta$};
\draw(beta') node{$\bm\beta'$};

\draw(gamma) node{$\bm\gamma$};
\draw(gamma') node{$\bm\gamma'$};

\draw(delta) node{$\bm\delta$};

\draw(lambda) node{$\bm\lambda$};

\draw(hello) node{$\bm\sigma$};
\draw(hello2) node{$\bm\tau$};
\draw(hello3) node{$\bm\rho$};

\draw(mu) node{$\bm\mu$};
\draw(mu') node{$\bm\mu'$};

\draw(nu) node{$\bm\nu$};
\draw(nu') node{$\bm\nu'$};
 \draw(pi) node{$\bm\pi$};
\draw[thick, red](origin)--++(120:1*0.4)--++(-120:1*0.4)--++(120:8*0.4);
\draw[thick, red](origin)--++(120:1*0.4)--++(-120:1*0.4)--++(120:7*0.4)--++(0:1*0.4);
\draw[thick, red](origin)--++(120:1*0.4)--++(-120:1*0.4)--++(120:4*0.4)--++(-120:3*0.4)--++(0:1*0.4);
\draw[thick, red](origin)--++(120:1*0.4)--++(-120:1*0.4)--++(120:4*0.4)--++(-120:4*0.4);

\draw[thick, red](origin)--++(120:1*0.4)--++(-120:1*0.4)--++(120:2*0.4)--++(0:2*0.4)--++(0:3*0.4)--++(0:1*0.4);
\draw[thick, red](origin)--++(120:1*0.4)--++(-120:1*0.4)--++(120:2*0.4)--++(0:2*0.4)--++(0:3*0.4)--++(120:1*0.4);

\end{scope}
\path(0,0)--(120:10*0.4)--++(180:10*0.4) coordinate (bottom);
\draw[densely dotted] (bottom)--++(180:0.2);
\draw[densely dotted] (bottom)--++(120:0.2);
\path(bottom)--++(60:1*0.4) coordinate (bottom);
\draw[densely dotted] (bottom)--++(180:0.2);
\draw[densely dotted] (bottom)--++(120:0.2);
\path(bottom)--++(60:1*0.4) coordinate (bottom);
\draw[densely dotted] (bottom)--++(180:0.2);
\draw[densely dotted] (bottom)--++(120:0.2);
\path(bottom)--++(60:1*0.4) coordinate (bottom);
\draw[densely dotted] (bottom)--++(180:0.2);
\draw[densely dotted] (bottom)--++(120:0.2);
\path(bottom)--++(60:1*0.4) coordinate (bottom);
\draw[densely dotted] (bottom)--++(180:0.2);
\draw[densely dotted] (bottom)--++(120:0.2);
\path(bottom)--++(60:1*0.4) coordinate (bottom);
\draw[densely dotted] (bottom)--++(180:0.2);
\draw[densely dotted] (bottom)--++(120:0.2);
\path(bottom)--++(60:1*0.4) coordinate (bottom);
\draw[densely dotted] (bottom)--++(180:0.2);
\draw[densely dotted] (bottom)--++(120:0.2);
\path(bottom)--++(60:1*0.4) coordinate (bottom);
\draw[densely dotted] (bottom)--++(180:0.2);
\draw[densely dotted] (bottom)--++(120:0.2);
\path(bottom)--++(60:1*0.4) coordinate (bottom);
\draw[densely dotted] (bottom)--++(180:0.2);
\draw[densely dotted] (bottom)--++(120:0.2);
\path(bottom)--++(60:1*0.4) coordinate (bottom);
\draw[densely dotted] (bottom)--++(180:0.2);
\draw[densely dotted] (bottom)--++(120:0.2);
\path(bottom)--++(60:1*0.4) coordinate (bottom);
\draw[densely dotted] (bottom)--++(180:0.2);
\draw[densely dotted] (bottom)--++(120:0.2);
\path(bottom)--++(60:1*0.4) coordinate (bottom);
\draw[densely dotted] (bottom)--++(180:0.2);
\draw[densely dotted] (bottom)--++(120:0.2);
\path(bottom)--++(60:1*0.4) coordinate (bottom);
\draw[densely dotted] (bottom)--++(180:0.2);
\draw[densely dotted] (bottom)--++(120:0.2);
\path(bottom)--++(60:1*0.4) coordinate (bottom);
\draw[densely dotted] (bottom)--++(180:0.2);
\draw[densely dotted] (bottom)--++(120:0.2);
\path(bottom)--++(60:1*0.4) coordinate (bottom);
\draw[densely dotted] (bottom)--++(180:0.2);
\draw[densely dotted] (bottom)--++(120:0.2);
\path(bottom)--++(60:1*0.4) coordinate (bottom);
\draw[densely dotted] (bottom)--++(180:0.2);
\draw[densely dotted] (bottom)--++(120:0.2);
\draw[densely dotted] (bottom)--++(60:0.2);
\path(bottom)--++(0:1*0.4) coordinate (bottom);
\draw[densely dotted] (bottom)--++(180:0.2);
\draw[densely dotted] (bottom)--++(120:0.2);
\draw[densely dotted] (bottom)--++(60:0.2);
\path(bottom)--++(0:1*0.4) coordinate (bottom);
\draw[densely dotted] (bottom)--++(180:0.2);
\draw[densely dotted] (bottom)--++(120:0.2);
\draw[densely dotted] (bottom)--++(60:0.2);
\path(bottom)--++(0:1*0.4) coordinate (bottom);
\draw[densely dotted] (bottom)--++(180:0.2);
\draw[densely dotted] (bottom)--++(120:0.2);
\draw[densely dotted] (bottom)--++(60:0.2);
\path(bottom)--++(0:1*0.4) coordinate (bottom);
\draw[densely dotted] (bottom)--++(180:0.2);
\draw[densely dotted] (bottom)--++(120:0.2);
\draw[densely dotted] (bottom)--++(60:0.2);
\path(bottom)--++(0:1*0.4) coordinate (bottom);
\draw[densely dotted] (bottom)--++(180:0.2);
\draw[densely dotted] (bottom)--++(120:0.2);
\draw[densely dotted] (bottom)--++(60:0.2);
\path(bottom)--++(0:1*0.4) coordinate (bottom);
\draw[densely dotted] (bottom)--++(180:0.2);
\draw[densely dotted] (bottom)--++(120:0.2);
\draw[densely dotted] (bottom)--++(60:0.2);
\path(bottom)--++(0:1*0.4) coordinate (bottom);
\draw[densely dotted] (bottom)--++(180:0.2);
\draw[densely dotted] (bottom)--++(120:0.2);
\draw[densely dotted] (bottom)--++(60:0.2);
\path(bottom)--++(0:1*0.4) coordinate (bottom);
\draw[densely dotted] (bottom)--++(180:0.2);
\draw[densely dotted] (bottom)--++(120:0.2);
\draw[densely dotted] (bottom)--++(60:0.2);
\path(bottom)--++(0:1*0.4) coordinate (bottom);
\draw[densely dotted] (bottom)--++(180:0.2);
\draw[densely dotted] (bottom)--++(120:0.2);
\draw[densely dotted] (bottom)--++(60:0.2);
\path(bottom)--++(0:1*0.4) coordinate (bottom);
\draw[densely dotted] (bottom)--++(0:0.2);
\draw[densely dotted] (bottom)--++(120:0.2);
\draw[densely dotted] (bottom)--++(60:0.2);
\path(bottom)--++(-60:1*0.4) coordinate (bottom);
\draw[densely dotted] (bottom)--++(0:0.2);
\draw[densely dotted] (bottom)--++(120:0.2);
\draw[densely dotted] (bottom)--++(60:0.2);
\path(bottom)--++(-60:1*0.4) coordinate (bottom);
\draw[densely dotted] (bottom)--++(0:0.2);
\draw[densely dotted] (bottom)--++(120:0.2);
\draw[densely dotted] (bottom)--++(60:0.2);
\path(bottom)--++(-60:1*0.4) coordinate (bottom);
\draw[densely dotted] (bottom)--++(0:0.2);
\draw[densely dotted] (bottom)--++(120:0.2);
\draw[densely dotted] (bottom)--++(60:0.2);
\path(bottom)--++(-60:1*0.4) coordinate (bottom);
\draw[densely dotted] (bottom)--++(0:0.2);
\draw[densely dotted] (bottom)--++(120:0.2);
\draw[densely dotted] (bottom)--++(60:0.2);
\path(bottom)--++(-60:1*0.4) coordinate (bottom);
\draw[densely dotted] (bottom)--++(0:0.2);
\draw[densely dotted] (bottom)--++(120:0.2);
\draw[densely dotted] (bottom)--++(60:0.2);
\path(bottom)--++(-60:1*0.4) coordinate (bottom);
\draw[densely dotted] (bottom)--++(0:0.2);
\draw[densely dotted] (bottom)--++(120:0.2);
\draw[densely dotted] (bottom)--++(60:0.2);
\path(bottom)--++(-60:1*0.4) coordinate (bottom);
\draw[densely dotted] (bottom)--++(0:0.2);
\draw[densely dotted] (bottom)--++(120:0.2);
\draw[densely dotted] (bottom)--++(60:0.2);
\path(bottom)--++(-60:1*0.4) coordinate (bottom);
\draw[densely dotted] (bottom)--++(0:0.2);
\draw[densely dotted] (bottom)--++(120:0.2);
\draw[densely dotted] (bottom)--++(60:0.2);
\path(bottom)--++(-60:1*0.4) coordinate (bottom);
\draw[densely dotted] (bottom)--++(0:0.2);
\draw[densely dotted] (bottom)--++(120:0.2);
\draw[densely dotted] (bottom)--++(60:0.2);
\path(bottom)--++(-60:1*0.4) coordinate (bottom);
\draw[densely dotted] (bottom)--++(0:0.2);
\draw[densely dotted] (bottom)--++(120:0.2);
\draw[densely dotted] (bottom)--++(60:0.2);
\path(bottom)--++(-60:1*0.4) coordinate (bottom);
\draw[densely dotted] (bottom)--++(0:0.2);
\draw[densely dotted] (bottom)--++(120:0.2);
\draw[densely dotted] (bottom)--++(60:0.2);
\path(bottom)--++(-60:1*0.4) coordinate (bottom);
\draw[densely dotted] (bottom)--++(0:0.2);
\draw[densely dotted] (bottom)--++(120:0.2);
\draw[densely dotted] (bottom)--++(60:0.2);
\path(bottom)--++(-60:1*0.4) coordinate (bottom);
\draw[densely dotted] (bottom)--++(0:0.2);
\draw[densely dotted] (bottom)--++(120:0.2);
\draw[densely dotted] (bottom)--++(60:0.2);
\path(bottom)--++(-60:1*0.4) coordinate (bottom);
\draw[densely dotted] (bottom)--++(0:0.2);
\draw[densely dotted] (bottom)--++(120:0.2);
\draw[densely dotted] (bottom)--++(60:0.2);
\path(bottom)--++(-60:1*0.4) coordinate (bottom);
\draw[densely dotted] (bottom)--++(0:0.2);
\draw[densely dotted] (bottom)--++(120:0.2);
\draw[densely dotted] (bottom)--++(60:0.2);

 \end{tikzpicture}\]

\!\!\!\!\!\!\!\!\!\!\!\!\!\!\!\!\!\!\!\!\!\!\!\!\!\!\!\!\!\!\!\!\!\!\!\!\!\!\!\!\!\!\!\!\!\!\!\!
\caption{
The six elements of  $\Path^+(-,\stt^\gamma)$ in the
 geometry of type ${A}_1\subseteq \widehat{{A}}_2$. 
Each of the pairs $(\alpha,\lambda)$, $(\beta,\mu)$, $(\gamma,\nu)$,
 and $(\delta,\pi)$ are   related by an affine vertex reflection  (as are their primed versions) and so these pairs are generic sets as in \cref{maximalnonparabolic}.  
The pairs $(\mu',\rho)$, $(\nu',\tau)$, $(\pi,\sigma)$
 are all generic sets as in  \cref{maximalnonparabolic}.   }
 \label{bigeg}
   \end{figure}

   \begin{eg}\label{bigegeg}
   We let $e=5$, $h=\ell=2$, $n=13$, and  $\kappa=(0,2)\in (\ZZ/5\ZZ)^2$.  
   We let $\Bbbk$ be an arbitrary field.  
      Thus we are considering a chunk of the modular representation theory of the Hecke algebra of type $B$. 
    We consider the linkage class containing the element $\alpha=((1^{11}),(1^2))$.  
   We consider the closed subset   $$\{\xi =((\xi^{(1)}_1,\xi^{(1)}_2),(\xi^{(2)}_1,\xi^{(2)}_2)) \in \mptn 2 {13}  (2) \mid \xi \trianglerighteq \alpha \text{ and }\xi_{2}^{(1)}=0\}\subset \mptn 2 {13}  (2)$$ which can be embedded in $\mathbb{E}_{2,2}^\circledcirc\cap \mathbb{Z}\{\varepsilon_1,\varepsilon_2,\varepsilon_3\}$ as depicted in \cref{bigeg}.  
   This subalgebra of $A_2(13,\theta,(0,2))$ has 15 simple modules.  
     The decomposition matrix of this algebra (which appears as a submatrix of $A_2(13,\theta,(0,2))$) is as follows:  
   $$  \begin{array}{cc|cccccc|cccccc|ccc}
  &  & \alpha & \beta & \beta' & \gamma & \gamma' & \delta & \lambda & \mu & \mu' & \nu & \nu' &\pi &\rho &\tau &\sigma	\\
\hline 
& \alpha	&	&	&	&	&	&	&	&	&	&	&	&	&	\\
&\beta	&	&	&	&	&	&	&	&	&	&	&	&	&	\\
& \beta' 	&	&	&   	&	&	&	&	&	&	&	&	&	&	\\
&\gamma	& 	& \multicolumn{4}{c}{\smash{\raisebox{.5\normalbaselineskip}{$\scalefont{4}M $}}}	  	&	&	&  
	&	&&&	&	\\
&\gamma'	&	&	&	&	&	&	&	&	&	&	&	&	&	\\
& \delta	&	&	&	&	&	&	&	&	&	&	&	&	&	\\
\hline
&\lambda	&	&	&	&	&	&	&	&	&	&	&	&	&	\\
& \mu 	&	&	&	&	&	&	&	&	&	&	&	&	&	\\
& \mu '	&	&	&	&	&	&	&	&	&	&	&	&	&	\\
& \nu 	&	& 	 \multicolumn{4}{c}{\smash{\raisebox{.5\normalbaselineskip}{$\scalefont{4}t^1M $}}}	   	& 	&& \multicolumn{4}{c}{\smash{\raisebox{.5\normalbaselineskip}{$\scalefont{4}M$}}}	   &	\\
&\nu'		&	&	&	&	&	&	&	&	&	&	&	&	&	\\
&	\pi	&	&	&	&	&	&	&	&	&	&	&	&	&	\\
\hline
&\rho	&0&t^2	&t^2	&t	&0	&0	&0	&0	&t	&0	&0	&0	&1	&	&	\\
&\tau		&t^2&t^3	&t 	&t^2	&0	&t	&0	&0	&t^2	&0	&t	&0	&t	&1	&	\\
&\sigma	&t^3&0	&t^2	&t^3	&t	&t^2	&0	&0	&0	&0	&t^2	&t	&0	&t	&1	\\
   	\end{array} 
   $$
   where the matrix $M$ records the (non-parabolic)   Kazhdan--Lusztig polynomials of type $ {A}_2$.  
   This is given as follows,
$$
 M=\left(\begin{array}{cccccc}
 1 	&   &   &   &   &   \\
 t 	& 1 &   &   &   &   \\
 t 	& 0  & 1 &   &   &   \\
 t^2 	& t & t & 1 &   &   \\
 t^2 	& t & t &0   & 1 &   \\
 t^3 	& t^2 & t^2 & t & t & 1 \\
  \end{array}\right).
 $$   
All the zero entries in the above can be deduced via \cref{strongerman}.

The set of points 
$\{\alpha,\beta,\beta',\gamma,\gamma',\delta,		\lambda,\mu,\mu',\nu,\nu',\tau\}$
is a generic set.  
One can deduce that $M$ appears twice along the diagonal using the ``Steinberg-like'' 
results in \cref{nonparabolic}.    
 That $t^1 M$ appears as a submatrix follows by combining the ``maximal non-parabolic'' results of \cref{maximalnonparabolic} (which gives the diagonal entries of $t^1M$) with the 
 ``Steinberg-like'' 
results in \cref{nonparabolic} (which give the off-diagonal entries).  
 
 We now consider the final 3 rows of the decomposition matrix.  
   The non-zero entries in the final 9 columns  can all be calculated using the generic results of previous sections.  
In particular, the final 3 columns  are given by the Kazhdan--Lusztig polynomials of type $A_2 \subseteq \widehat{A}_2$.

Finally, we consider  the decomposition numbers  in the final  3 rows intersected with the first 6 columns. 
These are the most interesting decomposition numbers in our matrix as they  cannot be calculated using the generic results or \cref{strongerman}.  These can be calculated using the same considerations as in \cref{egegeegggg}.  
   \end{eg}
   
   \begin{rmk}
     Note that  the generic results only give lower bounds on decomposition numbers in $M$ and $t^1M$ in \cref{bigegeg}. One must also verify that the paths of degree zero   $\sts\sim \stt^\alpha$ and $\stt\sim \stt^\lambda$ do correspond to weight spaces in certain simple modules. 

   \end{rmk}

\begin{Acknowledgements*}
The   authors are grateful for the financial support received from the Royal Commission for the Exhibition of 1851  and  EPSRC grant EP/L01078X/1.

\end{Acknowledgements*}

\bibliographystyle{amsalpha}  
\bibliography{master} 
\end{document}